\documentclass{amsart}

\usepackage{amsmath}
\usepackage{amsthm}
\usepackage{amsopn}
\usepackage{amssymb}
\usepackage[all]{xy}

\parskip 0.7pc
\parindent 0pt

\allowdisplaybreaks[1]

\newcommand{\ul}[1]{\underline{#1}}
\newcommand{\mc}[1]{\mathcal{#1}}
\newcommand{\mb}[1]{\mathbb{#1}}
\newcommand{\mr}[1]{\mathrm{#1}}
\newcommand{\mbf}[1]{\mathbf{#1}}
\newcommand{\mit}[1]{\mathit{#1}}
\newcommand{\mf}[1]{\mathfrak{#1}}
\newcommand{\abs}[1]{\left\lvert #1 \right\rvert}
\newcommand{\norm}[1]{\lVert #1 \rVert}
\newcommand{\bra}[1]{\langle #1 \rangle}
\newcommand{\br}[1]{\overline{#1}}

\newcommand{\td}[1]{\widetilde{#1}}

\newcommand{\ZZ}{\mathbb{Z}}
\newcommand{\RR}{\mathbb{R}}

\newcommand{\QQ}{\mathbb{Q}}

\newcommand{\FF}{\mathbb{F}}
\newcommand{\GG}{\mathbb{G}}
\newcommand{\MS}{\mathbb{S}}

\newcommand{\PP}{\mathbb{P}}
\newcommand{\TT}{\mathbb{T}}
\newcommand{\LL}{\mathbb{L}}
\newcommand{\DD}{\mathbb{D}}

\newcommand{\Alg}{\mathrm{Alg}}
\newcommand{\Mod}{\mathrm{Mod}}
\newcommand{\Id}{\mathrm{Id}}
\newcommand{\Sp}{\mathrm{Sp}}
\newcommand{\Top}{\mathrm{Top}}

 \newtheorem{thm}[equation]{Theorem}
 \newtheorem{cor}[equation]{Corollary}
 \newtheorem{lem}[equation]{Lemma}
 \newtheorem{prop}[equation]{Proposition}

\theoremstyle{definition}

 \newtheorem{defn}[equation]{Definition}

 \newtheorem{rmk}[equation]{Remark}
\newtheorem*{thm*}{Theorem}
\newtheorem*{cor*}{Corollary}
\newtheorem*{lem*}{Lemma}
\newtheorem*{prop*}{Proposition}
\newtheorem*{defn*}{Definition}
\newtheorem*{ex*}{Example}
\newtheorem*{exs*}{Examples}
\newtheorem*{rmk*}{Remark}
\newtheorem*{claim*}{Claim}

\numberwithin{equation}{section}
\numberwithin{figure}{section}
\DeclareMathOperator{\Ext}{Ext}
\DeclareMathOperator{\Tor}{Tor}
\DeclareMathOperator{\Hom}{Hom}

\DeclareMathOperator{\Map}{Map}
\DeclareMathOperator{\Ind}{Ind}
\DeclareMathOperator{\Res}{Res}
\DeclareMathOperator{\Tr}{Tr}
\DeclareMathOperator{\Prim}{Prim}
\DeclareMathOperator{\Spf}{Spf}
\DeclareMathOperator{\TAQ}{TAQ}

\DeclareMathOperator*{\holim}{holim}
\DeclareMathOperator*{\hocolim}{hocolim}

\DeclareMathOperator*{\Tot}{Tot}

\title[The Bousfield-Kuhn functor and $\TAQ$]{The Bousfield-Kuhn functor and Topological Andr\'e-Quillen cohomology}
\author{Mark Behrens and Charles Rezk}
\date{\today}

\setcounter{tocdepth}{1}

\begin{document}

\begin{abstract}
We construct a natural transformation from the Bousfield-Kuhn functor evaluated on a space to the Topological Andr\'e-Quillen cohomology of the $K(n)$-local Spanier-Whitehead dual of the space, and show that the map is an equivalence in the case where the space is a sphere.  This results in a method for computing unstable $v_n$-periodic homotopy groups of spheres from their Morava $E$-cohomology (as modules over the Dyer-Lashof algebra of Morava $E$-theory).  We relate the resulting algebraic computations to the algebraic geometry of isogenies between Lubin-Tate formal groups.
\end{abstract}

\maketitle

\tableofcontents

\section{Introduction}

Let $X$ be a simply connected space.  Quillen \cite{Quillen} introduced two algebraic models of its rational homotopy type: a rational differential graded Lie algebra $\mc{L}(X)$ and a rational cocommutative differential graded coalgebra $\mc{C}(X)$.  The rational homotopy groups of $X$ are given by the homology of $\mc{L}(X)$ and the rational homology is given by the homology of $\mc{C}(X)$.  The differential graded Lie algebra $\mc{L}(X)$ can be obtained by taking the derived primitives of the differential graded coalgebra $\mc{C}(X)$.  Sullivan \cite{Sullivan} reformulated this theory in concrete terms in the case where $X$ is of finite type.  In Sullivan's theory, one takes a minimal model $\Lambda(X)$ of the differential graded algebra given by the dual of $\mc{C}(X)$.  The underlying graded commutative algebra of the minimal model $\Lambda(X)$ is free, and the rational homotopy groups of $X$ are recovered from the dual of the indecomposables of $\Lambda(X)$:
\begin{equation}\label{eq:Sullivan}
 \pi_*(X)_\QQ \cong \left(Q\Lambda(X)\right)^\vee.
 \end{equation}
Thus the rational homotopy groups of a space can be computed by taking the dual of the derived indecomposables of a commutative algebra model of its rational cochains.

Work of Kriz \cite{Kriz}, Goerss \cite{Goerss}, Mandell \cite{Mandell}, and Dwyer-Hopkins \cite[Appendix~C]{Mandell} shows that the unstable $p$-adic homotopy type of a simply connected finite type space is similarly encoded in its $\bar{\FF}_p$-valued singular cochains.  Unfortunately, the $p$-adic analog of (\ref{eq:Sullivan}) fails: the space of derived indecomposables of the resulting $E_\infty$-algebra is contractible \cite[Thm.~3.4]{Mandellcochains}.

One can nevertheless ask if there are other localizations of unstable homotopy groups for which the analog of (\ref{eq:Sullivan}) holds.  Unstable chromatic homotopy theory \cite{Bousfield} suggests that the $p$-primary unstable $v_h$-periodic homotopy groups are a likely candidate, as rational homotopy is the $h = 0$ case of this theory.  Integral homotopy groups are assembled out of the $v_h$-periodic homotopy groups for all $h$ and $p$. The purpose of this paper is to show that an analog of (\ref{eq:Sullivan}) sometimes holds in the case of $v_h$-periodic homotopy groups.

To state our results precisely, we will need to establish some notation.  Fix a prime $p$ and a height $h \ge 1$.  Let $E$ denote the Morava $E$-theory spectrum $E_h$ associated to the Lubin-Tate universal deformation $\GG$ of the Honda height $h$ formal group $\GG_0/\FF_{p^h}$.  Let $\mf{m}$ denote the maximal ideal of $E_*$, and let $K$ denote the even periodic variant of Morava $K$-theory $K(h)$ with 
$$ K_* = E_*/\mf{m}. $$
We shall use $(-)_K$ to denote localization with respect to $K$.
In this paper, unless explicitly stated otherwise, $E_*(-)$ will denote completed $E$-homology
$$ E_* Y = \pi_* (E \wedge Y)_K.$$
Let $T = T(h)$ denote the telescope of a $v_h$-self map on a type $h$ complex.  Bousfield and Kuhn (see, for example, \cite{Kuhn}) constructed functors $\Phi_{T}$ (for each $h \ge 1$) from pointed spaces to spectra with the properties that
\begin{align*}
\Phi_{T}(\Omega^\infty Y) & \simeq Y_{T}, \\
\pi_* \Phi_{T}(X) & \cong v_h^{-1} \pi_* (X)^{\wedge}.
\end{align*}
Here, $v_h^{-1} \pi_*(X)^{\wedge}$ are the completed $v_h$-periodic homotopy groups of $X$.\footnote{These should not be confused with the ``uncompleted'' unstable $v_h$-periodic homotopy groups
studied by Bousfield, Davis, Mahowald, and others. These are given as the homotopy groups of the $n$th telescopic monochromatic layer of $\Phi_T(X)$ (see \cite{Kuhncalc}).}
In this paper, we will be interested in the $K$-local variant
$$ \Phi(X) := \Phi_T(X)_K. $$
If the telescope conjecture is true, then $\Phi = \Phi_T$, but this conjecture is widely believed to be false for $h \ge 2$ \cite{MahowaldRavenelShick}.  The homotopy groups
$$ \pi_* \Phi(X) $$
thus constitute a competing definition of ``completed unstable $v_h$-periodic homotopy groups'' which are potentially more computable than $\pi_* \Phi_T(X)$. 

In this paper we construct a natural transformation between functors from pointed spaces to $K$-local spectra
$$ c^{S_K}: \Phi(X) \rightarrow \TAQ_{S_K}(S_K^{X_+}) $$
(the ``comparison map'') which relates $\Phi(X)$ to the topological Andr\'e-Quillen cohomology of the augmented commutative $S_K$-algebra $S_K^{X_+}$.  This relates unstable $v_h$-periodic homotopy to the dual of the derived indecomposables of the $E_\infty$ algebra of cochains with values in the $K$-local sphere, which generalizes (\ref{eq:Sullivan}) in the case of $h = 0$.
Our main theorem (Theorem~\ref{thm:main}) states that the comparison map is an equivalence when $X = S^q$ for $q$ odd.  The case of $h = 1$ is closely related to the thesis of Jennifer French \cite{French}.

In general, there is a class of finite spaces for which the comparison map is an equivalence.  This is discussed further in \cite{BehrensRezk}, where such spaces are called ``$\Phi_{K(h)}$-good''.  Thus the main result of this paper shows odd dimensional spheres are $\Phi_{K(h)}$-good.  In \cite{BehrensRezk}, we show one can easily deduce from this that even dimensional spheres, the groups $SU(n)$, and the groups $Sp(n)$, are $\Phi_{K(h)}$-good.  Also, finite products of finite $\Phi_{K(h)}$-good spaces are $\Phi_{K(h)}$-good.  However, the work of Langsetmo and Stanley \cite{LangsetmoStanley} (in the case of $h = 1$) and Brantner and Heuts \cite{BrantnerHeuts} ($h$ arbitrary) shows that mod $p$ Moore spaces are not $\Phi_{K(h)}$-good.  Brantner and Heuts also show in \cite{BrantnerHeuts} that wedges of spheres of dimension greater than $1$ are not $\Phi_{K(h)}$-good.

By the work of Ching \cite{Ching}, $\TAQ_{S_K}(S_K^{X_+})$ has the structure of an algebra over the operad formed by the Goodwillie derivatives $\partial_*(\mr{Id})$ of the identity functor on pointed spaces.  As this operad is Koszul dual to the commutative operad in spectra, it may be regarded as a topological analog of the (shifted) Lie operad.  Thus, at least for the class of finite $\Phi_{K(h)}$-good spaces, one could regard $\TAQ_{S_K}(S_K^{X_+})$ as a Lie algebra model for the unstable $v_h$-periodic homotopy type of $X$, mimicking in higher heights Quillen's Lie algebra model $\mc{L}(X)$ for the rational homotopy type.  This idea has since been successfully pursued by Heuts \cite{Heuts}, and is outlined in \cite{BehrensRezk}.

We apply our main theorem to understand the $v_h$-periodic Goodwillie
tower of the identity evaluated on odd spheres.  This constitutes a step in the program begun
by Arone and Mahowald \cite{AroneMahowald} to generalize the
Mahowald-Thompson approach to unstable $v_1$-periodic homotopy groups
of spheres \cite{Mahowald}, \cite{Thompson}.  Work of Arone-Mahowald
\cite{AroneMahowald} and Arone-Dwyer \cite{AroneDwyer} shows that
applying $\Phi$ to the Goodwillie tower of the identity evaluated on
$S^q$ ($q$ odd) gives a (finite) resolution 
\begin{equation}\label{eq:res}
\Phi(S^q) \rightarrow (L(0)_q)_K \rightarrow (L(1)_q)_K \rightarrow (L(2)_q)_K \rightarrow \cdots \rightarrow (L(h)_{q})_K.
\end{equation}
Here $L(k)_q$ denotes the Steinberg summand of the Thom spectrum of
$q$ copies of the reduced regular representation of $(\ZZ/p)^k$, as
described in \S\ref{sec:enlk}. 

We show (Theorem~\ref{thm:koszul}) that the $E$-homology of the
resolution (\ref{eq:res}) is isomorphic to the dual of the Koszul
resolution of the (degree $q$) Dyer-Lashof algebra $\Delta^q$ for
Morava $E$-theory (\ref{eq:deltaq}).  This results (Corollary~\ref{cor:koszul}) in a spectral sequence having the form
\begin{equation}\label{eq:EGSS}
\Ext^s_{\Delta^q}(\td{E}^q(S^q), \bar{E}_t) \Rightarrow E_{q+t-s} \Phi(S^q).
\end{equation}
This is related to unstable $v_h$-periodic homotopy groups of spheres by the homotopy fixed point spectral sequence \cite{DevinatzHopkins}
$$ H_c^s(\MS_h; E_t \Phi(S^q))^{\mathit{Gal}} \Rightarrow \pi_{t-s} \Phi(S^q). $$

In \cite{RezkMIC}, the second author defined the \emph{modular isogeny complex}, a cochain complex geometrically defined in terms of finite subgroups of the formal group $\GG$, mimicking the structure of the building for $GL_h(\FF_p)$.  We show that the cohomology of the modular isogeny complex is the dual of the Koszul resolution for $\Delta^q$, and use this to give a modular description of the differentials in the Koszul resolution.  This gives a modular interpretation of the $E_2$-term of the spectral sequence (\ref{eq:EGSS}). Presumably this modular interpretation is related to the ``pile'' interpretation of unstable chromatic homotopy, proposed by Ando, Morava, Salch, and others, but we do not pursue this here (see \cite{RezkICM}).

The results of this paper were first announced in 2012, however it took many years to resolve some thorny technical issues which emerged, the most significant of which involved the structure of the $E$-cohomology of $QX$ as an algebra over the Morava $E$-theory Dyer-Lashof algebra.  In fact, the first preprint version of this paper made an erroneous claim about this structure, and the authors are indebted to Nick Kuhn for discovering this error.  

During the evolution of the present form of this paper, many interesting developments have emerged.  The first author's Ph.D. student Guozhen Wang used some of the techniques of this paper to give a complete computation of $\pi_* \Phi(S^3)$ for $h = 2$ and $p \ge 5$ \cite{Wang}.  Yifei Zhu \cite{Zhu} has used our techniques to compute $E_*\Phi(S^{q})$ for $q$ odd and $h = 2$.  Finally, Arone-Ching \cite{AroneChing} and Heuts \cite{Heuts} have recently announced alternative approaches to prove of Theorem~\ref{thm:main} which are more conceptual than our computational approach, and more general, in the sense that they apply to the functor $\Phi_T$ as well as the functor $\Phi$. Both of these alternative approaches are are outlined in \cite{BehrensRezk}.

\subsection*{Organization of the paper.}$\quad$

In Section~\ref{sec:recollections} we summarize the results about the Morava $E$-theory Dyer-Lashof algebra $\Delta^q$ we will need for the rest of the paper.  

In Section~\ref{sec:BarrBeck} we introduce a form of Andr\'e-Quillen homology for unstable algebras over $\Delta^q$, as well as a Grothendieck-type spectral sequence which relates this homology to $\Tor_*^{\Delta^q}$.  

In Section~\ref{sec:TAQ} we introduce a bar construction model for Kuhn's filtration on topological Andr\'e-Quillen homology.  The layers of this filtration, as well as the layers of the Goodwillie tower of the identity, are equivalent to the spectra $L(k)_q$.  We also construct a Morava $K$-theory analog of a spectral sequence of Basterra, which relates the Morava $K$-homology of topological Andr\'e-Quillen homology to the algebraic Andr\'e-Quillen homology groups introduced in Section~\ref{sec:BarrBeck}.  

In Section~\ref{sec:enlk} we show the $E$-homology of the spectrum $L(k)_q$ is dual to the $k$-th term of the Koszul resolution for $\Delta^q$.  

In Section~\ref{sec:comparison}, we define the comparison map, and investigate its behavior on infinite loop spaces.  
This requires a technical result on the structure of the $E$-cohomology of $QX$ as a $\Delta^*$-algebra, which is relegated to
Appendix~\ref{apx:norm}.
  
In Section~\ref{sec:Weiss}, we discuss a $K$-local analog of Weiss's orthogonal calculus.  

In Section~\ref{sec:main}, we prove that the comparison map is an equivalence on odd spheres, by using $K$-local Weiss calculus to play the Goodwillie tower off of the Kuhn filtration.  

In Section~\ref{sec:kinv} we use the identification of the Goodwillie tower with the Kuhn filtration to compute the $E$-homology of the $k$-invariants of the Goodwillie tower.  From these results we establish the spectral sequence (\ref{eq:EGSS}).  

In Section~\ref{sec:modular} we give our modular description of the Koszul resolution for $\Delta^q$, by showing that it is given by the cohomology of the modular isogeny complex.  

There are two appendices.  Appendix~\ref{apx:borel} contains a summary of results on norms, transfers, and Euler classes.  These are needed in Appendix~\ref{apx:norm}, which is devoted to a detailed study of the Morava $E$-cohomology of $QX$.

\subsection*{Conventions.}$\quad$
\begin{itemize}
\item $(-)^\vee$ denotes the $E_0$-linear dual when applied to an $E_0$-module, and the Spanier-Whitehead dual when applied to a spectrum. 

\item $\Sp$ denotes the category of symmetric spectra with the positive stable model structure \cite{MMSS}, and shall simply refer to these as ``spectra''.  

\item If $R$ is a commutative $S$-algebra, $A$ is a commutative augmented $R$-algebra, and $M$ is an $R$-module, we will let $\TAQ^R(A;M)$ denote topological Andr\'e-Quillen homology of $A$ (relative to $R$) with coefficients in $M$.  Similarly we let $\TAQ_R(A;M)$ denote the corresponding topological Andr\'e-Quillen cohomology.  If $M = R$, we shall omit it from the $\TAQ$-notation.  

\item For an endofunctor $F$ of $\Top_*$, we shall let $\{ P_n(F)\}$ denote its Goodwillie tower, with fibers $D_n(F) = \Omega^\infty \mb{D}_n(F)$ and derivatives $\partial_n(F)$.
\end{itemize}

\subsection*{Acknowledgments}  The first author learned many of the techniques employed in this paper through conversations with his Ph.D. student Jennifer French, and also benefited from many conversations with Jacob Lurie.  Mike Hopkins suggested a version of the comparison map.  In some sense much of this paper completes a project initially suggested and pursued by Matthew Ando, Paul Goerss, Mike Hopkins, and Neil Strickland, relating $\Delta^q$ to the Tits building for $GL_h(\FF_p)$.  Johann Sigurdsson and Neil Strickland have also studied the Morava $E$-homology of $L(k)$, but from a slightly different perspective than taken in this paper.  The authors are indebted to Nick Kuhn, for pointing out a critical error in an earlier version of this paper, and to Greg Arone, Michael Ching, and Gijs Heuts, for generously sharing their unique insights on the results of this paper.  The first author would also like to thank Jack Morava, whose enthusiasm about this project was an invaluable source of motivation when technical obstacles began to multiply.  Finally, the authors would like to thank the referee, for their valuable comments.  The first author was supported by NSF grants  DMS-1050466, DMS-1452111, and DMS-1611786, and the second author was supported by NSF grants DMS-1006054 and DMS-1406121.

\section{Recollections on the Dyer-Lashof algebra for Morava $E$-theory}\label{sec:recollections}

\subsection*{Morava $E$-theory of symmetric groups}

Strickland studied the Hopf ring
$$ E^0 (\amalg_n B\Sigma_n),  $$
where the two products $\cdot$ and $\ast$ are given respectively by the cup product and transfers associated to the inclusions
\begin{equation}\label{eq:partsub}
\Sigma_n \times \Sigma_m \rightarrow \Sigma_{n+m} 
\end{equation}
and the coproduct is given by the restrictions associated to the above inclusions.  Note that there are actually inclusions 
(\ref{eq:partsub}) for every partition of the set $\{1, \ldots, n\}$ into two pieces.  We shall refer to the stabilizers of such partitions as \emph{partition subgroups}.

Strickland \cite{Strickland} proved that $E^0 (\amalg_n B\Sigma_n)$ is a formal power series ring (with respect to the $\ast$ product) with indecomposables
$$ \prod_{k \ge 0} E^0(B\Sigma_{p^k})/\Tr(\text{proper partition subgroups}). $$
Let 
$$ \mr{Sub}_{p^k}(\GG) = \Spf(\mc{S}_{p^k}) $$
 be the (affine) formal scheme of subgroups of $\GG$ of order $p^k$.  
For a Noetherian complete local $E_0$-algebra $R$, the $R$-points of $\mr{Sub}_{p^k}(\GG)$ are given by
$$ \mr{Sub}_{p^k}(\GG)(R) = \{ H < \GG \times_{\Spf(E_0)} \Spf(R) \: : \:  \abs{H} = p^k \}. $$
Strickland also shows that there is a canonical isomorphism
\begin{equation}\label{eq:Strickland}
E^0(B\Sigma_{p^k})/\Tr(\text{proper partition subgroups}) \cong \mc{S}_{p^k}
\end{equation}
of rings, where the product on the left-hand-side is induced from the $\cdot$ product. 

Let
$$ s: E_0 \rightarrow \mc{S}_{p^k} $$
be the map  induced topologically from the map
$$ B\Sigma_{p^k} \rightarrow \ast, $$  
which gives $\mc{S}_{p^k}$ an $E_0$-algebra-structure.
We regard $\mc{S}_{p^k}$ as a left module over $E_0$ by the module structure induced by $s$.  With respect this module structure, $\mc{S}_{p^k}$ is free of finite rank \cite[Prop.~6.3]{RezkWilk}.

Give the ring $\mc{S}_{p^k}$ the structure of a right $E_0$-module via the ring map
\begin{equation}\label{eq:t}
t : E_0 \rightarrow \mc{S}_{p^k}
\end{equation}
which associates to an $R$-point $H < \GG \times_{\Spf(E_0)} \Spf(R)$ the deformation 
$$ (\GG \times_{\Spf(E_0)} \Spf(R))/H. $$ 
The map $t$ arises topologically from the total power operation
$$ E^0(\ast) \rightarrow E^0(B\Sigma_{p^k}) $$
coming from the $E_\infty$ structure of $E$ \cite[Sec.~12.4]{AndoHopkinsStrickland}.

\subsection*{Morava $E$-theory of extended powers}

For an $E$-module $Y$, define
$$ \PP_E(Y) := \bigvee_{i \ge 0} Y^{\wedge_E i}_{h\Sigma_i}. $$
In \cite[\S4]{RezkWilk}, the second author defined a monad
$$ \TT: \mr{Mod}_{E_*} \rightarrow \mr{Mod}_{E_*} $$
and a natural transformation
$$ \TT \pi_*Y \rightarrow \pi_* (\PP_E Y)_{K} $$
which induces an isomorphism 
\begin{equation}\label{eq:algmodel}
[\TT \pi_*(Y)]^\wedge_{\mathfrak{m}} \xrightarrow{\cong} \pi_*(\PP_E Y)_{K}
\end{equation}
if $\pi_*Y$ is flat as an $E_*$-module \cite[Prop.\ 4.9]{RezkWilk}.
There is a decomposition
$$ \TT = \bigoplus_{i \ge 0} \TT\bra{i} $$
so that if $\pi_*Y$ is finite and flat, we have
\begin{equation}\label{eq:extendedpower}
\pi_* [Y^{\wedge_E i}_{h\Sigma_i}]_K \cong \TT\bra{i} \pi_*Y_K. 
\end{equation}
The monad $\TT$ comes equipped \cite[Prop.\ 4.7]{RezkWilk} with
natural isomorphisms 
\begin{equation}\label{eq:exponential}
\TT(M) \otimes_{E_*} \TT(N) \xrightarrow{\cong} \TT(M \oplus N).
\end{equation}
In particular, if $A$ is a $\TT$-algebra, then $A$ is a
graded-commutative $E_*$-algebra in the following strong sense: not
only do elements of odd degree anticommute, but also elements of odd
degree square to $0$  (see \cite[Prop.\ 3.4]{RezkWilk} for an
explanation of this phenomenon).

A convenient summary of the most important properties of the $\TT$ construction
is given in Section 3.2 of \cite{RezkKoszul}.  In particular, we note
that if $R$ is a $K(n)$-local commutative $E$-algebra, then $\pi_* R$
canonically admits the structure of a $\TT$-algebra.

\begin{lem}\label{lem:freealg}
If $M$ is a free $E_*$-module,  then $\TT M$ is a free graded
commutative $E_*$-algebra in the above sense.  
\end{lem}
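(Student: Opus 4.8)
The plan is to exploit the exponential property (\ref{eq:exponential}) to reduce the claim to the case where $M$ is a free $E_*$-module on a single generator, and then to use the decomposition $\TT = \bigoplus_i \TT\bra{i}$ together with Strickland's structural results on $E^0(\amalg_n B\Sigma_n)$ recalled above. First I would write $M = \bigoplus_{j} E_* \cdot x_j$ as a direct sum of free modules of rank one (allowing the generators $x_j$ to lie in both even and odd degrees), so that $\TT M \cong \bigotimes_j \TT(E_* \cdot x_j)$ as $E_*$-algebras by iterating (\ref{eq:exponential}). Since a tensor product (over $E_*$) of free graded-commutative $E_*$-algebras on given generators is again free graded-commutative on the union of those generators, it suffices to treat each factor $\TT(E_* \cdot x_j)$ separately.

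Next I would analyze $\TT(E_* \cdot x)$ for a single generator $x$. By (\ref{eq:extendedpower}), its degree-$i$ summand is $\TT\bra{i}(E_* \cdot x) \cong \pi_*[(\Sigma^{|x|}E)^{\wedge_E i}_{h\Sigma_i}]_K$, which by the usual identification is a twisted form of $E^0(B\Sigma_i)$ (a free $E_*$-module of finite rank, using \cite[Prop.~6.3]{RezkWilk} / Strickland). The point is that Strickland's theorem presents $E^0(\amalg_n B\Sigma_n)$, with respect to the transfer ($\ast$) product, as a formal power series ring on the indecomposables $\prod_{k\ge 0} E^0(B\Sigma_{p^k})/\Tr(\text{proper partition subgroups})$; translating through the monad structure on $\TT$, this says precisely that $\TT(E_* \cdot x)$ is the free graded-commutative $E_*$-algebra on a set of polynomial generators indexed (in each internal degree) by a basis of the Strickland quotient rings $\mc{S}_{p^k}$. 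For $x$ in even degree this is a genuine polynomial algebra; for $x$ in odd degree one must invoke the strong graded-commutativity noted after (\ref{eq:exponential}) — odd-degree elements square to zero — so that the free graded-commutative algebra is the expected exterior-on-odd $\otimes$ polynomial-on-even object, and check that the monad structure is compatible with this.

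The main obstacle I expect is the bookkeeping in the odd-degree case: one must verify that the generators produced by the Strickland decomposition actually assemble into a \emph{free} graded-commutative algebra in the strong sense of the paper, i.e. that there are no hidden relations beyond the squares-to-zero relations on odd generators, and that the transfer-product filtration on $E^0(\amalg B\Sigma_n)$ matches the word-length filtration on a free algebra. This is essentially a matter of carefully transporting Strickland's ``formal power series ring'' statement across the monad isomorphism (\ref{eq:algmodel}) and the summand decomposition, keeping track of the $\mf{m}$-adic completion, which is harmless here since $M$ — hence each graded piece — is finitely generated and free in each degree. Once the rank-one case is settled, the general case follows formally from the tensor-product argument of the first paragraph.
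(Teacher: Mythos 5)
Your proposal follows essentially the same route as the paper: reduce to the rank-one cases by iterating the exponential isomorphism (\ref{eq:exponential}), and then handle $\TT(E_*)$ and $\TT(\Sigma E_*)$ by the known structure of the Morava $E$-theory of symmetric groups. The paper simply cites the proof of \cite[Prop.~7.2]{RezkWilk} for the rank-one cases, which is where the Strickland-based analysis you sketch (including the odd-degree, squares-to-zero bookkeeping) is carried out.
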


\begin{proof}
  The rank 1 cases $M = E_*$ and $M = \Sigma E_*$ are discussed in the
  proof of Proposition~7.2 of \cite{RezkWilk}.  The general case then
  follows from \ref{eq:exponential}.
\end{proof}

Specializing to the case where $Y = \Sigma^q E$ (for $q \in \ZZ$), and
$i = p^k$, we have
$$ [\TT\bra{p^k} E_*(S^q)]_q = [E_* S^{qp^k}_{h\Sigma_{p^k}}]_q =
E_0(B\Sigma_{p^k})^{q\bar{\rho}_k} $$ 
where $\bar{\rho}_k$ denotes the reduced standard real representation
of $\Sigma_{p^k}$, and $(B\Sigma_{p^k})^{q\bar{\rho}_k}$ denotes the
associated Thom spectrum.

Consider the sub- and quotient modules
$$ \Prim_q[k] \hookrightarrow E_0(B\Sigma_{p^k})^{q\bar{\rho}_k} \twoheadrightarrow \Ind_q[k] $$
where $\Prim_q[k]$ denotes the intersection of the kernels of transfers to proper partition subgroups, and $\Ind_q[k]$ denotes quotient by the sum of the images of the restrictions from proper partition subgroups.  Both $\Prim_q[k]$ and $\Ind_q[k]$ are finite free $E_0$-modules, and (\ref{eq:Strickland}) implies that there is a canonical isomorphism
$$ \Prim_0[k] \cong \mc{S}^\vee_{p^k}.$$
Let $\iota_q$ denote the composite
$$ \iota_q: \Prim_q[k] \rightarrow \Ind_q[k]. $$
The suspension $\sigma$ is shown in \cite{RezkWilk} to fit these modules together to give a diagram
\begin{equation}\label{eq:zigzag}
\xymatrix@C+1em@R+1em{
& \Prim_{2m-1}[k] \ar[d]_\cong^{\iota_{2m-1}} & \Prim_{2m}[k] \ar@{^{(}->}[d]^{\iota_{2m}} & \Prim_{2m+1}[k] \ar[d]_\cong^{\iota_{2m+1}} & \cdots
\\
\cdots \ar[ur]^{\sigma}_\cong & \Ind_{2m-1}[k] \ar[ur]^\sigma_\cong & \Ind_{2m}[k] \ar[ur]^\sigma_{\cong} & \Ind_{2m+1}[k] \ar[ur]^\sigma_\cong 
}
\end{equation}
where $\iota_q$ is an isomorphism for $q$ odd, and an inclusion with torsion cokernel for $q$ even.  



\subsection*{The Dyer-Lashof algebra for Morava $E$-theory}

The algebra of additive power operations acting on cohomological degree $q$ is given by
$$ \Gamma^q = \bigoplus_k \Prim_{-q}[k]. $$
This is contained (via the map $\iota_{-q}$) in the larger algebra of indecomposable power operations
\begin{equation}\label{eq:deltaq}
 \Delta^q = \bigoplus_k \Ind_{-q}[k].
\end{equation}
In both rings, the ring $E_0$ is not central, and thus $\Gamma^q$ and $\Delta^q$ have distinct left and right $E_0$-module structures.  In the case of $\Gamma^0$, these left and right module structures are induced respectively from the left and right module structures of $E_0$ on $\mc{S}_{p^k}$ under the isomorphism
\begin{equation}\label{eq:dualGamma}
\Gamma^0[k] \cong \mc{S}^\vee_{p^k}. 
\end{equation}

The algebra $\Gamma^q$ is the algebra of natural endomorphisms of the functor
\begin{gather*}
U^q: \Alg_{\TT} \rightarrow \Mod_{E_0}, \\
A_* \mapsto A_{-q};
\end{gather*}
see \cite[\S3.8]{RezkKoszul}.  It follows that the underlying
$E_*$-module of a $\TT$-algebra carries the structure of a graded
$\Gamma^{*}$-module.  The morphism (\ref{eq:exponential}) gives this
$\Gamma^{*}$-module the structure of a graded-commutative
$\Gamma^*$-algebra.  The functors $U^q$ thus assemble to give a
functor
$$ U^*: \Alg_\TT \rightarrow \Alg_{\Gamma^*}. $$

The algebra $\Delta^q$ is the algebra of natural endomorphisms of the functor
\begin{gather}\label{eq:Vqdef}
\begin{split}V^q: \Alg_{\TT}\downarrow E_* \rightarrow \Mod_{E_0}, \\
A_* \mapsto [I(A)/I(A)^2]_{-q}.;
\end{split}
\end{gather}
see \cite[\S3.10]{RezkKoszul}.

The non-canonical natural isomorphisms $U^q \cong U^{q+2}$ and $V^q \cong V^{q+2}$ given by multiplication by a unit in $E_{-2}$ induce non-canonical isomorphisms of algebras
\begin{align}
\Gamma^q & \cong \Gamma^{q+2}, \label{eq:Gammacong} \\ 
\Delta^q & \cong \Delta^{q+2}.  \label{eq:Deltacong}
\end{align}
The suspension induces canonical isomorphisms of algebras
\begin{equation}\label{eq:susp}
 \sigma: \Delta^q \xrightarrow{\cong} \Gamma^{q-1}.
\end{equation}
In particular, all of the $E_0$-algebras $\Gamma^q$ and $\Delta^q$,
for all $q$, are non-canonically isomorphic to each other.  

For an $E_*$-module $M$ we define\footnote{Warning: the functor $\widehat{\TT}$ defined here is \emph{different} from the one defined by Barthel-Frankland \cite{BarthelFrankland}.}
\begin{equation}\label{eq:Thatdef}
\widehat{\TT} M = \prod_{i \ge 0} \TT\bra{i}M. 
\end{equation}
Note that by (\ref{eq:exponential}), Lemma~\ref{lem:freealg}, and (\ref{eq:deltaq}), for $M$ a free module over $E_*$, there is a non-canonical isomorphism 
\begin{equation}\label{eq:Tsym}
\TT M \cong \mr{Sym}_{E_*}(\Delta^* \otimes_{E_*} M).
\end{equation}
From this perspective, $\widehat{\TT}$ is non-canonically isomorphic to the corresponding formal power series ring
\begin{equation}\label{eq:That}
\widehat{\TT} M := \widehat{\mr{Sym}}_{E_*} (\Delta^* \otimes_{E_*} M). 
\end{equation}
Here $\widehat{\mr{Sym}}$ denotes the completed graded symmetric algebra.  For an $E$-module $Y$, let 
$$ \widehat{\PP}_E Y = \prod_{i \ge 0} Y^{\wedge_E i}_{h\Sigma_i} $$
denote the completed free commutative $E$-algebra.  The we have the following lemma.

\begin{lem}\label{eq:completealgmodel}
Suppose that $Y$ is an $E$-module, and that $\pi_*Y$ is flat over $E_*$.  Then (\ref{eq:algmodel}) induces an isomorphism
$$ [\widehat{\TT}\pi_*(Y)]^\wedge_\mf{m} \xrightarrow{\cong} \pi_* (\widehat{\PP}_E Y)_K. $$ 
\end{lem}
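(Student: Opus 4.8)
The plan is to compute $\pi_*\widehat{\PP}_E Y$ outright, observe that it is flat over $E_*$, and then invoke the fact that the $K(n)$-localization of an $E$-module with flat homotopy is obtained by $\mf{m}$-adically completing its homotopy groups. The only genuine subtlety is that $K$-localization does not commute with the infinite product defining $\widehat{\PP}_E Y$; this is saved by the fact that $\mf{m}$ is finitely generated, so that $\mf{m}$-adic completion \emph{does} commute with products.

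First I would compute the homotopy before localizing. Since the forgetful functor $\Mod_E\to\Sp$ preserves products and $\pi_*$ carries products of spectra to products of abelian groups,
$$ \pi_*\widehat{\PP}_E Y \;=\; \prod_{i\ge 0}\pi_*\bigl(Y^{\wedge_E i}_{h\Sigma_i}\bigr). $$
I claim $\pi_*(Y^{\wedge_E i}_{h\Sigma_i})\cong\TT\bra{i}\pi_*Y$, naturally in $Y$, and that this is a flat $E_*$-module. When $\pi_*Y$ is finite free, $Y$ and all the $Y^{\wedge_E i}_{h\Sigma_i}$ are already $K$-local, since their homotopy groups are finite free (by Strickland's computations; cf.\ Lemma~\ref{lem:freealg} and (\ref{eq:Tsym})) and hence $\mf{m}$-complete, so (\ref{eq:extendedpower}) gives the claim with no completion intervening. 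The general flat case follows by writing $Y\simeq\hocolim_\alpha Y_\alpha$ as a filtered homotopy colimit of $E$-modules with finite free homotopy (Lazard's theorem, realized as in \cite{RezkWilk}) and using that $(-)^{\wedge_E i}$, $(-)_{h\Sigma_i}$, $\pi_*$, and $\TT\bra{i}$ each commute with filtered homotopy colimits, together with the fact that $\TT\bra{i}$ of a filtered colimit of finite free modules is flat. Taking the product over $i$ and using that a product of flat modules over the Noetherian ring $E_*$ is flat, I obtain a natural isomorphism $\widehat{\TT}\pi_*Y=\prod_{i\ge 0}\TT\bra{i}\pi_*Y\xrightarrow{\cong}\pi_*\widehat{\PP}_E Y$ onto a flat $E_*$-module.

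Then I would localize. Since $\widehat{\PP}_E Y$ is an $E$-module, $(\widehat{\PP}_E Y)_K$ is its derived $\mf{m}$-adic completion. As $E_0$ is a regular complete local ring, $\mf{m}$ is generated by a regular sequence, and so the derived $\mf{m}$-completion of a \emph{flat} $E_*$-module $M$ reduces to the ordinary completion $M^\wedge_\mf{m}$ (the higher derived functors of completion vanish, and the tower $\{M/\mf{m}^jM\}$ is Mittag--Leffler, so $\lim^1$ vanishes as well). Applying this to $M=\pi_*\widehat{\PP}_E Y$ and combining with the previous paragraph gives
$$ \pi_*(\widehat{\PP}_E Y)_K \;\cong\; \bigl(\pi_*\widehat{\PP}_E Y\bigr)^\wedge_\mf{m} \;\cong\; [\widehat{\TT}\pi_*Y]^\wedge_\mf{m}, $$
and unwinding the identifications shows that this isomorphism is the one induced by (\ref{eq:algmodel}).

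The step I expect to need the most care is the interaction of $K$-localization with the infinite product: one cannot localize $\widehat{\PP}_E Y=\prod_i Y^{\wedge_E i}_{h\Sigma_i}$ factorwise, because $K$-localization of spectra does not commute with infinite products. The argument succeeds only because, once $K$-localization has been reduced to $\mf{m}$-adic completion --- legitimate here since every spectrum in sight is an $E$-module with flat homotopy --- one is left with the elementary identity $(\prod_i M_i)^\wedge_\mf{m}=\prod_i (M_i)^\wedge_\mf{m}$, which holds because $\mf{m}$ is finitely generated, so that $\mf{m}^j(\prod_i M_i)=\prod_i\mf{m}^jM_i$ and $\lim_j$ commutes with $\prod_i$. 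The remaining point --- extending (\ref{eq:extendedpower}) from finite free to arbitrary flat homotopy by the filtered-colimit argument above --- is routine given the methods of \cite{RezkWilk}.
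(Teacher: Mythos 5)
Your proposal is correct, but it routes around the product-versus-localization issue differently than the paper does. The paper's proof is a one-line appeal to three facts: the factorwise identification (\ref{eq:extendedpower}), the statement that $K$-localization \emph{does} commute with products of $E$-local spectra \cite[Cor.~6.1.3]{BehrensDavis}, and \cite[Prop.~3.6-7]{RezkWilk}; that is, it localizes the product $\prod_i Y^{\wedge_E i}_{h\Sigma_i}$ factorwise at the spectrum level (legitimate here because each factor is an $E$-module, hence $E$-local --- your remark that ``one cannot localize factorwise'' is overly pessimistic in the $E$-local category, where $L_K$ is computed by a homotopy limit against finite complexes and so commutes with products), and then completes each factor. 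You instead compute the \emph{unlocalized} homotopy of the product --- which is trivial once one has the unlocalized, flat-coefficient version of (\ref{eq:extendedpower}), obtained from the finite free case by the Lazard filtered-colimit argument --- observe that the result $\widehat{\TT}\pi_*Y$ is flat by Chase's theorem (exactly as in Corollary~\ref{cor:flat}), and then apply $L_K = (\text{derived } \mf{m}\text{-completion})$ once to the whole product, where flatness collapses the derived completion to the ordinary one. Both arguments are sound and lean on the same underlying inputs (Strickland's finiteness, flatness of products over the coherent ring $E_*$, and the identification of $K$-localization of $E$-modules with $\mf{m}$-adic completion); yours trades the Behrens--Davis commutation result for the slightly stronger unlocalized form of (\ref{eq:extendedpower}), which is available by the methods of \cite{RezkWilk} as you indicate.
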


\begin{proof}
This follows from (\ref{eq:extendedpower}), together with the fact that $K$-localization commutes with products of $E$-local spectra \cite[Cor.~6.1.3]{BehrensDavis}, and \cite[Prop.~3.6-7]{RezkWilk}.
\end{proof}

\subsection*{The Koszul resolution}
Observe that the augmentation 
\begin{equation}\label{eq:augmentation}
 \epsilon: \Delta^q = \bigoplus_{k \ge 0} \Delta^q[k] \rightarrow \Delta^q[0] = E_0 
 \end{equation}
sending $\Delta^q[k]$ to $0$ for $k\geq0$ endows $E_0$ with the structure of a $\Delta^q$ bi-module: we shall
use the notation $\br{E}_0$ to denote this $\Delta^q$-bimodule.  Let
$\td{\Delta}^q$ denote the kernel of the augmentation $\epsilon$; it
is likewise a $\Delta^q$-bimodule.

Consider the normalized bar complex $B_*(\br{E}_0, \td{\Delta}^q, \br{E}_0)$ with
$$ B_s(\br{E}_0, \td{\Delta}^q, \br{E}_0) = \br{E}_0 \otimes_{E_0} (\td{\Delta}^q)^{\otimes_{E_0} s} \otimes_{E_0} \br{E}_0 \cong (\td{\Delta}^q)^{\otimes_{E_0} s}. $$
Endow the complex $B_*(\br{E}_0, \td{\Delta}^q, \br{E}_0)$ with a grading by setting
$$ B_s(\br{E}_0, \td{\Delta}^q, \br{E}_0)[k] :=  \bigoplus_{\substack{k=k_1+\cdots + k_s \\ k_i > 0}} \Delta^q[k_1] \otimes_{E_0} \cdots \otimes_{E_0} \Delta^q[k_s]. $$
Observe that since $\Delta^q[k]$ acts trivially on $\br{E}_0$ for $k > 0$, the differential in the bar complex preserves this grading.  Thus there is a decomposition of chain complexes
$$ B_*(\br{E}_0, \td{\Delta}^q, \br{E}_0) = \bigoplus_{k \ge 0} B_*(\br{E}_0, \td{\Delta}^q, \br{E}_0)[k]. $$

In \cite{RezkKoszul}, the second author proved that the algebras $\Delta^q$ are Koszul, as summarized in the following theorem.

\begin{thm}[\cite{RezkKoszul}, Prop.\ 4.6]\label{thm:RezkKoszul}
For each $k$, the $k$th graded part of the chain complex has homology
concentrated in top degree: 
$$ H_s(B_*(\br{E}_0, \td{\Delta}^q, \br{E}_0)[k]) = 
0, \qquad s \ne k.
$$
The top degree homology
$$ C[k]_{-q} := H_k(B_*(\br{E}_0, \td{\Delta}^q, \br{E}_0)[k])
$$
is finitely generated and free as a right $E_0$-module;
furthermore, $C[k]_{-q}=0$ if $k>h$.

Thus we have
\begin{align*}
\Tor^k_{\Delta^q}(\br{E}_0, \br{E}_0) & \cong C[k]_{-q}, \\
\Ext^k_{\Delta^q}(\br{E}_0, \br{E}_0) & \cong C[k]^\vee_{-q}.
\end{align*}
\end{thm}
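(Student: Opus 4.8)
This is \cite[Prop.~4.6]{RezkKoszul}; I sketch the approach I would take, drawing on the recollections above. Throughout, ``quadratic'' and ``Koszul'' are meant in the sense appropriate to a graded algebra $\Delta^q = \bigoplus_{k\ge 0}\Delta^q[k]$ over the base $E_0$, whose left and right actions (induced by $s$ and by $t$ of (\ref{eq:t})) differ.

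\textbf{Reduction and quadraticity.} By the algebra isomorphisms (\ref{eq:Gammacong}), (\ref{eq:Deltacong}), (\ref{eq:susp}), together with the fact visible in (\ref{eq:zigzag}) that $\iota_{-q}$ is an isomorphism when $q$ is odd, all of the graded $E_0$-algebras $\Delta^q$ are non-canonically isomorphic; since the conclusion is invariant under such isomorphisms it suffices to treat one value of $q$. Taking $q$ odd and using (\ref{eq:susp}) and (\ref{eq:Gammacong}) to identify $\Delta^q\cong\Gamma^{q-1}\cong\Gamma^0$, I would work with $\Gamma^0$, whose $k$-th graded piece is $\mc{S}_{p^k}^\vee$ by (\ref{eq:dualGamma}) and whose multiplication is $E_0$-linearly dual to the comultiplication $\mc{S}_{p^{k+l}}\to\mc{S}_{p^k}\otimes_{E_0}\mc{S}_{p^l}$ (tensor formed via the ``quotient'' structure $t$) that records the factorization $\GG\to\GG/H'\to\GG/H$ of the degree-$p^{k+l}$ isogeny with kernel $H$ through an order-$p^k$ subgroup $H'<H$. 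The first input I need is that $\Gamma^0$ is \emph{quadratic}: the iterated product $\Gamma^0[1]^{\otimes_{E_0} k}\twoheadrightarrow\Gamma^0[k]$ is surjective and the two-sided ideal of relations is generated in internal degree $2$. Dually this asserts that $\mr{Sub}_{p^k}(\GG)$ is cut out, inside the scheme of length-$k$ complete flags of subgroups, by the pairwise incidence conditions; I would deduce it from Strickland's presentation of $E^0(\amalg_n B\Sigma_n)$ as a power series ring on $\prod_k E^0(B\Sigma_{p^k})/\Tr(\text{proper partition subgroups})$ together with the double-coset relations among iterated total power operations.

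\textbf{A flag-complex model for the bar complex.} Iterating the dual-comultiplication identification, the tensor factor $\Delta^q[k_1]\otimes_{E_0}\cdots\otimes_{E_0}\Delta^q[k_s]$ of $B_s(\br{E}_0,\td{\Delta}^q,\br{E}_0)[k]$ is the $E_0$-linear dual of the ring of functions on the scheme of chains $0=G_0\subsetneq G_1\subsetneq\cdots\subsetneq G_s$ of subgroups of $\GG$ with $[G_i:G_{i-1}]=p^{k_i}$ (so $\abs{G_s}=p^k$), and the bar differential becomes the alternating sum of the maps that delete one term of the chain or compose two consecutive isogenies. Thus, after a degree shift and a Thom twist by reduced regular representations, $B_*[k]$ computes the reduced homology of a ``building'' for $\GG$ — the object dual to the modular isogeny complex of Section~\ref{sec:modular}, modelled on the Tits building of $GL_h(\FF_p)$.

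\textbf{Solomon--Tits and conclusion.} Because each $\mc{S}_{p^k}$ is finite free over $E_0$ (\cite[Prop.~6.3]{RezkWilk}), $B_*[k]$ is a bounded complex of finite free $E_0$-modules, hence perfect; by the theory of minimal free resolutions over the Noetherian local ring $E_0$ it is then quasi-isomorphic to a single finite free module placed in one homological degree as soon as its reduction $B_*[k]\otimes_{E_0}(E_0/\mf{m})$ has homology concentrated in one degree. The heart of the argument is to prove that this reduced complex is the reduced chain complex of the building of the height-$h$ Honda formal group $\GG_0/\FF_{p^h}$, a Solomon--Tits building: it is acyclic for $k>h$ and, for $0\le k\le h$, has homology concentrated in homological degree $s=k$. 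This simultaneously yields the concentration of $H_*(B_*[k])$ in degree $k$, the finite freeness of $C[k]_{-q}:=H_k(B_*[k])$, and the vanishing $C[k]_{-q}=0$ for $k>h$ (this is where the height enters: $\abs{\GG[p]}=p^h$ bounds the relevant building's dimension). Finally, the normalized bar complex computes $\Tor^*_{\Delta^q}(\br{E}_0,\br{E}_0)$ compatibly with the internal grading, so $\Tor^k_{\Delta^q}(\br{E}_0,\br{E}_0)\cong C[k]_{-q}$ with all other bidegrees vanishing; applying $\Hom_{E_0}(-,E_0)$, which is exact on finite free $E_0$-modules, converts the bar resolution into one computing $\Ext$ and yields $\Ext^k_{\Delta^q}(\br{E}_0,\br{E}_0)\cong C[k]_{-q}^\vee$.

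The main obstacle is the Solomon--Tits-type exactness of the last step — identifying the special fiber of the flag complex with a building of the right dimension and computing its homology — together with the quadraticity input of the first step; both rest ultimately on Strickland's computation of the Morava $E$-theory of the symmetric groups.
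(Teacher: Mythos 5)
First, a structural remark: the paper offers no proof of this statement. Theorem~\ref{thm:RezkKoszul} is quoted verbatim from \cite{RezkKoszul}, Prop.~4.6, so there is no internal argument to compare yours against. That said, parts of your sketch are sound and even appear elsewhere in the paper: the identification of $B_*(\br{E}_0,\td{\Delta}^q,\br{E}_0)[k]^\vee$ with the modular isogeny complex $\mc{K}^*_{p^k}$ is exactly Proposition~\ref{prop:HMIC} (note the paper runs that identification in the opposite logical direction, using Theorem~\ref{thm:RezkKoszul} as \emph{input} to compute $H^*(\mc{K}^*_{p^k})$). The reduction to a single $q$ via (\ref{eq:Deltacong}) and (\ref{eq:susp}), and the descent from ``homology of the derived special fibre concentrated in one degree'' to concentration and freeness over $E_0$ via minimal complexes of finite free modules over the complete local ring $E_0$, are both correct.

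The genuine gap is the step you yourself flag as the heart of the argument, and it is mischaracterized in a way that conceals the difficulty. The special fibre of $\mr{Sub}_{p^{k_1},\ldots,p^{k_s}}(\GG)$ is not the chain complex of a simplicial building: a one-dimensional formal group over the perfect residue field $\FF_{p^h}$ has a \emph{unique} finite subgroup scheme of each order $p^j$ (the kernel of the $j$-fold Frobenius, since every isogeny factors as an isomorphism composed with a power of Frobenius). Hence each $\mc{S}_{p^{k_1},\ldots,p^{k_s}}/\mf{m}$ is a local Artinian ring supported at the single flag of Frobenius kernels, and all of the Tits-building combinatorics is carried by nilpotents. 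There is therefore no Solomon--Tits theorem available to quote; the concentration of $H_*(B_*[k]\otimes_{E_0}E_0/\mf{m})$ in degree $k$, and its vanishing for $k>h$, is precisely the main technical theorem of \cite{RezkKoszul} (compare \cite{RezkMIC}) and must be proved by a bespoke filtration/induction on these non-reduced subgroup schemes. The quadraticity input of your first step is likewise asserted rather than derived (and for the statement at hand it is part of what must be proved, being the $s<k$ vanishing in weights $1$ and $2$). As it stands the proposal is a reasonable roadmap whose two essential inputs remain unestablished.
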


\begin{rmk}
Actually, in \cite{RezkKoszul}, it is proven that $\Delta^0$ is Koszul, but using the isomorphisms (\ref{eq:Deltacong}) and (\ref{eq:susp}), there are non-canonical isomorphisms $\Delta^0 \cong \Delta^q$.  Therefore $\Delta^q$ is also Koszul.
\end{rmk}

If $M$ is a left $\Delta^q$-module, then the \emph{Koszul complex} $C_*^{\Delta^q}(M)$ associated to $M$ is the chain complex
$$ C_*^{\Delta^q}(M) = \left( C[0]_{-q} \otimes_{E_0} M \xleftarrow{\delta_0}  C[1]_{-q} \otimes_{E_0} M \xleftarrow{\delta_1} \cdots \right) $$
with differentials $\delta_k$ induced from the following diagram.
\begin{equation}\label{eq:Cdiff}
\xymatrix{
B_{k+1}(\br{E}_0, \Delta^q, M)[k+1] \ar@{=}[d] \ar[r]^-{d_{k+1}} & B_{k}(\br{E}_0, \Delta^q, M)[k] \ar@{=}[d] 
\\
\Delta^q[1]^{\otimes (k+1)} \otimes_{E_0} M   &
\Delta^q[1]^{\otimes k} \otimes_{E_0} M
\\
C[k+1]_{-q} \otimes_{\br{E}_0} M \ar[r]_{\delta_k} \ar@{^{(}->}[u] &
C[k]_{-q} \otimes_{\br{E}_0} M  \ar@{^{(}->}[u]
}
\end{equation}
Here, the map $d_{k+1}$ is the last face map in the bar complex $B_\bullet(\br{E}_0, \Delta^q, M)$.
We have
$$ H_s(C_*^{\Delta^q}(M)) \cong \Tor_{s}^{\Delta^q}(\br{E}_0, M). $$
Recall that the $E_0$-modules $C[k]_{-q}$ are projective.  It follows that if $M$ is projective as an $E_0$-module, the dual cochain complex computes $\Ext$:
$$ H^s(C_*^{\Delta^q}(M)^\vee) \cong \Ext^{s}_{\Delta^q}(M, \br{E}_0). $$

\section{Barr-Beck homology}\label{sec:BarrBeck}

\subsection*{Augmented $\TT$-algebras.}

Consider the adjunction
$$ \TT : \Mod_{E_*} \leftrightarrows \Alg_\TT: U^*. $$
A free $\TT$-algebra $\TT M$ is augmented over $E_*$ by the map
$$ \TT M \rightarrow \TT \bra{0}M = E_*. $$
Thus the above adjunction restricts to an adjunction for augmented $\TT$-algebras
$$ \TT : \Mod_{E_*} \leftrightarrows \Alg_\TT \downarrow E_*: I(-) $$
where $I(-)$ is the kernel of the augmentation.
The monad $\TT$ contains a ``non-unital'' summand 
$$ \bar{\TT} := \bigoplus_{i > 0} \TT\bra{i}. $$ 
Note that there is a natural isomorphism
$$ I(\TT M) \cong \bar{\TT} M. $$
In particular, if $A$ is an augmented $\TT$-algebra, then $I(A)$ is a
$\bar{\TT}$-algebra.

\subsection*{Trivial $\TT$-algebras}\label{sec:trivial-T-algebras}

The monad $\bar{\TT}$ is augmented over the identity functor via the projection
$$ \bar{\TT} \rightarrow \TT\bra{1} = \Id. $$
If $M$ is an $E_*$-module, then via the augmentation we can give $M$ the \emph{trivial} $\bar{\TT}$-algebra structure.  We shall denote the resulting $\bar{\TT}$-algebra by $\br{M}$.

\begin{rmk} It may appear that this notation conflicts in with the definition of the $\Delta^q$-module $\bar{E}_0$ immediately following Equation~(\ref{eq:augmentation}). Note, however, that in the case of $M = E_*$, the corresponding $\TT$-algebra $\bar{E}_*$ as defined above is a square-zero algebra, and hence is a $\Delta^*$-module.  The $\Delta^0$ module structures on $\bar{E}_0$ then agree in both cases.
\end{rmk}

If $X$ is an $E$-module spectrum, write $\br{X}$ for this spectrum
endowed with the structure of a non-unital $E$-algebra spectrum with
trivial multiplication.  We have the following.
\begin{prop}\label{prop:trivial-action}
If $X$ is a $K$-local $E$-module spectrum, the evident identification
$\pi_*\br{X} \approx \br{\pi_*X}$ is an isomorphism of
$\bar{\TT}$-algebras. 
\end{prop}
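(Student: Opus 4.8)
The plan is to reduce the statement to a question about the structure of $\pi_*(\PP_E Y)_K$ for $Y$ with trivial multiplication, and then unwind the definition of the $\bar{\TT}$-algebra structure in terms of the natural transformation $\TT \pi_* Y \to \pi_*(\PP_E Y)_K$ from \cite{RezkWilk}. First I would recall that for a $K$-local commutative $E$-algebra $R$, the $\TT$-algebra structure on $\pi_* R$ is induced by the composite $\TT \pi_* R \to \pi_* (\PP_E R)_K \to \pi_* R$, where the second map is the $E$-algebra structure map of $R$ (i.e. the free-algebra counit). For a non-unital $E$-algebra $X$, the analogous statement gives the $\bar{\TT}$-algebra structure on $\pi_* X$ via $\bar{\TT}\pi_*X \to \pi_*(\bar{\PP}_E X)_K \to \pi_* X$, where $\bar{\PP}_E Y = \bigvee_{i>0} Y^{\wedge_E i}_{h\Sigma_i}$ and the second map is the non-unital multiplication. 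So the claim amounts to: when the multiplication on $X$ is trivial, this composite kills $\bar{\TT}\langle i\rangle$ for $i \ge 2$ and is the identity on $\TT\langle 1\rangle = \Id$.

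The key step is to show that when $X \to X$ has trivial (null) multiplication, the composite $X^{\wedge_E i}_{h\Sigma_i} \to \PP_E X \to X$ is null for $i \ge 2$. The multiplication map $\PP_E X \to X$ on an $E$-algebra is built from the iterated multiplication maps $X^{\wedge_E i} \to X$, suitably $\Sigma_i$-equivariantly; each of these factors through the binary multiplication $X \wedge_E X \to X$ (by associativity), which is null by hypothesis. Hence the restriction of $\PP_E X \to X$ to the summand $X^{\wedge_E i}_{h\Sigma_i}$ is null for $i \ge 2$, while on the $i=1$ summand it is the identity. Passing to $K$-localization and homotopy groups, and using that $\pi_*(\bar{\PP}_E X)_K \cong \bar{\TT}\pi_* X$ when $\pi_* X$ is flat (Equation~(\ref{eq:extendedpower})), we conclude the $\bar{\TT}$-action factors through the augmentation $\bar{\TT} \to \Id$, which is exactly the trivial structure $\br{\pi_*X}$.

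There is a flatness hypothesis lurking: the identification $\pi_*(Y^{\wedge_E i}_{h\Sigma_i})_K \cong \TT\langle i\rangle \pi_* Y$ requires $\pi_* Y$ to be flat over $E_*$, whereas the proposition as stated only assumes $X$ is a $K$-local $E$-module. I would handle this by choosing a free resolution or a map from a free $E$-module $F \to X$ inducing a surjection (or an isomorphism after a colimit) on homotopy, comparing the $\bar{\TT}$-structures via naturality of the whole setup in $X$, and using that the trivial-structure conclusion is detected on such a cover; alternatively, one can observe that the relevant naturality square for the multiplication-is-null argument does not need flatness — only the bookkeeping identifying the source as $\bar{\TT}\pi_*X$ does — so it suffices to note the action map $\bar{\TT}\pi_* X \to \pi_* X$ always factors through $\pi_*(\bar\PP_E X)_K$ and that the composite $\pi_*(\bar\PP_E X)_K \to \pi_* X$ vanishes on the summands with $i\ge 2$. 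The main obstacle is precisely this: making the "trivial multiplication implies the structure map is null on $i\ge 2$ summands" argument at the spectrum level clean and $\Sigma_i$-equivariant, rather than the flatness bookkeeping, which is routine. Everything else is unwinding definitions.
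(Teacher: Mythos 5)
Your argument is correct, and it is essentially the only argument available: the paper in fact offers no proof of this proposition, treating it as immediate from the definitions, and your unwinding of the $\bar{\TT}$-action as the composite $\bar{\TT}\pi_*X \rightarrow \pi_*(\bar{\PP}_E X)_K \rightarrow \pi_*X$ is exactly the intended content. The two worries you flag at the end both dissolve for the same reason: the ``trivial multiplication'' on $\br{X}$ is defined by composing with the monad augmentation $\bar{\PP}_E \rightarrow \Id$ (projection onto the $i=1$ wedge summand), exactly parallel to how the trivial $\bar{\TT}$-algebra $\br{M}$ is defined via $\bar{\TT}\rightarrow \TT\bra{1}=\Id$, so the structure map of $\br{X}$ is literally the projection and vanishes on the $i\ge 2$ summands on the nose --- no associativity argument, equivariant null-homotopy, or flatness hypothesis is needed. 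All that remains is the observation that the natural transformation $\bar{\TT}\pi_*(-)\rightarrow \pi_*(\bar{\PP}_E(-))_K$ is compatible with the two augmentations (it is the identity on the $i=1$ components), which forces the induced $\bar{\TT}$-action on $\pi_*\br{X}$ to factor through $\bar{\TT}\rightarrow\Id$, i.e.\ to be the trivial structure $\br{\pi_*X}$.
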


\subsection*{Cotriple homology.}

Suppose that we are given a functor $F: \Alg_\TT \downarrow E_* \rightarrow \mc{A}$ for $\mc{A}$ an abelian category.  Barr and Beck \cite{BarrBeck} define a ``cotriple homology'' associated to $F$ relative to the comonad $\TT I(-)$ on $\Alg_\TT \downarrow E_*$, which we shall simply denote $\LL_* F$,  as it could be viewed as a kind of left derived functor.  Explicitly it may be computed in terms of the monadic bar construction as
$$ \LL_s F(A) \cong H_s(F(B_*(\TT, \bar{\TT}, I(A)))). $$

\subsection*{Derived functors of $\TT$-indecomposables.}

Let $N$ be an $E_0$-module.
Consider the functor
\begin{gather*}\label{eq:T-indecomposables}
\Omega^q_{\TT/E_*}(-;N) : \Alg_{\TT} \downarrow E_* \rightarrow \Mod_{E_0}, \\
 A \mapsto \br{N} \otimes_{\Delta^q} V^q(A).
\end{gather*}
where $V^q$ is the functor (\ref{eq:Vqdef}).

If $N = E_0$, we shall simply write
$$ \Omega^q_{\TT/E_*}A := \Omega^q_{\TT/E_*}(A;E_0). $$
Combining (\ref{eq:exponential}), Lemma~\ref{lem:freealg}, and the
definition of $\Delta^*$, we have the following lemma.

\begin{lem}\label{lem:T-indecomposable-of-free}
Suppose that $M$ is a free $E_*$-module.  Then there is a natural
isomorphism
\[
V^q(\TT M) \approx \Delta^q\otimes_{E_0} M_{-q},
\]
and hence a natural isomorphism
$$ \Omega^q_{\TT/E_*} (\TT M;N) \cong M_{-q} \otimes_{E_0} N. $$
\end{lem}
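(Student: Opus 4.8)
The plan is to compute the module of $\TT$-indecomposables $Q(\TT M):=I(\TT M)/I(\TT M)^2$ directly and then read off both assertions — the first being essentially the defining property of $\Delta^*$ recalled in Section~\ref{sec:recollections}, and the second being formal. First I would record that $I(\TT M)=\bar{\TT}M=\bigoplus_{i>0}\TT\bra{i}M$ and pin down its square: applying the exponential isomorphism (\ref{eq:exponential}) to the fold map $M\oplus M\to M$ shows that the multiplication $\TT M\otimes_{E_*}\TT M\to\TT M$ restricts, on the summand $\TT\bra{a}M\otimes_{E_*}\TT\bra{b}M$ of $\TT\bra{a+b}(M\oplus M)$, to the canonical structure map into $\TT\bra{a+b}M$. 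Hence $I(\TT M)^2=\bigoplus_{i\geq 2}\sum_{a+b=i,\,a,b\geq 1}\im\big(\TT\bra{a}M\otimes_{E_*}\TT\bra{b}M\to\TT\bra{i}M\big)$, so that
$$ Q(\TT M)\ \cong\ \bigoplus_{i\geq 1}\ \TT\bra{i}M\Big/\sum_{\substack{a+b=i\\ a,b\geq 1}}\im\big(\TT\bra{a}M\otimes_{E_*}\TT\bra{b}M\big), $$
the $i=1$ summand being $\TT\bra{1}M=M$.

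Next I would reduce to a single generator. Writing $M=\bigoplus_\alpha\Sigma^{d_\alpha}E_*$ and using (\ref{eq:exponential}) to identify $\TT M\cong\bigotimes_{E_*,\alpha}\TT(\Sigma^{d_\alpha}E_*)$ as augmented $\TT$-algebras — each tensor factor free graded-commutative by Lemma~\ref{lem:freealg}, hence so is the product — one obtains a natural splitting $Q(\TT M)\cong\bigoplus_\alpha Q(\TT(\Sigma^{d_\alpha}E_*))$, since passing to indecomposables carries $\otimes_{E_*}$ of augmented algebras to $\oplus$ (for infinitely generated $M$ one first passes to the filtered colimit over finitely generated free submodules). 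It then suffices to treat $M=\Sigma^{d}E_*$, and reducing further to $d=-q$ by tensoring with an invertible even-degree $E_*$-module (which commutes with $\TT$ and with $V^q$), the computation of the summands above is exactly the input recalled in Section~\ref{sec:recollections}: by Strickland's theorem the quotient $\TT\bra{i}(\Sigma^{-q}E_*)\big/\sum\im(\cdots)$ vanishes unless $i=p^k$, and for $i=p^k$ — using the identification of $\TT\bra{p^k}(\Sigma^{-q}E_*)$ with the Thom-spectrum homology $E_0(B\Sigma_{p^k})^{-q\bar{\rho}_k}$, under which the decomposables become the images along proper partition subgroups — it is, by definition, $\Ind_{-q}[k]$. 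Summing over $k$ (and using $2$-periodicity and (\ref{eq:Deltacong}) for the regrading from $d$ to $-q$) gives $V^q(\TT(\Sigma^d E_*))=Q(\TT(\Sigma^dE_*))_{-q}\cong\bigoplus_k\Ind_{-q}[k]\otimes_{E_0}(\Sigma^d E_*)_{-q}=\Delta^q\otimes_{E_0}(\Sigma^d E_*)_{-q}$, and reassembling via the splitting yields $V^q(\TT M)\approx\Delta^q\otimes_{E_0}M_{-q}$. This is natural in $M$ because it is the map induced by letting the operation algebra $\Delta^q$ act on the image of the generators $M_{-q}\hookrightarrow I(\TT M)_{-q}\twoheadrightarrow V^q(\TT M)$.

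The second isomorphism is then formal. The trivial $\bar{\TT}$-algebra $\br{N}$ carries the right $\Delta^q$-module structure induced from the $E_0$-module $N$ via the augmentation $\Delta^q\to\Delta^q[0]=E_0$ of (\ref{eq:augmentation}), while $\Delta^q\otimes_{E_0}M_{-q}$ is the left $\Delta^q$-module obtained from $\Delta^q$ regarded as a $(\Delta^q,E_0)$-bimodule via left multiplication and the right action (\ref{eq:t}), whose restriction to $\Delta^q[0]$ is the standard $E_0$-structure. Hence
$$ \Omega^q_{\TT/E_*}(\TT M;N)=\br{N}\otimes_{\Delta^q}V^q(\TT M)\cong\br{N}\otimes_{\Delta^q}\big(\Delta^q\otimes_{E_0}M_{-q}\big)\cong N\otimes_{E_0}M_{-q}\cong M_{-q}\otimes_{E_0}N, $$
using $\br{N}\otimes_{\Delta^q}\Delta^q\cong N$ and the commutativity of $E_0$. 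The only step that is not pure bookkeeping is the single-generator computation: matching the algebraically defined decomposables $I(\TT M)^2$ with the partition-subgroup images used to define $\Ind$ (and, at $p=2$, the degree parity constraints on $\TT\bra{2^k}$). Everything else follows from (\ref{eq:exponential}), Lemma~\ref{lem:freealg}, and the adjunction $\TT\dashv I(-)$.
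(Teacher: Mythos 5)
Your argument is correct and is exactly the intended one: the paper offers no written proof, stating only that the lemma follows by combining the exponential isomorphism (\ref{eq:exponential}), Lemma~\ref{lem:freealg}, and the definition of $\Delta^*$ via the indecomposables $\Ind_{-q}[k]$, which are precisely the three ingredients you assemble. Your write-up simply makes explicit the identification of $I(\TT M)^2$ with the images of the partition-subgroup maps and the formal base-change $\br{N}\otimes_{\Delta^q}\Delta^q\otimes_{E_0}M_{-q}\cong M_{-q}\otimes_{E_0}N$.
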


\begin{cor}
If $A \in \Alg_\TT \downarrow E_*$ is free as an $E_*$-module, then there is an isomorphism
\begin{align*}
\LL_s \Omega^q_{\TT/E_*} (A;N) & \cong H_{s}(B_*(\Id, \bar{\TT}, I(A))_{-q} \otimes_{E_0} N).
\end{align*}
\end{cor}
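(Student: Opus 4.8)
The plan is to compute $\LL_s\Omega^q_{\TT/E_*}(A;N)$ directly from the monadic bar construction that defines cotriple homology, and to reduce the functor $\Omega^q_{\TT/E_*}(-;N) = \br{N}\otimes_{\Delta^q}V^q(-)$ on free $\TT$-algebras to something purely linear using Lemma~\ref{lem:T-indecomposable-of-free}. By definition,
\[
\LL_s\Omega^q_{\TT/E_*}(A;N) \cong H_s\bigl(\Omega^q_{\TT/E_*}(B_*(\TT,\bar{\TT},I(A));N)\bigr),
\]
where $B_*(\TT,\bar{\TT},I(A))$ is a simplicial free $\TT$-algebra with $B_s = \TT\bar{\TT}^s I(A)$. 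Since $A$ is free as an $E_*$-module and $I(A)$ is the corresponding $\bar{\TT}$-algebra, each $\bar{\TT}^s I(A)$ is again free over $E_*$ (using that $\bar{\TT}$ applied to a free module is free, which follows from Lemma~\ref{lem:freealg} and \eqref{eq:exponential}), so Lemma~\ref{lem:T-indecomposable-of-free} applies levelwise.

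First I would apply Lemma~\ref{lem:T-indecomposable-of-free} in each simplicial degree: for a free $E_*$-module $M$ one has $V^q(\TT M)\cong \Delta^q\otimes_{E_0}M_{-q}$, hence
\[
\Omega^q_{\TT/E_*}(\TT M;N) = \br{N}\otimes_{\Delta^q}V^q(\TT M) \cong \br{N}\otimes_{\Delta^q}(\Delta^q\otimes_{E_0}M_{-q}) \cong M_{-q}\otimes_{E_0}N.
\]
Applying this with $M = \bar{\TT}^s I(A)$ identifies the simplicial $E_0$-module $\Omega^q_{\TT/E_*}(B_*(\TT,\bar{\TT},I(A));N)$ with $(\bar{\TT}^{\,\bullet}\,I(A))_{-q}\otimes_{E_0}N$, i.e. with the (degree $-q$ part of the) monadic bar construction $B_*(\Id,\bar{\TT},I(A))$ — here the outer $\TT$ has been stripped off by the indecomposables functor, which is exactly the role the augmentation $\bar{\TT}\to\Id$ plays. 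Then I would invoke the Dold–Kan correspondence to pass from this simplicial $E_0$-module to its associated (normalized) chain complex, so that taking simplicial homotopy groups becomes taking chain homology, yielding
\[
\LL_s\Omega^q_{\TT/E_*}(A;N) \cong H_s\bigl(B_*(\Id,\bar{\TT},I(A))_{-q}\otimes_{E_0}N\bigr).
\]
The only point requiring a small amount of care is that $N$ is flat/free enough for $-\otimes_{E_0}N$ to commute with homology — but this is not needed for the bare statement, which places the $\otimes_{E_0}N$ inside the homology; and in the main application $C[k]_{-q}$ and the relevant modules are $E_0$-free anyway.

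The main obstacle, such as it is, is bookkeeping rather than conceptual: one must check that the identification of $\Omega^q_{\TT/E_*}(\TT M;N)$ with $M_{-q}\otimes_{E_0}N$ is natural in $M$ with respect to all the face and degeneracy maps of $B_*(\TT,\bar{\TT},I(A))$, so that the isomorphism of simplicial objects is genuine and the face maps on the right-hand side are precisely those of $B_*(\Id,\bar{\TT},I(A))$. This amounts to tracing through the naturality statement already contained in Lemma~\ref{lem:T-indecomposable-of-free} together with the compatibility of $V^q$ with the monad structure maps of $\TT$ and the augmentation $\bar{\TT}\to\Id$, so it should go through without difficulty once the freeness of each $\bar{\TT}^s I(A)$ over $E_*$ is in hand.
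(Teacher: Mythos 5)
Your proposal is correct and is exactly the argument the paper intends (the corollary is stated without proof as an immediate consequence of Lemma~\ref{lem:T-indecomposable-of-free} applied levelwise to the bar construction $B_*(\TT,\bar{\TT},I(A))$ computing cotriple homology). The points you flag — freeness of each $\bar{\TT}^s I(A)$ over $E_*$ and naturality of the identification with respect to the simplicial structure maps — are precisely the (routine) checks implicit in the paper's omitted proof.
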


\subsection*{A Grothendieck spectral sequence.}

\begin{prop}\label{prop:GSS}
Suppose that $A$ is an augmented $\TT$-algebra which is free as an $E_*$-module.
Then there is a Grothendieck-type spectral sequence
\begin{align*}
E^2_{s,t} = \Tor^{\Delta^q}_s(\br{N}, \LL_t V^q(A)) & \Rightarrow \LL_{s+t} \Omega^q_{\TT/E_*} (A;N). 
\end{align*}
\end{prop}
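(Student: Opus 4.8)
\emph{Proof plan.} The idea is to present $\Omega^q_{\TT/E_*}(-;N)$ as the composite of $V^q\colon \Alg_\TT\downarrow E_* \to \Mod_{\Delta^q}$ with the right exact functor $\br{N}\otimes_{\Delta^q}(-)\colon\Mod_{\Delta^q}\to\Mod_{E_0}$, and then to run the Grothendieck-type spectral sequence of this composite via a double complex. The only place the hypothesis that $A$ is $E_*$-free is needed is to guarantee that the monadic bar resolution of $A$ is carried by $V^q$ to a resolution of $\LL_\bullet V^q(A)$ by flat $\Delta^q$-modules; given that, the spectral sequence is a formal consequence of the machinery.

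\textbf{Step 1: the resolution.} Let $B_\bullet = B_\bullet(\TT,\bar{\TT},I(A))$ be the monadic bar construction, an augmented simplicial $\TT$-algebra over $A$ with $B_s = \TT(\bar{\TT}^sI(A))$. Since the augmentation $A\to E_*$ is split by the unit, $I(A)$ is a direct summand of the free $E_*$-module $A$, hence is itself free (a projective graded $E_*$-module is free). Since $\TT M = E_*\oplus \bar{\TT}M$ and $\TT M$ has free underlying module whenever $M$ is free by Lemma~\ref{lem:freealg}, it follows by induction that $\bar{\TT}^sI(A)$ is $E_*$-free for every $s$. Therefore Lemma~\ref{lem:T-indecomposable-of-free} applies termwise and identifies $P_\bullet := V^q(B_\bullet)$ with a simplicial left $\Delta^q$-module whose simplices $V^q(B_s)\cong\Delta^q\otimes_{E_0}(\bar{\TT}^sI(A))_{-q}$ are \emph{free} $\Delta^q$-modules. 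By definition of cotriple homology, $\pi_sP_\bullet \cong \LL_sV^q(A)$ as $\Delta^q$-modules, and by functoriality $\br{N}\otimes_{\Delta^q}P_\bullet = \Omega^q_{\TT/E_*}(B_\bullet;N)$, so $H_s$ of the associated chain complex of the latter is $\LL_s\Omega^q_{\TT/E_*}(A;N)$.

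\textbf{Step 2: the double complex.} Let $C_\bullet P$ be the (unnormalized) chain complex of the simplicial $\Delta^q$-module $P_\bullet$; it is a nonnegatively graded complex of free left $\Delta^q$-modules with $H_t(C_\bullet P)\cong\LL_tV^q(A)$, and by additivity of $\br{N}\otimes_{\Delta^q}(-)$ we have $H_n(\br{N}\otimes_{\Delta^q}C_\bullet P)\cong\LL_n\Omega^q_{\TT/E_*}(A;N)$. Choose a projective resolution $Q_\bullet\twoheadrightarrow\br{N}$ of the right $\Delta^q$-module $\br{N}$ and form the first-quadrant double complex $D_{\bullet,\bullet} = Q_\bullet\otimes_{\Delta^q}C_\bullet P$. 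Filtering by rows (homology in the $C_\bullet P$ direction first) and using that each $Q_s$ is $\Delta^q$-flat gives $E^1_{s,t}=Q_s\otimes_{\Delta^q}\LL_tV^q(A)$ and hence $E^2_{s,t}=\Tor^{\Delta^q}_s(\br{N},\LL_tV^q(A))$. Filtering by columns and using that each $C_tP$ is $\Delta^q$-flat makes the spectral sequence collapse onto the line $s=0$ with $E^2_{0,t}=H_t(\br{N}\otimes_{\Delta^q}C_\bullet P)=\LL_t\Omega^q_{\TT/E_*}(A;N)$, so $H_n\Tot(D)\cong\LL_n\Omega^q_{\TT/E_*}(A;N)$. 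Comparing the two filtrations yields $E^2_{s,t}=\Tor^{\Delta^q}_s(\br{N},\LL_tV^q(A))\Rightarrow\LL_{s+t}\Omega^q_{\TT/E_*}(A;N)$, with convergence automatic from the first-quadrant shape.

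The main obstacle is Step 1: one must know that $V^q$ converts the free $\TT$-algebras occurring in the bar resolution into flat (indeed free) $\Delta^q$-modules, since exactly that acyclicity makes the composite-functor spectral sequence of Grothendieck type. This is Lemma~\ref{lem:T-indecomposable-of-free}, but invoking it requires the $E_*$-freeness of $I(A)$ together with all its iterated $\bar{\TT}$'s, which is where the hypothesis on $A$ enters (via Lemma~\ref{lem:freealg}). Everything after that is the standard two-spectral-sequences-of-a-first-quadrant-double-complex argument.
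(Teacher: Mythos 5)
Your proof is correct and follows essentially the same route as the paper's: resolve $A$ by the monadic bar construction, use Lemma~\ref{lem:T-indecomposable-of-free} (via the $E_*$-freeness of $\bar{\TT}^sI(A)$) to see that $V^q$ carries it to a complex of extended, hence free, $\Delta^q$-modules, and compare the two filtrations of the resulting first-quadrant double complex. The only difference is cosmetic: you resolve $\br{N}$ by an arbitrary projective resolution $Q_\bullet$, whereas the paper uses the two-sided bar construction $B_s(\br{N},\Delta^q,-)$ for the same purpose.
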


\begin{proof}
Consider the double complex
$$ C_{s,t} := B_s(\br{N}, \Delta^q, V^q(B_t(\TT, \bar{\TT}, I(A)))). $$
Computing the spectral sequence for the double complex by running $s$-homology, then $t$-homology, we have
$$ H_t H_s C_{s,t} \Rightarrow H_{s+t} \Tot C_{s,t}. $$
Using (\ref{eq:exponential}), Lemma~\ref{lem:freealg}, and the definition of $\Delta^q$, we have
\begin{align*} 
H_t H_s C_{s,t} & = 
H_t H_s B_s(\br{N}, \Delta^q, V^q(B_t(\TT, \bar{\TT}, I(A)))) \\
& \cong H_t \Tor^{\Delta^q}_{s}(\br{N}, V^q(B_t(\TT, \bar{\TT}, I(A)))) \\
& \cong H_t \Tor^{\Delta^q}_{s}(\br{N}, \Delta^q \otimes_{E_0} \bar{\TT}^{\circ t}I(A) ) \\
& \cong \begin{cases} H_t (B_t(\Id, \bar{\TT}, I(A))\otimes_{E_0} N), & s= 0, \\
0, & s \ne 0.  
\end{cases}
\end{align*}
The isomorphism of the second line uses the fact that
$V^q(B_t(\TT,\bar{\TT}, I(A)))$ is a free $E_0$-module when $A$ is
one, using Lemma~\ref{lem:freealg}.  The isomorphism of the third line
uses Lemma~\ref{lem:T-indecomposable-of-free}.

The spectral sequence therefore collapses to give an isomorphism
$$ H_{i} \Tot C_{*,*} \cong \LL_{i} \Omega^q_{\TT/E_*} (A;N). $$
Running $t$-homology followed by $s$-homology therefore gives a spectral sequence
$$ H_s H_t C_{s,t} \Rightarrow \LL_{s+t} \Omega^q_{\TT/E_*} (A;N).$$
Using the fact that $\Delta^q$ is free over $E_0$, we compute
\begin{align*}
H_s H_t C_{s,t} & = 
H_s H_t B_s(\br{N}, \Delta^q, V^q(B_t(\TT, \bar{\TT}, I(A)))) \\
& \cong H_s B_s(\br{N}, \Delta^q, H_t V^q(B_t(\TT, \bar{\TT}, I(A)))) \\
& \cong H_s B_s(\br{N}, \Delta^q, \LL_t V^q A) \\
& \cong \Tor^{\Delta^q}_s(\br{N}, \LL_t V^q A).
\end{align*}
\end{proof}

The homology groups $\LL_t V^q A$ appearing in the $E^2$-term of the
Grothendieck spectral sequence are demystified by the following
lemma.  We write ``$\LL_*\Omega_{(-)/E_*}$'' for the Andr\'e-Quillen
homology of augmented graded commutative $E_*$-algebras, where as in
\S\ref{sec:recollections}, graded commutativity implies that odd degree
elements square to $0$.  

\begin{lem}
Suppose that $A \in \Alg_\TT \downarrow E_*$ is free as an $E_*$-module.  Then there are isomorphisms
$$ \LL_i V^q A \cong [\LL_i \Omega_{A_*/E_*}]_{-q}. $$ 
\end{lem}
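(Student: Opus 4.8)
The statement to prove is $\LL_i V^q A \cong [\LL_i \Omega_{A_*/E_*}]_{-q}$ for $A$ a free $E_*$-module in $\Alg_\TT \downarrow E_*$. Both sides are cotriple-derived functors computed via the monadic bar construction $B_*(\TT, \bar\TT, I(A))$, so the plan is to identify the two simplicial $E_0$-modules obtained by applying $V^q$ and by applying degree-$(-q)$ commutative-algebra indecomposables levelwise, and then take homology. Since $A$ is free over $E_*$, every term $B_t(\TT, \bar\TT, I(A)) = \TT\bar\TT^{\circ t} I(A)$ is of the form $\TT M$ for a free $E_*$-module $M$, so everything in sight is computed on free $\TT$-algebras.

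First I would invoke Lemma~\ref{lem:T-indecomposable-of-free}: on a free $\TT$-algebra $\TT M$ with $M$ free, $V^q(\TT M) \cong \Delta^q \otimes_{E_0} M_{-q}$. On the other side, by Lemma~\ref{lem:freealg} the underlying graded-commutative $E_*$-algebra of $\TT M$ is the free one $\mr{Sym}_{E_*}(\Delta^* \otimes_{E_*} M)$ (in the strong graded sense where odd-degree elements square to zero), and the indecomposables of a free graded-commutative algebra are just the generating module, so $[\Omega_{(\TT M)_*/E_*}]_{-q} \cong [\Delta^* \otimes_{E_*} M]_{-q} \cong \Delta^q \otimes_{E_0} M_{-q}$. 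Thus the two functors agree on free $\TT$-algebras that are free over $E_*$. The key point to check is that this identification is natural in $M$ — more precisely, natural for maps of free $\TT$-algebras of the form $\TT f$ and for the structure maps of the bar construction (the two outer face maps coming from the $\TT$-algebra structure on the augmented target, and the inner faces/degeneracies coming from the monad multiplication and unit of $\bar\TT$). Naturality for $\TT f$ is clear; the face map using the algebra structure map $\TT I(A) \to I(A)$ requires knowing that $V^q$ and commutative indecomposables both send it compatibly, which is exactly the assertion that $\Delta^q$ is the algebra of natural operations on $V^q$ and that this refines the commutative-algebra indecomposables functor — this is the content of \cite[\S3.10]{RezkKoszul} together with the factorization $U^* : \Alg_\TT \to \Alg_{\Gamma^*}$ and $V^q$'s relation to $U^q$.

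Having matched the two simplicial objects $t \mapsto V^q(B_t(\TT,\bar\TT,I(A)))$ and $t \mapsto [\Omega_{(B_t)_*/E_*}]_{-q}$, I would take $\pi_t$ (i.e.\ $t$-homology of the associated chain complex) on both sides. The left side gives $\LL_t V^q A$ by definition of cotriple homology. The right side gives $[\LL_t \Omega_{A_*/E_*}]_{-q}$, because $B_*(\TT,\bar\TT,I(A))$, after passing to underlying augmented graded-commutative $E_*$-algebras, is precisely the simplicial free resolution of $A_*$ computing commutative-algebra André–Quillen homology — one needs here that the forgetful functor from $\TT$-algebras to augmented graded-commutative $E_*$-algebras carries the monad $\bar\TT$ over the (non-unital) free graded-commutative monad and hence carries $B_*(\TT,\bar\TT,I(A))$ to $B_*(\mr{Sym}, \overline{\mr{Sym}}, I(A_*))$, whose indecomposables in each degree compute $\LL_*\Omega_{A_*/E_*}$. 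Since $(-)_{-q}$ is exact, it commutes with taking homology, giving the claim.

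**Main obstacle.** The routine part is the identification on free objects; the delicate part is naturality with respect to the bar-construction structure maps, specifically checking that the isomorphism $V^q(\TT M) \cong [\Omega_{(\TT M)_*/E_*}]_{-q}$ intertwines the action of the $\TT$-monad multiplication $\TT\bar\TT \to \TT$ on the one side with the free graded-commutative monad multiplication on the other. This amounts to verifying that the "linearization" defining $\Delta^q$ (as operations on $V^q$) is compatible with forgetting from $\TT$-algebras to graded-commutative $E_*$-algebras — i.e.\ that the power operations encoded by $\Delta^q$ restrict correctly to the commutative-algebra indecomposables. I expect this to follow formally from the description of $V^q$ in \cite[\S3.10]{RezkKoszul} and the monad map $\bar\TT \to \Id = \TT\langle 1\rangle$, but it is the step where care is genuinely needed, and it is where I would spend the bulk of the write-up.
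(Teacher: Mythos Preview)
Your approach is correct but considerably more elaborate than necessary, and the ``main obstacle'' you identify is not an obstacle at all.

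The point you are missing is that $V^q$ is not merely isomorphic to the degree-$(-q)$ commutative indecomposables on free objects: by its very definition (equation~(\ref{eq:Vqdef})) we have $V^q(A) = [I(A)/I(A)^2]_{-q}$ for \emph{every} augmented $\TT$-algebra $A$.  So the two simplicial $E_0$-modules you set out to compare,
\[
t \mapsto V^q\bigl(B_t(\TT,\bar\TT,I(A))\bigr)
\quad\text{and}\quad
t \mapsto \bigl[I(B_t)/I(B_t)^2\bigr]_{-q},
\]
are literally the same simplicial object, and there is no naturality to check.  All of your concern about compatibility with the monad multiplication, the linearization map, and the material from \cite[\S3.10]{RezkKoszul} evaporates.

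What remains is precisely the paper's two-line argument: since $A$ is free over $E_*$, Lemma~\ref{lem:freealg} implies each $B_t(\TT,\bar\TT,I(A))$ is a free graded-commutative $E_*$-algebra, so $B_\bullet(\TT,\bar\TT,I(A)) \to A$ is a simplicial resolution by free commutative algebras.  Any such resolution computes classical Andr\'e--Quillen homology; applying $[I(-)/I(-)^2]_{-q} = V^q$ levelwise and taking homology gives the result.

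One minor correction: you write that the forgetful functor carries $B_*(\TT,\bar\TT,I(A))$ to $B_*(\mr{Sym},\overline{\mr{Sym}},I(A_*))$.  This is false --- the underlying commutative algebra of $\TT M$ is $\mr{Sym}_{E_*}(\Delta^*\otimes_{E_*} M)$, not $\mr{Sym}_{E_*}(M)$, so the two bar constructions do not match level by level.  Fortunately this is not needed: Andr\'e--Quillen homology can be computed from \emph{any} resolution by free commutative algebras, not just the canonical cotriple one.
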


\begin{proof}
By Lemma~\ref{lem:freealg}, the bar resolution 
$$ B_\bullet (\TT, \bar{\TT}, I(A)) \rightarrow A $$
is a simplicial resolution of $A$ by free graded commutative algebras.  Since $V^*(-) = I(-)/I(-)^2$, the result follows.
\end{proof}

\begin{cor}\label{cor:edge}
Suppose that $A \in \Alg_\TT \downarrow  E_*$ is free as an augmented graded commutative $E_*$-algebra.  Then the Grothendieck spectral sequence collapses to give an isomorphism
\begin{align*}
\LL_s \Omega^q_{\TT/E_*} (A;N) & \cong \Tor^{\Delta^q}_{s}(\br{N}, V^q(A)). 
\end{align*}
\end{cor}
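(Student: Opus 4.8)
The plan is to show that, under the stronger hypothesis of this corollary, the $E^2$-page of the Grothendieck spectral sequence of Proposition~\ref{prop:GSS} is concentrated on the single row $t = 0$, whence it degenerates with no room for differentials or extension problems.

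First I would note that a free augmented graded commutative $E_*$-algebra is in particular free as an $E_*$-module, so that the lemma immediately preceding this corollary applies and gives $\LL_t V^q A \cong [\LL_t \Omega_{A_*/E_*}]_{-q}$. Next I would invoke the classical computation of the cotangent complex of a free (hence cofibrant) commutative algebra: the augmented Andr\'e-Quillen homology of a free augmented graded commutative $E_*$-algebra is concentrated in homological degree zero, with $\LL_0 \Omega_{A_*/E_*}$ equal to the module of indecomposables $I(A)/I(A)^2$ and $\LL_t \Omega_{A_*/E_*} = 0$ for $t > 0$. Passing to the degree $-q$ component, this reads $\LL_0 V^q A \cong V^q(A)$ and $\LL_t V^q A = 0$ for $t > 0$.

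Feeding this into the $E^2$-term $E^2_{s,t} = \Tor^{\Delta^q}_s(\br{N}, \LL_t V^q A)$ of Proposition~\ref{prop:GSS}, every row with $t \ne 0$ vanishes, while the $t = 0$ row is $\Tor^{\Delta^q}_s(\br{N}, V^q(A))$; a spectral sequence supported on a single row collapses, giving $\LL_s \Omega^q_{\TT/E_*}(A;N) \cong \Tor^{\Delta^q}_s(\br{N}, V^q(A))$. I do not expect a genuine obstacle here: the only step requiring any thought is the vanishing of Andr\'e-Quillen homology in positive degrees for a free graded commutative algebra in the present graded context --- even-periodic $E_*$, with generators allowed in odd degrees and squaring to zero --- but the classical argument, that free objects are cofibrant and that the cotangent complex of a cofibrant commutative algebra is represented by the module of K\"ahler differentials placed in degree zero, carries over verbatim to the category of graded commutative $E_*$-algebras used throughout the paper, and is in effect already packaged into the preceding lemma together with that classical fact.
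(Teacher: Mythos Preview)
Your proposal is correct and matches the paper's approach: the corollary is stated without explicit proof, being an immediate consequence of the preceding lemma (identifying $\LL_t V^q A$ with Andr\'e--Quillen homology) together with the classical vanishing of Andr\'e--Quillen homology in positive degrees for free graded commutative algebras, which collapses the Grothendieck spectral sequence to the $t=0$ row exactly as you describe.
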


\subsection*{Linearization.}

The definition of $\Delta^*$ gives rise to natural transformations
\[
\Delta^*\otimes_{E_*} M \rightarrow V^*(\TT M) = \bar{\TT}(M)/(\bar{\TT}(M))^2
\leftarrow \bar{\TT}(M)
\]
of functors.
We have noted (Lemma~\ref{lem:T-indecomposable-of-free}) that if $M$ is
a free $E_*$-module, then $\Delta^*\otimes_{E*} M\rightarrow V^*(\TT M)$ is an
isomorphism.   Hence, on the full subcategory of free $E_*$-modules, we
obtain a natural transformation of monads
$$ \mc{L} : \bar{\TT} M \rightarrow \Delta^* \otimes_{E_*} M $$
on $\Mod_{E_*}$.  In \cite{RezkKoszul}, this transformation is
observed to be linearization for projective $M$.\footnote{We warn the reader that our notation here differs slightly from that used in \cite{RezkKoszul}: there the notation $\mc{L}_F$ is used for the linearization of a functor $F$, and $\epsilon: F \rightarrow \mc{L}_F$ is used for the natural transformation from a functor to its linearization.}
For $A \in \Alg_\TT \downarrow E_*$, the natural transformation $\mc{L}$ induces a map of chain complexes
\begin{equation}\label{eq:linearization}
\mc{L}: B(\Id, \bar{\TT}, I(A))_{-q} \rightarrow B_*(\br{E}_0, \Delta^q, V^q(A))
\end{equation}
and therefore a map
\begin{align*}
 \mc{L}: \LL_s \Omega^q_{\TT/E_*} A & \rightarrow \Tor^{\Delta^q}_{s}(\br{E}_0, V^q(A)).
\end{align*}

\begin{lem}\label{lem:quasi-isomorphism}
If $A$ is free as a graded commutative $E_*$-algebra, the map
(\ref{eq:linearization}) is a quasi-isomorphism. 
\end{lem}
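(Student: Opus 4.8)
The plan is to verify the assertion first for free $\TT$-algebras and then to propagate it to a general $A$ that is free as a graded-commutative $E_*$-algebra, by comparing both sides of (\ref{eq:linearization}) with their ``hyper-derived'' versions over the monadic bar resolution $P_\bullet := B_\bullet(\TT, \bar{\TT}, I(A)) \to A$, whose terms $P_t = \TT(\bar{\TT}^t I(A))$ are free $\TT$-algebras on free $E_*$-modules.

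First I would handle the base case $A = \TT M$ with $M$ a free $E_*$-module. Here the source complex $B_*(\Id, \bar{\TT}, I(\TT M))_{-q}$ computes the cotriple homology $\LL_* \Omega^q_{\TT/E_*}(\TT M)$, which is concentrated in degree $0$ because $\TT M$ is cofree for the comonad $\TT I(-)$ (so that $B_\bullet(\TT, \bar{\TT}, I(\TT M)) \to \TT M$ admits an extra degeneracy), with $\LL_0 = M_{-q}$ by Lemma~\ref{lem:T-indecomposable-of-free}. On the target side, Lemma~\ref{lem:T-indecomposable-of-free} identifies $V^q(\TT M) \cong \Delta^q \otimes_{E_0} M_{-q}$ as a \emph{free} left $\Delta^q$-module, so $B_*(\br{E}_0, \Delta^q, V^q(\TT M))$ computes $\Tor^{\Delta^q}_*(\br{E}_0, \Delta^q \otimes_{E_0} M_{-q})$, again concentrated in degree $0$ with value $M_{-q}$. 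Unwinding the construction of $\mc{L}$ from the linearization $\bar{\TT}(-) \to \Delta^* \otimes_{E_*} (-)$ and from Lemma~\ref{lem:T-indecomposable-of-free}, one checks that the map induced by $\mc{L}$ on $H_0$ is the identity of $M_{-q}$; hence $\mc{L}$ is a quasi-isomorphism when $A$ is a free $\TT$-algebra.

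For the general case I would form the two first-quadrant bicomplexes $\mc{D}_{s,t} := B_s(\Id, \bar{\TT}, I(P_t))_{-q}$ and $\mc{D}'_{s,t} := B_s(\br{E}_0, \Delta^q, V^q(P_t))$, with $\mc{L}$ inducing a map of bicomplexes $\mc{D} \to \mc{D}'$. Since each $P_t$ is a free $\TT$-algebra, the base case makes $\mc{L}$ a quasi-isomorphism in each column, and therefore $\mc{L} : \Tot\mc{D} \to \Tot\mc{D}'$ is a quasi-isomorphism. It then remains to identify the two totalizations with the two complexes of (\ref{eq:linearization}). For $\mc{D}$: fixing $s$, the column $\mc{D}_{s,\bullet}$ is $\bar{\TT}^s$ applied to the monadic bar resolution $B_\bullet(\bar{\TT}, \bar{\TT}, I(A)) \to I(A)$, which carries an extra degeneracy after forgetting to $E_*$-modules; thus each column is a resolution of $\mc{D}_{s,-1} = B_s(\Id, \bar{\TT}, I(A))_{-q}$, and the augmentation yields a quasi-isomorphism $B_*(\Id, \bar{\TT}, I(A))_{-q} \xrightarrow{\sim} \Tot\mc{D}$. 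For $\mc{D}'$: fixing $s$, the column $\mc{D}'_{s,\bullet}$ is the simplicial module $t \mapsto V^q(P_t)$ tensored over $E_0$ with the flat $E_0$-module $\br{E}_0 \otimes_{E_0} (\td{\Delta}^q)^{\otimes_{E_0} s}$, and $\pi_* V^q(P_\bullet) = \LL_* V^q(A)$, which --- using the preceding lemma $\LL_i V^q(A) \cong [\LL_i \Omega_{A_*/E_*}]_{-q}$ together with the vanishing of Andr\'e-Quillen homology of the free graded-commutative algebra $A_*$ in positive degrees --- is concentrated in degree $0$ with value $V^q(A)$. Hence each column is a resolution of $B_s(\br{E}_0, \Delta^q, V^q(A))$, and the augmentation yields a quasi-isomorphism $B_*(\br{E}_0, \Delta^q, V^q(A)) \xrightarrow{\sim} \Tot\mc{D}'$. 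These two quasi-isomorphisms are compatible with $\mc{L}$, by naturality of $\mc{L}$ and of the augmentation $P_\bullet \to A$, so $\mc{L}$ is a quasi-isomorphism, as claimed.

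I expect the crux to be the identification of $\Tot\mc{D}'$, since this is the only point at which the hypothesis enters: one needs $\LL_{>0} V^q(A) = 0$, which holds precisely because $A$ is free as a graded-commutative algebra, and to invoke this vanishing one must first match the cotriple homology $\LL_* V^q$ with ordinary Andr\'e-Quillen homology, via the simplicial resolution of $A$ by free graded-commutative algebras furnished by Lemma~\ref{lem:freealg}. The base-case verification that $\mc{L}$ induces the identity on $H_0$ is routine but does require carefully unwinding the definitions of $\Delta^q$, $V^q$, and $\mc{L}$.
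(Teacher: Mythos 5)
Your argument is correct and is in substance the paper's own proof: both reduce to the monadic bar resolution $B_\bullet(\TT,\bar{\TT},I(A))$, use Lemma~\ref{lem:T-indecomposable-of-free} to make everything collapse for free $\TT$-algebras on free modules, and invoke Corollary~\ref{cor:edge} (equivalently, the vanishing of $\LL_{>0}V^q(A)$ when $A$ is free as a graded commutative $E_*$-algebra) to collapse the resolution direction. The paper packages this as the single commuting diagram (\ref{eq:barcomplexes}) of augmentations of one double complex, identifying $\mc{L}$ with an edge homomorphism of the Grothendieck spectral sequence, whereas you run two parallel double complexes with an explicit base case for free $\TT$-algebras; this is a cosmetic reorganization rather than a different method.
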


\begin{proof}
This essentially follows Corollary~\ref{cor:edge} from an identification of the map (\ref{eq:linearization}) with the edge homomorphism of the Grothendieck spectral sequence.  Specifically, consider the following commutative diagram of maps of chain complexes.
\begin{equation}\label{eq:barcomplexes}
\xymatrix{
\bigoplus\limits_{s+t = n} B_s(\bar{E}_0, \Delta^q, V^q B_t(\TT, \bar{\TT}, I(A))) \ar[r]_-{\simeq}^-{\mr{aug}_t} \ar[d]^\simeq_{\mr{aug}_s} \ar[dr]_{\mc{L}} &
B_n(\bar{E}_0, \Delta^q, V^q A) 
\\
\bar{E}_0 \otimes_{\Delta^q} V^q(B_n(\TT, \bar{\TT}, I(A))) \ar@{=}[d] &
\bigoplus\limits_{s+t=n} B_s(\bar{E}_0, \Delta^q, B_t(\Delta^q, \Delta^q, V^qA)) \ar[d]_\simeq^{\mr{aug}_{s}} \ar[u]^\simeq_{\mr{aug}_t} 
\\
B_n(\Id, \bar{\TT}, I(A))_{-q} \ar[r]_{\mc{L}}  &
B_n(\bar{E}_0, \Delta^q, V^qA)
}
\end{equation}
Here the maps labeled $\mr{aug}_s$ and $\mr{aug}_t$ are the augmentations of the corresponding bar complexes, and $\mc{L}$ are the maps induced by linearization.
All of the augmentation maps are edge homomorphisms of appropriate spectral sequences of double complexes, with $E_2$-terms:
\begin{align*}
^{\mit{I}}E^2_{s,t} & = H_t H_s B_s(\bar{E}_0, \Delta^q, V^q B_t(\TT, \bar{\TT}, I(A))), \\
^{\mit{II}}E^2_{s,t} & = H_s H_t B_s(\bar{E}_0, \Delta^q, V^q B_t(\TT, \bar{\TT}, I(A))), \\
^{\mit{III}}E^2_{s,t} & = H_t H_s B_s(\bar{E}_0, \Delta^q, B_t(\Delta^q, \Delta^q, V^qA)), \\
^{\mit{IV}}E^2_{s,t} & = H_s H_t B_s(\bar{E}_0, \Delta^q, B_t(\Delta^q, \Delta^q, V^qA)).
\end{align*}
Each of these spectral sequences collapses:  the case of $\{^{\mit{I}}E^r_{s,t}\}$ is discussed in the proof of Proposition~\ref{prop:GSS}, the case of $\{^{\mit{II}}E^r_{s,t}\}$, the Grothendieck spectral sequence, is handled by Corollary~\ref{cor:edge}, and the spectral sequences $\{^{\mit{III}}E^2_{s,t} \}$ and $\{^{\mit{IV}}E^2_{s,t}\}$ collapse for trivial reasons.  It follows that each of the augmentation maps in Diagram~(\ref{eq:barcomplexes}) are quasi-isomorphisms, as indicated.  It follows that the bottom arrow in (\ref{eq:barcomplexes}) is a quasi-isomorphism, as desired.
\end{proof}

\section{Topological Andr\'e-Quillen homology}\label{sec:TAQ}

\subsection*{Definitions.}  Suppose that $R$ is a commutative $S$-algebra, and that $A$ is an augmented commutative $R$-algebra.
Topological Andr\'e-Quillen homology of $A$ (relative to $R$) was defined by Basterra \cite{Basterra} as a suitably derived version of the cofiber of the multiplication map on the augmentation ideal:
$$ \TAQ^R(A)  = I(A)/I(A)^{\wedge 2}. $$
If $M$ is an $R$-module, then topological Andr\'e-Quillen homology and cohomology of $A$ with coefficients in $M$ are defined respectively as
\begin{align*}
\TAQ^R(A;M) & = \TAQ^R(A) \wedge_R M, \\
\TAQ_R(A;M) & = F_R (\TAQ^R(A), M). \\
\end{align*}
As with $\TAQ^R$, we define $\TAQ_R(A) := \TAQ_R(A;R)$.

The augmentation ideal functor gives an equivalence
$$ I(-): \Alg_R \downarrow R \xrightarrow{\simeq} \Alg^{nu}_R $$
between the homotopy category of augmented commutative $R$-algebras and the category of non-unital commutative $R$-algebras \cite[Prop.~2.2]{Basterra}.  These categories are tensored over pointed spaces.  Basterra-McCarthy \cite{BasterraMcCarthy} show that $\TAQ^R(-)$ is the stabilization: there is an equivalence
\begin{equation}\label{eq:BasterraMcCarthy}
 \TAQ^R(A) \simeq \hocolim_n \Omega^n (S^n \otimes IA).
 \end{equation}

\subsection*{The Kuhn filtration.}

Kuhn \cite{KuhnMcCord} endows the topological Andr\'e-Quillen homology $\TAQ^S(A)$ of an augmented commutative $S$-algebra $A$ with an increasing filtration 
\begin{equation}\label{eq:Kuhnfilt}
F_1 \TAQ^S(A) \rightarrow F_2 \TAQ^S(A) \rightarrow \cdots. 
\end{equation}
We shall use the simplicial presentation of $\TAQ^S$ to give a point set level construction of Kuhn's filtration.  

\begin{rmk}
Our construction of the filtration is different from the construction given by Kuhn in \cite{KuhnMcCord}.  Kuhn and Pereira have recently explained to the authors an argument which shows that the two filtrations are equivalent \cite{KuhnPereira}.
\end{rmk}

Let $\PP$ denote the free $E_\infty$-ring monad on $\Sp$
$$ \PP(Y) := \bigvee_{n \ge 0} Y^{\wedge n}_{h\Sigma_n}, $$
and let $\td{\PP}$ denotes the ``non-unital'' version
$$ \td{\PP}(Y) := \bigvee_{n \ge 1} Y^{\wedge n}_{h\Sigma_n}. $$
Note that the monad $\td{\PP}$ is augmented over the identity.
Basterra \cite[\S5]{Basterra} shows that $\TAQ$ admits a simplicial presentation using the monadic bar construction:
\begin{equation}\label{eq:TAQbar}
 \TAQ^S(A) \simeq \abs{B_\bullet(\Id, \td{\PP}, I(A)) }.
\end{equation}

For a non-unital operad $\mc{O}$ in $\Sp$, let $\mc{F}_\mc{O}$ denote the free $\mc{O}$-algebra monad in $\Sp$:
$$ \mc{F}_\mc{O} Y := \bigvee_{n \ge 1} \mc{O}_n \wedge_{\Sigma_n} Y^{\wedge n}. $$
Let $\mr{Comm}$ denote the (non-unital) commutative operad in spectra, with
$$ \mr{Comm}_n = S. $$
Viewed as an endofunctor of spectra, we have (using \cite[Lem.~15.5]{MMSS}) 
$$ \LL \mc{F}_{\mr{Comm}} \simeq \td{\PP}. $$
We therefore have,  for $A$ positive cofibrant:
$$ \TAQ^S(A) \simeq \abs{B_\bullet(\Id, \mc{F}_{\mr{Comm}}, I(A)) }. $$
Observe for fixed $s$ there is a splitting 
\begin{equation}\label{eq:TAQsplitting}
\begin{split}
B_s(\Id, \mc{F}_{\mr{Comm}}, I(A)) & 
= \mc{F}_{\mr{Comm}}^s I(A) \\
& \cong \mc{F}_{[\mr{Comm}^{\circ s}]} I(A) \\
& = \bigvee_{i \ge 1} [\mr{Comm}^{\circ s}]_i \wedge_{\Sigma_i} I(A)^{\wedge i}. \\
& =: \bigvee_{i \ge 1} B_s(\Id, \mc{F}_{\mr{Comm}}, I(A))\bra{i}.
\end{split}
\end{equation}
Here, $\circ$ denotes the composition product of symmetric sequences.  Consider the filtration
\begin{align*}
F_n B_s(\Id, \mc{F}_{\mr{Comm}}, I(A)) & = \bigvee_{1 \le i \le n} B_s(\Id, \mc{F}_{\mr{Comm}}, I(A))\bra{i}. 
\end{align*}
This filtration is compatible with the simplicial structure, and therefore induces a filtration on realizations.  For positive cofibrant $R$, we have:
$$ F_n \TAQ^S(A) \simeq  \abs{F_n B_\bullet(\Id, \mc{F}_{\mr{Comm}}, I(A))}. $$

\subsection*{The layers of the Kuhn filtration}

We now compute the structure of the layers of the Kuhn filtration.  These layers have the same structure as that computed by Kuhn for the filtration he defined on $\TAQ$ in \cite{KuhnMcCord}, which seems to suggest the two filtrations agree.
The (pointed) partition poset complex $\mc{P}(n)_\bullet$ is defined to be the pointed simplicial $\Sigma_n$-set whose set of $s$-simplices is the set 
$$ \left\{ \lambda_0 \le \lambda_1 \le \cdots \le \lambda_{s} \: : \: 
\begin{array}{l} 
\text{$\lambda_i$ is a partition of $\ul{n}$,} \\
 \lambda_0 = \{ 1, \ldots, n \}, \\
\lambda_{s} = \{ 1 \} \cdots \{n\}
\end{array}
\right\} \amalg \{* \}.
$$
The face and degeneracy maps send the disjoint basepoint $*$ to the disjoint basepoint, and are given on the other elements by the formulas
\begin{align*}
d_i(\lambda_0 \le \cdots \le \lambda_s) & = 
\begin{cases}
\lambda_0 \le \cdots \le \widehat{\lambda}_i \le \cdots \le \lambda_s, & i \not\in \{ 0,s \}, \\
\ast, & i \in \{0, s\},
\end{cases}
\\
s_i(\lambda_0 \le \cdots \le \lambda_s) & = 
\lambda_0 \le \cdots \le \lambda_i \le \lambda_i  \le \cdots \le \lambda_s.
\end{align*}
Note that we have 
$$ 
\mc{P}(n)_0 = \begin{cases}
\{ \{1\}, \ast \}, & n = 1, \\
\{ \ast \}, & n > 1. 
\end{cases}
$$

\begin{prop}[c.f. \cite{KuhnMcCord}]
We have
\begin{equation}\label{eq:Kuhncofiber}
 F_n\TAQ^S(A)/F_{n-1}\TAQ^S(A) \simeq \abs{\mc{P}(n)_\bullet} \wedge_{h\Sigma_n} I(A)^{\wedge n}. 
\end{equation}
\end{prop}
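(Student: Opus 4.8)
The plan is to identify the subquotient $F_n B_\bullet / F_{n-1} B_\bullet$ degreewise and then pass to realizations. By the splitting (\ref{eq:TAQsplitting}), we have for each fixed simplicial degree $s$
\[
F_n B_s(\Id, \mc{F}_{\mr{Comm}}, I(A)) / F_{n-1} B_s(\Id, \mc{F}_{\mr{Comm}}, I(A)) \cong [\mr{Comm}^{\circ s}]_n \wedge_{\Sigma_n} I(A)^{\wedge n},
\]
the top ($i = n$) wedge summand. So the first step is to understand the symmetric sequence $\mr{Comm}^{\circ s}$ in weight $n$. The composite $\mr{Comm}^{\circ s}$ is the symmetric sequence whose value in weight $n$ is a wedge of copies of $S$ indexed by chains of surjections (equivalently nested partitions) $\ul{n} = \lambda_0 \twoheadrightarrow \lambda_1 \twoheadrightarrow \cdots \twoheadrightarrow \lambda_{s-1}$; since $\mr{Comm}_m = S$ for every $m$, each term in the composition-product wedge contributes a single sphere. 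Keeping track of the simplicial structure on $B_\bullet(\Id, \mc{F}_{\mr{Comm}}, -)$ — where the outer $\Id$ forces $\lambda_0 = \ul n$, the inner augmentation $\mc{F}_{\mr{Comm}} \to \Id$ (used in the last face map) contracts a partition, and the monad multiplication composes adjacent partitions — one checks that the simplicial $\Sigma_n$-spectrum $[\mr{Comm}^{\circ\bullet}]_n \wedge_{\Sigma_n} I(A)^{\wedge n}$ is exactly $\Sigma^\infty \mc{P}(n)_\bullet \wedge_{\Sigma_n} I(A)^{\wedge n}$, where the chains-of-partitions description of $\mc{P}(n)_\bullet$ in the statement matches the wedge summands and the face/degeneracy formulas match (the collapse of the summands indexed by chains that fail to start at $\{\ul n\}$ or end at the discrete partition, to the basepoint, corresponds to the last/first face maps landing in $F_{n-1}$, i.e. in strictly lower weight). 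This is the step I expect to be the main obstacle: it is a purely combinatorial bookkeeping matter, but one must be careful that the basepoint conventions in $\mc{P}(n)_\bullet$, the identification $[\mr{Comm}^{\circ s}]_n$ with the indexing set of chains of length $s$, and the simplicial identities all line up, and that $i = n$ (rather than $i < n$) picks out precisely the chains of partitions of $\ul n$ with no repeated coarsenings collapsed.

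Second, having the degreewise identification
\[
F_n B_s / F_{n-1} B_s \;\simeq\; \Sigma^\infty \mc{P}(n)_s \wedge_{\Sigma_n} I(A)^{\wedge n}
\]
compatibly with the simplicial structure, I take geometric realization. Since $F_{n-1} \TAQ^S(A) \to F_n \TAQ^S(A)$ is (up to the equivalence $\TAQ^S(A) \simeq |F_\infty B_\bullet|$ recorded before the statement) the realization of a levelwise cofibration of simplicial spectra, its cofiber is the realization of the levelwise cofiber, giving
\[
F_n \TAQ^S(A)/F_{n-1}\TAQ^S(A) \;\simeq\; \bigl| \Sigma^\infty \mc{P}(n)_\bullet \wedge_{\Sigma_n} I(A)^{\wedge n} \bigr|.
\]
Third, realization commutes with smashing against the fixed spectrum $I(A)^{\wedge n}$ and with the (homotopy) $\Sigma_n$-orbits — for the latter one uses that we are working with the positive model structure and $A$ positive cofibrant, so $I(A)^{\wedge n}$ is suitably cofibrant as a $\Sigma_n$-spectrum and the orbits compute homotopy orbits — hence
\[
\bigl| \Sigma^\infty \mc{P}(n)_\bullet \wedge_{\Sigma_n} I(A)^{\wedge n} \bigr| \;\simeq\; |\mc{P}(n)_\bullet| \wedge_{h\Sigma_n} I(A)^{\wedge n},
\]
which is the desired formula (\ref{eq:Kuhncofiber}). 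Throughout, I would cite Basterra's simplicial presentation (\ref{eq:TAQbar}), the identification $\LL\mc{F}_{\mr{Comm}} \simeq \td\PP$ from \cite[Lem.~15.5]{MMSS}, and standard facts about realization of simplicial spectra and homotopy orbits; the only genuinely new content is the combinatorial matching with $\mc{P}(n)_\bullet$ in the first step.
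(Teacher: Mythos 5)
Your proposal is correct and follows essentially the same route as the paper: identify the associated graded of the weight filtration degreewise with the top summand $[\mr{Comm}^{\circ s}]_n \wedge_{\Sigma_n} I(A)^{\wedge n}$ (the paper phrases this as the bar construction on $I(A)$ with its trivialized algebra structure), match the resulting simplicial $\Sigma_n$-spectrum with $\mc{P}(n)_\bullet$, and pull $I(A)^{\wedge n}$ and the homotopy orbits out of the realization. The only real difference is that the paper outsources what you call the ``main obstacle''---the combinatorial identification $B_\bullet(1,\mr{Comm},1)_n \cong \mc{P}(n)_\bullet$---to a citation of Ching rather than verifying it by hand.
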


\begin{proof}
Let $\br{I(A)}$ denote the spectrum $I(A)$ endowed with the trivial $\mc{F}_{\mr{Comm}}$-algebra structure.  
We have
\begin{align*}
 F_n\TAQ^S(A)/F_{n-1}\TAQ^S(A) 
& \simeq B(\Id, \mc{F}_{\mr{Comm}}, \br{I(A)})\bra{n} \\
& \cong \abs{[\mr{Comm}^{\circ \bullet}]_n \wedge_{\Sigma_n} I(A)^{\wedge n}} \\
& \cong \abs{B_\bullet(1, \mr{Comm}, 1)_n} \wedge_{\Sigma_n} I(A)^{\wedge n}. 
\end{align*}
Here, $1$ denotes the unit symmetric sequence.
The lemma now follows from the isomorphism of simplicial $\Sigma_n$-spectra (see \cite{Ching})
$$ B_\bullet(1, \mr{Comm}, 1)_n \cong \mc{P}(n)_\bullet. \qedhere$$
\end{proof}

\subsection*{A Basterra spectral sequence}

We conclude this section with the construction of a spectral sequence which computes the $K$-homology of $\TAQ$.

\begin{prop}\label{prop:BSS}
Suppose that $A$ is an augmented commutative $S_K$-algebra such that $E_*A$ is flat over $E_*$.  Then there is a spectral sequence
$$ E^2_{s,*} = \mb{L}_s \Omega^*_{\TT/E_*}(E_* A;K_0) \Rightarrow K_{s+*} \TAQ^{S_K}(A). $$
\end{prop}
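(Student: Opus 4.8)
The plan is to filter the bar-construction presentation of $\TAQ^{S_K}$ by skeleta, apply $K$-homology, and match the resulting spectral sequence with the one computing $\LL_*\Omega^*_{\TT/E_*}$ from a monadic bar resolution. Concretely, I would start from the $S_K$-linear form of Basterra's simplicial presentation \eqref{eq:TAQbar},
$$ \TAQ^{S_K}(A) \simeq \abs{B_\bullet(\Id, \td{\PP}, I(A))}, $$
where $\td{\PP}$ is the free non-unital commutative $S_K$-algebra monad. The skeletal filtration of this geometric realization gives a conditionally convergent spectral sequence
$$ E^1_{s,*} = K_* B_s(\Id, \td{\PP}, I(A)) \;\Rightarrow\; K_{s+*}\TAQ^{S_K}(A), $$
whose $d^1$ is the alternating sum of the simplicial face maps.

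Next I would identify the $E^1$-page algebraically. As $A$ is augmented with $E_* A$ flat over $E_*$, the splitting $A \simeq S_K \vee I(A)$ shows $E_* I(A)$ is flat and equals the augmentation ideal $I(E_*A)$ of the $\TT$-algebra $E_* A$. Each simplicial level $B_s(\Id, \td{\PP}, I(A))$ is an (infinite) wedge of extended powers of $I(A)$; since $K$-homology commutes with wedges, and $K_*W \cong K_0 \otimes_{E_0} E_* W$ whenever $E_* W$ is flat (recall $K_* = E_*/\mf{m}$), I can evaluate each level using the description of the Morava $E$-homology of extended powers of spectra with flat $E$-homology recalled in \S\ref{sec:recollections} (equations \eqref{eq:algmodel}, \eqref{eq:extendedpower}), together with the fact --- built into the construction of $\TT$ in \cite{RezkWilk} --- that this identification is compatible with the monadic structure. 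This should yield an isomorphism of simplicial $E_0$-modules
$$ K_* B_\bullet(\Id, \td{\PP}, I(A)) \cong B_\bullet(\Id, \bar{\TT}, E_* I(A)) \otimes_{E_0} K_0, $$
compatible with face maps, and hence
$$ E^2_{s,*} \cong H_s\bigl( B_*(\Id, \bar{\TT}, E_* I(A)) \otimes_{E_0} K_0 \bigr). $$

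It then remains to recognize the right-hand side. When $E_*A$ is free over $E_*$ --- to which the hypothesis ``flat'' reduces, as in the relevant arguments of \cite{RezkWilk} --- the Corollary following Lemma~\ref{lem:T-indecomposable-of-free} identifies $H_s\bigl(B_*(\Id, \bar{\TT}, E_* I(A))_{-q} \otimes_{E_0} K_0\bigr)$ with $\LL_s \Omega^q_{\TT/E_*}(E_*A; K_0)$; letting $q$ range recovers the asserted $E^2$-term. Convergence would be handled separately: the skeletal filtration is exhaustive, so the issue is completeness, and for this I would invoke the Kuhn filtration of \S\ref{sec:TAQ} --- whose $n$-th layer is $\abs{\mc{P}(n)_\bullet}\wedge_{h\Sigma_n} I(A)^{\wedge n}$ --- together with the completeness of the $K(n)$-local category, to control the associated graded of $\TAQ^{S_K}(A)$.

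The step I expect to be the main obstacle is the compatibility assertion in the second paragraph: one must check that, under the Rezk--Wilkerson approximation of extended powers by $\TT$, the entire simplicial (monadic) structure of $B_\bullet(\Id, \td{\PP}, I(A))$ goes over to that of the algebraic bar complex $B_\bullet(\Id, \bar{\TT}, E_* I(A))$, so that $d^1$ really is the algebraic bar differential; this is precisely where flatness of $E_*A$ is indispensable. A secondary difficulty is the convergence argument in the co-connective setting of the intended applications (e.g.\ $A = S_K^{X_+}$), where the smash powers $I(A)^{\wedge n}$ do not become more highly connected, so one cannot argue by a connectivity estimate and must instead lean on the structure of the Kuhn filtration and on $K$-local completeness directly.
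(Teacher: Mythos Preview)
Your proposal is correct and follows essentially the same route as the paper: both take the Bousfield--Kan/skeletal spectral sequence of the simplicial bar presentation $\TAQ^{S_K}(A)\simeq\abs{B_\bullet(\Id,\td{\PP},I(A))}$, identify the $E^1$-page with $B_\bullet(\Id,\bar{\TT},E_*I(A))\otimes_{E_0}K_0$ via the algebraic model for Morava $E$-theory of extended powers, and read off the $E^2$-term as the derived $\TT$-indecomposables.

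Two minor remarks on the comparison. First, the paper handles your ``main obstacle'' by a base-change maneuver: it rewrites $K\wedge\td{\PP}^s_{S_K}I(A)\simeq K\wedge_E\td{\PP}^s_E(E\wedge I(A))$, so that one is computing $\pi_*$ of iterated $E$-module extended powers, where \eqref{eq:algmodel} applies directly and the compatibility of monadic structures is immediate from the construction of $\TT$. This is slightly slicker than your approach of computing $K_*$ of $S_K$-extended powers and then invoking $K_*\cong K_0\otimes_{E_0}E_*$ on flat objects, though both work. Second, the paper does not address convergence at all --- it simply asserts the spectral sequence exists and is used only in the case $A=S_K^{QX_+}$, where the $E^2$-page is shown to collapse onto the $s=0$ line --- so your discussion of convergence via the Kuhn filtration is extra care beyond what the paper provides.
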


\begin{proof}
We have
\begin{align*}
K \wedge \TAQ^{S_K}(A) & \simeq K \wedge \abs{B_\bullet(\Id, \td{\PP}_{S_K}, I(A))} \\
& \simeq \abs{K \wedge \td{\PP}^\bullet_{S_K} I(A)} \\
& \simeq \abs{K \wedge_E E \wedge \td{\PP}^\bullet_{S_K} I(A)} \\
& \simeq \abs{K \wedge_E \td{\PP}^\bullet_{E} (E \wedge I(A))}.
\end{align*}
Associated to this simplicial presentation is a Bousfield-Kan spectral sequence
\begin{equation}\label{eq:TAQSS}
E^1_{s,t} = \pi_t K \wedge_E \td{\PP}^s_{E} (E \wedge I(A)) \Rightarrow K_{s+t} \TAQ^{S_K}(A). 
\end{equation}
Using (\ref{eq:algmodel}), this $E^1$ term can therefore be described as
\begin{align*}
E^1_{s,*} & = \left( \bar{\TT}^s E_* I(A) \right)/\mf{m} \\
& = B_s(\mr{Id}, \bar{\TT}, E_* I(A)) \otimes_{E_0} K_0
\end{align*}
with $d_1$ differential given by the alternating sum of the face maps of the bar construction.
We therefore deduce that
$$ E^2_{s,*} = \mb{L}_s \Omega^*_{\TT/E_*}(E_* A;K_0). $$
\end{proof}





\section{The Morava $E$-theory of $L(k)$}\label{sec:enlk}

\subsection*{$L(k)$-spectra}

Let $L(k)_q$ denote the spectrum given by \cite{Takayasu}
\begin{equation}\label{eq:L(k)def}
L(k)_q := \epsilon_{st} (B\FF_p^k)^{q\bar{\rho}_k}. 
\end{equation}
Here, $\bar{\rho}_k$ denotes the reduced regular real
representation of the elementary abelian $p$-group $\FF_p^k$, and
$(B\FF_p^k)^{q\bar{\rho}_k}$ denotes the Thom spectrum of the
$q$-fold direct sum of $\bar{\rho}_k$.  We write $\epsilon_{st}$ for
the Steinberg idempotent, acting on this spectrum, so that $L(k)_q$ is
the Steinberg summand.

Mitchell and Priddy \cite{MitchellPriddy} showed that there are
equivalences
$$ \Sp^{p^k}(S)/\Sp^{p^{k-1}}(S) \simeq \Sigma^k L(k)_1 $$
where $\Sp^{n}(S)$ is the $n$th symmetric product of the sphere spectrum.

The Goodwillie derivatives of the identity functor
$$ \Id: \Top_* \rightarrow \Top_* $$
are given by (see \cite{AroneMahowald})
\begin{equation}\label{eq:AroneMahowald}
 \partial_n(\Id) \simeq \left( \Sigma^\infty \abs{\mc{P}(n)_{\bullet}} \right)^\vee. 
\end{equation}

Arone and Dwyer \cite[Cor.\ 9.6]{AroneDwyer} establish mod $p$ equivalences (for $q$ odd)
\begin{align}
L(k)_q & \simeq_p  \Sigma^{k-q} [\partial_{p^k}(\Id) \wedge S^{qp^k}]_{h\Sigma_{p^k}}= 
\Sigma^{k-q} \DD_{p^k}(\Id)(S^q). \label{eq:ADI} 
\end{align}
Here $\DD_{p^k}(\Id)$ is the infinite delooping of the $(p^k)$th layer of the Goodwillie tower of the identity functor on $\Top_*$.
 
\begin{rmk}
For the purposes of the rest of the paper, one could take (\ref{eq:ADI}) as the definition of the $p$-adic homotopy type of $L(k)_q$, instead of (\ref{eq:L(k)def}).  All of the computations and properties of the spectra $L(k)_q$ in what follows are really aspects of the partition poset model of $\DD_{p^k}(\Id)(S^q)$.
\end{rmk}
  
\subsection*{The $E$-homology calculation}

We now turn our attention to computing the $E$-homology of the spectra
$L(k)_q$ using (\ref{eq:ADI}).  We do this with a sequence of lemmas.
Recall from \S\ref{sec:trivial-T-algebras} that for an $E_*$-module $M$,
we write $\br{M}$ for the $\bar{\TT}$-algebra obtained by endowing $M$
with the trivial action.

\begin{lem}\label{lem:barpart}
If $Y$ is a spectrum with $E_* Y$ finite and flat as an $E_*$-module, then there is an isomorphism of simplicial $E_*$-modules
$$ 
E_* (\mc{P}(n)_\bullet \wedge_{h\Sigma_{n}} 
Y^{\wedge n}) \cong
B_\bullet(\Id, \bar{\TT}, \overline{E_*Y})\bra{n}. 
$$
\end{lem}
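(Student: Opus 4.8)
The plan is to identify both simplicial $E_*$-modules in each simplicial degree, compatibly with the simplicial structure maps. Fix a simplicial degree $s$. On the topological side, the $s$-simplices of $\mc{P}(n)_\bullet$ form a pointed $\Sigma_n$-set, and smashing with $Y^{\wedge n}$ and taking homotopy orbits gives a spectrum whose $E$-homology I would compute using the fact that $E_*(\mc{P}(n)_s \wedge_{h\Sigma_n} Y^{\wedge n})$ is a wedge, over orbits of nonbasepoint $s$-simplices, of $E$-homology of smash powers of $Y$ induced up from the relevant partition stabilizers. Since $E_*Y$ is finite and flat, the Künneth and transfer arguments underlying (\ref{eq:extendedpower}) apply. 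The key point is that a chain $\lambda_0 \le \cdots \le \lambda_s$ in $\mc{P}(n)_s$ records exactly a sequence of coarsenings, i.e.\ an iterated partition structure, which is precisely the combinatorial data indexing the summands of $[\mr{Comm}^{\circ s}]_n$, or equivalently of $\bar{\TT}^{\circ s}$ applied to a module, restricted to its weight-$n$ part.

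Concretely, I would proceed as follows. First, recall from (\ref{eq:TAQsplitting}) and the surrounding discussion that $B_s(\Id, \mc{F}_{\mr{Comm}}, -)\bra{n}$ is built from $[\mr{Comm}^{\circ s}]_n = \abs{B_s(1,\mr{Comm},1)_n}$, and that $B_s(1,\mr{Comm},1)_n \cong \mc{P}(n)_s$ as $\Sigma_n$-sets (the isomorphism of simplicial $\Sigma_n$-spectra cited from \cite{Ching}). Second, on the algebraic side, unwind $B_s(\Id, \bar{\TT}, \overline{E_*Y})\bra{n} = [\bar{\TT}^{\circ s} E_*Y]\bra{n}$ with the trivial action on the innermost copy, and use the exponential isomorphism (\ref{eq:exponential}) together with (\ref{eq:extendedpower}) to decompose $\bar{\TT}\bra{i}$ applied to a sum into a sum over partitions; iterating $s$ times produces exactly a sum indexed by chains of partitions of $\ul{n}$, matching $\mc{P}(n)_s$. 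Third, match the $\Sigma_n$-equivariance: the weight-$n$ part of $\bar{\TT}^{\circ s}$ carries a $\Sigma_n$-action permuting the $n$ inner factors, and passing to $E_*$ of the homotopy orbit spectrum $\mc{P}(n)_s \wedge_{h\Sigma_n} Y^{\wedge n}$ gives the $\Sigma_n$-orbits, which on both sides agree because $E_*Y$ is flat so that homotopy orbits compute the expected induced modules. Finally, check that the face and degeneracy maps on $\mc{P}(n)_\bullet$ (which drop or repeat a partition $\lambda_i$, sending the extreme faces to the basepoint) correspond under this identification to the bar-construction face and degeneracy maps of $B_\bullet(\Id,\bar{\TT},\overline{E_*Y})$ (the extreme faces use the augmentation $\bar{\TT}\to\Id$ and the trivial algebra structure on $\overline{E_*Y}$, which kills the relevant terms — hence the disjoint basepoint).

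The main obstacle I expect is the careful bookkeeping of the $\Sigma_n$-equivariance through the iterated use of (\ref{eq:exponential}) and (\ref{eq:extendedpower}): the exponential isomorphism for $\TT$ is naturally indexed by \emph{unordered} decompositions, whereas the bar construction and the partition poset are most naturally described with ordered data modulo symmetry, so one must be attentive that the symmetrizations match up, and in particular that the flatness hypothesis on $E_*Y$ is genuinely used to pass from homotopy orbits to the algebraic induced modules without correction terms (this is where one invokes that $[Y^{\wedge_E i}_{h\Sigma_i}]_K$ has homotopy $\TT\bra{i}\pi_*Y$ exactly, not just up to a spectral sequence). The simplicial structure maps, while combinatorially intricate, are then a routine — if tedious — compatibility check, since both sides are manifestly functorial in the chain of partitions.
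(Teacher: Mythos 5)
Your proposal is correct and follows essentially the same route as the paper's proof: identify $\mc{P}(n)_\bullet \wedge_{h\Sigma_n} Y^{\wedge n}$ with $B_\bullet(\Id,\mc{F}_{\mr{Comm}},\br{Y})\bra{n}$ via Ching's isomorphism $B_\bullet(1,\mr{Comm},1)_n \cong \mc{P}(n)_\bullet$, then apply (\ref{eq:extendedpower}) termwise using the finiteness and flatness of $E_*Y$. The paper states this in two lines (adding only a cofibrant replacement of $Y$ in the positive model structure); the combinatorial bookkeeping you flag as the main obstacle is exactly what those cited isomorphisms package up.
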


\begin{proof}
Replacing $Y$ with a cofibrant replacement in the positive model structure for symmetric spectra, 
this follows immediately from applying (\ref{eq:extendedpower}) to the isomorphisms
\begin{align*}
\mc{P}(n)_\bullet \wedge_{h\Sigma_{n}} 
Y^{\wedge n} & \cong B_\bullet(1, \mr{Comm}, 1)_n \wedge_{\Sigma_n} Y^{\wedge n} \\
& \cong B_\bullet(\Id, \mc{F}_{\mr{Comm}}, \br{Y})\bra{n}.
\end{align*}
\end{proof}

Recall from Theorem~\ref{thm:RezkKoszul} that $C[*]_q$ denotes the Koszul complex for $\Delta^{-q}$.

\begin{lem}\label{lem:calc1}
For $q$ odd, there is a canonical isomorphism
$$ E_0 (\Sigma^{-k-q} \abs{\mc{P}(p^k)_\bullet} \wedge_{h\Sigma_{p^k}}
S^{qp^k}) \cong C[k]_q.$$
\end{lem}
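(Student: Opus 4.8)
The plan is to compute $E_0$ of the spectrum $\Sigma^{-k-q}\abs{\mc{P}(p^k)_\bullet}\wedge_{h\Sigma_{p^k}} S^{qp^k}$ by identifying it with the top homology of a chain complex built out of the partition poset, and then recognizing that chain complex as (a desuspension/dual of) the Koszul complex $B_*(\br{E}_0,\td{\Delta}^q,\br{E}_0)$ whose top homology is $C[k]_q$ by Theorem~\ref{thm:RezkKoszul}.

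First I would apply Lemma~\ref{lem:barpart} with $Y = S^q$ (so $E_*Y = \Sigma^q E_*$, which is finite and flat): this gives an isomorphism of simplicial $E_*$-modules
\[
E_*(\mc{P}(p^k)_\bullet \wedge_{h\Sigma_{p^k}} S^{qp^k}) \cong B_\bullet(\Id, \bar{\TT}, \overline{\Sigma^q E_*})\langle p^k\rangle.
\]
Taking realization, the left side has homotopy $E_*(\abs{\mc{P}(p^k)_\bullet}\wedge_{h\Sigma_{p^k}}S^{qp^k})$, and the right side computes the homology of the associated chain complex $B_*(\Id,\bar{\TT},\overline{\Sigma^q E_*})\langle p^k\rangle$. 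Next I would invoke the linearization machinery of Section~\ref{sec:BarrBeck}: the natural transformation $\mc{L}$ of \eqref{eq:linearization} identifies, on free modules, the degree-$(p^k)$ part of $B_*(\Id,\bar{\TT},-)$ with the degree-$k$ part of the bar complex $B_*(\br{E}_0,\Delta^q,-)$ over the linearized monad $\Delta^*$, because the weight-$p^k$ summand of $\bar{\TT}^{\circ s}$ that survives to the linearization corresponds exactly to splitting $p^k = p^{k_1}+\cdots$ with each $k_i\ge 1$, i.e. to the $k=k_1+\cdots+k_s$ grading on the bar complex of $\Delta^q$. Concretely, applying Lemma~\ref{lem:quasi-isomorphism} with $A$ a free graded-commutative algebra on a rank-one generator in degree $-q$ (equivalently, using $\TT(\Sigma^{-q}E_*)$ so that $V^q$ picks out the generator), the complex $B_*(\Id,\bar{\TT},\overline{\Sigma^qE_*})\langle p^k\rangle$ is quasi-isomorphic to the Koszul complex $B_*(\br{E}_0,\td{\Delta}^{-q},\br{E}_0)[k]$ of Theorem~\ref{thm:RezkKoszul}, after accounting for the $(-q)$ versus $q$ degree bookkeeping. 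Finally I would apply Theorem~\ref{thm:RezkKoszul}: the $k$th graded piece of this bar complex has homology concentrated in top degree $s=k$, equal to $C[k]_{-(-q)} = C[k]_q$ (with the convention, as in the statement just before this lemma, that $C[*]_q$ denotes the Koszul complex for $\Delta^{-q}$). The shift $\Sigma^{-k-q}$ and the realization contribute exactly the right suspension to land this top homology class in $E_0$ rather than in some nonzero degree: the simplicial/realization degree $k$ contributes $\Sigma^k$, the generator contributes $\Sigma^q$ (times $p^k$, but in the primitive summand the relevant shift is the single copy of $\Sigma^q$ via the suspension isomorphisms $\sigma$), so the net shift cancels and we obtain $E_0(\cdots)\cong C[k]_q$.

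The main obstacle will be pinning down the degree bookkeeping and the naturality/canonicity of the identification. There are several suspensions in play — the $\Sigma^{-k-q}$ in the statement, the $\Sigma^k$ from the top-degree homology of a length-$k$ bar complex, the $\Sigma^q$ (or $\Sigma^{qp^k}$) from the sphere, and the internal regrading \eqref{eq:susp} relating $\Delta^q$ to $\Gamma^{q-1}$ — and one must verify they conspire to put the answer in $E_0$ and that the resulting isomorphism is canonical (not just abstract), since Lemma~\ref{lem:calc1} claims a \emph{canonical} isomorphism. The canonicity comes from the fact that every map used (Lemma~\ref{lem:barpart}, the linearization $\mc{L}$, and the edge homomorphism identification in Lemma~\ref{lem:quasi-isomorphism}) is natural, and from the canonical identification $\Prim_0[k]\cong\mc{S}^\vee_{p^k}$ feeding into the definition of $C[k]$. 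The parity hypothesis "$q$ odd" is used precisely so that $\iota_q$ is an isomorphism in the zigzag \eqref{eq:zigzag}, which is what makes $V^q$ of the relevant free algebra a free $E_0$-module on the expected generator and makes the linearization comparison clean; for $q$ even one would only get an inclusion with torsion cokernel. I would also need the flatness of $E_*S^q$ over $E_*$ (immediate) to legitimately apply \eqref{eq:extendedpower} inside Lemma~\ref{lem:barpart}.
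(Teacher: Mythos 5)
Your proposal is correct and follows essentially the same route as the paper's proof: apply Lemma~\ref{lem:barpart} to identify the levelwise $E$-homology with $B_\bullet(\Id,\bar{\TT},\overline{E_*S^q})\bra{p^k}$, use the linearization quasi-isomorphism of Lemma~\ref{lem:quasi-isomorphism} to pass to $B_\bullet(\bar{E}_0,\Delta^{-q},\bar{E}_0)[k]$, and conclude via Theorem~\ref{thm:RezkKoszul} that the homology sits in top degree $k$ and equals $C[k]_q$. The only cosmetic difference is that the paper packages the passage from levelwise homology to the homology of the realization as a collapsing Bousfield--Kan spectral sequence, which is exactly the collapse your concentration-in-one-degree observation provides.
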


\begin{proof}
Consider the Bousfield-Kan spectral sequence:
\begin{equation}\label{eq:BKSS}
E^1_{s,t} = E_{t}(\mc{P}(p^k)_s \wedge_{h\Sigma_{p^k}} S^{qp^k}) \Rightarrow E_{t+s} (\abs{\mc{P}(p^k)_\bullet} \wedge_{h\Sigma_{p^k}} S^{qp^k}). 
\end{equation}
We compute, using Lemma~\ref{lem:barpart} and Lemma~\ref{lem:quasi-isomorphism}
\begin{align*}
E_{q+*}(\mc{P}(p^k)_\bullet \wedge_{h\Sigma_{p^k}} S^{qp^k}) 
& \cong E_* \otimes_{E_0} B_\bullet(\Id, \bar{\TT}, \overline{E_* S^q})\bra{p^k}_{q} \\
& \xrightarrow[\simeq]{\mc{L}} E_* \otimes_{E_0} B_\bullet(\bar{E}_0, \Delta^{-q}, \bar{E}_0)[k].
\end{align*}
By Theorem~\ref{thm:RezkKoszul}, 
the spectral sequence (\ref{eq:BKSS}) collapses to give the desired result.
\end{proof}

\begin{rmk}
This can also be proven directly from the work of Arone, Dwyer, and Lesh 
\cite{AroneDwyerLesh}. 
\end{rmk}





\begin{thm}\label{thm:enlk}
For $q$ odd, there are canonical isomorphisms of $E_*$-modules 
$$ E_0 L(k)_q  \cong C[k]^\vee_{-q} $$
and 
\[ E^0 L(k)_q \cong C[k]_{-q}. \]
\end{thm}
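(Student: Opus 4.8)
The plan is to compute $E_* L(k)_q$ directly from the Arone--Dwyer identification (\ref{eq:ADI}), which expresses $L(k)_q$ up to $p$-completion as $\Sigma^{k-q}[\partial_{p^k}(\Id)\wedge S^{qp^k}]_{h\Sigma_{p^k}}$, together with the partition-complex model (\ref{eq:AroneMahowald}) for $\partial_{p^k}(\Id)$. Since $E$-homology only sees the $p$-complete homotopy type (as $E$ is already $p$-complete and $K(h)$-local), the mod~$p$ equivalence of (\ref{eq:ADI}) suffices to compute $E_* L(k)_q$. The first step is to rewrite
$$ E_0 L(k)_q \cong E_0\bigl(\Sigma^{k-q}[\partial_{p^k}(\Id)\wedge S^{qp^k}]_{h\Sigma_{p^k}}\bigr) \cong E_0\bigl(\Sigma^{-k-q}\abs{\mc{P}(p^k)_\bullet}\wedge_{h\Sigma_{p^k}} S^{qp^k}\bigr)^\vee, $$
using (\ref{eq:AroneMahowald}) to turn $\partial_{p^k}(\Id)$ into the Spanier--Whitehead dual of $\Sigma^\infty\abs{\mc{P}(p^k)_\bullet}$, and using that the Steinberg summand splitting is compatible with everything in sight (the $\epsilon_{st}$ is already built into the identification (\ref{eq:ADI})). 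Here I need to be a little careful about how Spanier--Whitehead duality interacts with the homotopy orbit construction and the Thom spectrum; the point is that $\partial_{p^k}(\Id)$ is a finite $\Sigma_{p^k}$-spectrum, so $[\partial_{p^k}(\Id)\wedge S^{qp^k}]_{h\Sigma_{p^k}}$ is, after $K$-localization, $E$-homology dual to $[\abs{\mc{P}(p^k)_\bullet}\wedge_{h\Sigma_{p^k}} S^{-qp^k}]$ — and the representation-sphere bookkeeping converts $S^{-qp^k}$ with the $\Sigma_{p^k}$-action into the Thom spectrum $\Sigma^{-2k}\cdots$; this is the standard Arone--Dwyer shift, already absorbed into the suspension $\Sigma^{k-q}$ vs.\ $\Sigma^{-k-q}$ above.

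Granting this duality bookkeeping, the second and decisive step is to invoke Lemma~\ref{lem:calc1}, which already gives
$$ E_0\bigl(\Sigma^{-k-q}\abs{\mc{P}(p^k)_\bullet}\wedge_{h\Sigma_{p^k}} S^{qp^k}\bigr) \cong C[k]_q $$
for $q$ odd. Dualizing (and noting $C[k]_q$ is finite free over $E_0$ by Theorem~\ref{thm:RezkKoszul}, so the $E_0$-linear dual is well-behaved and $(C[k]_q)^\vee = C[k]^\vee_q$), and re-indexing $q \rightsquigarrow -q$ to match the statement, we get $E_0 L(k)_q \cong C[k]^\vee_{-q}$. The $E$-cohomology statement $E^0 L(k)_q \cong C[k]_{-q}$ then follows by another application of Spanier--Whitehead/$E_0$-linear duality: since $E_* L(k)_q$ is finite and free over $E_0$ (being dual to the finite free module $C[k]_{-q}$), we have $E^0 L(k)_q \cong (E_0 L(k)_q)^\vee \cong (C[k]^\vee_{-q})^\vee \cong C[k]_{-q}$, using reflexivity of finite free $E_0$-modules. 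One should also remark why everything lands in degree $0$ (equivalently why $E_*L(k)_q$ is concentrated in even degrees): this comes from Lemma~\ref{lem:calc1} together with the evenness/freeness built into Theorem~\ref{thm:RezkKoszul}, where $C[k]_{-q}$ is declared to be a finitely generated free $E_0$-module.

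The main obstacle I anticipate is \emph{not} the algebra — that is entirely handled by Lemma~\ref{lem:calc1} and Theorem~\ref{thm:RezkKoszul} — but rather the careful matching of suspensions and Thom-spectrum twists under Spanier--Whitehead duality for genuine $\Sigma_{p^k}$-spectra, i.e.\ verifying that the dual of $[\partial_{p^k}(\Id)\wedge S^{qp^k}]_{h\Sigma_{p^k}}$ is, after the appropriate shift, exactly the $\abs{\mc{P}(p^k)_\bullet}$-homotopy-orbit spectrum appearing in Lemma~\ref{lem:calc1}. This is a known manipulation (it is precisely the content behind (\ref{eq:ADI}) combined with (\ref{eq:AroneMahowald})), but stating it cleanly requires keeping track of the reduced regular representation $\bar\rho_k$ versus its negative, the dimension shift $\dim\bar\rho_k = p^k - 1$, and the relation between Thom spectra of $q\bar\rho_k$ and of $-q\bar\rho_k$. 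Once that identification is in hand, the rest is a two-line duality argument.
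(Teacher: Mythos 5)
Your proposal is correct and follows the paper's proof essentially verbatim: the paper combines (\ref{eq:ADI}) and (\ref{eq:AroneMahowald}), handles the duality bookkeeping you flag by converting homotopy orbits to homotopy fixed points via the vanishing of $K$-local Tate constructions for the finite complex $\mc{P}(p^k)_\bullet$ (the norm equivalence of \cite{KuhnTate}), and then applies Lemma~\ref{lem:calc1} together with the universal coefficient theorem using freeness of $C[k]_{-q}$. The one sign to fix: the $K$-local dual of $L(k)_q$ is $\Sigma^{-k+q}\abs{\mc{P}(p^k)_\bullet}\wedge_{h\Sigma_{p^k}} S^{-qp^k}$ (not $S^{qp^k}$), so Lemma~\ref{lem:calc1} is applied with $-q$ in place of $q$ and directly produces $C[k]_{-q}$, which is what your ``re-indexing'' step is implicitly doing.
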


\begin{proof}
By (\ref{eq:ADI}) and (\ref{eq:AroneMahowald}) there are equivalences
\begin{align*}
L(k)_q & \simeq \Sigma^{k-q} \partial_{p^k}(\Id) \wedge_{h\Sigma_{p^k}} S^{qp^k} \\
& \simeq \Sigma^{k-q} \abs{\mc{P}(p^k)_\bullet}^\vee \wedge_{h\Sigma_{p^k}} S^{qp^k}.
\end{align*} 
Since $\mc{P}(p^k)_\bullet$ is a finite complex, the results of \cite{KuhnTate} imply that there are equivalences
\begin{align*}
\left([\Sigma^{k-q} \abs{\mc{P}(p^k)_\bullet}^\vee \wedge S^{qp^k}]_{h\Sigma_{p^k}}
\right)_K
& \xrightarrow[\mr{norm}]{\simeq} 
\left[ (\Sigma^{k-q} \abs{\mc{P}(p^k)_\bullet}^\vee \wedge S^{qp^k})_K \right]^{h\Sigma_{p^k}}
\\
& \simeq 
F(\Sigma^{-k+q} \abs{\mc{P}(p^k)_\bullet} \wedge S^{-qp^k}, S_K)^{h\Sigma_{p^k}} \\
& \simeq 
F((\Sigma^{-k+q} \abs{\mc{P}(p^k)_\bullet} \wedge S^{-qp^k})_{h\Sigma_{p^k}},S_K).
\end{align*}
Now apply the universal coefficient theorem, using the fact that $C[k]_{-q}$ is free as a module over $E_0$, to deduce the result from Lemma~\ref{lem:calc1}.  
\end{proof}




\begin{rmk}
Arone and Dwyer actually give another identification of the spectrum $L(k)_q$, dual to (\ref{eq:ADI}): they prove that there is an equivalence
$$ L(k)_q \simeq  \Sigma^{-k-q}  [\abs{\mc{P}(p^k)_\bullet} \wedge S^{qp^k}]_{h\Sigma_{p^k}}. 
$$
Thus Lemma~\ref{lem:calc1} gives the following alternative to Theorem~\ref{thm:enlk}: for $q$ odd we have
$$ E_0 L(k)_q  \cong C[k]_q. $$
This description of the $E$-homology of $L(k)_q$ is less well suited to the perspective of the present paper.
\end{rmk}

\section{The Bousfield-Kuhn functor and the comparison map}\label{sec:comparison}

\subsection*{The Bousfield-Kuhn functor.}

Let $T$ denote any $v_h$-telescope on a type $h$ finite complex.  
The Bousfield-Kuhn functor $\Phi_T$ factors localization with
respect to $T$.  We are mainly interested in the $K$-localization of 
$\Phi_T$, which we shall denote $\Phi$.
Thus we have a diagram of functors commuting up to natural weak equivalence.
$$
\xymatrix{
\Sp \ar[d]_{\Omega^\infty} \ar[r]^{(-)_{T}} \ar@/^2pc/[rr]^{(-)_K}
& \Sp \ar[r]^{(-)_{K}}
& \Sp
\\
\Top_* \ar[ur]^{\Phi_T}  \ar[urr]_{\Phi}
}
$$
The completed unstable $v_h$-periodic homotopy groups of $X$ are the homotopy groups of $\Phi_T(X)$:
$$ v_h^{-1}\pi_*(X) \cong \pi_* \Phi(X)^{\wedge}.  $$
If the telescope conjecture for height $h$ is true, then the functors $\Phi_T$ and  $\Phi$ are equivalent.  

See \cite{Kuhn} for a detailed summary of the construction and
properties of these functors.   The main additional property we will
need is that $\Phi$ commutes with finite homotopy limits \cite{Bousfield2}, and thus in
particular $\Phi\Omega\rightarrow \Omega\Phi$ is a natural weak equivalence.  

Applying $\Phi$ to the unit of the adjunction
$$ X \rightarrow \Omega^\infty \Sigma^\infty X, $$
we get a natural transformation
$$ \eta_X: \Phi(X) \rightarrow (\Sigma^\infty X)_{K}. $$

\subsection*{The comparison map.}

Let $R$ be a commutative $S$-algebra, and
consider the functor 
$$ R^{(-)_+}: \Top^{op}_* \rightarrow \Alg_R \downarrow R. $$
Here, the $R$-algebra structure on $R^{X_+}$ comes from the diagonal
on $X$, with unit given by the map $X \rightarrow \ast$, and
augmentation coming from the basepoint on $X$.

The augmentation ideal $I(R^{X_+})$ is identified with
$R^X$, the
$R$-module of maps from  $\Sigma^\infty X$ to
$R$.  As the functor $\Top^{op}_*\rightarrow \Alg_R^{nu}$ given by
$X\mapsto R^X$ is a pointed homotopy functor, 
there is are natural transformations 
$$ S^n \otimes  R^{X} \rightarrow R^{\Omega^n X}.  $$

Assume that $R$ is $K$-local.  We define a natural transformation
\[
c_R\colon \TAQ^R(R^{X_+}) \rightarrow R^{\Phi(X)}
\]
of functors $\Top_*^{op}\rightarrow \Mod_R$
as follows (using (\ref{eq:BasterraMcCarthy})):  
\begin{align*}
c_R: \TAQ^R(R^{X_+}) & \simeq \hocolim_n \Omega^n (S^n \otimes R^X) \\
& \rightarrow \hocolim_n \Omega^n R^{\Omega^n X} \\
& \simeq \hocolim_n \Omega^n R^{(\Sigma^\infty \Omega^n X)_K} \\
& \xrightarrow{\eta_{\Omega^n X}^*} \hocolim_n \Omega^n R^{\Phi (\Omega^n X)} \\
& \simeq \hocolim_n \Omega^n R^{\Sigma^{-n} \Phi (X)} \\
& \simeq R^{\Phi(X)}.
\end{align*}
Taking the $R$-linear dual of $c_R$ and composing with the evident map
$R\wedge \Phi(X) \rightarrow \Hom_R(R^{\Phi(X)}, R)$ gives a natural transformation
$$ c^R: (R \wedge \Phi(X))_{K} \rightarrow \TAQ_R(R^{X_+}). $$ 
We shall refer to $c_R$ and $c^R$ as the \emph{comparison maps}.


\subsection*{The comparison map on infinite loop spaces.}

Let $Y$ be a spectrum.  The counit of the adjunction
$$ \epsilon: \Sigma^\infty \Omega^\infty \rightarrow \Id $$
induces a natural transformation
$$ \epsilon^* : S_K^Y \rightarrow S_K^{\Omega^\infty Y}. $$
Regarding $S_K^{\Omega^\infty Y}$ as a non-unital commutative $S_K$-algebra, this induces a map of augmented commutative $S_K$-algebras
$$ \td{\epsilon}^* : \PP_{S_K} S_K^{Y} \rightarrow S_K^{\Omega^\infty Y_+}. $$

The following property of $c_{S_K}\colon \TAQ^{S_K}(S_K^{\Omega^\infty
  Y_+})\rightarrow S_K^{\Phi(\Omega^\infty Y)}$ will be all that we need to know about it. 

\begin{lem}\label{lem:omegainfty}
The composite
$$ S_K^Y \simeq \TAQ^{S_K}(\PP_{S_K} S_K^Y) \xrightarrow{\TAQ^{S_K}(\td{\epsilon}^*)} 
\TAQ^{S_K}(S_K^{\Omega^\infty Y_+})
\xrightarrow{c_{S_K}} S_K^{\Phi(\Omega^\infty Y)} \simeq S_K^Y $$
is the identity.
\end{lem}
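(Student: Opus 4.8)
The plan is to unwind both the definition of the equivalence $S_K^Y \simeq \TAQ^{S_K}(\PP_{S_K} S_K^Y)$ and the explicit $\hocolim$-formula for $c_{S_K}$, and to check that all the way around the composite is induced by the counit–unit triangle identity for the $(\Sigma^\infty, \Omega^\infty)$ adjunction. The key observation is that the equivalence $S_K^Y \simeq \TAQ^{S_K}(\PP_{S_K} S_K^Y)$ is the stabilization equivalence applied to a free algebra: for any $R$-module $N$, one has $\TAQ^R(\PP_R N) \simeq N$ naturally, because $I(\PP_R N) = \td{\PP}_R N$, the stabilization of the free non-unital algebra monad is the identity (its linear part is the identity), and the Basterra--McCarthy formula $\TAQ^R(A) \simeq \hocolim_n \Omega^n(S^n \otimes IA)$ applied to $IA = \td{\PP}_R N$ picks out exactly the linear summand $N$. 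Here $N = S_K^Y$.

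First I would record, for the free algebra $\PP_{S_K} S_K^Y$, that the map $S^n \otimes I(\PP_{S_K} S_K^Y) \to S^n \otimes \br{I(\PP_{S_K} S_K^Y)}$ collapsing onto the trivial (square-zero) algebra — equivalently projecting $\td{\PP}_{S_K} S_K^Y$ onto its weight-$1$ summand $S_K^Y$ — realizes the stabilization equivalence, so that under $\TAQ^{S_K}(\PP_{S_K} S_K^Y) \simeq \hocolim_n \Omega^n(S^n \otimes S_K^Y) \simeq S_K^Y$ the identification is the evident one. Next I would trace $\TAQ^{S_K}(\td\epsilon^*)$: since $\td\epsilon^*$ is the algebra map extending $\epsilon^*\colon S_K^Y \to S_K^{\Omega^\infty Y}$ (the map on augmentation ideals being $\td{\PP}_{S_K}$ applied to $\epsilon^*$ followed by the algebra structure map of $S_K^{\Omega^\infty Y_+}$), naturality of the Basterra--McCarthy stabilization formula identifies $\TAQ^{S_K}(\td\epsilon^*)$ with the map $\hocolim_n \Omega^n(S^n\otimes S_K^Y) \to \hocolim_n \Omega^n(S^n \otimes S_K^{\Omega^\infty Y})$ induced by $\epsilon^*$, i.e. with $\epsilon^*$ itself after the identifications $\TAQ^{S_K}(\PP_{S_K}S_K^Y)\simeq S_K^Y$ on the source.

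Then I would substitute into the definition of $c_{S_K}$. With $X = \Omega^\infty Y$, the composite $c_{S_K}\circ \TAQ^{S_K}(\td\epsilon^*)$, read off the displayed chain defining $c_R$, becomes: start from $S_K^Y$, map via $\epsilon^*$ to $\hocolim_n \Omega^n S_K^{\Omega^n \Omega^\infty Y}$, then apply $\eta_{\Omega^n\Omega^\infty Y}^*$, then use $\Phi(\Omega^n\Omega^\infty Y)\simeq \Sigma^{-n}\Phi(\Omega^\infty Y) \simeq \Sigma^{-n} Y_K$ and the $\hocolim_n \Omega^n \Sigma^{-n}$ collapse. The point is that $\eta_{\Omega^\infty Y}\colon \Phi(\Omega^\infty Y)\to (\Sigma^\infty\Omega^\infty Y)_K$ is, by construction of the Bousfield--Kuhn functor, the map whose composite with $\Phi$ applied to the unit $\Omega^\infty Y \to \Omega^\infty\Sigma^\infty\Omega^\infty Y$ together with the natural equivalence $\Phi(\Omega^\infty(-))\simeq (-)_K$ recovers $(\Sigma^\infty\Omega^\infty Y)_K \xrightarrow{\epsilon_K} Y_K$ — i.e. $\eta_{\Omega^\infty Y}$ is (up to the identification $\Phi\Omega^\infty\simeq(-)_K$) the localized counit $\epsilon$. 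Dualizing, $\eta_{\Omega^\infty Y}^*$ composed with $\epsilon^*$ is the identity on $S_K^Y$ by the triangle identity $\epsilon\circ\Sigma^\infty\eta = \mathrm{id}$ of the $(\Sigma^\infty,\Omega^\infty)$-adjunction. Carrying this through the $\hocolim_n$ (using that everything in sight is natural in the loop variable and compatible with the structure maps of the telescopes) collapses the whole composite to the identity.

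The main obstacle I anticipate is bookkeeping rather than conceptual: one must make sure that the several "evident" equivalences — the Basterra--McCarthy stabilization identification for a \emph{free} algebra, the identification $\Phi(\Omega^n\Omega^\infty Y)\simeq \Sigma^{-n}Y_K$ as $n$ varies, and the $\hocolim_n\Omega^n\Sigma^{-n}$ telescope collapse — are mutually compatible and match up strictly enough (i.e. on the level of the filtered homotopy colimit, not just in each degree $n$) for the triangle identity to be applied coherently. Concretely this means checking that the structure maps $\Omega^n(S^n\otimes S_K^Y)\to \Omega^{n+1}(S^{n+1}\otimes S_K^Y)$ are carried, under all the identifications, to the corresponding structure maps on the $\Phi$-side, so that the componentwise triangle identities assemble to an identity of the colimit. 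Once that compatibility is in place, the result follows.
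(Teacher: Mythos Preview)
Your approach is essentially the paper's: unwind the Basterra--McCarthy stabilization formula for $c_{S_K}$, identify $\TAQ^{S_K}(\PP_{S_K}S_K^Y)\simeq S_K^Y$ via the free-algebra case, and reduce the whole composite to $\eta^*_{\Omega^\infty Y}\circ\epsilon^*$; the paper records exactly this reduction as one large commutative diagram and then invokes the identity $\eta^*_{\Omega^\infty Y}\circ\epsilon^*\simeq\Id$, citing \cite[Sec.~7]{Kuhn} rather than rederiving it.

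One slip to fix in your write-up. You claim $\eta_{\Omega^\infty Y}$ ``is (up to the identification $\Phi\Omega^\infty\simeq(-)_K$) the localized counit $\epsilon$'' and then invoke the triangle identity $\epsilon\circ\Sigma^\infty\eta=\mathrm{id}$. Neither is quite right: under $\Phi\Omega^\infty\simeq(-)_K$, the map $\eta_{\Omega^\infty Y}\colon Y_K\to(\Sigma^\infty\Omega^\infty Y)_K$ goes the \emph{opposite} way from $\epsilon_K$; what you need is that it is a \emph{section} of $\epsilon_K$. This follows from the \emph{other} triangle identity, $\Omega^\infty\epsilon_Y\circ\eta^{\mathrm{adj}}_{\Omega^\infty Y}=\mathrm{id}_{\Omega^\infty Y}$: apply $\Phi$, use naturality of the equivalence $\Phi\Omega^\infty\simeq(-)_K$, and recall that $\eta_X$ is by definition $\Phi$ of the adjunction unit. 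Dualizing then gives $\eta^*_{\Omega^\infty Y}\circ\epsilon^*=\mathrm{id}$ as required. The ``bookkeeping'' you flag is exactly what the paper's diagram handles; once you correct the triangle-identity step, your argument goes through.
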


\begin{proof}
The lemma is proved by the commutativity of the following diagram
$$
\xymatrix{
S^Y 
\ar `l[dddd] `[dddd]_{\epsilon^*} [ddddr]
\ar@{^{(}->}[d] \ar[r]^-{\simeq} &
\TAQ^{S_K}(\PP_{S_K} S_K^{Y}) \ar[r]^{\td{\epsilon}^*} \ar@{=}[d] &
\TAQ^{S_K}(S_K^{\Omega^\infty Y_+}) \ar@{=}[d] 
\ar `r[dddd] `[dddd]^{c_{S_K}} [dddd] 
\\
\PP_{S_K} S_K^Y \ar[r] \ar[dr] \ar[dddr]_{\td{\epsilon}^*} &
\varinjlim \Omega^n(S^n \otimes \td{\PP}_{S_K} S_K^Y) \ar[r]_{\td{\epsilon}^*} \ar[d] &
\varinjlim \Omega^n(S^n \otimes S_K^{\Omega^\infty Y}) \ar[d]
\\
& \varinjlim \Omega^n(\td{\PP}_{S_K} S_K^{\Sigma^{-n} Y}) \ar[r]_{\td{\epsilon}^*} &
\varinjlim \Omega^n(S_K^{\Omega^\infty \Sigma^{-n} Y}) \ar[d]^{\eta_{\Omega^\infty \Sigma^{-n} Y}^*}
\\
&& \varinjlim \Omega^n(S_K^{\Sigma^{-n} Y}) 
\\
& S_K^{\Omega^\infty Y} \ar[ruu] \ar[r]_{\eta^*_{\Omega^\infty Y}} &
S_K^Y \ar[u]_{\simeq} 
}
$$
together with the fact that $\eta^*_{\Omega^\infty Y} \circ \epsilon^* \simeq \Id$ \cite[Sec.~7]{Kuhn}.
\end{proof}

\begin{rmk}
Since the the previous lemma implies that the comparison map $c^R$ is the inclusion of a wedge summand for $K$-locally dualizable infinite loop spaces $Y$, it provides an interesting mechanism for computing the effect of the Bousfield-Kuhn functors on maps between infinite loop spaces using $\TAQ$, even if we do not know the comparison map is an equivalence for these spaces (see \cite{Wangmono}, where this observation is employed to give a determination of the effect of the James-Hopf map on $E_*$-homology which is independent of Theorem~\ref{thm:modularkoszuldiffs}).  
\end{rmk}

\subsection*{The comparison map on $QX$.}

The previous lemma allows us to deduce the following.

\begin{prop}\label{prop:comparisonQ}
There is a non-negative integer $N$  such that for all pointed spaces of the form $X = \Sigma^N X'$ with finite free $E$-homology, the comparison map for $QX$
$$ c_{S_K} : \TAQ^{S_K}(S_K^{QX_+}) \rightarrow S_K^{\Phi QX} \simeq S_K^{X} $$
is an equivalence.
\end{prop}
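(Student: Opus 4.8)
The plan is to exhibit $c_{S_K}$ as a split epimorphism whose target is no larger, in Morava $K$-homology, than its source; since both $\TAQ^{S_K}(S_K^{QX_+})$ and $S_K^X$ are $K$-local, any $K_*$-isomorphism between them is an equivalence, and a split surjection of free graded $K_*$-modules of the same finite rank is an isomorphism.

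\emph{Reduction via Lemma~\ref{lem:omegainfty}.} Take $Y=\Sigma^\infty X$, so that $QX=\Omega^\infty Y$ and $\Phi(QX)\simeq Y_K=(\Sigma^\infty X)_K$, whence $S_K^{\Phi QX}\simeq S_K^X$. Lemma~\ref{lem:omegainfty} says precisely that $c_{S_K}$ for $QX$ admits a section, namely the composite $S_K^X\simeq\TAQ^{S_K}(\PP_{S_K}S_K^X)\xrightarrow{\TAQ^{S_K}(\td{\epsilon}^*)}\TAQ^{S_K}(S_K^{QX_+})$. In particular $c_{S_K}$ is a split epimorphism in the homotopy category, so it induces a split surjection $K_*\TAQ^{S_K}(S_K^{QX_+})\twoheadrightarrow K_*S_K^X$. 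Because $X$ is a finite complex, $K_*S_K^X\cong\td{K}^{-*}X$ is a free graded $K_*$-module of finite rank. Hence it suffices to show that $K_*\TAQ^{S_K}(S_K^{QX_+})$ is a free $K_*$-module of the same finite rank, i.e.\ abstractly isomorphic to $\td{K}^{-*}X$.

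\emph{The computation — the main obstacle.} To compute $K_*\TAQ^{S_K}(S_K^{QX_+})$ I would use the Basterra spectral sequence of Proposition~\ref{prop:BSS},
$$ E^2_{s,*}=\mb{L}_s\Omega^*_{\TT/E_*}(E_*S_K^{QX_+};K_0)\ \Longrightarrow\ K_{s+*}\TAQ^{S_K}(S_K^{QX_+}), $$
which requires $E_*S_K^{QX_+}\cong E^{-*}(QX_+)$ (completed $E$-cohomology) to be flat over $E_*$; this holds because $E^*(QX_+)$ is assembled from the finite free modules $E^*(B\Sigma_{p^k})$ and $E_*$ is Noetherian. The heart of the matter is identifying the $E^2$-term, and this is exactly the technical input relegated to Appendix~\ref{apx:norm}: for $N$ sufficiently large and $X=\Sigma^N X'$ with finite free $E$-homology, the map $\td{\epsilon}^*$ exhibits $E^*(QX_+)$ as the completed free $\TT$-algebra $\widehat{\TT}(\td{E}^*X)$ on the free $E_*$-module $\td{E}^*X$, in the sense of \eqref{eq:Tsym}--\eqref{eq:That} and Lemma~\ref{eq:completealgmodel}. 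Granting this, the Grothendieck spectral sequence of Proposition~\ref{prop:GSS}, together with the completed analogue of Lemma~\ref{lem:T-indecomposable-of-free} and the fact that $\widehat{\Delta}^q\otimes_{E_0}\td{E}^{-q}X$ is $\Tor$-acyclic as a left $\Delta^q$-module (using that $\Delta^q$ is Noetherian by Theorem~\ref{thm:RezkKoszul}, so its grading-ideal completion $\widehat{\Delta}^q$ is flat over $\Delta^q$), shows that $\mb{L}_s\Omega^q_{\TT/E_*}(E^*(QX_+);K_0)$ is concentrated in degree $s=0$, where it is $\td{E}^{-q}X\otimes_{E_0}K_0=\td{K}^{-q}X$ — the completed version of the collapse in Corollary~\ref{cor:edge} and Lemma~\ref{lem:quasi-isomorphism}. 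I expect this identification of $E^*(QX_+)$ over the Dyer--Lashof algebra, rather than any formal manipulation, to be the genuine obstacle.

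\emph{Conclusion.} The Basterra spectral sequence then collapses at $E^2$ and yields $K_*\TAQ^{S_K}(S_K^{QX_+})\cong\td{K}^{-*}X$, free over $K_*$ of the same finite rank as $K_*S_K^X$. Combined with the first step, that $c_{S_K}$ is a split surjection on $K_*$, this forces $c_{S_K}$ to be a $K_*$-isomorphism, hence an equivalence of $K$-local spectra. The integer $N$ is precisely the connectivity bound needed in Appendix~\ref{apx:norm} for $E^*(QX_+)$ to be free over $\TT$; the delicate point — and the source of the error in earlier versions of this paper — is that for $X$ not a sufficiently high suspension the $\Delta^*$-algebra structure on $E^*(QX_+)$ is more subtle, and it is the resolution of this that makes the argument work.
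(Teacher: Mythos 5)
Your proposal is correct and follows essentially the same route as the paper's proof: Lemma~\ref{lem:omegainfty} makes $c_{S_K}$ a split epimorphism, the Basterra spectral sequence of Proposition~\ref{prop:BSS} together with the mod-$\mf{m}$ identification of $E_*S_K^{QX_+}$ with $\widehat{\TT}\td{E}^*X$ from Appendix~\ref{apx:norm} and the collapse of the Grothendieck spectral sequence yields $K_*\TAQ^{S_K}(S_K^{QX_+})\cong\td{K}^*X$, and comparing ranks forces the equivalence. You also correctly locate the genuine difficulty in the $\Delta^*$-algebra structure of $E^*(QX_+)$ for insufficiently high suspensions; the only caveat is that Lemma~\ref{lem:keylemma} gives an isomorphism of $\TT$-algebras only mod $\mf{m}$, which suffices because the Basterra $E^2$-term depends only on the mod-$\mf{m}$ bar construction.
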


\begin{proof}
We will argue that $\TAQ^{S_K}(S_K^{QX_+})$ has finite $K$-homology, of rank equal to the rank of the $K$-homology of  $S_K^{X}$.  
The proposition then follows from Lemma~\ref{lem:omegainfty}.  

We will use the Basterra spectral sequence (Prop.~\ref{prop:BSS}):
\begin{equation}\label{eq:BSS}
E^2_{s,*} = \mb{L}_s \Omega^*_{\TT/E_*}(E_* S_K^{QX_+};K_0) \Rightarrow K_{s+*} \TAQ^{S_K}(S_K^{QX_+}).
\end{equation}
The calculation of this $E^2$ term requires a thorough understanding of the $E$-homology of $S_K^{QX_+}$, as a $\TT$-algebra, modulo the maximal ideal $\mf{m}$.  This is the subject of Appendix~\ref{apx:norm}.  We shall freely refer to results in this appendix for the remainder of this proof.

We first note that (\ref{eq:SKQX}) and Corollary~\ref{cor:flat} imply that $E_*S_K^{QX_+}$ satisfies the flatness hypotheses required by Proposition~\ref{prop:BSS} for the $E_2$-term to take the desired form.
The main result of Appendix~\ref{apx:norm} is Lemma~\ref{lem:keylemma}, which implies that for $N$ sufficiently large, there is a simplicial isomorphism of bar constructions
$$ B_\bullet (\mr{Id}, \bar{\TT}, E_* S_K^{QX})/\mf{m} \cong B_\bullet (\mr{Id}, \bar{\TT}, \widehat{\TT} \td{E}^*X)/\mf{m} $$
where $\widehat{\TT}$ is the functor (\ref{eq:Thatdef}).
Therefore there is an isomorphism
$$
\mb{L}_s \Omega^*_{\TT/E_*}(E_* S_K^{QX_+};K_0) \cong
\mb{L}_s \Omega^*_{\TT/E_*}(\widehat{\TT} \td{E}^* X;K_0).
$$
The Grothendieck spectral sequence of Proposition~\ref{prop:GSS}
$$
E^2_{s,t} = \Tor^{\Delta^*}_s(\bar{K}_0, \LL_t V^*(\widehat{\TT} \td{E}^* X)) \Rightarrow \LL_{s+t} \Omega^*_{\TT/E_*} (\widehat{\TT} \td{E}^* X;K_0)
$$
collapses to give 
$$ \LL_s \Omega^*_{\TT/E_*}(\widehat{\TT} \td{E}^* X;K_0)
\cong 
\begin{cases}
\td{K}^* X,  & s = 0, \\
0, & s > 0.
\end{cases}
$$
We conclude that spectral sequence (\ref{eq:BSS}) converges and collapses to give an isomorphism
$$ K_* \TAQ^{S_K}(S^{QX_+}_K) \cong \td{K}^* X. $$
\end{proof}

\begin{rmk}
The authors would like to believe that Proposition~\ref{prop:comparisonQ} is an equivalence for all connected $X$ with finite free $E$-homology.
Ideally, some kind of weak convergence of the $K$-based cohomological Eilenberg-Moore spectral sequence for the path-loop fibration for $Q\Sigma^N X'$ would give a $K$-local equivalence
$$ S_K \wedge_{S_K^{Q\Sigma^{N} X'_+}} S_K \xrightarrow{\simeq} S_K^{Q\Sigma^{N-1} X'_+}. $$
It would then follow that there is a $K$-local equivalence
$$ \Sigma \TAQ^{S_K}(S_K^{Q\Sigma^{N} X'_+}) \simeq \TAQ^{S_K}(S_K^{Q\Sigma^{N-1}X'_+}). $$
The general result would then follow from downward induction on $N$.
\end{rmk}

\section{Weiss towers}\label{sec:Weiss}

In this section we freely use the language of Weiss's orthogonal calculus \cite{Weiss}.

\begin{defn}
Let $F$ be a reduced homotopy functor from complex vector spaces to $K$-local spectra.  We shall say that a tower
$$ \cdots \rightarrow F_n \rightarrow F_{n-1} \rightarrow \cdots \rightarrow F_1 $$
of functors under $F$ is a \emph{finite $K$-local Weiss tower} if 
\begin{enumerate}
\item the fiber of
$ F_n \rightarrow F_{n-1} $
is equivalent to the $K$-localization of a homogeneous degree $n$ functor from complex vector spaces to spectra, and 
\item The map $F \rightarrow F_n$ is an equivalence for $n \gg 0$.
\end{enumerate}
\end{defn}

\begin{rmk}
Suppose that $\{F_n \}$ is a finite $K$-local Weiss tower for $F$.  We record the following observations.
\begin{enumerate}
\item The functor $F_n$ is $n$-excisive.  This is because the localization of a homogeneous degree $n$ functor is $n$-excisive.  
\item If $\{G_n\}$ is a finite $K$-local Weiss tower for $G$, and $F \rightarrow G$ is a natural transformation, there is a homotopically unique induced compatible system of natural transformations
$$ F_n \rightarrow G_n. $$
This is because if $D_n$ is a homogeneous degree $n$ functor which is $K$-locally equivalent to the fiber $F_n \rightarrow F_{n-1}$, the space of natural transformations
$$ \mbf{Nat}((D_n)_K, G_m) \simeq \mbf{Nat}( D_n, G_m) $$
is contractible for $m < n$.  It follows that the natural map
$$ \mbf{Nat}(F_m, G_m) \xrightarrow{\simeq} \mbf{Nat}(F, G_m) $$
is an equivalence.
\item It follows from (2) that if $F$ admits a finite $K$-local Weiss tower, such a tower is homotopically unique.
\end{enumerate}
\end{rmk}

We will construct finite $K$-local Weiss towers of the following functors from complex vector spaces to spectra:
\begin{align*}
 V & \mapsto \Phi(\Sigma S^{V}), \\
 V & \mapsto \TAQ_{S_K}(S_K^{(\Sigma S^V)_+}).
\end{align*}
In each of these cases, the towers will only have non-trivial layers in degrees $p^k$ for $k \le h$.

\begin{prop}\label{prop:Weiss1}
The tower $\{ \Phi(P_{n}(\Id)(\Sigma S^V)) \}_n$ is a finite $K$-local Weiss tower for $\Phi(\Sigma S^V)$.
\end{prop}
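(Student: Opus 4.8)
The plan is to verify the two defining properties of a finite $K$-local Weiss tower directly for the tower $\{\Phi(P_n(\Id)(\Sigma S^V))\}_n$, using the interaction of the Bousfield--Kuhn functor with the Goodwillie tower of the identity. First I would recall that $V \mapsto \Sigma S^V = S^{V \oplus \RR}$ is a reduced homotopy functor from complex vector spaces to pointed spaces (its value at $0$ is $S^1$, which is not a point, but $\Sigma S^V \simeq S^{1+2\dim V}$ is highly connected, and precomposition with the one-point compactification functor turns the Goodwillie tower of $\Id$ into a Weiss tower in $V$). Concretely, $P_n(\Id)(\Sigma S^V)$ is a functor of $V$, and applying $\Phi$ gives a tower $\{\Phi(P_n(\Id)(\Sigma S^V))\}_n$ under $\Phi(\Sigma S^V)$, since $\Phi$ is a functor and $\Sigma S^V \to P_n(\Id)(\Sigma S^V)$ is natural. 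So the tower is well-defined; it remains to identify its layers and prove convergence.

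For property~(1), the fiber of $P_n(\Id) \to P_{n-1}(\Id)$ evaluated at $\Sigma S^V$ is $D_n(\Id)(\Sigma S^V) = \Omega^\infty\bigl(\partial_n(\Id) \wedge (\Sigma S^V)^{\wedge n}\bigr)_{h\Sigma_n} = \Omega^\infty \mbf{D}_n(\Id)(\Sigma S^V)$. Since $\Phi$ commutes with finite homotopy limits (hence with the formation of fibers of maps of towers), and since $\Phi \Omega^\infty \simeq (-)_K \simeq (-)_T$ followed by $(-)_K$, applying $\Phi$ to this fiber yields $\bigl(\mbf{D}_n(\Id)(\Sigma S^V)\bigr)_K$, the $K$-localization of the spectrum-valued functor $V \mapsto \partial_n(\Id) \wedge_{h\Sigma_n} (\Sigma S^V)^{\wedge n} = \partial_n(\Id) \wedge_{h\Sigma_n} S^{n(V\oplus\RR)}$. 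The functor $V \mapsto \partial_n(\Id) \wedge_{h\Sigma_n} S^{nV}$ (up to the fixed suspension $S^{n\RR}$) is a homogeneous degree $n$ functor from complex vector spaces to spectra in the sense of Weiss calculus: it is built from the $\Sigma_n$-spectrum $\partial_n(\Id)$ and the $n$-fold smash of the sphere on $V$, which is exactly the shape of a degree-$n$ homogeneous Weiss functor. Thus the fiber is $K$-locally equivalent to the $K$-localization of a homogeneous degree $n$ functor, giving property~(1). (One also observes that $\partial_{p^k}(\Id) \simeq (\Sigma^\infty|\mc{P}(p^k)_\bullet|)^\vee$ by \eqref{eq:AroneMahowald}, and more to the point that $D_m(\Id)(S^q)$ is $K$-acyclic unless $m = p^k$ with $k \le h$ — this is the Arone--Mahowald vanishing used for \eqref{eq:res} — so the tower has nontrivial layers only in degrees $p^k$, $k \le h$.)

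For property~(2), I need $\Phi(\Sigma S^V) \to \Phi(P_n(\Id)(\Sigma S^V))$ to be an equivalence for $n \gg 0$. This is exactly the content of the Arone--Mahowald/Arone--Dwyer analysis of the $v_h$-periodic Goodwillie tower of the identity on an odd sphere: after applying $\Phi$, the tower for $S^q$ ($q$ odd) is a \emph{finite} tower, convergent at stage $p^h$, because all layers $D_m(\Id)(S^q)$ with $m \ne p^k$ are $v_h$-periodically (indeed $K$-locally) trivial, and the layers with $m = p^k$, $k > h$, are also trivial (the spectra $L(k)_q$ vanish for $k > h$, compare $C[k]_{-q} = 0$ for $k > h$ in Theorem~\ref{thm:RezkKoszul}). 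Since $\Sigma S^V \simeq S^{2\dim V + 1}$ is an odd sphere for every $V$, the map $\Phi(\Sigma S^V) \to \Phi(P_{p^h}(\Id)(\Sigma S^V))$ is an equivalence, and hence $\Phi(\Sigma S^V) \to \Phi(P_n(\Id)(\Sigma S^V))$ is an equivalence for all $n \ge p^h$. The main obstacle here is purely bookkeeping: one must confirm that the cited finiteness/convergence statements, originally proved for a fixed odd sphere, hold with uniform bound $n = p^h$ independent of $V$ (which they do, since the bound depends only on the height $h$, not on the dimension of the sphere) and that the identification is natural in $V$ — this follows from the naturality of the Goodwillie tower and of $\Phi$. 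Combining the two properties gives that $\{\Phi(P_n(\Id)(\Sigma S^V))\}_n$ is a finite $K$-local Weiss tower for $\Phi(\Sigma S^V)$, as claimed. $\qed$
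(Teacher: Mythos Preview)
Your proposal is correct and follows essentially the same approach as the paper: identify the fibers of the tower as $\DD_n(\Id)(\Sigma S^V)_K$ using that $\Phi$ commutes with fibers and satisfies $\Phi\Omega^\infty \simeq (-)_K$, then invoke the Arone--Mahowald/Kuhn convergence result (the paper cites \cite[Thm.~8.9]{Kuhn} directly) to see that $\Phi(\Sigma S^V) \to \Phi(P_{p^h}(\Id)(\Sigma S^V))$ is an equivalence. Your write-up is considerably more expansive than the paper's two-line proof, but the logical skeleton is identical; one minor quibble is that saying ``the spectra $L(k)_q$ vanish for $k>h$'' should be ``are $K$-acyclic,'' though your parenthetical reference to $C[k]_{-q}=0$ makes the intended meaning clear.
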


\begin{proof}
The fibers of the tower $\{ \Phi(P_{n}(\Sigma S^V)) \}_n$ are given by
$$ \DD_{n}(\Id)(\Sigma S^V)_K \rightarrow \Phi(P_{n}(\Id)(\Sigma S^V)) \rightarrow \Phi(P_{n-1}(\Id)(\Sigma S^V)). $$
By \cite[Thm.~8.9]{Kuhn}, the map
$$ \Phi(\Sigma S^V) \rightarrow  \Phi(P_{p^h}(\Id)(\Sigma S^V)) $$
is an equivalence.
\end{proof}

\begin{prop}\label{prop:Weiss2}
The tower $\{F_{n} \TAQ_{S_K}(S_K^{(\Sigma S^V)_+}) \}$ obtained by taking the $K$-local Spanier-Whitehead dual of the Kuhn filtration $\{ F_n \TAQ^{S_K}(S_K^{(\Sigma S^V)_+})\}$ is a finite $K$-local Weiss tower. 
\end{prop}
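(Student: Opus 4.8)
The plan is to verify the two defining conditions of a finite $K$-local Weiss tower for the tower obtained by dualizing the Kuhn filtration. Write $F = \TAQ_{S_K}(S_K^{(\Sigma S^V)_+})$ and $F_n = F_n\TAQ_{S_K}(S_K^{(\Sigma S^V)_+})$, the $K$-local Spanier-Whitehead dual of $F_n\TAQ^{S_K}(S_K^{(\Sigma S^V)_+})$. Since $K$-local Spanier-Whitehead duality is exact, the cofiber sequence $F_{n-1}\TAQ^{S_K} \to F_n\TAQ^{S_K} \to F_n\TAQ^{S_K}/F_{n-1}\TAQ^{S_K}$ of Equation~(\ref{eq:Kuhncofiber}) dualizes to a fiber sequence exhibiting the fiber of $F_n \to F_{n-1}$ as the $K$-local dual of $\abs{\mc{P}(n)_\bullet} \wedge_{h\Sigma_n} I(S_K^{(\Sigma S^V)_+})^{\wedge n}$. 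Now $I(S_K^{(\Sigma S^V)_+}) \simeq S_K^{\Sigma S^V}$, so as a functor of $V$ this is built from $S_K^{n \cdot \Sigma S^V} \simeq S_K^{\Sigma(S^V)^{\wedge n}}$-type data; the point is that $V \mapsto (S^V)^{\wedge n}$ is (the one-point compactification of) the functor $V \mapsto n$ copies of $V$, so $V \mapsto \partial_n(\Id) \wedge_{h\Sigma_n} (\Sigma S^V)^{\wedge n}$, and hence its $K$-localization, is homogeneous of degree $n$ in the sense of Weiss calculus — the dual $S_K$-module functor $V\mapsto F_{S_K}(\abs{\mc P(n)_\bullet}\wedge_{h\Sigma_n}(\Sigma S^V)^{\wedge n}, S_K)$ is likewise homogeneous of degree $n$ (duality of $K$-local spectra preserves the relevant homogeneity). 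This establishes condition~(1).

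For condition~(2), one must show $F \to F_n$ is an equivalence for $n \gg 0$. Dualizing, this is equivalent to showing that the Kuhn filtration $F_n\TAQ^{S_K}(S_K^{(\Sigma S^V)_+})$ is eventually constant ($K$-locally) in $n$; equivalently, that the layers $\abs{\mc{P}(p^k)_\bullet}\wedge_{h\Sigma_{p^k}} I^{\wedge p^k}$ are $K$-acyclic for $k > h$ (and the non-$p^k$ layers vanish identically, since $\abs{\mc P(n)_\bullet}$ is contractible unless $n$ is a power of $p$). The vanishing for $k > h$ should follow from Theorem~\ref{thm:RezkKoszul}, which asserts $C[k]_{-q} = 0$ for $k > h$, together with the $E$-homology computation in Lemma~\ref{lem:calc1} / Theorem~\ref{thm:enlk}: the $E$-homology of the relevant layer evaluated on $\Sigma S^V \simeq S^{q}$ (odd $q$, after desuspending appropriately) is $C[k]^\vee_{-q}$-type, hence zero for $k > h$, and a $K$-local spectrum with trivial $E$-homology is trivial.

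The key steps, in order, are: (i) dualize the cofiber sequence (\ref{eq:Kuhncofiber}) to get the fiber of $F_n \to F_{n-1}$; (ii) identify this fiber, as a functor of $V$, with the $K$-localization of a homogeneous degree $n$ functor, using the extended-power description and the fact that $V\mapsto (S^V)^{\wedge n}$ is the sphere on $n$ copies of $V$; (iii) show the filtration is eventually constant by proving the layers for $k > h$ are $K$-acyclic, via Theorem~\ref{thm:RezkKoszul} and Theorem~\ref{thm:enlk}; (iv) conclude. The main obstacle I expect is step~(ii): carefully matching the combinatorial indexing of the Kuhn filtration (which filters by the number $i$ of smash factors of $I(A)$) with the degree-$n$ homogeneity in the Weiss sense, and checking that $K$-localization and $K$-local duality genuinely preserve Weiss-homogeneity — this requires knowing that a homogeneous degree $n$ functor has the form $V \mapsto (\Theta \wedge S^{nV})_{h\Sigma_n}$ for a spectrum $\Theta$ with $\Sigma_n$-action, and that dualizing and localizing stays within this class. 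One should also be slightly careful that the suspension $\Sigma$ in $\Sigma S^V$ (making the functor reduced with the sphere in odd "dimension" $q=1$) is tracked consistently, so that the odd-sphere hypotheses of Lemma~\ref{lem:calc1} and Theorem~\ref{thm:enlk} actually apply.
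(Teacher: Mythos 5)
Your overall strategy matches the paper's: dualize the cofiber sequences of the Kuhn filtration, identify the fibers with $K$-localizations of homogeneous degree $n$ functors, and kill the layers above $p^h$. But the step you flag as "the main obstacle" in (ii) is exactly the content of the paper's proof, and you do not supply it. The fiber of $F_n \rightarrow F_{n-1}$ is the $K$-local dual of $\abs{\mc{P}(n)_\bullet}\wedge_{h\Sigma_n}(S_K^{\Sigma S^V})^{\wedge n}$, i.e.\ a spectrum of the form $F(\Theta_{h\Sigma_n}, S_K) \simeq (F(\Theta,S_K))^{h\Sigma_n}$ --- a homotopy \emph{fixed point} spectrum. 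A homogeneous degree $n$ functor in Weiss's sense has the form $V \mapsto (\Theta' \wedge S^{nV})_{h\Sigma_n}$, a homotopy \emph{orbit} spectrum, and the assertion that "duality of $K$-local spectra preserves the relevant homogeneity" is not true for formal reasons: the dual of a homogeneous functor is a priori cohomogeneous. The missing input is the vanishing of $K$-local Tate constructions for finite groups (Kuhn, \cite{KuhnTate}), which applies here because $\partial_n(\Id) \simeq \abs{\mc{P}(n)_\bullet}^\vee$ is a finite spectrum; it makes the norm map $((\partial_n(\Id)\wedge S^n\wedge S^{nV})_{h\Sigma_n})_K \rightarrow (((\partial_n(\Id)\wedge S^n\wedge S^{nV})_K)^{h\Sigma_n})_K$ an equivalence and lets you pass the dual inside the homotopy orbits, turning $S^{-n(V+1)}$ back into $S^{+n(V+1)}$ (using that $I(S_K^{(\Sigma S^V)_+}) \simeq S_K^{\Sigma S^V}$ is already a dual). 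Without this your condition (1) is unproved.

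Two smaller points. First, your claim that $\abs{\mc{P}(n)_\bullet}$ is contractible unless $n$ is a power of $p$ is false as stated (the partition complex is a wedge of $(n-2)$-spheres); the correct statement, due to Arone--Mahowald, is that the layers $\DD_n(\Id)(S^q)_K$ vanish unless $n$ is a power of $p$, and this (together with the vanishing for $n > p^h$) is what the paper cites for condition (2). Second, your alternative derivation of the vanishing for $k>h$ from Theorem~\ref{thm:enlk} and Theorem~\ref{thm:RezkKoszul} (using that $C[k]_{-q}=0$ for $k>h$ and that $\Sigma S^V$ is an odd sphere) is legitimate and non-circular, since those results are established before Section~\ref{sec:Weiss}; it is just a different route to the same fact. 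You should also say explicitly that $\TAQ^{S_K}$ is the colimit of the Kuhn filtration, so that its dual is the inverse limit of your tower and eventual constancy of the tower gives condition (2).
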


\begin{proof}
By \ref{eq:Kuhncofiber}, the fibers of the tower are given by
\begin{align*}
F(\partial_n(\Id)^\vee \wedge_{h\Sigma_n} (S_K^{\Sigma S^V})^{\wedge_{S_K} n}, S_K)
& \simeq F(\partial_n(\Id)^\vee \wedge_{h\Sigma_n} (S^{\Sigma S^V})^{n}, S_K)
\\
&  \simeq F(\partial_n(\Id)^\vee \wedge (S^{\Sigma S^V})^{\wedge n}, S_K)^{h\Sigma_n} \\
&  \simeq (F(\partial_n(\Id)^\vee \wedge (S^{\Sigma S^V})^{\wedge n}, S_K)^{h\Sigma_n})_K \\
&  \simeq (((\partial_n(\Id) \wedge S^n \wedge S^{nV})_K)^{h\Sigma_n})_K \\
&  \simeq  ((\partial_n(\Id) \wedge S^n \wedge S^{nV})_{h\Sigma_n})_K.
\end{align*}
Thus they are equivalent to $K$-localizations of homogeneous degree $n$ functors.  Since we have
$$ \TAQ^{S_K}(S_K^{(\Sigma V)_+}) \simeq \hocolim_n F_n \TAQ^{S_K}(S_K^{(\Sigma V)_+}),  $$
we have
$$ \TAQ_{S_K}(S_K^{(\Sigma S^V)_+}) \simeq \holim_n F_{n} \TAQ_{S_K}(S_K^{(\Sigma S^V)_+}). $$
Since the layers are equivalent to $\DD_{n}(\Id)(\Sigma S^{V})_K$, they are acyclic for $n > p^h$  \cite{AroneMahowald}.
\end{proof}

\section{The comparison map on odd spheres}\label{sec:main}

Fix $q$ to be an odd positive integer.  The main result of this section is the following theorem.

\begin{thm}\label{thm:main}
The comparison map
$$ c^{S_K}: \Phi(S^q) \rightarrow \TAQ_{S_K}(S_K^{S^q_+}) $$
is an equivalence.
\end{thm}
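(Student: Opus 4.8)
The plan is to prove the theorem via $K$-local Weiss calculus, as indicated in the introduction, by playing the Goodwillie tower of the identity off against the Kuhn filtration. Write $q = 2d+1$ and $S^q = \Sigma S^V$ for $V = \CC^d$, so that the comparison map is the value at $V$ of a natural transformation of functors $V \mapsto \Phi(\Sigma S^V)$ and $V \mapsto \TAQ_{S_K}(S_K^{(\Sigma S^V)_+})$ from complex vector spaces to $K$-local spectra. By Propositions~\ref{prop:Weiss1} and~\ref{prop:Weiss2}, each of these functors carries a finite $K$-local Weiss tower --- namely $\{\Phi(P_n(\Id)(\Sigma S^V))\}$ and the $K$-local Spanier--Whitehead dual of the Kuhn filtration --- with non-trivial layers only in degrees $n = p^k$ for $0 \le k \le h$. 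By the uniqueness and functoriality of finite $K$-local Weiss towers recorded in \S\ref{sec:Weiss}, the comparison map lifts, uniquely up to homotopy, to a compatible map of these towers. Since both towers are finite, it suffices to show that the induced map on $n$th layers is an equivalence for every $n = p^k$; evaluating the resulting equivalence of towers at $V = \CC^d$ then gives the theorem.

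The first task is to identify the layers. On the source side the $n$th layer is $\Phi(D_n(\Id)(\Sigma S^V)) \simeq (\mb{D}_n(\Id)(\Sigma S^V))_K$, using that $D_n(\Id)(\Sigma S^V) = \Omega^\infty \mb{D}_n(\Id)(\Sigma S^V)$ is an infinite loop space. On the target side, the computation in the proof of Proposition~\ref{prop:Weiss2} identifies the $n$th layer with $((\partial_n(\Id) \wedge S^n \wedge S^{nV})_{h\Sigma_n})_K$, which by the very definition of $\mb{D}_n(\Id)$ (and (\ref{eq:AroneMahowald})) is again $(\mb{D}_n(\Id)(\Sigma S^V))_K$. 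Thus the induced map on $n$th layers is a self-map of this homogeneous degree-$n$ functor of $V$. Moreover, for $n = p^k$, the Arone--Dwyer equivalence (\ref{eq:ADI}) identifies $\mb{D}_{p^k}(\Id)(\Sigma S^V)$ $p$-locally with $\Sigma^{q-k} L(k)_q$, so by Theorem~\ref{thm:enlk} its completed $E$-homology is free of an explicitly known finite rank.

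The crux is to show that this layer self-map of $(\mb{D}_{p^k}(\Id)(\Sigma S^V))_K$ is an equivalence; since homogeneous functors are detected on highly connected inputs, it is enough to treat $\dim V$ large. Here I would use the infinite loop space analysis of \S\ref{sec:comparison}: by Lemma~\ref{lem:omegainfty} and the remark following it, the comparison map on a $K$-locally dualizable infinite loop space is a split inclusion; and for $\dim V$ large, (\ref{eq:ADI}) together with the description $L(k)_q = \epsilon_{st}(B\FF_p^k)^{q\bar{\rho}_k}$ exhibits $\mb{D}_{p^k}(\Id)(\Sigma S^V)$ as a wedge summand of $\Sigma^\infty W$ for a suitable highly connected $W = \Sigma^N W'$ with finite free $E$-homology, so that the comparison map for $D_{p^k}(\Id)(\Sigma S^V) = \Omega^\infty \mb{D}_{p^k}(\Id)(\Sigma S^V)$ is a retract of the comparison map for the free infinite loop space $QW$, which is an equivalence by Proposition~\ref{prop:comparisonQ}. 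Combining the split-injection statement with the rank count of the previous paragraph --- a split injection of $K$-locally dualizable spectra whose completed $E$-homologies have equal finite rank is an equivalence --- forces the layer map to be an equivalence. I expect the main obstacle to be exactly this last step: carefully matching the layer of the map of towers for $\Sigma S^V$ with the comparison map on the layer space $D_{p^k}(\Id)(\Sigma S^V)$ --- that is, verifying compatibility of the comparison map with the two Weiss-tower filtrations --- together with the substantial input on the Morava $E$-cohomology of $QX$ from Appendix~\ref{apx:norm} on which Proposition~\ref{prop:comparisonQ} rests.
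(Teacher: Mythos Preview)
Your overall architecture --- lift $c^{S_K}$ to a map of finite $K$-local Weiss towers and check it on layers --- matches the paper.  The gap is in the crucial step where you try to show the induced map on the $p^k$th layer is an equivalence.

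The map on layers is the map induced by $c^{S_K}\colon \Phi(\Sigma S^V)\to \TAQ_{S_K}(S_K^{(\Sigma S^V)_+})$ after passing to Weiss towers; it is \emph{not} the comparison map $c^{S_K}$ evaluated at the space $D_{p^k}(\Id)(\Sigma S^V)$.  Your argument analyzes the latter (via the retract of $QW$ and Proposition~\ref{prop:comparisonQ}), which indeed shows that $c^{S_K}$ applied to that infinite loop space is an equivalence --- but that says nothing a priori about the layer map you actually need.  You flag this yourself as ``the main obstacle,'' and it is: there is no evident natural identification between these two maps, and your proposal does not supply one.

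The paper avoids this identification problem entirely.  Rather than working layer by layer, it produces a section of the \emph{entire} comparison map $c_{S_K}$ on $\Sigma S^V$ (for $\dim V\gg 0$) using the Bousfield--Kan cosimplicial resolution $\Sigma S^V \to Q^{\bullet+1}\Sigma S^V$.  By the iterated Snaith splitting, each $P_{p^h}(Q^{s+1})(\Sigma S^V)$ is a free infinite loop space $QW_s$ with $W_s$ a highly connected space of finite free $E$-homology, so Proposition~\ref{prop:comparisonQ} makes $c_{S_K}$ a \emph{levelwise} equivalence on this cosimplicial object; an induction up the Goodwillie tower then shows the geometric realization maps onto $S_K^{\Phi(\Sigma S^V)}$, and this factorization is the desired section (Lemma~\ref{lem:section}).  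Dualizing (using Proposition~\ref{eq:dualizable}) gives a retraction $\Phi(\Sigma S^V)\xrightarrow{c^{S_K}}\TAQ_{S_K}\to\Phi(\Sigma S^V)$ of functors, hence of the Weiss towers.  Now the layer map is a split monomorphism \emph{because it is the layer of a retraction of towers}, not because of a separate analysis of layer spaces; the equal-finite-rank argument then finishes as you say.  The moral: use Proposition~\ref{prop:comparisonQ} on the cosimplicial $Q$-resolution of $\Sigma S^V$ itself, not on the individual Goodwillie layers.
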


We shall begin with its dual, and establish the following weaker statement.

\begin{lem}\label{lem:section}
For $\mr{dim} V \gg 0$, the natural transformation
$$ c_{S_K}: \TAQ^{S_K}(S_K^{(\Sigma S^V)_+}) \rightarrow S_K^{\Phi(\Sigma S^V)}$$
of functors from complex vector spaces to $K$-local spectra has a weak section: there is a natural transformation
$$ s:  S_K^{\Phi(\Sigma S^V)} \rightarrow \TAQ^{S_K}(S_K^{(\Sigma S^V)_+}) $$
so that $c_{S_K} \circ s$ is an equivalence.
\end{lem}

\begin{proof}
Using work of Arone-Mahowald, Kuhn shows that the map
$$ \Phi(X) \rightarrow \Phi(P_{p^h}(\Id)(X)) $$
is an equivalence \cite[Thm.~8.9]{Kuhn}.
Let $X = \Sigma S^V$, and let
$$ X \rightarrow Q^{\bullet+1} X $$
denote the Bousfield-Kan cosimplicial resolution.
Consider the diagram:
\begin{equation}\label{eq:crucialdiagram}
\xymatrix{
\TAQ^{S_K}(S_K^{X_+}) \ar[r]^-{c_{S_K}} &
S_K^{\Phi (X)}
\\ 
& S_K^{\Phi P_{p^h}(\Id)(X)} \ar@{=}[u]
\\
\TAQ^{S_K}(S_K^{P_{p^h}(Q^{\bullet+1})(X)_+}) \ar[uu] \ar[r]^-{c_{S_K}} &
S_K^{\Phi P_{p^h}(Q^{\bullet+1}) X} \ar[u]
}
\end{equation}
It is well known (see \cite{AroneKankaanrinta}) that there is an equivalence of cosimplicial $\Sigma_n$-spectra:
$$ \partial_n(Q^{\bullet+1}) \simeq \Sigma^\infty \mc{P}(n)^\vee_\bullet $$
so that the induced map
$$  \partial_n(\Id) \xrightarrow{\simeq} \Tot \partial_n(Q^{\bullet+1}) \simeq \Tot \Sigma^\infty \mc{P}(n)^\vee_\bullet \simeq  \Sigma^\infty \abs{\mc{P}(n)_\bullet}^\vee $$
is equivalence (\ref{eq:AroneMahowald}).
For a fixed $s$, the iterated Snaith splitting implies that the Goodwillie tower for $Q^{s+1}$ splits, giving equivalences
\begin{align*}
P_{p^h}(Q^{\bullet+1})(X) & \simeq \prod_{1 \le i \le p^h} Q(\mc{P}(i)_s \wedge_{h\Sigma_i} X^{\wedge i}) \\
& \simeq  Q \left( \bigvee_{1 \le i \le p^h} \mc{P}(i)_s \wedge_{h\Sigma_i} X^{\wedge i} \right).
\end{align*}
In particular, for $\mr{dim} V \gg 0$, the spaces above satisfy the hypotheses of Prop.~\ref{prop:comparisonQ}, and the comparison map
$$
\TAQ^{S_K}(S_K^{P_{p^h}(Q^{\bullet+1})(X)_+}) \xrightarrow{c_{S_K}}
S_K^{\Phi P_{p^h}(Q^{\bullet+1}) X}
$$
is a levelwise equivalence of simplicial spectra.
It follows from Diagram~(\ref{eq:crucialdiagram}) that the natural map 
$$ \abs{S_K^{\Phi P_{p^h}(Q^{\bullet+1})(X)}}_K \rightarrow S_K^{\Phi P_{p^h}(\Id)(X)} \simeq S_K^{\Phi(X)} $$ 
factors through $c_{S_K}$:
$$ \abs{S_K^{\Phi(P_{p^h}(Q^{\bullet+1})(X))}}_K \rightarrow \TAQ^{S_K}(S_K^{X_+}) \xrightarrow{c_{S_K}} S^{\Phi(X)}_K \simeq S^{\Phi(P_{p^h}(\Id)(X))}_K. $$
The lemma will be proven if we can show that the natural map
$$ \abs{S_K^{\Phi(P_{p^h}(Q^{\bullet+1})(X))}}_K \rightarrow  S^{\Phi(P_{p^h}(\Id)(X))}_K $$
is an equivalence.  To do this, we will prove that for all $n$ the map
$$ \abs{S_K^{\Phi(P_{n}(Q^{\bullet+1})(X))}}_K \rightarrow  S^{\Phi(P_{n}(\Id)(X))}_K $$
is an equivalence, by induction on $n$.  The map of fiber sequences
$$
\xymatrix{
D_n(\Id)(X) \ar[r] \ar[d] &
P_n(\Id)(X) \ar[r] \ar[d] &
P_{n-1}(\Id)(X) \ar[d]
\\
D_n(Q^{\bullet+1})(X) \ar[r] & 
P_n(Q^{\bullet+1})(X) \ar[r] &
P_{n-1}(Q^{\bullet+1})(X)
}
$$
gives a map of fiber sequences
$$
\xymatrix{
\abs{S_K^{\Phi(P_{n-1}(Q^{\bullet+1})(X))}}_K
 \ar[r] \ar[d] & 
\abs{S_K^{\Phi(P_n(Q^{\bullet+1})(X))}}_K \ar[r] \ar[d]&
\abs{S_K^{\Phi(D_n(Q^{\bullet+1})(X))}}_K \ar[d] 
\\
S_K^{\Phi(P_{n-1}(\Id)(X))} \ar[r] &
S_K^{\Phi(P_n(\Id)(X))} \ar[r] &
S_K^{\Phi(D_n(\Id)(X))}
}
$$
The induction on $n$ therefore rests on proving that the natural map
$$ \abs{S_K^{\DD_n(Q^{\bullet+1})(X)}}_K \simeq \abs{S_K^{\Phi(D_n(Q^{\bullet+1})(X))}}_K \rightarrow S_K^{\Phi(D_n(\Id)(X))} \simeq  S_K^{\DD_n(\Id)(X)} $$
is an equivalence.

Using the finiteness of $X$ and $\mc{P}(n)_\bullet$, together with the vanishing of $K$-local Tate spectra \cite{KuhnTate}, we have the following diagram of equivalences
$$
\xymatrix{
\abs{S_K^{\DD_n(Q^{\bullet+1})(X)}}_K \ar[r] &
S_K^{\DD_n(\Id)(X)} 
\\
\abs{S_K^{\Sigma^\infty \mc{P}(n)_\bullet^\vee \wedge_{h\Sigma_n} X^{\wedge n}}}_K\ar[r] \ar[u]^\simeq &
S_K^{\Sigma^\infty \abs{\mc{P}(n)_\bullet}^\vee \wedge_{h\Sigma_n} X^{\wedge n}} \ar[u]^\simeq
\\
\abs{\left( S_K^{\Sigma^\infty \mc{P}(n)_\bullet^\vee \wedge X^{\wedge n}}\right)^{h\Sigma_n} }_K\ar[r] \ar[u]^\simeq &
\left( S_K^{\Sigma^\infty \abs{\mc{P}(n)_\bullet}^\vee \wedge X^{\wedge n}} \right)^{h\Sigma_n} \ar[u]^\simeq
\\
\abs{\left( S_K^{\Sigma^\infty \mc{P}(n)_\bullet^\vee \wedge X^{\wedge n}}\right)_{h\Sigma_n} }_K\ar[r] \ar[u]^\simeq &
\left\lbrack \left( S_K^{\Sigma^\infty \abs{\mc{P}(n)_\bullet}^\vee \wedge X^{\wedge n}} \right)_{h\Sigma_n} \right\rbrack_K \ar[u]^\simeq
\\
\abs{\left( \Sigma^\infty \mc{P}(n)_\bullet \wedge S_K^{ X^{\wedge n}}\right)_{h\Sigma_n} }_K\ar[r] \ar[u]^\simeq &
\left\lbrack \left( \Sigma^\infty \abs{\mc{P}(n)_\bullet} \wedge S_K^{ X^{\wedge n}} \right)_{h\Sigma_n} \right\rbrack_K \ar[u]^\simeq
}
$$
The bottom arrow in this diagram is an equivalence, since realizations commute past homotopy colimits and smash products.  Therefore the top arrow in the diagram is an equivalence, as desired.
\end{proof}

The final ingredient we will need to prove Theorem~\ref{thm:main} will be a result which will allow us to dualize Lemma~\ref{lem:section}.

\begin{prop}\label{eq:dualizable}
The spectrum $\Phi(S^q)$ is $K$-locally dualizable.
\end{prop}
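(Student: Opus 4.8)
\emph{Proof proposal.}
The plan is to exhibit $\Phi(S^q)$ as finitely built, through a bounded sequence of cofiber sequences, out of the spectra $(L(k)_q)_K$ for $0\le k\le h$, each of which I will first check is $K$-locally dualizable.

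First I would dispose of the layers. By Theorem~\ref{thm:enlk} we have $E^0 L(k)_q \cong C[k]_{-q}$, and by Theorem~\ref{thm:RezkKoszul} the $E_0$-module $C[k]_{-q}$ is finitely generated and free (and zero for $k>h$). Hence $K_* L(k)_q$ is a finite-dimensional module over the residue field, and a $K(h)$-local spectrum with finite Morava $K$-homology is dualizable in the $K$-local category; so each $(L(k)_q)_K$ is $K$-locally dualizable.

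Next I would assemble $\Phi(S^q)$ from these. Since $\Phi$ commutes with finite homotopy limits, applying it to the Goodwillie tower of the identity on $S^q$ yields a finite tower
$$ \Phi(S^q) \simeq \Phi(P_{p^h}(\Id)(S^q)) \to \Phi(P_{p^h-1}(\Id)(S^q)) \to \cdots \to \Phi(P_1(\Id)(S^q)), $$
where the leftmost equivalence is \cite[Thm.~8.9]{Kuhn}, whose successive fibers $\Phi(\mb{D}_n(\Id)(S^q)) \simeq (\mb{D}_n(\Id)(S^q))_K$ are trivial unless $n=p^k$ with $0\le k\le h$, in which case they are $\Sigma^{q-k}(L(k)_q)_K$ by (\ref{eq:ADI}) (this is exactly the finite resolution (\ref{eq:res})). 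Because $K$-locally dualizable spectra form a thick subcategory --- in particular one closed under (de)suspension and under fibers of maps --- a finite induction along this tower, beginning from $\Phi(P_1(\Id)(S^q))\simeq \Sigma^q S_K$, shows that $\Phi(S^q)$ is $K$-locally dualizable.

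I do not anticipate a genuine obstacle here: the substantive inputs --- the finiteness and convergence of the $\Phi$-localized Goodwillie tower (\cite[Thm.~8.9]{Kuhn}, \cite{AroneDwyer}) and the computation of $E^0 L(k)_q$ in Section~\ref{sec:enlk} --- are already in hand, and the remainder is a thick-subcategory bookkeeping argument. Equivalently, one may argue purely on homology: the finite tower furnishes a convergent spectral sequence with input $\bigoplus_{k} E_*(L(k)_q)_K$, which is finitely generated over $E_*$, so $E_*\Phi(S^q)$ is finitely generated over $E_*$, and this again forces $\Phi(S^q)$ to be $K$-locally dualizable.
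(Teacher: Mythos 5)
Your argument is correct and is essentially the paper's own proof: both induct along the finite tower $\{\Phi(P_{p^k}(\Id)(S^q))\}$ obtained from \cite[Thm.~8.9]{Kuhn}, identify the fibers with the $(L(k)_q)_K$, and invoke the computation of $E$-homology of $L(k)_q$ from Theorem~\ref{thm:enlk} together with the finiteness of $C[k]_{-q}$ and the Hovey--Strickland dualizability criterion. The only cosmetic difference is that the paper phrases the induction as showing finite generation of completed $E$-homology rather than as a thick-subcategory argument, which is precisely the variant you note in your closing sentence.
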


\begin{proof}
It suffices to show that its completed Morava $E$-homology is finitely generated \cite{HoveyStrickland}.  Since $\Phi (S^q) \simeq \Phi (P_{p^h}(\Id)(S^q))$ \cite[Sec.~7]{Kuhn}, one can prove this by proving $\Phi(P_{p^k}(\Id)(S^q))$ has finitely generated completed Morava $E$-homology  by induction on $k$.  This is done using the fiber sequences
$$ 
\DD_{p^k}(\Id)(S^q)_K \rightarrow \Phi(P_{p^k}(\Id)(S^q)) \rightarrow \Phi(P_{p^{k-1}}(\Id)(S^q))
$$
together with our computation $E_0 L(k)_q \cong C[k]_{-q}^\vee$.  Note that $C[k]^\vee_{-q}$ is finitely generated by \cite[Prop.~4.6]{RezkKoszul}. 
\end{proof}

\begin{proof}[Proof of Theorem~\ref{thm:main}]
For $\mr{dim} V \gg 0$, we can take the $K$-local Spanier-Whitehead dual of the retraction
$$ S_K^{\Phi(\Sigma S^V)} \rightarrow \TAQ^{S_K}(S_K^{(\Sigma S^V)_+}) \xrightarrow{c_{S_K}} S_K^{\Phi(\Sigma S^V)} $$ 
provided by Lemma~\ref{lem:section} to obtain a retraction of functors from complex vector spaces to $K$-local spectra:
$$ 
\Phi(\Sigma S^V) \xrightarrow{c^{S_K}} \TAQ_{S_K}(S_K^{(\Sigma S^V)_+}) \rightarrow \Phi(\Sigma S^V).
$$
We therefore get a retraction of the $K$-local Weiss towers of these functors, restricted to $\mr{dim} V \gg 0$ (see Propositions~\ref{prop:Weiss1} and \ref{prop:Weiss2})
$$ 
\{ \Phi(P_{n}(\Id)(\Sigma S^V)) \}_n \xrightarrow{c^{S_K}} 
\{F_{n} \TAQ_{S_K}(S_K^{(\Sigma S^V)_+}) \}_n \rightarrow 
\{ \Phi(P_{n}(\Id)(\Sigma S^V)) \}_n.
$$
However, the layers of both of these towers are equivalent to the spectra $\DD_{n}(\Id)(\Sigma S^V)_K$.  Since the Morava $K$-theory of these layers is finite, it follows that the map $c^{S_K}$ induces an equivalence on the layers of the $K$-local Weiss towers for $\mr{dim} V \gg 0$.  It follows that the comparison map induces an equivalence on Weiss derivatives, from which it follows that the comparison map is an equivalence on layers for \emph{all} $V$.
Since the $K$-local Weiss towers are themselves finite, we deduce that the natural transformation
$$
\Phi(\Sigma S^V) \xrightarrow{c^{S_K}} \TAQ_{S_K}(S_K^{(\Sigma S^V)_+})
$$
is an equivalence by inducting up the towers.
\end{proof}

Actually, the method of proof gives the following corollary, which allows us to compare $\Phi$ applied to the Goodwillie tower of the identity with the much easier to understand Kuhn tower.

\begin{cor}\label{cor:towercomp}
The comparison map induces an equivalence of towers
$$ 
\{ \Phi (P_n(\Id)(S^q)) \} \xrightarrow[\simeq]{c^{S_K}} \{F_{n} \TAQ_{S_K}(S_K^{S^q_+}) \}.
$$
\end{cor}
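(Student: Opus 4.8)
The plan is to observe that Corollary~\ref{cor:towercomp} is essentially already contained in the proof of Theorem~\ref{thm:main}, once one matches the indexing of the relevant Weiss towers with that of the Goodwillie tower of the identity and the Kuhn filtration. Write $q = 2m+1$ and put $V = \CC^m$, so that $\Sigma S^V \simeq S^q$.

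First I would invoke Proposition~\ref{prop:Weiss1} and Proposition~\ref{prop:Weiss2} to recognize $\{\Phi(P_n(\Id)(S^q))\}_n$ and $\{F_n \TAQ_{S_K}(S_K^{S^q_+})\}_n$ as finite $K$-local Weiss towers for $\Phi(\Sigma S^V)$ and $\TAQ_{S_K}(S_K^{(\Sigma S^V)_+})$ respectively. By the essential uniqueness of finite $K$-local Weiss towers (the Remark following the definition), the comparison map $c^{S_K}\colon \Phi(\Sigma S^V) \rightarrow \TAQ_{S_K}(S_K^{(\Sigma S^V)_+})$ induces a homotopically unique compatible system of maps $\Phi(P_n(\Id)(S^q)) \rightarrow F_n \TAQ_{S_K}(S_K^{S^q_+})$, and this is precisely the map of towers asserted in the corollary.

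Next I would recall that the proof of Theorem~\ref{thm:main} establishes, via the identification of the $n$th layers of both towers with $\DD_n(\Id)(\Sigma S^V)_K$ (using (\ref{eq:Kuhncofiber}), the computation carried out in the proof of Proposition~\ref{prop:Weiss2}, and the vanishing of $K$-local Tate constructions), that $c^{S_K}$ is an equivalence on layers for $\dim V \gg 0$, and hence---after passing to Weiss derivatives---an equivalence on layers for \emph{all} $V$. In particular it is an equivalence on the layers of the two towers above for our fixed $V = \CC^m$.

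Finally, since both towers are finite (with nontrivial layers only in degrees $p^k$, $k\le h$), I would conclude by inducting up the towers: the fiber sequences $\DD_n(\Id)(S^q)_K \rightarrow \Phi(P_n(\Id)(S^q)) \rightarrow \Phi(P_{n-1}(\Id)(S^q))$ and the analogous ones for the Kuhn filtration, together with the layerwise equivalence just noted, force $c^{S_K}$ to be an equivalence at each finite stage. The only delicate point is the passage from ``$\dim V \gg 0$'' to ``all $V$'', but this is exactly the Weiss-derivative argument already performed in the proof of Theorem~\ref{thm:main}; so no genuinely new obstacle arises, and the corollary is a bookkeeping consequence of that proof.
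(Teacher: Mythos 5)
Your proposal is correct and follows essentially the same route as the paper: the paper gives no separate proof of Corollary~\ref{cor:towercomp}, stating only that ``the method of proof gives the following corollary,'' and that method is precisely what you describe --- identify both towers as finite $K$-local Weiss towers via Propositions~\ref{prop:Weiss1} and \ref{prop:Weiss2}, use the retraction from Lemma~\ref{lem:section} to get a layerwise equivalence for $\dim V \gg 0$, pass through Weiss derivatives to all $V$, and induct up the finite towers, the intermediate stages of which are exactly the statement of the corollary.
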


\section{The Morava $E$-homology of the Goodwillie attaching maps}\label{sec:kinv}

Fix $q$ to be an odd positive integer.
Let $\alpha_k$ denote the attaching map connecting the $p^k$ and $p^{k+1}$-layers of the Goodwillie tower for $S^q$.
$$
\xymatrix@R-1em{
\alpha_k: D_{p^k}(\Id)(S^q) \ar[r] \ar@{=}[d]
& BD_{p^{k+1}}(\Id)(S^q) \ar@{=}[d] \\
 \Omega^\infty \Sigma^{q-k} L(k)_q & 
\Omega^\infty \Sigma^{q-k} L(k)_q
}
$$
Applying $\Phi$ and desuspending, we get a map
$$ \Phi(\alpha_k): (L(k)_q)_K \rightarrow (L(k+1)_q)_K $$
which should be regarded as the corresponding attaching map between consecutive non-trivial layers in the $v_h$-periodic Goodwillie tower of the identity.

Note that since $E^{S^q_+}$ is a commutative $E$-algebra, the reduced cohomology group
$$ \td{E}^q(S^q) = V^q \pi_* E^{S^q_+} $$
is a $\Delta^q$-module.
Under the isomorphisms
$$ E_0 L(k)_q \cong \left( C[k]_{-q} \otimes_{E_0} \td{E}^q(S^q) \right)^\vee $$
obtained by tensoring the isomorphism of Theorem~\ref{thm:enlk} with the fundamental class in $\td{E}^q(S^q)$, there is an induced map
$$ E_0 \Phi(\alpha_k): \left( C[k]_{-q} \otimes_{E_0} \td{E}^q(S^q) \right)^\vee \rightarrow \left( C[k+1]_{-q} \otimes_{E_0} \td{E}^q(S^q) \right)^\vee. $$
We have the following more refined version of Theorem~\ref{thm:enlk}.

\begin{thm}\label{thm:koszul}
There is an isomorphism of cochain complexes
$$  (E_0 L(k)_q, E_0 \Phi(\alpha_k)) \cong (C_k^{\Delta^q}(\td{E}^q(S^q))^\vee, \delta^\vee_k) $$
where $C_k^{\Delta^q}(\td{E}^q(S^q))$ is the Koszul complex for the $\Delta^q$-module $\td{E}^q(S^q)$.
\end{thm}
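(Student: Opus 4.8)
The strategy is to transport the problem to the Kuhn filtration of $\TAQ^{S_K}(S_K^{S^q_+})$ via the tower equivalence of Corollary~\ref{cor:towercomp}, and there to recognize the induced filtration spectral sequence as the $E_0$-linear dual of the Koszul complex, using the linearization quasi-isomorphism of Lemma~\ref{lem:quasi-isomorphism}. By Corollary~\ref{cor:towercomp} the comparison map identifies the tower $\{\Phi(P_n(\Id)(S^q))\}$ with the dual Kuhn tower $\{F_n\TAQ_{S_K}(S_K^{S^q_+})\}$ compatibly with all structure maps; since the nonzero layers occur only in degrees $p^k$ with $k\le h$, the attaching map $\alpha_k$ is thereby identified, up to the (de)suspensions relating $\DD_{p^k}(\Id)(S^q)$, the Kuhn layer, and $L(k)_q$, with the $K$-local Spanier--Whitehead dual of the connecting map $\mr{gr}_{p^{k+1}}\TAQ^{S_K}(S_K^{S^q_+})\to\Sigma\,\mr{gr}_{p^k}\TAQ^{S_K}(S_K^{S^q_+})$ of the Kuhn filtration. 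Theorem~\ref{thm:enlk} and the discussion preceding the statement already pin down the terms $E_0L(k)_q\cong(C[k]_{-q}\otimes_{E_0}\td{E}^q(S^q))^\vee=C_k^{\Delta^q}(\td{E}^q(S^q))^\vee$, so it remains only to compute the map induced on completed $E$-homology by this connecting map and to see that it is $\delta_k$.

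Next I would run the weight filtration spectral sequence of Section~\ref{sec:TAQ} after applying completed $E$-homology. Since $I(S_K^{S^q_+})=S_K^{S^q}\simeq\Sigma^{-q}S_K$ is $K$-locally dualizable, the bounded-weight extended powers occurring in $B_\bullet(\Id,\mc{F}_{\mr{Comm}},S_K^{S^q})$ have finite flat $E$-homology, and (by the identifications used in Proposition~\ref{prop:BSS}) $E_*$ of the weight-filtered topological bar construction is the weight-filtered algebraic bar construction on the $\bar\TT$-algebra $E_*S_K^{S^q_+}$. The key observation is that $E_*S_K^{S^q_+}$ is \emph{free as a graded commutative $E_*$-algebra}: for $q$ odd it is the square-zero extension $E_*\oplus\Sigma^{-q}E_*$, i.e. the free graded commutative algebra on a single odd generator, with $V^q=\td{E}^q(S^q)$. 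Hence Lemma~\ref{lem:quasi-isomorphism} applies and the linearization map (\ref{eq:linearization}) is a quasi-isomorphism carrying this filtered complex to the $\Delta^q$-weight-filtered bar complex $B_\bullet(\bar{E}_0,\Delta^q,\td{E}^q(S^q))$, with the Kuhn weight $p^k$ matched to the $\Delta^q$-weight $k$. On associated graded pieces the layer calculation is precisely Lemma~\ref{lem:calc1}, so the two filtration spectral sequences agree from $E^1$ on; each $E^1$-page is concentrated in a single total degree per filtration, so $d^1$ is the only possibly nonzero differential and there are no extension problems.

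It then remains to identify $d^1$ with $\delta_k$. By construction the $d^1$ of a weight filtration spectral sequence on a bar complex is induced by the weight-dropping part of the differential, which for $B_\bullet(\bar{E}_0,\Delta^q,\td{E}^q(S^q))$ is exactly the last face using the $\Delta^q$-action on $\td{E}^q(S^q)$; restricted to the Koszul subcomplexes $C[\bullet]_{-q}\hookrightarrow(\Delta^q[1])^{\otimes\bullet}$ this is precisely the map $d_{k+1}$ defining $\delta_k$ in diagram~(\ref{eq:Cdiff}). Thus the connecting map of the first paragraph induces $\delta_k\colon C[k+1]_{-q}\otimes_{E_0}\td{E}^q(S^q)\to C[k]_{-q}\otimes_{E_0}\td{E}^q(S^q)$ on $E$-homology. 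Dualizing over $E_0$ — legitimate because $C[k]_{-q}$ is finitely generated and free (Theorem~\ref{thm:RezkKoszul}) and $\td{E}^q(S^q)$ is free of rank one, so that $E$-homology and $E$-cohomology are exchanged by duality and compute $K$-local Spanier--Whitehead duality — and combining with Corollary~\ref{cor:towercomp} yields $E_0\Phi(\alpha_k)=\delta_k^\vee$, which is the asserted isomorphism of cochain complexes.

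\textbf{Main obstacle.} The hard part is the compatibility in the second and third paragraphs: that the linearization quasi-isomorphism of Lemma~\ref{lem:quasi-isomorphism} respects not merely the individual weight-graded layers but the entire weight filtration, and hence its connecting maps, so that the edge data of the Kuhn-filtration spectral sequence is carried \emph{on the nose} to the differential $\delta_k$ of~(\ref{eq:Cdiff}), with the signs forced by the graded-commutativity conventions for $\bar\TT$-algebras. Establishing this means rerunning the diagram chase behind Lemma~\ref{lem:quasi-isomorphism} while carrying the weight filtration through every bar construction in sight, and checking that the weight-dropping face (the multiplication $\mc{F}_{\mr{Comm}}\to\Id$ topologically, equivalently $\bar\TT\to\Id$ after linearization) matches $d_{k+1}$ in~(\ref{eq:Cdiff}). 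A secondary but necessary point is the bookkeeping of the suspensions relating $\DD_{p^k}(\Id)(S^q)$, the Kuhn layer $\mr{gr}_{p^k}\TAQ^{S_K}(S_K^{S^q_+})$, and $L(k)_q$, so that the module identifications invoked are exactly those recorded in Theorem~\ref{thm:enlk}.
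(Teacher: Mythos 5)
Your proposal is correct and follows essentially the same route as the paper: reduce via Corollary~\ref{cor:towercomp} to the attaching maps of the Kuhn tower, model these on $E$-homology by the weight-graded bar construction, use the linearization quasi-isomorphism (Lemma~\ref{lem:quasi-isomorphism}, legitimate since $E_*S_K^{S^q_+}$ is free on one odd generator) to pass to $B_\bullet(\br{E}_0,\Delta^q,\td{E}^q(S^q))$, and identify the weight-dropping last face map, restricted to the Koszul subcomplexes, with $\delta_k$ of (\ref{eq:Cdiff}) before dualizing. The paper packages your "main obstacle" as a single commutative diagram comparing the topological bar complex, its linearization, and the Koszul subcomplex, but the content is the same.
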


\begin{proof}
By Corollary~\ref{cor:towercomp}, it suffices to show that the $E$-homology of the attaching maps in the Kuhn tower
$$ \alpha'_k: (L(k)_q)_K \simeq \left( \frac{F_{p^k} \TAQ^{S_K}(S^{S^q_+}_K)}{F_{p^{k-1}} \TAQ^{S_K}(S^{S^q_+}_K)} \right)^\vee \rightarrow \left( \frac{F_{p^{k+1}} \TAQ^{S_K}(S^{S^q_+}_K)}{F_{p^{k}} \TAQ^{S_K}(S^{S^q_+}_K)} \right)^\vee \simeq (L(k+1)_q)_K  
 $$
has the desired description (here the $(-)^\vee$ notation above denotes the \emph{$K$-local} Spanier-Whitehead dual).  The result is obtained by dualizing the following diagram
$$
\xymatrix{
C[k+1]_{-q} \otimes_{E_0} \td{E}^q(S^q) \ar[r]^{\delta_k} \ar@{^{(}->}[d] &
C[k]_{-q} \otimes_{E_0} \td{E}^q(S^q) \ar@{^{(}->}[d]
\\
B_{k+1}(\bar{E}_0, \Delta^q, \td{E}^q(S^q))[k+1] \ar[r]^{d_{k+1}} 
& B_{k}(\bar{E}_0, \Delta^q, \td{E}^q(S^q))[k]
\\ 
B_{k+1}(\Id, \bar{\TT}, \td{E}^*(S^q)) \bra{p^{k+1}}_{-q} \ar[r]^{d_{k+1}\bra{p^k}} \ar[u]^{\mc{L}}
& B_{k}(\Id, \bar{\TT}, \td{E}^q(S^q))\bra{p^{k}}_{-q} \ar[u]_{\mc{L}} 
\\
E_{-q} B_{k+1}(\Id, \td{\PP}, I(S^{S^q_+})) \bra{p^{k+1}} \ar[r]^-{d_{k+1}\bra{p^k}} \ar@{=}[u]
& E_{-q} B_{k}(\Id, \td{\PP}, I(S^{S^q_+})) \bra{p^{k}} \ar@{=}[u]
}
$$
which identifies the $E$-homology of the attaching map
$$
\frac{F_{p^{k+1}} \TAQ^{S_K}(S^{S^q_+}_K)}{F_{p^{k}} \TAQ^{S_K}(S^{S^q_+}_K)}
\rightarrow
 \frac{F_{p^k} \TAQ^{S_K}(S^{S^q_+}_K)}{F_{p^{k-1}} \TAQ^{S_K}(S^{S^q_+}_K)}. 
$$
In this diagram, the maps $d_{k+1}$ are the last face maps in the corresponding bar complexes, and the maps $d_{k+1}\bra{p^k}$ are the projections of the face maps on to the $\bra{p^k}$-summands.
\end{proof}

\begin{cor}\label{cor:koszul}
The spectral sequence obtained by applying $E_*$ to the tower $\{ \Phi (P_n(\Id)(S^q)) \}$ takes the form
$$ \Ext^{s}_{\Delta^q}(\td{E}^q(S^q), \bar{E}_t)  \Rightarrow E_{q+t-s} \Phi(S^q). $$
\end{cor}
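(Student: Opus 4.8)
The plan is to realize the spectral sequence as the $E$-homology spectral sequence of the \emph{finite} tower $\{\Phi(P_n(\Id)(S^q))\}$, identify its $E_1$-page with the dual Koszul complex using Theorem~\ref{thm:enlk} and its $d_1$-differential using Theorem~\ref{thm:koszul}, and then take homology. First I would recall, via \cite[Thm.~8.9]{Kuhn}, that $\Phi$ applied to the Goodwillie tower of the identity on $S^q$ is finite: the only non-contractible fibers occur at $n=p^k$ for $0\le k\le h$, the $k$-th such fiber being $\Phi(D_{p^k}(\Id)(S^q))\simeq(\Sigma^{q-k}L(k)_q)_K$, and the top term being $\Phi(P_{p^h}(\Id)(S^q))\simeq\Phi(S^q)$. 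Applying $E_*(-)$ to this finite tower produces a strongly convergent spectral sequence with abutment $E_*\Phi(S^q)$, whose $E_1$-term is the $E$-homology of the fibers.

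Next I would compute the $E_1$-term. Since $E_*L(k)_q$ is a free $E_*$-module, it is recovered from $E_0L(k)_q\cong C[k]^\vee_{-q}$ (Theorem~\ref{thm:enlk}) by two-periodicity; smashing the isomorphism of Theorem~\ref{thm:enlk} with the fundamental class of $\td{E}^q(S^q)$ rewrites this as $(C[k]_{-q}\otimes_{E_0}\td{E}^q(S^q))^\vee$. Thus in filtration $k$ the $E_1$-term is free over $E_*$ on $(C[k]_{-q}\otimes_{E_0}\td{E}^q(S^q))^\vee$, placed in the topological degrees congruent to $q-k$ mod $2$ (the suspension $\Sigma^{q-k}$ coming from the identification $D_{p^k}(\Id)(S^q)=\Omega^\infty\Sigma^{q-k}L(k)_q$).

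Then I would identify the $d_1$-differential. By construction of the spectral sequence of a tower, $d_1$ is induced by the attaching maps $\alpha_k$ connecting consecutive fibers, i.e.\ by $E_0\Phi(\alpha_k)$ after accounting for the common suspension $\Sigma^{q-k}$. Theorem~\ref{thm:koszul} identifies the cochain complex $(E_0L(k)_q,E_0\Phi(\alpha_k))_k$ with $(C_*^{\Delta^q}(\td{E}^q(S^q))^\vee,\delta^\vee)$, the $E_0$-linear dual of the Koszul complex for the $\Delta^q$-module $\td{E}^q(S^q)$. Hence, up to the degree shifts recorded above, the $(E_1,d_1)$-page is this dual Koszul complex tensored up over $E_*$. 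Taking homology and using that $\td{E}^q(S^q)$ is free of rank one, hence projective, over $E_0$, the universal-coefficient discussion following Theorem~\ref{thm:RezkKoszul} gives $H^s$ of the dual Koszul complex $\cong\Ext^s_{\Delta^q}(\td{E}^q(S^q),\bar{E}_0)$, which two-periodicity upgrades to $\Ext^s_{\Delta^q}(\td{E}^q(S^q),\bar{E}_t)$ in internal degree $t$.

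Finally I would reconcile the indexing: a class of $E_0L(s)_q$, carried by the fiber $(\Sigma^{q-s}L(s)_q)_K$, sits in $E_{q-s}$ of that fiber, and the two-periodic family of its copies in $E_m$ of the fiber (with $m\equiv q-s$) is parametrized by $t=m-q+s$, so it contributes to $E_{m}\Phi(S^q)=E_{q+t-s}\Phi(S^q)$ in filtration $s$, yielding $$E_2^{s,t}=\Ext^s_{\Delta^q}(\td{E}^q(S^q),\bar{E}_t)\Rightarrow E_{q+t-s}\Phi(S^q).$$ Convergence is automatic because the tower is finite, and the identifications of the $E_1$-page and $d_1$-differential are exactly Theorems~\ref{thm:enlk} and~\ref{thm:koszul}; so the only genuinely non-formal point — and thus the main obstacle — is the careful bookkeeping of the suspension shifts $\Sigma^{q-k}$ between consecutive Goodwillie fibers against the two-periodicity of $E_*$, in particular pinning down the sign of $s$ in the abutment degree $q+t-s$.
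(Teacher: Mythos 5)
Your proposal is correct and is essentially the argument the paper intends: the corollary is deduced by applying $E_*$ to the finite tower, identifying the $E_1$-page and $d_1$-differential with the dual Koszul complex via Theorems~\ref{thm:enlk} and~\ref{thm:koszul}, and invoking the observation after Theorem~\ref{thm:RezkKoszul} that the dual Koszul complex of a projective module computes $\Ext^s_{\Delta^q}(-,\bar{E}_0)$, with two-periodicity supplying the $\bar{E}_t$ coefficients. The indexing bookkeeping you carry out matches the stated abutment $E_{q+t-s}\Phi(S^q)$.
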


\section{A modular description of the Koszul complex}\label{sec:modular}

\subsection*{Reduction to the case of $q = 1$}

In this section we give a modular interpretation of the Koszul complex $C_*^{\Delta^q}(\td{E}^q(S^q))$ in the case of $q = 1$.  Since the suspension gives inclusions of bar complexes (see (\ref{eq:zigzag}))
$$ B(\bar{E}_0, \Delta^q, \td{E}^q(S^q)) \hookrightarrow B(\bar{E}_0, \Delta^1, \td{E}^1(S^1)) $$
we deduce that we have an induced map of Koszul complexes
$$
\xymatrix{
C[k]_{-q} \ar@{^{(}->}[r] \ar@{.>}[d]_{\sigma^{q-1}} & 
\Delta^q[1]^{\otimes k} \ar@{^{(}->}[d] 
\\
C[k]_{-1} \ar@{^{(}->}[r] & 
\Delta^1[1]^{\otimes k}
}
$$
Furthermore, the map $\sigma^{q-1}$ above must be an inclusion.  We deduce that there is an inclusion of Koszul complexes
$$ \sigma^{q-1}: C_*^{\Delta^q}(\td{E}^q(S^q)) \hookrightarrow C_*^{\Delta^1}(\td{E}^1(S^1)). $$
It follows that the modular description of the Koszul complex we shall give for $q = 1$ will extend to a modular description for arbitrary odd $q$ provided we have a good understanding of the inclusions of lattices
\begin{align*}
\Delta^q[1] & \subseteq \Delta^1[1], \\
\Delta^q[2] & \subseteq \Delta^1[2].
\end{align*}
This amounts to having a concrete understanding of the second author's ``Wilkerson Criterion'' \cite{RezkWilk}.

\subsection*{The modular isogeny complex}

We review the definition of the modular isogeny complex $\mc{K}^*_{p^k}$ of \cite{RezkMIC} associated to the formal group $\GG$.  

For $(k_1, \ldots, k_s)$ a sequence of positive integers, let
$$ \mr{Sub}_{p^{k_1}, \ldots, p^{k_s}}(\GG) = \Spf(\mc{S}_{p^{k_1}, \ldots , p^{k_s}}) $$
be the (affine) formal scheme whose $R$-points are given by
$$ \mr{Sub}_{p^{k_1}, \ldots, p^{k_s}}(\GG)(R) = \{ H_1 < \cdots < H_s < \GG \times_{\Spf(E_0)} \Spf(R) \: : \: \abs{H_i/H_{i-1}} = p^{k_i} \}. $$

\begin{lem}\label{lem:decomp}
There is a canonical isomorphism of $E_0$-algebras
$$  \mc{S}_{p^{k_1}, \ldots, p^{k_s}} \cong \mc{S}_{p^{k_1}} \otimes_{E_0} \cdots \otimes_{E_0} \mc{S}_{p^{k_s}}.  $$
\end{lem}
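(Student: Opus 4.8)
The plan is to argue by induction on $s$, the case $s=1$ being a tautology. Reassociating tensor products, the inductive step reduces to the case $s=2$: one must produce a canonical isomorphism $\mc{S}_{p^{k_1},p^{k_2}} \cong \mc{S}_{p^{k_1}} \otimes_{E_0} \mc{S}_{p^{k_2}}$, where (following the convention already in force for iterated tensor products over $E_0$) the left-hand factor $\mc{S}_{p^{k_1}}$ carries the \emph{right} $E_0$-module structure given by the map $t$ of (\ref{eq:t}), and $\mc{S}_{p^{k_2}}$ carries the \emph{left} $E_0$-module structure given by $s$. The general case then follows by iterating this, with the one bookkeeping point that each $\mc{S}_{p^{k_i}}$ is finite free over $E_0$, so that no completions intervene in forming the iterated tensor product and the relevant formal schemes stay finite over $\Spf(E_0)$.

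First I would set up the comparison on points. For a complete local Noetherian $E_0$-algebra $R$, an $R$-point of $\mr{Sub}_{p^{k_1},p^{k_2}}(\GG)$ is a flag $H_1 < H_2 < \GG_R := \GG \times_{\Spf(E_0)} \Spf(R)$ with $\abs{H_1}=p^{k_1}$ and $\abs{H_2/H_1}=p^{k_2}$. The assignment $(H_1<H_2) \mapsto \bigl(H_1,\; H_2/H_1 < \GG_R/H_1\bigr)$ is a bijection, natural in $R$, onto the set of pairs consisting of an $R$-point $H_1$ of $\mr{Sub}_{p^{k_1}}(\GG)$ together with an $R$-point of $\mr{Sub}_{p^{k_2}}(\GG_R/H_1)$; the inverse sends such a pair to $(H_1 < \pi^{-1}\bar{H}_2)$, where $\pi\colon \GG_R \to \GG_R/H_1$ is the quotient isogeny. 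The decisive input is the identification of the base of the second choice: by the very definition of $t$, which sends the universal point $H_1 < \GG \times \Spf(\mc{S}_{p^{k_1}})$ to the deformation $(\GG \times \Spf(\mc{S}_{p^{k_1}}))/H_1$, the formal group $\GG/H_1^{\mr{univ}}$ over $\Spf(\mc{S}_{p^{k_1}})$ is precisely the pullback $t^*\GG$ of the universal deformation along $t\colon E_0 \to \mc{S}_{p^{k_1}}$.

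Consequently, Strickland's representability of $\mr{Sub}_{p^{k_2}}$, base-changed along $t$, shows that the functor of pairs above is represented by $\mc{S}_{p^{k_1}} \otimes_{E_0} \mc{S}_{p^{k_2}}$, with $E_0$ acting on the left factor through $t$ and on the right factor through $s$ — exactly the stated module structure. Composing with the natural bijection of the previous paragraph and invoking the Yoneda lemma (all objects being affine formal schemes finite over $\Spf(E_0)$) yields the desired canonical isomorphism of $E_0$-algebras, and naturality of the construction in the flag data gives that it is canonical. I expect the main obstacle to be purely bookkeeping rather than conceptual: keeping straight, at each stage of the iterated tensor product, which of the two $E_0$-module structures on each $\mc{S}_{p^{k_i}}$ is in play, and checking that "quotient by $H_1$, then reindex the deformation via $t$'' matches the $\otimes_{t,E_0,s}$ pattern levelwise. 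Once that dictionary is fixed, the rest is a formal consequence of representability, Yoneda, and the finiteness of $\mc{S}_{p^{k}}$ over $E_0$.
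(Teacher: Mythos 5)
Your proposal is correct and follows essentially the same route as the paper: both arguments identify $R$-points of the flag scheme with iterated data of a subgroup together with a subgroup of the quotient formal group, with the decisive input in each case being that the right $E_0$-module structure via $t$ classifies the quotient deformation $\GG/H$. The paper simply describes the full chain $(\GG_i,\td{H}_i)$ at once rather than inducting on $s$, but the content is identical.
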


\begin{proof}
An $R$ point of $\Spf(\mc{S}_{p^{k_1}, \ldots, p^{k_s}})$ corresponds to a chain of finite subgroups
$$ (H_1 < \cdots < H_s) $$
in $\GG_1 := \GG \times_{\Spf(E_0)} \Spf(R)$ with $\abs{H_i/H_{i-1}} = p^i$.  Define 
$ \GG_i := \GG_1/H_{i-1} $.  Then, defining, $\td{H}_i := H_i/H_{i-1}$, we get a collection of $R$-points  
$$ (\GG_i, \td{H}_i) \in \Spf(\mc{S}_{p^i})(R) $$
and isomorphisms $\GG_i/\td{H}_i \cong \GG_{i+1}$.  This is precisely the data of an $R$-point of 
$$ \Spf(\mc{S}_{p^{k_1}} \otimes_{E_0} \cdots \otimes_{E_0} \mc{S}_{p^{k_s}}). $$
Conversely, given such a sequence $(\GG_i, \td{H}_i)$ with isomorphisms $\GG_i/\td{H}_i \cong \GG_{i+1}$, there is an associated chain of subgroups $(H_1, \ldots, H_s)$ of $\GG_1$ obtained by pulling back the subgroup $\td{H}_i$ over the isogeny:
$$ \GG_1 \rightarrow \GG_1/\td{H}_1 \cong \GG_2 \rightarrow \GG_2/\td{H}_2 \cong \GG_3 \rightarrow \cdots \rightarrow \GG_{i-1}/\td{H}_{i-1} \cong \GG_i. $$
\end{proof}

For $k > 0$ we define
$$ \mc{K}^s_{p^k} = 
\begin{cases}
\prod_{\substack{k_1 + \cdots + k_s = k \\ k_i  > 0}} \mc{S}_{p^{k_1}, \ldots , p^{k_s}}, & 1 \le s \le k, \\
0, & \text{otherwise}.
\end{cases}
$$
We handle the case of $k = 0$ by defining 
$$
\mc{K}^s_{1} = 
\begin{cases}
E_0, & s = 0, \\
0, & s > 0.
\end{cases}
$$
For $1 \le i \le s$ and a decomposition $k_i = k_i'+k_i''$  with $k_i', k_i'' > 0$, define maps
$$ u_i: \mc{S}_{p^{k_1}, \ldots, p^{k_s}} \rightarrow \mc{S}_{p^{k_1}, \ldots, p^{k'_i}, p^{k''_i}, \ldots, p^{k_s}} $$
on $R$ points by
$$ u_i^*: (H_1 < \cdots < H_{s+1}) \mapsto (H_1 < \cdots < \widehat{H}_i < \cdots < H_{s+1}). $$
The maps $u_i$, under the isomorphism of Lemma~\ref{lem:decomp}, all arise from the maps
$$ u_1: \mc{S}_{p^{k'+k''}} \rightarrow \mc{S}_{p^{k'}} \otimes_{E_0} \mc{S}_{p^{k''}}. $$
In \cite{RezkWilk}, it is established that the maps $u_1$ above are dual to the algebra maps
$$ \Gamma[k'] \otimes_{E_0} \Gamma[k''] \rightarrow \Gamma[k'+k'']. $$

Taking a product over all possible such decompositions of $k_i = k_i' + k_i''$ gives a map
$$ u_i: \mc{K}^s_{p^k} \rightarrow \mc{K}^{s+1}_{p^k}. $$
The differentials 
$$ \delta: \mc{K}^s_{p^k} \rightarrow \mc{K}^{s+1}_{p^k}, \quad 1 \le s < k $$
in the cochain complex $\mc{K}^*_{p^k}$ are given by
$$ \delta(x) = \sum_{1 \le i \le s} (-1)^i u_i(x). $$

\subsection*{The cohomology of the modular isogeny complex}

The key observation of this section is the following.

\begin{prop}\label{prop:HMIC}
There is an isomorphism of cochain complexes
$$ B_*(\bar{E}_0, \td{\Delta}^1, \bar{E}_0)[k]^\vee \cong \mc{K}^*_{p^k}. $$
It follows that we have
$$
H^s(\mc{K}^*_{p^k}) \cong
\begin{cases}
C[k]_{-1}^\vee, & s = k, \\
0, &  s \ne k.
\end{cases}
$$
\end{prop}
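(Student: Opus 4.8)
The plan is to realize both sides as bar/cobar complexes of mutually dual (co)algebras and then quote Theorem~\ref{thm:RezkKoszul}. Concretely, $B_*(\br{E}_0,\td{\Delta}^1,\br{E}_0)$ is the normalized two-sided bar complex of the graded $E_0$-algebra $\td{\Delta}^1=\bigoplus_{j\ge 1}\Delta^1[j]$ with coefficients in the trivial bimodule $\br{E}_0$; because $\td{\Delta}^1$ acts by zero on $\br{E}_0$, its two outer face maps vanish and the differential is the alternating sum $\sum_i(-1)^i d_i$ of the internal multiplications $d_i\colon\Delta^1[k_i]\otimes_{E_0}\Delta^1[k_{i+1}]\to\Delta^1[k_i+k_{i+1}]$ (this is exactly the observation used just after (\ref{eq:augmentation})). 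On the other side, Lemma~\ref{lem:decomp} identifies $\mc{K}^*_{p^k}$ in internal degree $k$ with the cobar complex of the graded coalgebra $\mc{S}_{>0}:=\bigoplus_{j\ge 1}\mc{S}_{p^j}$, whose comultiplication is built from the maps $u_1\colon\mc{S}_{p^{k'+k''}}\to\mc{S}_{p^{k'}}\otimes_{E_0}\mc{S}_{p^{k''}}$ and whose differential is $\delta=\sum_i(-1)^i u_i$. So the whole content is to show that $E_0$-linear duality matches the algebra $\td{\Delta}^1$ with the coalgebra $\mc{S}_{>0}$.

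First I would pin down the duality on generators. The canonical suspension isomorphism $\sigma\colon\Delta^1\xrightarrow{\cong}\Gamma^0$ of algebras (\ref{eq:susp}) together with Strickland's isomorphism $\Gamma^0[k]\cong\mc{S}^\vee_{p^k}$ of (\ref{eq:dualGamma}) gives $\Delta^1[k]\cong\mc{S}^\vee_{p^k}$, hence $\Delta^1[k]^\vee\cong\mc{S}_{p^k}$, the $E_0$-modules in sight being finite free (\cite[Prop.~6.3]{RezkWilk}). By the result of \cite{RezkWilk} quoted in the definition of $\mc{K}^*_{p^k}$ — that the maps $u_1$ are $E_0$-dual to the algebra maps $\Gamma^0[k']\otimes_{E_0}\Gamma^0[k'']\to\Gamma^0[k'+k'']$ — this identification carries the $E_0$-dual of the multiplication of $\td{\Delta}^1$ onto the comultiplication of $\mc{S}_{>0}$.

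Next I would dualize the bar complex termwise. Each term $\Delta^1[k_1]\otimes_{E_0}\cdots\otimes_{E_0}\Delta^1[k_s]$ is finite free over $E_0$ (each $\Delta^1[k_i]$ being so), so $(-)^\vee$ turns each $\otimes_{E_0}$ into an $\otimes_{E_0}$ — reversing the order of factors, which merely reindexes the direct sum over compositions of $k$ — and by the previous paragraph each dualized term is $\mc{S}_{p^{k_1}}\otimes_{E_0}\cdots\otimes_{E_0}\mc{S}_{p^{k_s}}$, i.e.\ $\mc{K}^s_{p^k}$ by Lemma~\ref{lem:decomp}. Matching the dualized internal faces $d_i^\vee$ with the comultiplications $u_i$, signs included, shows that the cochain differential of $B_*(\br{E}_0,\td{\Delta}^1,\br{E}_0)[k]^\vee$ is exactly $\delta$, giving the asserted isomorphism of cochain complexes.

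Finally, for the cohomology I would dualize Theorem~\ref{thm:RezkKoszul}: in internal degree $k$ the complex $B_*(\br{E}_0,\td{\Delta}^1,\br{E}_0)[k]$ is termwise finite free over $E_0$ and has homology $C[k]_{-1}$ (itself $E_0$-free) concentrated in degree $s=k$, so the universal coefficient sequence over $E_0$ collapses — no $\Ext^1_{E_0}$-terms, since the relevant homology is free — and yields $H^s(\mc{K}^*_{p^k})\cong H^s(B_*(\br{E}_0,\td{\Delta}^1,\br{E}_0)[k]^\vee)\cong H_s(B_*(\br{E}_0,\td{\Delta}^1,\br{E}_0)[k])^\vee$, which is $C[k]^\vee_{-1}$ for $s=k$ and $0$ otherwise. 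The main obstacle, and the only point requiring real care, is the bookkeeping in the two middle steps: tracking the two distinct $E_0$-module structures on $\mc{S}_{p^k}$ (equivalently on $\Delta^1[k]$) that occur as the left and right inputs of the iterated $\otimes_{E_0}$'s, checking that $E_0$-linear duality interchanges them so that the dualized multiplications land in the correct slots of the $\mc{K}$-complex, and verifying the signs so that $\sum_i(-1)^i d_i^\vee$ is literally $\delta=\sum_i(-1)^i u_i$.
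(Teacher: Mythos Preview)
Your proposal is correct and follows essentially the same approach as the paper: use the suspension isomorphism $\sigma\colon\Delta^1\cong\Gamma^0$ together with $\Gamma^0[k]\cong\mc{S}_{p^k}^\vee$ and Lemma~\ref{lem:decomp} to identify the dualized bar terms with $\mc{K}^s_{p^k}$, match the differentials via the stated duality between the $u_1$ maps and the multiplication in $\Gamma^0$, and then invoke Theorem~\ref{thm:RezkKoszul} plus freeness of $C[k]_{-1}$ to compute the cohomology. The paper's write-up is terser about the bookkeeping you flag (left/right $E_0$-structures and signs), but the argument is the same.
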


\begin{proof}
The suspension isomorphism (\ref{eq:susp})
$$ \sigma: \Delta^1 \xrightarrow{\cong} \Gamma^0 $$
induces an isomorphism of chain complexes
$$ B_*(\bar{E}_0, \td{\Delta}^1, \bar{E}_0)[k] \cong B_*(\bar{E}_0, \td{\Gamma}^0, \bar{E}_0)[k]. $$
The isomorphisms (\ref{eq:dualGamma}) together with those of Lemma~\ref{lem:decomp}
induce isomorphisms
\begin{align*}
B_s(\bar{E}_0, \td{\Gamma}^0, \bar{E}_0)[k]^\vee & = 
\left( \bigoplus_{\substack{k_1 + \cdots + k_s = k \\ k_i > 0}} \td{\Gamma}^0[k_1] \otimes_{E_0} \cdots \otimes_{E_0} \td{\Gamma}^0[k_s] \right)^\vee \\
& \cong \bigoplus_{\substack{k_1 + \cdots + k_s = k \\ k_i > 0}} \td{\Gamma}^0[k_1]^\vee \otimes_{E_0} \cdots \otimes_{E_0} \td{\Gamma}^0[k_s]^\vee \\
& \cong \bigoplus_{\substack{k_1 + \cdots + k_s = k \\ k_i > 0}} \mc{S}_{p^{k_1}} \otimes_{E_0} \cdots \otimes_{E_0} \mc{S}_{p^{k_s}} \\
& \cong \prod_{\substack{k_1 + \cdots + k_s = k \\ k_i > 0}} \mc{S}_{p^{k_1}, \ldots, p^{k_s}}
\end{align*}
since all of the $E_0$-modules involved are finite and free.  Using the facts that $\Gamma^0[t]$ acts trivially on $\bar{E}_0$ for $t >0$, and that the differential in the modular isogeny complex is an alternating sum of maps dual to the multiplication maps in $\Gamma^0$, our isomorphisms yield the desired isomorphism of cochain complexes
$$ B_*(\bar{E}_0, \td{\Delta}^1, \bar{E}_0)[k]^\vee \cong \mc{K}^*_{p^k}. $$
Again, appealing the the fact that these cochain complexes are free $E_0$-modules in each degree, and that the modules $C[k]_{-1}$ are free (see \cite[Prop.~4.6]{RezkKoszul}), we have
\begin{align*}
H^s(\mc{K}^*_{p^k}) & \cong  H^s(B_*(\bar{E}_0, \td{\Delta}^1, \bar{E}_0)[k]^\vee) \\
& \cong H_s(B_*(\bar{E}_0, \td{\Delta}^1, \bar{E}_0)[k])^\vee \\
& 
\cong 
\begin{cases}
C[k]_{-1}^\vee, & s = k, \\
0, & s \ne k.
\end{cases}
\end{align*}
\end{proof}

\subsection*{Modular description of the Koszul differentials}

What remains is to give a modular description of the Koszul differentials
$$ H^k(\mc{K}^*_{p^k}) \cong C^{\Delta^1}_k(\td{E}^1(S^1))^\vee \xrightarrow{\delta_k^\vee} C^{\Delta^1}_{k+1}(\td{E}^1(S^1))^\vee \cong H^{k+1}(\mc{K}^*_{p^{k+1}}). $$
Consider the map
$$ u_{k+1} : \mc{S}_{\underbrace{p, \ldots, p}_k} \rightarrow \mc{S}_{\underbrace{p, \ldots, p}_{k+1}} $$
whose effect on $R$-points is given by
$$ u^*_{k+1}:(H_1 < \cdots < H_{k+1} < \GG) \mapsto (H_2/H_1 < \cdots < H_{k+1}/H_1 < \GG/H_1). $$

\begin{thm}\label{thm:modularkoszuldiffs}
The following diagram commutes.
$$
\xymatrix{
\mc{S}_{\underbrace{p, \ldots, p}_k} \ar[r]^{u_{k+1}} \ar@{->>}[d]
&  \mc{S}_{\underbrace{p, \ldots, p}_{k+1}} \ar@{->>}[d]
\\
C_k^{\Delta^1}(\td{E}^1(S^1))^\vee \ar[r]_{\delta_k^\vee} &
C_{k+1}^{\Delta^1}(\td{E}^1(S^1))^\vee
}
$$
\end{thm}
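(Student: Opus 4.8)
\emph{Strategy.} The plan is to lift the asserted square to a diagram of (duals of) bar complexes and to deduce its commutativity from the identifications already established in the paper. By Theorem~\ref{thm:koszul} (via Corollary~\ref{cor:towercomp}), the dual Koszul differential $\delta_k^\vee$ is realized, after transport along the linearization $\mc{L}$ and the identifications of Section~\ref{sec:recollections}, by the $K$-local Spanier--Whitehead dual of the last face map in $B_\bullet(\Id,\td{\PP},I(S^{S^q_+}))$ projected onto the weight $p^k$ summand; this is exactly the map along the top of the identifying diagram in the proof of Theorem~\ref{thm:koszul}. On the other hand, by Proposition~\ref{prop:HMIC} the modular complex $\mc{K}^*_{p^k}$ is identified with the dual bar complex $B_*(\bar{E}_0,\td{\Delta}^1,\bar{E}_0)[k]^\vee$, under which the top term $\mc{K}^k_{p^k}=\mc{S}_{\underbrace{p,\ldots,p}_k}$ corresponds to $(\td{\Delta}^1[1]^{\otimes k})^\vee$ by means of the suspension isomorphism~\eqref{eq:susp}, the duality~\eqref{eq:dualGamma}, and Lemma~\ref{lem:decomp}. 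It therefore suffices to check that, under these identifications, the dual of the last face map agrees with $u_{k+1}$.

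\emph{Reduction to bar complexes.} Concretely, the right-hand vertical map of the square is the $E_0$-linear dual of the inclusion of the top of the Koszul complex, $C[k+1]_{-1}\otimes_{E_0}\td{E}^1(S^1)\hookrightarrow \td{\Delta}^1[1]^{\otimes(k+1)}\otimes_{E_0}\td{E}^1(S^1)=B_{k+1}(\bar{E}_0,\Delta^1,\td{E}^1(S^1))[k+1]$, transported to $\mc{S}_{p,\ldots,p}$ once a generator of the rank-one free $E_0$-module $\td{E}^1(S^1)^\vee$ is fixed; the left-hand vertical is the analogous map in weight $k$. By \eqref{eq:Cdiff}, $\delta_k$ is the restriction of the last face map $d_{k+1}$, which multiplies the final $\td{\Delta}^1[1]$ tensor coordinate into $\td{E}^1(S^1)$ via its $\Delta^1$-module structure. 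Dualizing, $\delta_k^\vee$ is induced by the map $\mc{S}_{p,\ldots,p}\to\mc{S}_{p,\ldots,p}$ (from $k$ to $k+1$ factors) which is the identity on the first $k$ tensor coordinates and adjoins a new $\mc{S}_p$-coordinate according to the comodule structure dual to the action $\td{\Delta}^1[1]\otimes_{E_0}\td{E}^1(S^1)\to\td{E}^1(S^1)$.

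\emph{The geometric heart.} The remaining point is to recognize this ``new coordinate'' map. Here one uses that the $\Delta^1$-module structure on the fundamental class of $\td{E}^1(S^1)$ is governed by the universal isogeny $\GG\to\GG/H$: under $\td{\Delta}^1[1]^\vee\cong\td{\Gamma}^0[1]^\vee\cong\mc{S}_p$ coming from \eqref{eq:susp} and \eqref{eq:dualGamma}, the coaction $\td{E}^1(S^1)^\vee\to\mc{S}_p\otimes_{E_0}\td{E}^1(S^1)^\vee$ is, up to the canonical identifications, the ring map $t\colon E_0\to\mc{S}_p$ of \eqref{eq:t}; this is the Wilkerson-criterion computation of \cite{RezkWilk}, and is the same fact that underlies the duality between the maps $u_1$ and the products $\Gamma[k']\otimes_{E_0}\Gamma[k'']\to\Gamma[k'+k'']$. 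Using Lemma~\ref{lem:decomp} to write $\mc{S}_{\underbrace{p,\ldots,p}_{k+1}}\cong \mc{S}_p\otimes_{E_0}\mc{S}_{\underbrace{p,\ldots,p}_k}$, with the new $\mc{S}_p$ recording the isogeny $\GG\to\GG/H_1$ and the $E_0$-module structure on the remaining factors re-based along $t$, one checks that adjoining the new coordinate via $t$ is precisely the ring map whose effect on $R$-points is $(H_1<\cdots<H_{k+1}<\GG)\mapsto(H_2/H_1<\cdots<H_{k+1}/H_1<\GG/H_1)$, i.e.\ the map $u_{k+1}$. The square then commutes because both ways around it lift to the same map of dual bar complexes.

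\emph{Main obstacle.} The genuine work is bookkeeping: one must keep careful track of the two distinct $E_0$-module structures ($s$ and $t$) on each copy of $\mc{S}_p$, and of the orientation conventions relating $B_*(\bar{E}_0,\td{\Delta}^1,\bar{E}_0)$ to $\mc{K}^*_{p^k}$ --- including the order in which tensor coordinates are listed and how the module end of the Koszul bar complex is matched with the bottom subgroup $H_1$ in the modular picture. The one substantive input, as opposed to formal manipulation, is the identification of the $\Delta^1$-module structure of $\td{E}^1(S^1)$ with the isogeny map $t$; granting that, together with Theorem~\ref{thm:koszul} and Proposition~\ref{prop:HMIC}, commutativity of the square is forced.
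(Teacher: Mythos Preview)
Your proposal follows essentially the same line as the paper's proof: reduce to the dual bar complex via Proposition~\ref{prop:HMIC} and Lemma~\ref{lem:decomp}, recognize the last face map $d_{k+1}$ from~\eqref{eq:Cdiff}, and identify the $\Delta^1[1]$-action on the fundamental class of $\td{E}^1(S^1)$ (through the suspension isomorphism~\eqref{eq:susp} with the $\Gamma^0$-action on $E_0$) as dual to the map $t$ of~\eqref{eq:t}.  The paper does exactly this, in three sentences.

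One comment: your opening appeal to Theorem~\ref{thm:koszul} and Corollary~\ref{cor:towercomp} is an unnecessary detour.  Those results identify the Goodwillie attaching maps with the Kuhn attaching maps and compute their $E$-homology as the Koszul differentials; but the differential $\delta_k$ is already \emph{defined} algebraically in~\eqref{eq:Cdiff} as the restriction of the last bar face map.  The statement being proved is purely algebraic --- it concerns the Koszul complex of the $\Delta^1$-module $\td{E}^1(S^1)$ and the modular isogeny complex --- and neither the Bousfield--Kuhn functor nor $\TAQ$ plays any role.  Dropping that invocation streamlines your argument to exactly the paper's.
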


\begin{proof}
Under the suspension isomorphism $\sigma: \Delta^1 \cong \Gamma^0$, the $\Delta^1$-module $\td{E}^1(S^1)$ is isomorphic to the $\Gamma^0$-module $\td{E}^0(S^0) = E_0$.  Moreover the action map
$$ \Gamma^0[1] \cong \Gamma^0[1] \otimes_{E_0} E_0 \rightarrow E_0 $$
is dual to the map $t$ of (\ref{eq:t})
$$ t: E_0 \rightarrow \mc{S}_{p} $$
whose effect on $R$ points is given by
$$ t^*: (H < \GG) \mapsto \GG/H. $$
The result follows from the isomorphisms
\begin{align*}
\mc{S}_{\underbrace{p, \ldots, p}_k} & \cong \underbrace{\mc{S}_p \otimes_{E_0} \cdots  \otimes_{E_0} \mc{S}_p}_k \\
& \cong  B_k(\bar{E}_0, \td{\Gamma}^0, E_0)[k] \\
& \cong B_k(\bar{E}_0, \td{\Delta}^0, \td{E}^1(S^1))[k]
\end{align*}
and (\ref{eq:Cdiff}).
\end{proof}

\appendix
\section{Borel equivariant stable homotopy theory}\label{apx:borel}

The technical aspects of Appendix~\ref{apx:norm} will necessitate a detailed understanding of homotopy orbit and fixed point spectra, including norm and transfer maps in the stable homotopy category, and a theory of Euler classes.  Everything in this appendix should be well known, but it seems difficult to fully track down in the literature.  The first author learned of this particular perspective on norms from some lectures of Jacob Lurie.

\subsection*{Functors induced from homomorphisms}

For a finite group $G$, let $\Sp_G$ denote the category of $G$-spectra ($G$-equivariant objects in $\Sp$, with weak equivalences given by those equivariant maps which are equivalences on underlying non-equivariant spectra), and $\mr{Ho}(\Sp_G)$ the corresponding homotopy category.

Given a homomorphism $f : H \rightarrow G$, the associated restriction functor
$$ f^* : \mr{Ho}(\Sp_G) \rightarrow \mr{Ho}(\Sp_H) $$
has a left adjoint 
$$ f_! : \mr{Ho}(\Sp_H) \rightarrow \mr{Ho}(\Sp_G) $$
and a right adjoint 
$$ f_* : \mr{Ho}(\Sp_H) \rightarrow \mr{Ho}(\Sp_G). $$
In the case where $f: H \rightarrow G$ is the inclusion of a subgroup, these functors are given by induction and coinduction
\begin{align*}
f_! Y & = \mr{Ind}_H^G Y = G_+ \wedge_H Y, \\
f_* Y & = \mr{CoInd}_H^G Y = \Map_H(G, Y). \\
\end{align*}
In this special case, since finite products are equivalent to finite wedges in $\Sp$, the natural map
$$ f_! Y = \mr{Ind}_H^G Y \xrightarrow[\cong]{\psi_f} \mr{CoInd}_H^G Y = f_* Y $$
is an isomorphism in $\mr{Ho}(\Sp_G)$, and thus $f_!$ is also right adjoint to $f^*$.

If $f$ is the unique map to the trivial group $f: G \rightarrow 1$, then these functors are given by homotopy orbits and homotopy fixed points:
\begin{align*}
f_! Y & = Y_{hG}, \\
f_* Y & = Y^{hG}. \\
\end{align*}

In general, these functors are compatible with composition:
\begin{align*}
(fg)^* &= g^* f^*, \\
(fg)_! & = f_! g_!, \\
(fg)_* & = f_* g_*. 
\end{align*}
For $Y_1$ and $Y_2$ in $\Sp_G$, let $Y_1 \wedge Y_2 \in \Sp_G$ denote the smash product with diagonal $G$-action.  For $f : H \rightarrow G$, $Y \in \Sp_H$, and $Z \in \Sp_G$, there is a projection formula
$$ Y \wedge (f_! Z) \cong f_!((f^* Y) \wedge Z). $$

Finally, if 
$$
\xymatrix{
H \ar[r]^f \ar[d]_g & G \ar[d]^{g'} \\
H' \ar[r]_{f'} & G'
}
$$
is a pullback, and $g'$ is surjective, then for $Y \in \Sp_G$ there is an isomorphism
$$ g_! f^* Y \cong (f')^* (g')_! Y. $$

For example, if $f: G \rightarrow G/N$ is a quotient, then for $Y \in \Sp_G$, $f_!Y$ is a $G/N$-equivariant model for $Y_{hN}$.  Indeed, this can be seen formally by considering the following diagram.
$$
\xymatrix{
N \ar[r]^k \ar[d]_i & 1 \ar[d]^j \\
G \ar[r]^f \ar[dr]_h & G/N \ar[d]^g \\
& 1 
}
$$
Since the square in the above diagram is a pullback, we deduce
$$ j^* f_! Y \cong k_! i^* Y = Y_{hN}. $$
Furthermore, we get an iterated homotopy orbit theorem
$$ (Y_{hN})_{hG/N} = g_! f_! Y = h_! Y = Y_{hG}. $$

\subsection*{Norm and transfer maps}

In the language introduced in the previous subsection, norm and transfer maps have a particularly nice description.
Suppose that $H$ is a subgroup of $G$, and consider the diagram:
$$
\xymatrix{
H \ar[rr]^i \ar[dr]_f && G \ar[dl]^g \\
& 1
}
$$
For $Y \in \Sp_G$, the transfer is given by the composite
$$ \mr{Tr}_H^G : Y_{hG} = g_! Y \xrightarrow{\eta_H^G} g_! i_! i^* Y \cong f_! i^* Y = Y_{hH}. $$
where $\eta_H^G$ is the composite
\begin{equation}\label{eq:eta}
\eta_H^G: Y \rightarrow i_* i^* Y \xrightarrow[\cong]{\psi^{-1}_{i}} i_! i^* Y
\end{equation}
arising from the unit of the adjunction.

If $H$ is normal in $G$, there is a refinement of the transfer $\Tr_e^H$ which is $G$-equivariant.  Consider the diagram
$$
\xymatrix{
G \ar[dr]|\Delta \ar[drr]^{\mr{Id}} \ar[ddr]_{\mr{Id}} \\
& G \times_{G/H} G \ar[r]_-{\pi_2} \ar[d]^{\pi_1} & G \ar[d]^f \\
& G \ar[r]_f & G/H
}
$$
Using the fact that the square in the diagram is a pullback, we define the $G$-equivariant transfer to be the composite
\begin{multline}\label{eq:equivtr}
\Tr_e^H : f^* Y_{hH} = f^* f_! Y = (\pi_1)_! (\pi_2)^* Y \xrightarrow{\eta^{G \times_{G/H} G}_{G}} (\pi_1)_! \Delta_! \Delta^* (\pi_2)^* Y = Y.
\end{multline}
The adjoint of this map gives a $G/H$-equivariant norm map
$$ N_H : Y_{hH} \rightarrow f_* Y = Y^{hH}. $$

The equivariant transfer maps (\ref{eq:equivtr}) can be constructed more generally: for subgroups
$$ K \le H \le G $$
with $K$ and $H$ normal in $G$
we can construct the $G/K$ equivariant transfer $\mr{Tr}^H_K$ as the composite
$$ \mr{Tr}_K^H : Y_{hH} \simeq (Y_{hK})_{hH/K} \xrightarrow{\mr{Tr}^{H/K}_e}  Y_{hK}. $$

We end this section with a lemma which we will need to make use of later.

\begin{lem}\label{lem:twotransfer}
Given $X, Y \in \Sp_G$, the following diagram commutes in $\mr{Ho}(\Sp_G)$.
$$
\xymatrix{
X_{hG} \wedge Y_{hG} \ar[r]^{\cong} \ar[d]_{\cong} & (X \wedge Y_{hG})_{hG} \ar[d]^{1 \wedge \mr{Tr}^G_e}
\\
 (X_{hG} \wedge Y)_{hG} \ar[r]_{\mr{Tr}_e^G \wedge 1} & (X \wedge Y)_{hG}
}
$$
\end{lem}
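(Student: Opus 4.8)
The plan is to show that both composites in the square equal the homotopy orbits of a single transfer map — namely $\mr{Tr}^{G\times G}_{\Delta G}$ for the diagonal subgroup $\Delta G\le G\times G$ acting on $X\wedge Y$ with the two coordinates acting separately — and then to deduce that the two resulting descriptions coincide from the symmetry of the smash product. Write $\pi_1,\pi_2\colon G\times G\to G$ for the projections, $\Delta\colon G\to G\times G$ for the diagonal, and $g\colon G\times G\to 1$, $f\colon G\to 1$ for the maps to the trivial group, so that $g=f\pi_1=f\pi_2$, $\pi_i\Delta=\mr{Id}$, and $f\pi_1\Delta=f$. Form $Z:=\pi_1^*X\wedge\pi_2^*Y\in\Sp_{G\times G}$; then $\Delta^*Z=X\wedge Y$ with the diagonal $G$-action, and base change over the pullback square $G\times G\rightrightarrows G\to 1$ together with the projection formula gives $g_!Z\simeq X_{hG}\wedge Y_{hG}$. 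Indeed, the top isomorphism $X_{hG}\wedge Y_{hG}\cong(X\wedge Y_{hG})_{hG}$ of the lemma is exactly the projection formula for $f$ (taking the non-equivariant tensor factor to be $Y_{hG}$ and the equivariant one to be $X$), and the left isomorphism is the projection formula with these roles played by $X_{hG}$ and $Y$.

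The next step is to unwind $1_X\wedge\mr{Tr}^G_e$ from the definition~(\ref{eq:equivtr}) of the equivariant transfer over the pullback square $G\times_{G/G}G=G\times G$. Using $f^*f_!Y\cong(\pi_1)_!\pi_2^*Y$ (base change, as in~(\ref{eq:equivtr})) and the projection formula for $\pi_1$, the map $1_X\wedge\mr{Tr}^G_e\colon X\wedge f^*f_!Y\to X\wedge Y$ is identified with $(\pi_1)_!$ applied to the unit map $Z=\pi_1^*X\wedge\pi_2^*Y\to\Delta_!\Delta^*Z=\Delta_!(X\wedge Y)$ of~(\ref{eq:eta}); here one invokes the projection formula for $\Delta$ and the relations $\pi_1\Delta=\pi_2\Delta=\mr{Id}$. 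Applying $f_!=(-)_{hG}$ and using $f\pi_1=g$ and $f\pi_1\Delta=f$, the right-hand vertical composite of the square becomes precisely $g_!$ of the unit for $\Delta$, i.e. $\mr{Tr}^{G\times G}_{\Delta G}$ evaluated on $Z$, landing in $f_!(X\wedge Y)=(X\wedge Y)_{hG}$. The identical argument applied to $\mr{Tr}^G_e\wedge 1_Y$ identifies the bottom--left composite with $\mr{Tr}^{G\times G}_{\Delta G}$ evaluated on $\widetilde Z:=\pi_1^*Y\wedge\pi_2^*X$, landing in $f_!(Y\wedge X)$.

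Finally I would compare the two: the swap automorphism $\tau\colon G\times G\to G\times G$ satisfies $\tau\Delta=\Delta$ and $g\tau=g$, and the symmetry isomorphism of the smash product identifies $\tau^*Z$ with $\widetilde Z$ (and $\Delta^*\widetilde Z=Y\wedge X$ with $\Delta^*Z=X\wedge Y$). Naturality of the unit map of~(\ref{eq:eta}) and of $g_!$ under $\tau$ then forces the two composites to agree, the only residual bookkeeping being the evident identification $(Y\wedge X)_{hG}\cong(X\wedge Y)_{hG}$. The main obstacle is the coherence in the middle step: one must verify that the projection-formula isomorphisms genuinely intertwine the units $\eta$ of~(\ref{eq:eta}) that define the transfer, and that the several ways of identifying $X_{hG}\wedge Y_{hG}$ with $g_!Z$ (via the projection formula and base change) are mutually compatible — routine but fiddly. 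As an alternative that sidesteps the conceptual reformulation, one can prove the lemma by a direct diagram chase: expand both composites into the zig-zags of units, base-change isomorphisms, and projection-formula maps defining the transfers, and check commutativity square by square using the compatibilities of $(-)_!$ and $(-)^*$ with composition, base change, and the projection formula recorded earlier in this appendix.
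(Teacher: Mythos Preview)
Your proposal is correct and follows essentially the same strategy as the paper: both composites are identified with $g_!$ applied to the unit $Z\to\Delta_!\Delta^*Z$ for $Z=\pi_1^*X\wedge\pi_2^*Y$, i.e.\ with the transfer $\mr{Tr}^{G\times G}_{\Delta G}$. The paper carries this out as the single large diagram chase you describe as your ``alternative'', working with the same $Z$ for both paths (via $f\pi_1=g=f\pi_2$) rather than introducing $\widetilde Z$ and the swap $\tau$; your swap argument is a harmless repackaging of the same comparison.
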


\begin{proof}
With respect to the maps:
$$
\xymatrix{
G \ar[dr]|{\delta} \ar@/_1pc/[ddr]_{=} \ar@/^1pc/[drr]^= & & \\
& G \times G \ar[d]_{\pi_1} \ar[r]^{\pi_2} \ar[dr]|g & G \ar[d]^f
\\
& G \ar[r]_f & 1 
}
$$
the lemma follows from the following commutative diagram.
$$
\scriptsize
\xymatrix@C-2em@R-1em{
f_!X \wedge f_!Y \ar[dd]_{\cong} \ar[rr]^\cong \ar[dr]_\cong && 
f_!(X \wedge f^*f_!Y) \ar[r]^\cong &
f_!(X \wedge (\pi_1)_! \pi^*_2 Y ) \ar@/^5pc/[dd] & 
\\
 &
g_!(\pi_1^* X \wedge \pi_2^* Y) \ar[d]_\cong \ar[dr] \ar[r]^\cong &
f_! (\pi_1)_! (\pi_1^*X \wedge \pi_2^*Y) \ar[ur]_{\cong} &
f_!(\pi_1)_! \delta_! (\delta^* \pi_1^* X \wedge \delta^* \pi_2^* Y) \ar[d]^\cong
\\
f_!(f^* f_! X \wedge Y) \ar[d]_\cong & 
f_! (\pi_2)_!(\pi_1^* X \wedge \pi_2^* Y) \ar[dl]_{\cong} &
g_! \delta_! \delta^* (\pi_1^* X \wedge \pi_2^* Y) \ar[dl]_\cong \ar[ru]^\cong &
f_!(X \wedge (\pi_1)_! \delta_! \delta^* \pi_2^* Y) \ar[d]^\cong &
\\
f_!((\pi_2)_! \pi_1^* X \wedge Y) \ar@/_2pc/[rr]
&f_! (\pi_2)_! \delta_! (\delta^* \pi_1^* X \wedge \delta^* \pi_2^* Y) \ar[r]_{\cong} &
f_!((\pi_2)_! \delta_! \delta^* \pi_1^* X \wedge Y) \ar[r]_\cong & f_!(X \wedge Y)
\\
 &&
 &&
}
$$
\end{proof}

\subsection*{Thom isomorphism and Euler classes}

For the purposes of this subsection, let $E$ be a complex orientable ring spectrum, with a fixed choice of complex orientation.  We may regard $E$ as a $G$-spectrum with trivial action.

For a $d$-dimensional complex representation $V$, its Thom class can be represented by a map
$$ [V]: S^V_{hG} \rightarrow \Sigma^{2d} E $$
in $\mr{Ho}(\Sp)$\footnote{Alternatively, the reader may assume that $E$ is even periodic, and that the Thom class lies in dimension $0$ --- the reader then may just set $d = 0$ in the formulas that follow.}.  By adjointness, this map corresponds to an equivariant map
$$ \td{[V]}: S^V \rightarrow \Sigma^{2d} E. $$
in $\mr{Ho}(\Sp_G)$.
By the definition of a Thom class, the induced map of $E$-modules
$$ \td{[V]}_E: E \wedge S^V \rightarrow \Sigma^{2d} E $$
is an underlying equivalence of spectra.  It follows that for any $Y \in \Sp_G$, there is an equivalence
$$ \td{[V]}_E: E \wedge S^V \wedge Y \rightarrow E \wedge \Sigma^{2d} Y. $$
It follows that there is an equivalence of non-equivariant spectra
$$ \Phi_V: E \wedge (S^V \wedge Y)_{hG} \rightarrow E \wedge (\Sigma^{2d} Y)_{hG}, $$
and thus isomorphisms
\begin{align}
(\Phi_V)_* : E_{*+2d} (S^V \wedge Y)_{hG} & \xrightarrow{\cong} E_* Y_{hG}, \\
\Phi_V^*: E^{*+2d} (S^V \wedge Y)_{hG} & \xleftarrow{\cong} E^* Y_{hG}. \label{eq:thom}
\end{align}
These isomorphisms are instances of the classical Thom isomorphism if $Y$ is of the form $\Sigma^{\infty} X_+$ for a $G$-space $X$.

Observe that, in general, $E^*Y_{hG}$ is a module over the ring $E^*(BG)$.  Indeed, given classes
\begin{align*}
\alpha & \in E^n(BG), \\
\beta & \in E^m Y_{hG}
\end{align*}
represented by stable maps
\begin{align*}
\alpha: S^0_{hG} & \rightarrow \Sigma^n E, \\
\beta : Y_{hG} & \rightarrow \Sigma^m E,
\end{align*}
we can take the smash of their adjoints
\begin{align*}
\td{\alpha}: S^0 & \rightarrow \Sigma^n E, \\
\td{\beta} : Y & \rightarrow \Sigma^m E 
\end{align*}
to get a map
$$ \td{\alpha} \wedge \td{\beta} : Y \rightarrow \Sigma^{n+m} E \wedge E. $$
Postcomposing with the product for $E$, and taking the adjoint, gives a map
$$ \alpha \cdot \beta : Y_{hG} \rightarrow \Sigma^{n+m} E $$
which represents the desired product
$$ \alpha \cdot \beta \in E^{n+m} Y_{hG}. $$

The composite
$$ S^0_{hG} \rightarrow S^V_{hG} \xrightarrow{[V]} \Sigma^{2d} E $$
represents the \emph{Euler class}
$$ e_V \in E^{2d}(BG). $$

\begin{lem}\label{lem:euler}
The composite
$$  E^* Y_{hG} \xrightarrow[\cong]{\Phi_V^*} E^{*+2d}(S^V \wedge Y)_{hG} \rightarrow E^{*+2d} Y_{hG} $$
induced by the inclusion $Y_{hG} \hookrightarrow (S^V \wedge Y)_{hG}$
is given by multiplication by $e_V$.
\end{lem}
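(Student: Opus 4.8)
The plan is to unwind the two maps in the statement into represented maps and to recognize the composite as exactly the action of the Euler class recalled earlier in this subsection.

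First I would revisit the construction of the Thom isomorphism (\ref{eq:thom}). By definition $\Phi_V$ is obtained from the equivariant equivalence of $E$-modules $\td{[V]}_E\colon E\wedge S^V\to \Sigma^{2d}E$ by smashing with $Y$, passing to homotopy orbits, and using the identification $E\wedge(-)_{hG}\simeq (E\wedge -)_{hG}$ valid when $E$ carries the trivial $G$-action. Chasing this through, for $\beta\in E^mY_{hG}$ the class $\Phi_V^*(\beta)\in E^{m+2d}(S^V\wedge Y)_{hG}$ is obtained by pairing the Thom class $[V]\in E^{2d}(S^V_{hG})$ with $\beta$ via the external product
$$ E^{*}(S^V_{hG})\otimes_{E^*} E^{*}(Y_{hG})\longrightarrow E^{*}\big((S^V\wedge Y)_{hG}\big) $$
induced by the diagonal on $EG$, i.e.\ by the map $(S^V\wedge Y)_{hG}\to S^V_{hG}\wedge Y_{hG}$ coming from $EG_+\wedge S^V\wedge Y\to (EG_+\wedge S^V)\wedge(EG_+\wedge Y)$. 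In other words $\Phi_V^*$ is ``cup product with the Thom class,'' the relative and Borel-equivariant form of the classical statement.

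Second, I would observe that the inclusion $Y_{hG}\hookrightarrow(S^V\wedge Y)_{hG}$ induced by $S^0\hookrightarrow S^V$ is compatible with the external pairings: the square with top edge $Y_{hG}\to (S^V\wedge Y)_{hG}$, bottom edge $BG_+\wedge Y_{hG}=S^0_{hG}\wedge Y_{hG}\to S^V_{hG}\wedge Y_{hG}$, and vertical maps the respective $EG$-diagonals commutes, since $S^0\hookrightarrow S^V$ is an equivariant map compatible with the relevant diagonals. Pulling $\Phi_V^*(\beta)$ back along the inclusion therefore replaces $[V]$ by its restriction along the zero section $S^0_{hG}\hookrightarrow S^V_{hG}$, which is by definition the Euler class $e_V\in E^{2d}(BG)$; so the composite carries $\beta$ to the external product of $e_V$ and $\beta$ along the map $Y_{hG}\to BG_+\wedge Y_{hG}$. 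It then only remains to identify this last pairing with the $E^*(BG)$-module structure $\alpha\cdot\beta$ recalled earlier, which is immediate: both are defined by smashing the adjoints $\td{\alpha}\colon S^0\to\Sigma^nE$ and $\td{\beta}\colon Y\to\Sigma^mE$, post-composing with $\mu_E$, and passing to orbits, and the $EG$-diagonal is exactly what converts the external smash into the product on $Y_{hG}$.

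The only real content is bookkeeping: one must use the three identifications in play --- the adjunction $\mr{Ind}\simeq\mr{CoInd}$, the equivalence $E\wedge(-)_{hG}\simeq(E\wedge-)_{hG}$ for trivial $E$-action, and the commutation of $(-)_{hG}$ with smash products --- coherently, and check that the diagonal of $EG$ used to build $\Phi_V$ agrees with the one used to define both the $E^*(BG)$-module structure and the zero-section restriction. I expect the compatibility square of the second paragraph to be the step requiring the most care to set up precisely, but it is routine; there is no homotopy-theoretic subtlety once everything is expressed in terms of the Thom class and the $EG$-diagonal.
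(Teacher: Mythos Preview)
Your argument is correct: unwinding $\Phi_V^*$ as cup product with the equivariant Thom class $\td{[V]}$ and then precomposing with $S^0\hookrightarrow S^V$ replaces $\td{[V]}$ by its restriction along the zero section, which is exactly the adjoint of $e_V$, and the resulting pairing $\td{e}_V\wedge\td{\beta}$ is precisely the module action $e_V\cdot\beta$ spelled out just before the lemma. The paper itself states Lemma~\ref{lem:euler} without proof, treating it as the standard identification of the Euler class with the zero-section pullback of the Thom class; your write-up supplies exactly that routine verification, and there is nothing further to compare.
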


\section{The $H_\infty$ structure of $S_K^{QX_+}$}\label{apx:norm}

To state the main result of this appendix, we shall need the following.

\begin{defn}We shall say that a pair of $\TT$-algebras $A$, $B$ are \emph{isomorphic mod $\mf{m}$} (and write $A \cong_\mf{m} B$) if there is a map of $E_*$-modules
$$ f : A \rightarrow B $$
such that 
\begin{enumerate}
\item the map
$$ \bar{f} : A/\mf{m} \rightarrow B/\mf{m} $$
is an isomorphism, and 
\item the following diagram commutes
$$
\xymatrix{
(\TT A)/\mf{m} \ar[r]^{\br{\TT f}} \ar[d] & (\TT B)/\mf{m} \ar[d] \\
A/\mf{m} \ar[r]_{\bar{f}} & B/\mf{m} 
}
$$
where the vertical maps are the mod $\mf{m}$ reductions of the $\TT$-algebra structure maps.
\end{enumerate}
\end{defn}

In this appendix we prove the following technical lemma needed in the proof of Proposition~\ref{prop:comparisonQ}.

\begin{lem}\label{lem:keylemma}
There is an $N \gg 0$ such that for all pointed spaces $X = \Sigma^N X'$ whose suspension spectra are $K$-locally strongly dualizable, with $E^*X$ flat as an $E_*$-module,
there is an isomorphism of $\TT$-algebras mod $\mf{m}$:
$$ E_*S_K^{QX_+} \cong_{\mf{m}} \widehat{\TT} \td{E}^*X. $$
\end{lem}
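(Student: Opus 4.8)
The plan is to construct a comparison map of $E_*$-modules
$$ f\colon \widehat{\TT}\,\td E^*X \longrightarrow E_*S_K^{QX_+} $$
and then verify the two conditions in the definition of $\cong_{\mf{m}}$: that the reduction $\bar f$ is an isomorphism, and that $\bar f$ intertwines the mod-$\mf{m}$ reductions of the $\TT$-algebra structure maps. The map $f$ will be obtained from the completed version of the natural transformation $\td\epsilon^*\colon \PP_{S_K}S_K^{\Sigma^\infty X}\to S_K^{QX_+}$ of Section~\ref{sec:comparison}: smashing with $E$, $K$-localizing, identifying $E_*S_K^{\Sigma^\infty X}$ with $\td E^*X$ by $K$-local Spanier--Whitehead duality (legitimate since $\Sigma^\infty X$ is $K$-locally strongly dualizable and $E^*X$ is flat), and applying Lemma~\ref{eq:completealgmodel}, the transformation $\widehat{\td\epsilon^*}$ induces $f$.

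The first condition is essentially formal once the underlying geometry is in place. The Snaith splitting identifies $\Sigma^\infty QX_+$ with $\PP(\Sigma^\infty X)$, so after $K$-localization and the vanishing of $K$-local Tate constructions on dualizable spectra \cite{KuhnTate} (note $(-)^{tG}$ commutes with the infinite products occurring in the Snaith decomposition, since $G=\Sigma_n$ is finite), one obtains the identification $S_K^{QX_+}\simeq \widehat{\PP}_{S_K}(S_K^{\Sigma^\infty X})$ recorded in (\ref{eq:SKQX}). Combined with Lemma~\ref{eq:completealgmodel} and Corollary~\ref{cor:flat}, this yields an $E_*$-module isomorphism $E_*S_K^{QX_+}\cong [\widehat{\TT}\,\td E^*X]^\wedge_{\mf{m}}$ compatible with $f$, so $\bar f$ is an isomorphism. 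Moreover, Strickland's computation of the Hopf ring $E^0(\coprod_n B\Sigma_n)$ \cite{Strickland}, together with the description of $\TT$ in \cite{RezkWilk}, shows that even the full ring structure on $E_*S_K^{QX_+}$ coming from the diagonal on $QX$ is the free graded-commutative structure of $\widehat{\TT}\,\td E^*X$; thus the entire content of the lemma sits in the power operations.

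The second condition is the heart of the matter. The $\TT$-algebra structure on $E_*S_K^{QX_+}$ is induced by the $H_\infty$-structure of the commutative $S_K$-algebra $S_K^{QX_+}$, whose $n$-th structure map factors as the dual of the $n$-fold smash $(S_K^{QX_+})^{\wedge n}\to S_K^{(QX^{\times n})_+}$, followed by a norm equivalence $(S_K^{(QX^{\times n})_+})_{h\Sigma_n}\xrightarrow{\sim}(S_K^{(QX^{\times n})_+})^{h\Sigma_n}$, followed by restriction along the $n$-th extended diagonal $QX\to (QX^{\times n})_{h\Sigma_n}$. I would compute the effect of this composite on $E_*$ modulo $\mf{m}$ using the apparatus of Appendix~\ref{apx:borel} --- the projection formula, the compatibility of norm and transfer maps with smash products (Lemma~\ref{lem:twotransfer}), and the Euler-class formula (Lemma~\ref{lem:euler}) --- and show that modulo $\mf{m}$ it coincides with the structure map of the free $\widehat{\TT}$-algebra. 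The discrepancy between the two is expressible through transfers from proper partition subgroups of the $\Sigma_n$, and these transfers are divisible by $\mf{m}$ by Strickland's isomorphism (\ref{eq:Strickland}); this is exactly why the statement holds only mod $\mf{m}$, and only for the weakened notion $\cong_{\mf{m}}$.

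The main obstacle is precisely this last computation: the Dyer--Lashof module structure on $E_*S_K^{QX_+}$ is \emph{not} the free one --- an earlier version of this paper claimed otherwise --- and the two agree only after killing $\mf{m}$, and only once $X$ has been suspended sufficiently many times. The hypothesis $N\gg 0$ plays a double role: it guarantees that $\Sigma^\infty X$ is $K$-locally dualizable and that the Bousfield--Kan and Snaith-filtration spectral sequences in play converge, and it forces all of the classes in $\td E^*X$ into a range in which the ``unstable'' corrections to the power operations --- those not predicted by the free theory --- become $\mf{m}$-divisible and hence vanish mod $\mf{m}$. Organizing the norm and transfer contributions so that this $\mf{m}$-divisibility is manifest is the technically demanding step, and is where essentially all of Appendix~\ref{apx:norm} beyond this sketch is spent.
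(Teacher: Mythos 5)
Your overall architecture is the right one --- identify $S_K^{QX_+}$ with the completed free algebra $\widehat{\PP}_{S_K}(S_K^{X})$ via the Snaith/Kahn splitting, norms, and the product-versus-homology argument of Lemma~\ref{lem:product}, and then show the two $H_\infty$ (hence $\TT$-algebra) structures agree mod $\mf{m}$. But the mechanism you propose for the key step is wrong. The discrepancy between the $\TT$-algebra structure on $E_*S_K^{QX_+}$ and the free one is \emph{not} ``expressible through transfers from proper partition subgroups, which are divisible by $\mf{m}$ by (\ref{eq:Strickland}).'' Transfers from proper partition subgroups are not $\mf{m}$-divisible: Strickland's isomorphism says the quotient of $E^0(B\Sigma_{p^k})$ by the transfer ideal is $\mc{S}_{p^k}$, a \emph{free} $E_0$-module of positive rank, which says nothing about that ideal lying in $\mf{m}$. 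Moreover those transfer terms are an intrinsic part of the free structure $\widehat{\TT}$ itself (they appear in $\xi^{\mr{add}}$, cf.\ Lemma~\ref{lem:xiadd}); they are not the correction terms. Relatedly, your parenthetical claim that the cup product coming from the diagonal on $QX$ is already the free one is backwards: the free ($*$-product) structure in Strickland's Hopf ring comes from the loop-space addition, i.e.\ the \emph{additive} coproduct on $\PP(X)$, whereas the diagonal on $QX_+$ corresponds under the Kahn splitting to the \emph{multiplicative} coproduct (Lemma~\ref{lem:diagonal}), and these genuinely differ before reducing mod $\mf{m}$.

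The actual correction terms are the cross terms $\xi^{\mr{mult}}_J$ indexed by sequences $J=(j_S)$ with $j_S\neq 0$ for some subset $S\subseteq\ul{k}$ with $\abs{S}>1$; these arise from the $\abs{S}$-fold diagonals $X\to X^{\wedge\abs{S}}$ hidden in $\psi^{\mr{mult}}_k$ (think $x\mapsto x_1+x_2+x_1x_2$ versus $x\mapsto x_1+x_2$). The argument that kills them is the one place the hypothesis $X=\Sigma^N X'$ with $N\gg 0$ does real work: writing $X=\Sigma^{2M}X'$, each such diagonal factors through the inclusion $Y_{h\Sigma_{(J)}}\hookrightarrow (S^{2M\rho^\perp}\wedge Y)_{h\Sigma_{(J)}}$ for a positive-dimensional representation $\rho^\perp$ of the finite group $\Sigma_{(J)}$, which on $E$-cohomology is multiplication by the Euler class power $e_{2\rho^\perp}^M$ (Lemma~\ref{lem:euler}). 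Since $E^*(B\Sigma_{(J)})/\mf{m}$ is a finite-dimensional $K_*$-vector space and $e_{2\rho^\perp}$ becomes invertible only after killing $B\Sigma_{(J)}$, this Euler class is nilpotent mod $\mf{m}$, so $e_{2\rho^\perp}^M\equiv 0 \pmod{\mf{m}}$ for $M$ large, uniformly in the finitely many $(k,J)$ that occur for $k\in\{2,p\}$. Without this Euler-class nilpotence argument (Lemma~\ref{lem:exponential}) your proof has no reason for the correction terms to vanish mod $\mf{m}$, and indeed they do not vanish integrally --- this is exactly the erroneous claim from the first version of the paper that the introduction warns about.
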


The mod $\mf{m}$ isomorphism in Lemma~\ref{lem:keylemma} is given by the sequence of mod $\mf{m}$ isomorphisms:
\begin{align*}
E_*S_K^{QX_+} & \cong E_* S_K^{\PP (X)^{\mr{mult}}} & & \text{(Lemma~\ref{lem:diagonal})} \\
& \cong_{\mf{m}} E^* \PP (X)^{\mr{mult}} && \text{(Lemma~\ref{lem:product})}\\
& \cong_{\mf{m}} E^* \PP (X)^{\mr{add}} && \text{(Lemma~\ref{lem:exponential})}\\
& \cong_{\mf{m}} E_* S_K^{\PP (X)^{\mr{add}}} && \text{(Lemma~\ref{lem:product})} \\
& \cong E_* \widehat{\PP}_{S_K} (S^{X}_K) && \text{(Lemma~\ref{lem:norm})} \\
& \cong_{\mf{m}} \pi_* \widehat{\PP}_{E} (E^X) && \text{(Lemma~\ref{lem:product})} \\
& \cong_{\mf{m}} \widehat{\TT} \td{E}^*X. && \text{(Lemma~\ref{eq:completealgmodel})}
\end{align*}
Here, $\PP(X)^{\mr{add}}$ and $\PP(X)^{\mr{mult}}$ denote two different $H_\infty$-coalgebra structures on $\PP(X)$, which will be defined in the next section.

Many parts of this appendix apply more generally to aspects of the $H_\infty$-$R$-algebra structure of $R^{QX_+}$, where $R$ is a commutative $S$-algebra, and $X$ is a connected pointed space such that $R \wedge X$ is strongly dualizable as an $R$-module.  Therefore, we shall always implicitly assume $X$ and $R$ satisfy these hypotheses throughout this appendix. At times we shall have to specifically take $R$ to be $S_K$ or $E$.  Observe that the dualizability of $X$ implies that the natural map
$$ (R^X)^{\wedge_R i} \rightarrow R^{(X^{\wedge i})} $$
is an equivalence.  We therefore may simply use $R^{X^i}$ to unambiguously refer to either of these equivalent spectra.

\subsection*{$H_\infty$ coalgebras in spectra}

By an $H_\infty$ coalgebra $C$ in spectra, we shall mean a spectrum $C$ equipped with $\Sigma_k$-equivariant comultiplication maps:
\begin{gather*}
\psi_k: C \rightarrow C^{\wedge k}
\end{gather*}
for all $k \ge 0$, such that for all $k$ and $\ell$ the diagram
$$
\xymatrix{
C \ar[r]^{\psi_2} \ar[dr]_{\psi_{k+\ell}} &
C^{\wedge 2} \ar[d]^{\psi_k \wedge \psi_\ell} \\
& C^{\wedge k+\ell}  
}
$$
commutes in $\mr{Ho}(\Sp_{\Sigma_k \times \Sigma_\ell})$, and 
$$
\xymatrix{
C \ar[r]^{\psi_k} \ar[d]_{\psi_{k\ell}} &
C^{\wedge k} \ar[d]^{(\psi_\ell)^{k}} \\
C^{\wedge kl} \ar[r]_{\cong} &
(C^{\wedge \ell})^{\wedge k}
}
$$
commutes in $\mr{Ho}(\Sp_{\Sigma_k\wr\Sigma_\ell})$.

The $H_\infty$ coalgebra structures $\PP(X)^{\mr{add}}$ and $\PP(X)^{\mr{mult}}$ on $\PP(X)$ will be encoded in structure maps
$$ \psi_k: \PP(X) \rightarrow \PP(X)^k $$
where the maps $\psi_k$ are maps of $E_\infty$ ring spectra (thus making $\PP(X)$ some kind of bialgebra).
The structure maps $\psi_k$ of such $H_\infty$-coalgebra structures on $\PP(X)$ are determined by their restrictions to $\Sigma^\infty X$, given by the composites
$$ \psi_k | X : \Sigma^{\infty}X \hookrightarrow \PP(X) \xrightarrow{\psi_k} \PP(X)^{\wedge k}. $$

For a spectrum $Y$, the zig-zag
$$ Y \xrightarrow{\Delta} \prod_{i = 1}^k Y \xleftarrow{\simeq} \bigvee_{i=1}^k Y $$
determines a canonical map
$$ w_k: Y \rightarrow \bigvee_{i = 1}^k Y $$
in $\mr{Ho}(\Sp_{\Sigma_k})$, where $\Sigma_k$ acts trivially on $Y$, and by permuting the wedge factors of $\bigvee_{i = 1}^k Y$.

The coproducts $\psi_k^{\mr{add}}$ giving rise to $\PP(X)^{\mr{add}}$ are determined (in the sense described above) by the composites
$$ \Sigma^\infty X \xrightarrow{w_k} \bigvee_{i = 1}^k \Sigma^\infty X_i \rightarrow \PP(X)^{\wedge k} $$
where $X_i = X$, regarded as a wedge summand of the $i$th term of the smash product $\PP(X)^{k}$.
The coproducts $\psi_k^{\mr{mult}}$ giving rise to $\PP(X)^{\mr{mult}}$ are determined by the composites
$$ \Sigma^\infty X \xrightarrow{w_{2^k-1}} \bigvee_{\emptyset \ne S \subseteq \ul{k}} \Sigma^\infty X 
\xrightarrow{\bigvee \Delta^S} 
\bigvee_{\emptyset \ne S \subseteq \ul{k}}   
\bigwedge_{s \in S}^k \Sigma^\infty X_s
 \rightarrow \PP(X)^{\wedge k}, $$
where the wedge ranges over non-empty subsets of $\ul{k} = \{1, \ldots, k\}$, and $\Delta^S$ is the $S$-fold diagonal.  

\begin{rmk}\label{rmk:addmult}
We explain why we call these the ``additive'' and ``multiplicative'' coalgebra structures.
Consider first the additive formal group, represented by $\ZZ[[x]]$, with coproduct given by
\begin{gather*}
\psi^{\mr{add}}: \ZZ[[x]] \rightarrow \ZZ[[x_1, x_2]], \\
x \mapsto x_1 + x_2.
\end{gather*}
The $k$-fold coproduct
$$ \psi_k^{\mr{add}}: \ZZ[[x]] \rightarrow \ZZ[[x_1, \ldots, x_k]] $$
is then given by
$$ \psi^{\mr{add}}_k(x) = x_1 + \cdots + x_k. $$

Consider now the multiplicative formal group, again represented by $\ZZ[[x]]$, but now with coproduct
$$ \psi^{\mr{mult}}(x) = x_1 + x_2 + x_1x_2. $$
The $k$-fold coproduct is then given by
$$ \psi_k^{\mr{mult}}(x) = \sum_{\emptyset \ne S \subseteq \ul{k}} \: \prod_{s \in S} x_s. $$
\end{rmk}

\begin{lem}\label{lem:Hinfty}
Suppose $R$ is an $H_\infty$ ring spectrum, and $C$ is an $H_\infty$ coalgebra in spectra.  Then $R^C$ inherits an $H_\infty$-$R$-algebra structure.
\end{lem}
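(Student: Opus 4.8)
The plan is to endow $R^C = F(C,R)$ with $H_\infty$-$R$-algebra structure maps
\[
 \xi_k \colon (E\Sigma_k)_+ \wedge_{\Sigma_k} (R^C)^{\wedge_R k} \longrightarrow R^C, \qquad k \ge 0,
\]
obtained by pairing the $\Sigma_k$-equivariant comultiplication $\psi_k\colon C \to C^{\wedge k}$ against the $H_\infty$ structure of $R$, and then to check the unit, associativity/distributivity, and composition axioms. This is the familiar yoga that mapping out of a coalgebra into an algebra produces an algebra, made equivariant.

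I would construct $\xi_k$ as a composite of five natural maps, all available already in $\mr{Ho}(\Sp)$ and $\mr{Ho}(\Sp_G)$. (1) Writing $R^C = F_R(R\wedge C, R)$ and using $(R\wedge C)^{\wedge_R k}\cong R\wedge C^{\wedge k}$, the lax (op)monoidal structure of $F_R(-,-)$ gives a $\Sigma_k$-equivariant assembly map $(R^C)^{\wedge_R k}\to F\big(C^{\wedge k},\,R^{\wedge_R k}\big)$, with $\Sigma_k$ permuting the smash factors of $C^{\wedge k}$ and of $R^{\wedge_R k}$ simultaneously; it is essential \emph{not} to collapse $R^{\wedge_R k}$ via the multiplication of $R$ at this stage. (2) Apply $(E\Sigma_k)_+\wedge_{\Sigma_k}(-)$. (3) Since $\psi_k$ is $\Sigma_k$-equivariant with $\Sigma_k$ acting trivially on $C$, the map $\psi_k^*\colon F(C^{\wedge k},R^{\wedge_R k})\to F(C,R^{\wedge_R k})$ is $\Sigma_k$-equivariant for the residual action on $R^{\wedge_R k}$ alone, so $(E\Sigma_k)_+\wedge_{\Sigma_k}\psi_k^*$ lands in $(E\Sigma_k)_+\wedge_{\Sigma_k}F(C,R^{\wedge_R k})$. (4) The canonical assembly map $(E\Sigma_k)_+\wedge_{\Sigma_k}F(C,M)\to F\big(C,\,(E\Sigma_k)_+\wedge_{\Sigma_k}M\big)$, natural in the $\Sigma_k$-spectrum $M$, rewrites this as $F\big(C,\,(E\Sigma_k)_+\wedge_{\Sigma_k}R^{\wedge_R k}\big)$. (5) Post-compose with $F(C,-)$ applied to the $R$-linear extended-power structure map $(E\Sigma_k)_+\wedge_{\Sigma_k}R^{\wedge_R k}\to R$, which exists because $R$, being $H_\infty$, is in particular an $H_\infty$-$R$-algebra over itself. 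The $R$-module structure on $R^C$ is visibly the one through the target $R$; the multiplication is the $k=2$ case restricted along a point of $E\Sigma_2$ (and hence uses $\psi_2$); and the unit $R = F(S,R)\xrightarrow{\psi_0^*} F(C,R)$ is induced by the counit $\psi_0\colon C\to S$ together with the unit of $R$.

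It then remains to check the axioms. Unitality ($\xi_1=\mr{id}$) is immediate from $\psi_1=\mr{id}_C$ and $\xi_1^R=\mr{id}_R$. The composition axiom asserts that, for all $j,k$, computing the structure map "one extended power at a time'' from $(E\Sigma_j)_+\wedge_{\Sigma_j}\big((E\Sigma_k)_+\wedge_{\Sigma_k}(R^C)^{\wedge_R k}\big)^{\wedge_R j}$ agrees with the route through the $(jk)$-th extended power $(E\Sigma_{jk})_+\wedge_{\Sigma_{jk}}(R^C)^{\wedge_R jk}$. Unwinding both composites into the five steps above, the equality follows from exactly three inputs: the wreath-product coherence for $C$ (the second diagram in the definition of an $H_\infty$ coalgebra, relating $\psi_{jk}$ to $(\psi_k)^{\wedge j}\circ\psi_j$ in $\mr{Ho}(\Sp_{\Sigma_j\wr\Sigma_k})$), the composition axiom for the $H_\infty$ structure of $R$, and the naturality together with the standard base-change compatibilities of the two assembly maps of (1) and (4) — in particular that the $j$-fold $R$-linear extended power of the assembly map in (4) fits into the canonical identification of iterated extended powers with the extended power for $\Sigma_j\wr\Sigma_k$. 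Likewise, the first diagram in the definition of an $H_\infty$ coalgebra, comparing $\psi_{k+\ell}$ with $(\psi_k\wedge\psi_\ell)\circ\psi_2$ $\Sigma_k\times\Sigma_\ell$-equivariantly, matches the distributivity of the $\xi$'s over the multiplication, so $R^C$ is a genuine $H_\infty$-$R$-algebra rather than a bare system of power operations.

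The one real difficulty is organizational rather than homotopy-theoretic: one must track the $\Sigma_k$- and $\Sigma_j\wr\Sigma_k$-actions through the contravariant functor $F(C,-)$ while simultaneously threading the covariant operadic structure of the extended powers $(E\Sigma_k)_+\wedge_{\Sigma_k}(-)^{\wedge_R k}$, so that every intermediate map is defined in the correct equivariant homotopy category as demanded by the definition of an $H_\infty$ coalgebra. Once the large comparison diagram for the composition axiom is drawn with the actions kept straight, each of its cells commutes either by naturality of assembly maps, by the $H_\infty$ axioms for $R$, or by the $H_\infty$-coalgebra axioms for $C$; no additional input is required.
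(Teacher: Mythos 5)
Your construction is the same as the paper's: the paper defines $\xi_k$ by its adjoint $(R^C)^{\wedge_R k}_{h\Sigma_k}\wedge C \simeq ((R^C)^{\wedge_R k}\wedge C)_{h\Sigma_k}\xrightarrow{1\wedge\psi_k}((R^C)^{\wedge_R k}\wedge C^{\wedge k})_{h\Sigma_k}\xrightarrow{\mr{ev}}R^{\wedge_R k}_{h\Sigma_k}\xrightarrow{\mu_k}R$, which is precisely the adjoint form of your five-step composite (your assembly maps are the projection-formula identifications used there). Your explicit discussion of the unit, wreath-product, and distributivity axioms is correct but covers ground the paper delegates to a citation of Bruner--May--McClure--Steinberger; no substantive difference in approach.
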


\begin{proof}
The $H_\infty$-$R$-algebra structure of $R^{C}$ is given by structure maps
$$ \xi_k : (R^{C})_{h\Sigma_k}^{\wedge_R k} \rightarrow R^{C} $$
whose adjoints are given by the composites (see, e.g. \cite[Lem.~II.3.3]{Hinfty})
\begin{multline*}
\td{\xi}_k : (R^{C})_{h\Sigma_k}^{\wedge_R k} \wedge C \simeq \left( (R^{C})^{\wedge_R k} \wedge C \right)_{h\Sigma_k} 
\\
\xrightarrow{1 \wedge \psi_k} \left( (R^{C})^{\wedge_R k} \wedge C^{\wedge k} \right)_{h\Sigma_k} \xrightarrow{\mr{ev}^{\wedge k}} R^{\wedge_R k}_{h\Sigma_k} \xrightarrow{\mu_k} R. \end{multline*}
Here $\mu_k$ comes from the $H_\infty$-$R$-algebra structure of $R$ itself: under the isomorphism $R^{\wedge_R k} \cong R$ the composite
$$ R_{h\Sigma_k} = R^{\wedge_R k}_{h\Sigma_k} \xrightarrow{\mu_k} R $$
is the restriction coming from the map of groups $\Sigma_k \rightarrow 1$.  Therefore the map $\td{\xi}_k$ is also given by the composite
\begin{multline*}
\td{\xi}_k : (R^{C})_{h\Sigma_k}^{\wedge_R k} \wedge C \rightarrow 
(R^{C^{\wedge k}})_{h\Sigma_k} \wedge C
\simeq \left( R^{C^{\wedge k}} \wedge C \right)_{h\Sigma_k} 
\\
\xrightarrow{1 \wedge \psi_k} \left( R^{C^{\wedge k}} \wedge C^{\wedge k} \right)_{h\Sigma_k} \xrightarrow{\mr{ev}} R_{h\Sigma_k} \xrightarrow{\Res_{\Sigma_k}^1} R. 
\end{multline*}
\end{proof}

\subsection*{The coalgebra structure of $\Sigma^\infty QX_+$}

The $H_\infty$ structure of $R^{QX_+}$ comes from the cocommutative coalgebra structure on $\Sigma^\infty QX_+$ associated to the diagonal map
$$ QX_+ \xrightarrow{\Delta} (QX_+)^{\wedge 2}. $$
Understanding how this diagonal map interacts with the Kahn splitting is key to everything else in this appendix.  This was worked out by Kuhn in \cite{Kuhndiagonal}; because our language and setting differs somewhat from his, we recall some details.

Recall the convenient point-set level description of the Kahn stable splitting of $QX_+$ given in \cite{KuhnAQG}.

\begin{lem}[\cite{KuhnAQG}]
The map
$$ s_X : \PP(X) \rightarrow \Sigma^\infty QX_+ $$
of $E_\infty$ ring spectra adjoint to the natural inclusion of spectra
$$ \Sigma^\infty X \rightarrow \Sigma^\infty QX_+ $$
is a weak equivalence when $X$ is connected.
\end{lem}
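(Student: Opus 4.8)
The plan is to recognize $s_X$ as a repackaging of the Kahn--Snaith stable splitting of $QX_+$, the real content being that this splitting is compatible with the relevant multiplicative structures. The structural input is that $QX = \Omega^\infty\Sigma^\infty X$ is an $E_\infty$-space, so that $\Sigma^\infty QX_+$ is an $E_\infty$-ring spectrum via the symmetric monoidal functor $\Sigma^\infty(-)_+$, while $\PP(X) = \PP(\Sigma^\infty X) = \bigvee_{n\ge 0}(\Sigma^\infty X)^{\wedge n}_{h\Sigma_n}$ is by construction the free $E_\infty$-ring on $\Sigma^\infty X$. Under this universal property, $s_X$ is the unique $E_\infty$-ring map restricting on the weight-one summand to the stabilization of the adjunction unit $X\to QX$, followed by the wedge summand inclusion $\Sigma^\infty QX \hookrightarrow \Sigma^\infty QX_+$.

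First I would install compatible weight filtrations on both sides. On $\PP(X)$ take $\PP_{\le n}(X) = \bigvee_{i\le n}(\Sigma^\infty X)^{\wedge i}_{h\Sigma_i}$, a split filtration whose $n$-th subquotient is the extended power $D_n X := (\Sigma^\infty X)^{\wedge n}_{h\Sigma_n}$. On the target, use the May approximation theorem: since $X$ is connected there is a natural equivalence of $E_\infty$-spaces $\mc{C}X \xrightarrow{\simeq} QX$, where $\mc{C}$ is the free $E_\infty$-space monad with operad spaces $\mc{C}(n)\simeq E\Sigma_n$, and hence an equivalence $\Sigma^\infty(\mc{C}X)_+ \xrightarrow{\simeq} \Sigma^\infty QX_+$ of $E_\infty$-ring spectra. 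Transporting the May word-length filtration $F_\bullet\mc{C}X$ gives a multiplicative filtration of $\Sigma^\infty QX_+$ whose $n$-th subquotient is $\Sigma^\infty(F_n\mc{C}X/F_{n-1}\mc{C}X)\simeq \mc{C}(n)_+\wedge_{\Sigma_n}X^{\wedge n}\simeq D_n X$, using that $\mc{C}(n)$ is a free contractible $\Sigma_n$-space. Because any $E_\infty$-ring map out of a free $E_\infty$-ring carries the weight filtration into a given multiplicative filtration of the target as soon as the unit lies in filtration $0$ and the weight-one generators land in filtration $1$, the map $s_X$ is automatically filtration-preserving.

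Next I would check that $s_X$ is an equivalence on each associated graded piece. On the $\PP$ side, $\mr{gr}_n\PP(X)=D_n X$; on the target, $\mr{gr}_n\Sigma^\infty QX_+ = D_n X$ by the previous step. The induced map $D_n X\to D_n X$ is, by inspection of the free $E_\infty$-ring structure maps against the operad action on the length-$n$ stratum of $\mc{C}X$, the identity of the $n$-th extended power (both present $E\Sigma_n{}_+\wedge_{\Sigma_n}(\Sigma^\infty X)^{\wedge n}$), hence an equivalence. Feeding this into the cofiber sequences $\PP_{\le n-1}(X)\to\PP_{\le n}(X)\to D_n X$ and their images under $s_X$, an induction on $n$ (starting from $\PP_{\le 0}(X)=S\xrightarrow{\simeq}F_0=S$, and using that a map of stable cofiber sequences which is an equivalence on sub and quotient is an equivalence on the total) shows $s_X$ restricts to an equivalence $\PP_{\le n}(X)\xrightarrow{\simeq}F_n\Sigma^\infty QX_+$ for all $n$; passing to the colimit over $n$ gives that $s_X$ itself is an equivalence.

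The main obstacle is not this bookkeeping but the stable splitting underlying it: that the May filtration quotients of $\Sigma^\infty(\mc{C}X)_+$ are the extended powers $D_n X$ (equivalently, that the filtration stably splits), which is precisely the classical Kahn--Snaith--May theorem and is exactly where connectivity of $X$ is essential. I would cite this rather than reprove it, as it is the substance of \cite{KuhnAQG}; granting it, everything else is a direct comparison of the free $E_\infty$-ring monad with the $E_\infty$-structure of $QX$ through the approximation theorem. (A homological shortcut --- checking that $s_X$ is an $E_*$- or $H_*$-isomorphism using the known Dyer--Lashof description of $H_*QX$ --- is available but less clean, and in any case relies on the same splitting input.)
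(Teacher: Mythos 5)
Your proposal is correct and is essentially the intended argument: the paper gives no proof of this lemma, deferring entirely to \cite{KuhnAQG}, and your reconstruction via May's approximation theorem $\mc{C}X \xrightarrow{\simeq} QX$ for connected $X$, the word-length filtration, and the identification of the filtration quotients with the extended powers $D_nX$ is precisely the standard route taken in that reference. One small clarification: your cofiber-sequence induction only needs the identification of the associated graded pieces, not the splitting of the filtration itself, and the connectivity hypothesis is really consumed by the approximation theorem rather than by the Kahn--Snaith splitting.
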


\begin{lem}[\cite{Kuhndiagonal}]\label{lem:diagonal}
The equivalence 
$$ s_X : \PP(X)^{\mr{mult}} \xrightarrow{\simeq} \Sigma^\infty QX_+ $$
is a map $H_\infty$ coalgebras.
\end{lem}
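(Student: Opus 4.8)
### Proof Proposal for Lemma~\ref{lem:diagonal}

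The plan is to verify the defining compatibility of an $H_\infty$-coalgebra map on the level of the structure maps $\psi_k$, and since both $H_\infty$-coalgebra structures in question have all their higher comultiplications determined by their restrictions to the wedge summand $\Sigma^\infty X \hookrightarrow \PP(X)$ (as discussed preceding Lemma~\ref{lem:Hinfty}), it suffices to check commutativity of a single diagram: the one expressing that $s_X$ intertwines $\psi_k^{\mr{mult}} \colon \PP(X)^{\mr{mult}} \to (\PP(X)^{\mr{mult}})^{\wedge k}$ with the $k$-fold diagonal comultiplication on $\Sigma^\infty QX_+$, after restricting along $\Sigma^\infty X \to \PP(X)$. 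First I would reduce to the case $k = 2$ using the associativity and wreath-compatibility axioms for $H_\infty$-coalgebras: since $s_X$ is already an $E_\infty$-ring map, and since the $k$-fold comultiplications are built from iterated $2$-fold comultiplications on both sides, compatibility with $\psi_2$ formally propagates to all $\psi_k$. This is the standard ``generated in degree $2$'' argument.

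Next, the crux is the $k=2$ case, which amounts to Kuhn's computation of the interaction of the diagonal $QX_+ \xrightarrow{\Delta} (QX_+)^{\wedge 2}$ with the Kahn–Priddy splitting $\Sigma^\infty QX_+ \simeq \PP(X) = \bigvee_{n\ge 0} X^{\wedge n}_{h\Sigma_n}$. Concretely, one must show that under the equivalence $s_X$, the composite
$$
\Sigma^\infty X \hookrightarrow \Sigma^\infty QX_+ \xrightarrow{\Delta} (\Sigma^\infty QX_+)^{\wedge 2} \xrightarrow{s_X^{-1}\wedge s_X^{-1}} \PP(X)^{\wedge 2}
$$
agrees with the composite $\Sigma^\infty X \xrightarrow{w_3} \bigvee_{\emptyset\ne S\subseteq\ul 2}\Sigma^\infty X \xrightarrow{\bigvee \Delta^S} (\Sigma^\infty X)^{\wedge 1}\vee (\Sigma^\infty X)^{\wedge 1} \vee (\Sigma^\infty X)^{\wedge 2} \to \PP(X)^{\wedge 2}$ defining $\psi_2^{\mr{mult}}$. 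I would invoke \cite{Kuhndiagonal} directly here: Kuhn shows that the diagonal on $QX_+$, restricted to the bottom cell $X \subset QX$ and projected onto the various smash-summands $X^{\wedge a}_{h\Sigma_a}\wedge X^{\wedge b}_{h\Sigma_b}$ of $(\Sigma^\infty QX_+)^{\wedge 2}$, is non-trivial precisely on the summands with $(a,b)\in\{(1,0),(0,1),(1,1)\}$, and on those summands it is, respectively, the identity, the identity, and the diagonal $X \to X\wedge X$. This is exactly the ``$x\mapsto x_1 + x_2 + x_1 x_2$'' pattern of Remark~\ref{rmk:addmult}, i.e. precisely $\psi_2^{\mr{mult}}$.

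The main obstacle is genuinely bookkeeping rather than conceptual: one must be careful that the equivariance is tracked correctly — the maps $\psi_k$ are required to be $\Sigma_k$-equivariant with a specified action (trivial on the source, permutation on the smash factors of the target), and one must check that $w_k$ as defined, together with the maps $\Delta^S$, assemble $\Sigma_k$-equivariantly, and that Kuhn's computation of the diagonal respects this action. I would handle this by noting that $w_k \colon Y \to \bigvee_{i=1}^k Y$ is $\Sigma_k$-equivariant by construction (it is the wedge-collapse of the diagonal $Y \to \prod^k Y$), that the collection of diagonals $\{\Delta^S\}_{\emptyset\ne S\subseteq\ul k}$ is permuted by $\Sigma_k$ exactly as the subsets $S$ are, and that the Kahn splitting $\Sigma^\infty QX_+ \simeq \PP(X)$ is itself realized $E_\infty$-equivariantly via $s_X$ by \cite{KuhnAQG}, so that the $\Sigma_k$-action on $(\Sigma^\infty QX_+)^{\wedge k}$ matches the permutation action on $\PP(X)^{\wedge k}$ summand-wise. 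Once equivariance is in place, the remaining verification is just the identification of the underlying non-equivariant maps, which is Kuhn's theorem. Finally, I would remark that connectivity of $X$ is used only to guarantee $s_X$ is an equivalence (via the cited lemma), so no additional hypotheses are needed.
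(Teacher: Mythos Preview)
Your proposal is correct and follows the same opening move as the paper --- both reduce to checking the compatibility after restriction along $\Sigma^\infty X \hookrightarrow \PP(X)$, using that $s_X$ and $\psi_k^{\mr{mult}}$ are $E_\infty$ ring maps so that the free-algebra universal property applies (what the paper calls ``adjointness'').

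Where your argument diverges is in how that restricted check is carried out. You reduce to $k=2$ and then invoke Kuhn's computation of the diagonal on $QX$ projected onto summands. The paper instead handles all $k$ at once with a single elementary observation: the inclusion $X_+ \hookrightarrow QX_+$ is a map of spaces and hence commutes with the $k$-fold diagonal, so one may factor through $\Sigma^\infty X_+ \xrightarrow{\Delta} \Sigma^\infty (X_+)^{\wedge k}$. The smash decomposition $(X_+)^{\wedge k} \cong \bigvee_{S \subseteq \ul{k}} \bigwedge_{s\in S} X_s$ then immediately reads off $\psi_k^{\mr{mult}}|X$, and the $\Sigma_k$-equivariance is automatic because everything comes from the space-level diagonal. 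This bypasses both your reduction to $k=2$ (which, as stated, only yields commutativity in $\mr{Ho}(\Sp_{\Sigma_{k-1}\times\Sigma_1})$ rather than $\mr{Ho}(\Sp_{\Sigma_k})$, a gap you patch separately in your equivariance paragraph) and the appeal to Kuhn's more general result about the diagonal on all of $QX$. Your route works, but the paper's is a two-diagram proof with no external input beyond the splitting $s_X$ itself.
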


\begin{proof}
We need to show that the following diagram commutes:
$$
\xymatrix{
\PP(\Sigma^\infty X) \ar[d]_{s_X}^\simeq \ar[r]^-{\psi_k^{\mr{mult}}} & \PP(\Sigma^\infty X)^{\wedge k} 
\ar[d]^{s_{X}^{\wedge k}}_\simeq
\\
\Sigma^\infty QX_+ \ar[r]_-{\Delta} 
& \Sigma^\infty (QX_+)^{\wedge k}
} $$
By adjointness, this follows from the commutativity of the diagram:
$$
\xymatrix{
\Sigma^\infty X \ar@{^{(}->}[dd] \ar[rrr]^{\psi_k^{\mr{mult}}|X} \ar@{^{(}->}[rd] &&& \PP(\Sigma^\infty X)^{\wedge k} 
\ar[dd]^{s_X^{\wedge k}}
\\
& \Sigma^\infty X_+ \ar[r]_-{\Delta} \ar@{^{(}->}[dl] 
&\Sigma^\infty (X_+)^{\wedge k} \ar@{^{(}->}[ur] \ar@{^{(}->}[dr] 
\\
\Sigma^\infty QX_+ \ar[rrr]_{\Delta} 
&&& \Sigma^\infty (QX_+)^{\wedge k}
} $$
\end{proof}

\subsection*{$E$-homology of products}

Disregarding multiplicative structure, for $X$ connected, the following $K$-local spectra are all equivalent:
\begin{equation}\label{eq:SKQX}
S_K^{QX_+} \simeq S_K^{\PP(X)} \simeq \prod_i (S^{X^{i}}_K)^{h\Sigma_i}  \simeq \prod_i [(S^{X^{i}}_K)_{h\Sigma_i}]_K \simeq \widehat{\PP}_{S_K}(S^X_K)_K
\end{equation}
(where the second to last equivalence is given by the product of norm maps, and the last equivalence is by \cite[Cor.~6.1.3]{BehrensDavis}).  We would like to study multiplicative structures on these equivalent spectra by means of $E$-homology, but this is complicated by the fact that homology, in general, does not commute with products.  The following lemma indicates that in our setting, there is an instance where completed Morava $E$-homology does commute with a certain infinite product.

We shall say that a $K$-local spectrum $Y$ is \emph{strongly dualizable} if the natural map
$$  S_K^Y\wedge Y \rightarrow Y^Y $$
is a $K$-local equivalence.

\begin{lem}\label{lem:product}
Suppose that $Y$ is a strongly dualizable $K$-local spectrum, such that $E_* Y$ is flat.  Then the natural map
$$ E_* \widehat{\PP}_{S_K} Y \rightarrow \pi_* \widehat{\PP}_E (E \wedge_{S_K} Y) = \widehat{\TT}(E_* Y) $$
is an isomorphism.
\end{lem}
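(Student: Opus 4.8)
The plan is to rewrite both sides of the comparison using strong dualizability, reducing the statement to an interchange of completed $E$-homology with a countable homotopy limit of $K$-locally dualizable spectra. First, strong dualizability together with flatness of $E_*Y$ forces $E_*Y$ to be finite and free over $E_*$. For each $i$, base change along $S_K\to E$ (using that smashing with $E$ commutes with homotopy orbits) gives an equivalence $E\wedge_{S_K}\!\bigl(Y^{\wedge_{S_K}i}_{h\Sigma_i}\bigr)\simeq(E\wedge_{S_K}Y)^{\wedge_E i}_{h\Sigma_i}$; assembling these over $i$, and recognizing the target product as $\widehat\PP_E(E\wedge_{S_K}Y)$ via \cite[Cor.~6.1.3]{BehrensDavis}, produces the natural map of the statement. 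By (\ref{eq:extendedpower}) each summand $Y^{\wedge_{S_K}i}_{h\Sigma_i}$ has $E$-homology $\TT\langle i\rangle E_*Y$, again finite free, hence is itself $K$-locally dualizable \cite{HoveyStrickland}.

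Next I would rewrite the products. Writing $DY$ for the $K$-local Spanier--Whitehead dual of $Y$ and applying $K$-local Tate vanishing \cite{KuhnTate} to each dualizable summand $(DY)^{\wedge_{S_K}i}$, the norm maps give $Y^{\wedge_{S_K}i}_{h\Sigma_i}\simeq F\bigl(((DY)^{\wedge_{S_K}i})_{h\Sigma_i},S_K\bigr)$, whence $\widehat\PP_{S_K}Y\simeq F\bigl(\PP_{S_K}(DY),S_K\bigr)=S_K^{\PP_{S_K}(DY)}$ (this is (\ref{eq:SKQX}) with $X=DY$). Writing $\PP_{S_K}(DY)=\hocolim_n P_n$ with $P_n=\bigvee_{1\le i\le n}((DY)^{\wedge_{S_K}i})_{h\Sigma_i}$ --- each $K$-locally dualizable, with $E^*P_n$ finite free --- one gets $\widehat\PP_{S_K}Y\simeq\holim_n S_K^{P_n}$, a tower with split surjective transition maps. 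The same computation performed over $E$ (or just base change of the free commutative algebra, $\PP_E(E\wedge_{S_K}DY)\simeq E\wedge_{S_K}\PP_{S_K}(DY)$) identifies $\widehat\PP_E(E\wedge_{S_K}Y)$ with $F(\PP_{S_K}(DY),E)\simeq\holim_n F(P_n,E)$, whose homotopy, by Lemma~\ref{eq:completealgmodel} (the $\mf m$-completion being vacuous as $E_*Y$ is finite), is $\widehat\TT(E_*Y)$. So the lemma reduces to showing that the natural map $E\wedge_{S_K}\holim_n S_K^{P_n}\to\holim_n\bigl(E\wedge_{S_K}S_K^{P_n}\bigr)$ is an equivalence, since for $P_n$ $K$-locally dualizable one has $E\wedge_{S_K}S_K^{P_n}\simeq F(P_n,E)$.

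The hard part is precisely this last interchange: completed $E$-homology does not commute with infinite products in general. I would attack it via the Milnor fiber sequence $\holim_n S_K^{P_n}=\mr{fib}\bigl(\prod_n S_K^{P_n}\to\prod_n S_K^{P_n}\bigr)$ and exactness of $E\wedge_{S_K}(-)$, reducing to commuting $E\wedge_{S_K}(-)$ past the product $\prod_n S_K^{P_n}$. Here the key input is that each $S_K^{P_n}$ is $K$-locally dualizable with $E\wedge_{S_K}S_K^{P_n}\simeq F(P_n,E)$ a finite free $E$-module --- a finiteness forced by strong dualizability of $Y$ through the extended-power calculation (\ref{eq:extendedpower}) --- so that the tower $\{E_*S_K^{P_n}\}$ is Mittag--Leffler with vanishing $\lim^1$, and $\prod_n E\wedge_{S_K}S_K^{P_n}$ is a product of $E$-local spectra to which \cite[Cor.~6.1.3]{BehrensDavis} applies. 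The identifications above are arranged so that the resulting isomorphism is the map in the statement.
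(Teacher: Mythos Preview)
Your reduction is clean and correct up to the last paragraph: the lemma does come down to showing that completed $E$-homology commutes with the product $\prod_n S_K^{P_n}$ (equivalently, with $F(\PP_{S_K}(DY),S_K)$).  But the final step is not actually argued.  Mittag--Leffler for the tower $\{E_*S_K^{P_n}\}$ only computes $\pi_*\holim_n(E\wedge_{S_K}S_K^{P_n})$ as a $\lim$; it says nothing about the map $E\wedge_{S_K}\prod_n S_K^{P_n}\to\prod_n(E\wedge_{S_K}S_K^{P_n})$.  Likewise \cite[Cor.~6.1.3]{BehrensDavis} is a statement about $K$-localization commuting with products of $E$-local spectra, not about smashing with $E$ commuting with products.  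Unwinding your dualities, you are asking for $E\wedge_{S_K}F(\PP_{S_K}(DY),S_K)\to F(\PP_{S_K}(DY),E)$ to be an equivalence, i.e.\ for $\PP_{S_K}(DY)$ to behave as though it were $K$-locally dualizable (or for $E$ to be); neither is true, since $K_*\PP_{S_K}(DY)$ and $K_*E$ are both infinite.

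This is exactly the point where the paper has to work.  Its argument is genuinely different from what you sketch: it runs a Hovey-type spectral sequence for $(E/I)$-homology of a product (Lemma~\ref{lem:HSS}), and collapses it using Lemma~\ref{lem:HSSE2}, whose input is that the Morava stabilizer group $\MS$ admits a resolution of $\ZZ_p$ by \emph{finitely generated} free $\ZZ_p[[\MS]]$-modules.  The role of strong dualizability is then that $(E/I)_*Y$ is finite, so a single open subgroup $U\le\MS$ acts trivially on it; finite generation of $\Delta^*[1]$ propagates this to a uniform open stabilizer for all $(E/I)_*Y^i_{h\Sigma_i}$, which is the hypothesis of Lemma~\ref{lem:HSSE2}.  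One then takes an inverse limit over invariant ideals $I$.  Your finiteness observations feed into this argument, but the cohomological finiteness of $\MS$ is the missing idea.
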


As in this paper we are working under the convention that $E_*$ denotes completed Morava $E$-homology, we shall let $E_*^{uc}$ denote \emph{uncompleted} Morava $E$-homology. 
Lemma~\ref{lem:product} will be proven using a variant of a spectral sequence of Hovey \cite{Hovey} for the homology of a product:
\begin{equation}\label{eq:HSS}
E_2^{s,*} = R^s\prod_{E^{uc}_*E} E^{uc}_* X_\alpha \Rightarrow E^{uc}_{*-s} \prod X_\alpha.
\end{equation}
 In (\ref{eq:HSS}), the $E_2$-term is given in terms of derived functors of the product of $E^{uc}_*E$-comodules (which in general is different from the product of sets/modules), and Hovey proves that if the spectra $X_\alpha$ are $E$-local, then the spectral sequence converges strongly.  

Note that if $I \subset E_*$ is an invariant regular ideal, the Hopf algebroid structure on $(E_*, E^{uc}_*E)$ descends to the quotient $(E_*/I, E_*E/I)$.  Let $E/I$ denote the associated spectrum.  We will need to use the following slight variant of spectral sequence (\ref{eq:HSS}).  

\begin{lem}\label{lem:HSS}
For a set of spectra $\{X_\alpha \}$ with $E^{uc}_*X_\alpha$ flat over $E_*$, there is a spectral sequence
$$ 
E_2^{s,*} = R^s\prod_{E_*E/I} (E/I)_* X_\alpha \Rightarrow (E/I)_{*-s} \prod X_\alpha.
$$
If all of the spectra $X_\alpha$ are $E$-local, then this spectral sequence converges strongly.
\end{lem}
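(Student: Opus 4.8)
The plan is to reconstruct Hovey's spectral sequence (\ref{eq:HSS}) with $E$ systematically replaced by $E/I$, using the Hopf algebroid $(E_*/I, E_*E/I)$ in place of $(E_*, E^{uc}_*E)$; since this Hopf algebroid is again flat and since $E/I$ is a perfect $E$-module, essentially every step of Hovey's construction goes through, and the only genuinely new point is the convergence input.

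First I would fix a finite invariant regular sequence $x_1, \dots, x_m$ generating $I$ and take $E/I$ to be the perfect $E$-module spectrum $E/x_1 \wedge_E \cdots \wedge_E E/x_m$ obtained by iterated cofibers of multiplication by the $x_j$. Flatness of $E^{uc}_*E$ over $E_*$ together with invariance of $I$ gives, via the Koszul/Künneth computation, $\pi_*(E/I \wedge E) \cong E_*E/I$; consequently, for any $E$-local spectrum $Y$ with $E^{uc}_*Y$ flat over $E_*$, one has $(E/I)_*Y \cong E_*/I \otimes_{E_*} E^{uc}_*Y$, which is flat over $E_*/I$ and carries a natural $E_*E/I$-comodule structure coming from the unit $S\to E$. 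In particular each $(E/I)_*X_\alpha$ is a flat $E_*E/I$-comodule, and $\prod_\alpha X_\alpha$ remains $E$-local.

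Next I would rerun the construction of \cite{Hovey}. Hovey obtains (\ref{eq:HSS}) by comparing the $E$-Adams (cobar) resolution of $\prod_\alpha X_\alpha$ with the product over $\alpha$ of the $E$-Adams resolutions of the individual $X_\alpha$: the terms of these resolutions are built from iterated smash products of $E$, so their $E$-homology is controlled by flatness of $E^{uc}_*E$, the $d_1$-differential is the cobar differential, and the $E_2$-page records the derived functors $R^s\prod$ of the product in the category of $E^{uc}_*E$-comodules. Running the same comparison with $E/I$ in place of $E$ uses only that the terms of the $E/I$-cobar resolution have $E/I$-homology computed by flatness of $E_*E/I$ over $E_*/I$, that the needed Künneth isomorphisms hold by the flatness hypotheses, and that $(E_*/I, E_*E/I)$ is flat (all available, the last by the paragraph preceding the statement). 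This produces a conditionally convergent spectral sequence with $E_2$-page $R^s\prod_{E_*E/I}(E/I)_*X_\alpha$ abutting to $(E/I)_{*-s}\prod_\alpha X_\alpha$.

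Finally, for strong convergence when the $X_\alpha$ are $E$-local, I would follow Hovey's convergence argument, the point being that $\prod_\alpha X_\alpha$ is $E$-local and therefore $E$-nilpotently complete (valid for Morava $E$-theory; cf. \cite{HoveyStrickland}, \cite{DevinatzHopkins}), so that its $E$-based Adams resolution converges. Smashing that resolution with the perfect $E$-module $E/I$ commutes with the finite partial totalizations, and the resulting inverse system of $E/I$-homology groups is Mittag--Leffler (using that each $(E/I)_*X_\alpha$ is flat over $E_*/I$), so the attendant $\lim^1$-term vanishes and the $E/I$-resolution computes $(E/I)_*\prod_\alpha X_\alpha$ exactly. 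I expect this last interchange — the vanishing of $\lim^1$ after smashing the $E$-Adams resolution with $E/I$ — to be the main obstacle; it is precisely where the hypothesis that the $X_\alpha$ be $E$-local (rather than merely $E/I$-local) and the finiteness of $E/I$ over $E$ are used.
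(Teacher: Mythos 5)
There is a genuine gap in the core construction. You propose to ``systematically replace $E$ by $E/I$'' and to run Hovey's comparison using the \emph{$E/I$-based} cobar/Adams resolutions. But the terms of an $E/I$-based resolution are smash powers of $E/I$, and their $E/I$-homology consists of extended comodules over the Hopf algebroid $\bigl((E/I)_*,\,(E/I)_*(E/I)\bigr)$, \emph{not} over $(E_*/I,\,E_*E/I)$. These are genuinely different: since $E/I$ is an iterated cofiber of the regular sequence generating $I$, the ring $\pi_*(E/I\wedge E/I)$ acquires exterior (Bockstein) classes beyond $E_*E/I$ --- already $\pi_*(E/p\wedge E/p)\cong E_*E/p\oplus \Sigma E_*E/p$. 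Hovey's two key facts (extended comodules have vanishing higher derived products, and derived products are computed by resolving by extended comodules) would then identify your $E_2$-page as $R^s\prod$ taken in the category of $(E/I)_*(E/I)$-comodules, which is not the $E_2$-term asserted in the lemma and is not what the subsequent argument (Lemma~\ref{lem:HSSE2}, which uses $E_*E/I\cong\Map^c(\MS,E_*/I)$) needs.

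The correct construction keeps the \emph{$E$-based} modified Adams resolutions of the individual $X_\alpha$ exactly as in (\ref{eq:HSS}), forms their product, and then applies $(E/I)_*$-homology instead of $E_*^{uc}$-homology. The point is that $(E/I)_*(E\wedge Z)\cong E_*E/I\otimes_{E_*/I}(E/I)_*Z$ is an extended $E_*E/I$-comodule, so the rows of the resulting $E_1$-page are products of extended $E_*E/I$-comodules and the identification of the $E_2$-term as $R^s\prod_{E_*E/I}$ goes through verbatim; strong convergence is likewise inherited from the $E$-based resolution (using $E$-locality of the $X_\alpha$), with no need for the separate $\lim^1$ analysis of an $E/I$-based tower that you flag as the main obstacle. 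Your preliminary observations (the Künneth identification $(E/I)_*Y\cong E_*/I\otimes_{E_*}E_*^{uc}Y$ under the flatness hypothesis, and the resulting $E_*E/I$-comodule structure) are correct and are indeed used, but the resolution itself must remain $E$-based.
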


\begin{proof}
Hovey constructed spectral sequence (\ref{eq:HSS}) from the exact couple obtained by taking the $E$-homology of the product of modified $E$-based Adams resolutions of each of the $X_\alpha$.  The desired spectral sequence is obtained by instead taking the $E/I$-homology of this product of $E$-based Adams resolutions.
This spectral sequences converges strongly if each of the $X_\alpha$ are $E$-local, for the same reasons the original one did.
\end{proof}

The $E_2$-term of this spectral sequence is in general quite mysterious, but Hovey does prove two key facts about it.
\begin{itemize}
\item If $\{N_\alpha\}_\alpha$ is a collection of $E_*/I$-modules, then the derived functors of the product of the extended comodules $E_*E/I \otimes_{E_*/I} N_\alpha$ are computed to be
$$
R^s \prod_{E_*E/I} E_*E/I \otimes_{E_*/I} N_\alpha \cong 
\begin{cases}
E_*E/I \otimes_{E_*/I} \prod N_\alpha, & s = 0, \\
0, & s > 0.
\end{cases}
$$

\item If $\{M_\alpha\}_\alpha$ is a collection of $E_*E/I$-comodules, and 
$$ M_\alpha \rightarrow J_\alpha^0 \rightarrow J_\alpha^1 \rightarrow \cdots $$
are resolutions of each of these comodules by extended comodules, then the derived functors of product may be computed as
$$
R^s \prod_{E_*E/I} M_\alpha \cong H^s\left(\prod_{E_*E/I}J^*_\alpha\right).
$$
\end{itemize}



We now assume the invariant ideal $I$ is of the form $(p^{i_0}, v_1^{i_1}, \ldots, v_{h-1}^{i_{h-1}})$.
Let $\MS$ denote the extended Morava stabilizer group.  Then we have \cite{HoveyEop}
$$ E_*E/I \cong \Map^c(\MS, E_*/I). $$
An $E_*E/I$-comodule structure on an $E_*/I$-module $M$ is the same thing as a (twisted $E_*/I$-linear) continuous action of $\MS$ on $M$.  Here, $M$ is given the discrete topology, so that continuity of the action is equivalent to the statement that every element $m \in M$ has an open stabilizer $\mr{Sta}_\MS(m)$.   For a collection $\{M_\alpha\}$ of $E_*E/I$-comodules, the product is easily seen to be
$$ \prod_{E_*E/I} M_\alpha = \{ (m_\alpha) \in \prod M_\alpha \: : \: \bigcap_\alpha \mr{Sta}_\MS(m_\alpha) \: \text{is open} \}. $$
In particular, if there is a fixed open subgroup $U \le \MS$ (independent of $\alpha$) such that the $\MS$-action on each of the modules $M_\alpha$ restricts to the trivial action on $U$, then the product in $E_*E/I$-comodules agrees with the ordinary product:
$$ \prod_{E_*E/I} M_\alpha = \prod M_\alpha. $$ 
Less obviously, the following lemma holds, whose proof seems to require cohomological finiteness properties of $\MS$.

\begin{lem}\label{lem:HSSE2}
Let $\{M_\alpha\}$ be a collection of $E_*E/I$-comodules, and suppose that 
there is a fixed open subgroup $U \le \MS$ (independent of $\alpha$) such that the $\MS$-action on each of the modules $M_\alpha$ restricts to the trivial action on $U$.  Then 
$$ 
R^s\prod_{E_*E/I} M_\alpha = \begin{cases}
\prod M_\alpha, & s = 0, \\
0, & s > 0. 
\end{cases}
$$ 
\end{lem}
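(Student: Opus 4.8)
The plan is: reduce to a family of discrete modules over a common \emph{finite} quotient of $\MS$; compute $R^0\prod$ by inspection; and deduce the vanishing of $R^s\prod$ for $s>0$ from the recipe recalled above for computing $R^*\prod$ via extended-comodule resolutions, together with the fact that $\MS$ has finite virtual cohomological dimension at $p$.

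First I would carry out the reductions. Replace $U$ by an open \emph{normal} subgroup contained in it --- for instance its core $\bigcap_{g\in\MS}gUg^{-1}$, which is still open because $U$ has finite index --- and, using that the $\MS$-action on the \emph{finite} ring $E_*/I$ factors through a finite quotient, shrink it further so that the resulting subgroup $U_0\trianglelefteq\MS$ acts trivially on $E_*/I$ as well. Then every $M_\alpha$ is the inflation along $\MS\twoheadrightarrow G:=\MS/U_0$ of an $E_*/I$-linear $G$-module; equivalently all the $M_\alpha$ lie in the full subcategory $\mc{C}\subseteq\mr{Comod}_{E_*E/I}$ of comodules on which $U_0$ acts trivially, and $\mc{C}$ is equivalent to the category of modules over the (twisted) finite group ring $E_*/I\rtimes G$. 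For $s=0$ there is nothing to do beyond noting that every element of each $M_\alpha$ has stabilizer containing the open subgroup $U_0$, so the comodule product coincides with the underlying product $\prod_\alpha M_\alpha$, which again lies in $\mc{C}$; since $R^0$ of a left exact functor is that functor, this is the asserted value.

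For $s>0$ I would argue as follows. By the recipe above, $R^s\prod_{E_*E/I}M_\alpha\cong H^s(\prod_{E_*E/I}J_\alpha^\bullet)$ for any resolution $M_\alpha\to J_\alpha^\bullet$ by extended comodules, and $\prod_{E_*E/I}$ of a family of extended comodules is again extended, computed on the product of the underlying modules. Fix a torsion-free open subgroup $W\le\MS$ with $\mr{cd}_p(W)=d<\infty$, which exists because $\mb{S}_h$ is a compact $p$-adic analytic group (hence has a torsion-free open subgroup, a Poincar\'e-duality pro-$p$ group) and the procyclic Galois factor raises the dimension by one; we may assume $U_0\subseteq W$. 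Using the bar resolution $M_\alpha\to\mr{Map}^c(\MS^{\bullet+1},M_\alpha)$ --- whose terms are extended comodules via the shearing isomorphism, and which is organized so that all but one of the simplicial faces leave the ``coinduction coordinate'' untouched --- one filters the complex $\prod_{E_*E/I}J_\alpha^\bullet$ so that its associated graded is built out of the $W$-cochain complexes of the $M_\alpha$. Because $W$ is torsion-free with $\mr{cd}_p(W)=d$ and $E_*/I$ is $p$-power torsion, these $W$-complexes have bounded (in fact length-$d$) cohomology, and because products of $E_*/I$-modules are exact and the groups $H^q_c(W,E_*/I)$ are \emph{finite} over $E_*/I$, this cohomology commutes with the product over $\alpha$; running the spectral sequence of the filtration then collapses everything onto $\prod_\alpha M_\alpha$ in degree $0$, which is the vanishing for $s>0$.

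The main obstacle is exactly the last paragraph, namely the interaction of the comodule product with the differentials of the resolution. The terms $J_\alpha^j$ are cofree comodules $E_*E/I\otimes_{E_*/I}N_\alpha^j$, but the differentials are \emph{not} of the form $\mr{id}\otimes(\text{map of }N_\alpha^j)$ --- a comodule map between cofree comodules is only as rigid as its adjoint $E_*/I$-linear map --- so $\prod_{E_*E/I}J_\alpha^\bullet$ is not simply $E_*E/I$ tensored with a product of complexes, and its positive-degree cohomology does not vanish formally. One genuinely needs the cohomological finiteness of $\MS$: the finite group $G$ by itself has \emph{infinite} cohomological dimension at $p$ (it contains $p$-torsion), and the individual $M_\alpha$ have infinite injective dimension in $\mr{Comod}_{E_*E/I}$, so the finiteness can only enter through the torsion-free open subgroup $W$. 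Making the filtration/spectral-sequence comparison precise enough to bring $\mr{cd}_p(W)<\infty$ and the finiteness of $H^*_c(W,E_*/I)$ to bear --- while correctly tracking the ``coinduction coordinate'' through the comodule product --- is where the real work lies.
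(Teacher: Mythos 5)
Your proposal correctly locates the crux --- the comodule product of extended-comodule resolutions is not visibly a resolution of $\prod M_\alpha$, because the differentials are not of the form $\mr{id}\otimes(\text{map of coefficient modules})$ and the coefficient functors need not commute with products --- but it does not resolve it, and you say so yourself (``where the real work lies''). That unresolved step is the entire content of the lemma, so as written this is a gap, not a proof. Moreover, the specific finiteness input you reach for (a torsion-free open $W\le\MS$ with $\mr{cd}_p(W)<\infty$, plus finiteness of $H^*_c(W,E_*/I)$) is not the right one: the bar resolution $\mr{Map}^c(\MS^{\bullet+1},M_\alpha)$ has terms that genuinely fail to commute with infinite products over $\alpha$ (a tuple of continuous cochains need not factor through a single open subgroup uniformly in $\alpha$), so the proposed filtration of $\prod_{E_*E/I}J_\alpha^\bullet$ with associated graded ``the $W$-cochain complexes of the $M_\alpha$'' is not available, and finite (virtual) cohomological dimension by itself does not repair this.

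The finiteness property the argument actually needs is that $\MS$ is of type $FP_\infty$: by Symonds--Weigel there is a resolution $\ZZ_p\leftarrow P_0\leftarrow P_1\leftarrow\cdots$ by \emph{finitely generated} free $\ZZ_p[[\MS]]$-modules $P_i=\ZZ_p[[\MS]]\otimes_{\ZZ_p}N_i$ with $N_i$ finite free over $\ZZ_p$. Applying $\Hom$ into $M$ produces a functorial resolution of any comodule $M$ by extended comodules with coefficient modules $\Hom_{\ZZ_p}(N_i,M)$; because each $N_i$ is finitely generated, $\Hom_{\ZZ_p}(N_i,-)$ commutes with arbitrary products, so the comodule product of these resolutions is \emph{literally} the same resolution applied to $\prod M_\alpha$ (which is a comodule by your correct $s=0$ observation). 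The vanishing for $s>0$ is then immediate. Note this resolution is infinite --- $\MS$ has $p$-torsion, so no finite-dimension hypothesis is used or needed --- which is why your detour through a torsion-free $W$ is both insufficient and unnecessary. Your reduction to a finite quotient $G=\MS/U_0$ and the $s=0$ computation are fine, but the $s>0$ case needs the $FP_\infty$ mechanism, not a vcd spectral sequence.
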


\begin{proof}
By \cite[Thm.~5.1.2]{SymondsWeigel}, there is a resolution of the trivial $\MS$-module $\ZZ_p$ by finitely generated free $\ZZ_p[[\MS]]$-modules:
$$ \ZZ_p \leftarrow P_0 \leftarrow P_1 \leftarrow P_2 \leftarrow \cdots. $$
Write
$$ P_i = \ZZ_p[[\MS]] \otimes_{\ZZ_p} N_i $$
where $N_i$ are finite free $\ZZ_p$-modules.
Then for any $E_*E/I$-comodule $M$, we get an induced resolution by extended comodules:
$$ M \rightarrow E_*E/I \otimes_{E_*/I} \Hom_{\ZZ_p}(N_0, M) \rightarrow E_*E/I \otimes_{E_*/I} \Hom_{\ZZ_p}(N_1, M) \rightarrow \cdots. $$
(Note that without the finiteness conditions on the modules $N_i$, the $\Hom$'s above would have to be replaced with $\Hom^c$'s --- continuous homomorphisms; the argument that follows would fail in this more general context.)
Denote this resolution
$$ C^*(M;P_*) = (E_*E/I \otimes_{E_*/I} \Hom_{\ZZ_p}(N_0, M) \rightarrow E_*E/I \otimes_{E_*/I} \Hom_{\ZZ_p}(N_1, M) \rightarrow \cdots ) $$
to emphasize its functoriality in $E_*E/I$-comodules $M$.  Since our hypotheses ensure $\prod M_\alpha$ is an $E_*E/I$-comodule, the isomorphisms
$$ \prod_{E_*E/I} E_*E/I \otimes_{E_*/I} \Hom(N_i, M_\alpha) \cong E_*E/I \otimes_{E_*/I}  \Hom(N_i, \prod M_\alpha) $$
extend to give isomorphisms
$$ \prod_{E_*E/I} C^*(M_\alpha; P_*) \cong C^*\left(\prod M_\alpha; P_*\right). $$
We therefore have
\begin{align*}
R^s\prod_{E_*E/I} M_\alpha 
& \cong H^s \left( \prod_{E_*E/I} C^*(M_\alpha; P_*) \right) \\
& \cong H^s \left(  C^*\left(\prod M_\alpha; P_*\right) \right)\\
& \cong \begin{cases}
\prod M_\alpha, & s = 0, \\
0, & s > 0.
\end{cases}
\end{align*}
\end{proof}

\begin{proof}[Proof of Lemma~\ref{lem:product}]
Let $I = (p^{i_0}, \ldots, v_{h-1}^{i_{h-1}}) \subset E_*$ be an invariant ideal.  By Lemma~\ref{lem:HSS}, there is a convergent spectral sequence
$$ 
E_2^{s,*} = R^s\prod_{E_*E/I} (E/I)_* Y^i_{h\Sigma_i} \Rightarrow (E/I)_{*-s} \widehat{\PP}_{S_K}Y.
$$
Moreover, since $Y$ is strongly dualizable, $(E/I)_*Y$ is finite, and there is therefore an open subgroup $U \le \MS$ which acts trivially on $(E/I)_*Y$.  Since $\Delta^*[1]$ is finitely generated as an $E_*$-module (and $\Delta^*[1]$ generates $\Delta^*$), it follows that there is an open subgroup $U' \le U$ which acts trivially on
$$ \left(\mr{Sym}_{E_*}(\Delta^* \otimes_{E_*} E_*Y)\right)/I \cong \bigoplus_{i} (\TT\bra{i} E_*Y)/I. $$
In particular, $U'$ acts trivially on 
$$ (E/I)_*Y^i_{h\Sigma_i} = (\TT\bra{i} E_*Y)/I. $$
Using Lemma~\ref{lem:HSSE2}, we conclude that the natural map
$$ (E/I)_* \prod_i Y^i_{h\Sigma_i} \rightarrow \prod_i (\TT\bra{i} E_*Y)/I $$
is an isomorphism.  Viewed as an inverse system indexed on $I$, the above system is Mittag-Leffler.  Taking inverse limits over $I$ (and using the facts that $E_*$ is Noetherian, and $\TT\bra{i} E_*Y$ is flat over $E_*$), we therefore obtain an isomorphism
$$ E_* \widehat{\PP}_{S_K}Y \cong \varprojlim_I (E/I)_* \prod_i Y^i_{h\Sigma_i} \cong \widehat{\TT}E_*Y. $$
\end{proof}

\begin{cor}\label{cor:flat}
Suppose that $Y$ is a strongly dualizable $K$-local spectrum, such that $E_* Y$ is flat. Then 
$E_* \widehat{\PP}_{S_K}Y$ is flat. 
\end{cor}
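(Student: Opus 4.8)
The plan is to reduce the statement to Lemma~\ref{lem:product}, which has already identified the completed $E$-homology of $\widehat{\PP}_{S_K}Y$ with $\widehat{\TT}(E_*Y)$, and then to check that the latter $E_*$-module is flat by a purely algebraic argument. So the first step is to invoke Lemma~\ref{lem:product}: since $Y$ is a strongly dualizable $K$-local spectrum with $E_*Y$ flat over $E_*$, there is a natural isomorphism of $E_*$-modules
\[
E_*\widehat{\PP}_{S_K}Y \;\cong\; \widehat{\TT}(E_*Y) \;=\; \prod_{i\ge 0}\TT\bra{i}(E_*Y),
\]
the last equality being the definition (\ref{eq:Thatdef}) of $\widehat{\TT}$. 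As flatness is an invariant of the $E_*$-module isomorphism type, it suffices to prove that $\widehat{\TT}(E_*Y)$ is flat over $E_*$.

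Next I would record that each factor $\TT\bra{i}(E_*Y)$ is flat over $E_*$ — this is precisely the fact already used (without comment) in the proof of Lemma~\ref{lem:product}. Concretely, when $E_*Y$ is finite and free it follows from the (non-canonical) isomorphism (\ref{eq:Tsym}) together with Lemma~\ref{lem:freealg}: $\TT(E_*Y)$ is then a free $E_*$-module and $\TT\bra{i}(E_*Y)$ is one of its direct summands, hence projective. For a general flat $E_*Y$ one writes $E_*Y$ as a filtered colimit of finite free $E_*$-modules and uses that the finitary functor $\TT\bra{i}$ commutes with filtered colimits and that a filtered colimit of flat modules is flat; alternatively one may simply cite \cite[Prop.~3.6-7]{RezkWilk}.

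The remaining, and only substantive, point is that $\widehat{\TT}(E_*Y)$ is an (at most countable) direct product of flat $E_*$-modules. The ring $E_0 = \WW(\FF_{p^h})[[u_1,\dots,u_{h-1}]]$ is a complete regular local Noetherian ring, and $E_*$ is obtained from a polynomial ring over $E_0$ by localization, so $E_*$ is Noetherian, hence coherent. By Chase's theorem, over a coherent ring an arbitrary direct product of flat modules is flat. Therefore $\widehat{\TT}(E_*Y)$ is flat over $E_*$, and with it $E_*\widehat{\PP}_{S_K}Y$.

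There is no serious obstacle: essentially all of the work sits inside Lemma~\ref{lem:product}, which has already been established, and the rest is the classical observation that products of flat modules over a Noetherian (indeed coherent) ring remain flat. The only thing to be careful about is that the identification in Lemma~\ref{lem:product} is genuinely an isomorphism of $E_*$-modules, so that flatness transports along it — which it is.
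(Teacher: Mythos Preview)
Your proof is correct and follows the same route as the paper: identify $E_*\widehat{\PP}_{S_K}Y$ with the product $\prod_i \TT\bra{i}(E_*Y)$ via Lemma~\ref{lem:product}, note each factor is flat, and invoke Chase's theorem using that $E_*$ is Noetherian (hence coherent). The paper compresses this to a single sentence, but the content is identical.
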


\begin{proof}
Since $E_*$ is Noetherian, it is coherent, and hence products of flat $E_*$-modules are flat \cite{Chase}.
\end{proof}

\subsection*{The $H_\infty$ structure of $R^{\PP(X)^{\mr{add}}}$}

 
We return to the motivating analogy (see Remark~\ref{rmk:addmult}) of the bialgebra representing the additive
coproduct.  The additive coproduct $\psi^{\mr{add}}_k$ on $\ZZ[[x]]$ satisfies
\begin{align*}
\psi^{\mr{add}}_k(x^i) & = (x_1 + \cdots + x_k)^i \\
& = \sum_{\substack{I = (i_1, \ldots, i_k) \\ \norm{I} = i}} \frac{i!}{i_1!\cdots i_k!}x_1^{i_1}\cdots x_k^{i_k}
\end{align*}
where, for a sequence $I = (i_1, \ldots, i_k)$ of non-negative integers, we define 
$$\norm{I} := i_1 + \cdots + i_k.$$ 

We wish to give a homotopy theoretic refinement of the above formula in the case of $\PP(X)^{\mr{add}}$.
Define
$$ \Sigma_I := \Sigma_{i_1} \times \cdots \times \Sigma_{i_k}, $$
and let $\Sigma_{(I)}$ denote the subgroup of $\Sigma_k$ which preserves the sequence $I$, and define $\Sigma_{[I]}$ to be the subgroup of $\Sigma_i$ given by
$$ \Sigma_{[I]} := \Sigma_{(I)} \ltimes \Sigma_{I}. $$



The $H_\infty$-$R$-algebra structure of $R^{\PP(X)^{\mr{add}}}$ is given by structure maps
$$ \xi^{\mr{add}}_k : (R^{\PP(X)^{\mr{add}}})_{h\Sigma_k}^{\wedge_R k} \rightarrow R^{\PP(X)^{\mr{add}}} $$
whose adjoints are given by the composites (see Lemma~\ref{lem:Hinfty})
\begin{multline*}
\td{\xi}^{\mr{add}}_k : (R^{\PP(X)})_{h\Sigma_k}^{\wedge_R k} \wedge \PP(X) \rightarrow 
(R^{(\PP(X)^{\wedge k})})_{h\Sigma_k} \wedge \PP(X)
\simeq \left( R^{(\PP(X)^{\wedge k})} \wedge \PP(X) \right)_{h\Sigma_k} 
\\
\xrightarrow{1 \wedge \psi^{\mr{add}}_k} \left( R^{(\PP(X)^{\wedge k})} \wedge \PP(X)^{\wedge k} \right)_{h\Sigma_k} \xrightarrow{\mr{ev}} R_{h\Sigma_k} \xrightarrow{\Res_{\Sigma_k}^1} R. 
\end{multline*}
There are $\Sigma_k$-equivariant equivalences
\begin{equation}\label{eq:Pwedge}
\PP(X)^{\wedge k} \simeq \bigvee_{I = (i_1, \ldots, i_k)} X^{\norm{I}}_{h\Sigma_I} 
\end{equation}
where $\Sigma_k$ acts on the indexing set by permuting the sequences.
It follows that there are equivalences   
$$ R^{(\PP(X)^{\wedge k})} 
\simeq \prod_i \bigvee_{\substack{I  = (i_1, \ldots, i_k) \\ \norm{I} = i}}  R^{X_{h\Sigma_I}^{\norm{I}}}. $$
Note that there is are equivalences
$$ 
\left( \bigvee_{\substack{I  = (i_1, \ldots, i_k) \\ \norm{I} = i}}  R^{X_{h\Sigma_I}^{\norm{I}}}\right)_{h\Sigma_k}
\simeq
\bigvee_{[I] \in \mc{I}^k_i}  \left( \mr{Ind}_{\Sigma_{(I)}}^{\Sigma_k} R^{X_{h\Sigma_I}^{\norm{I}}}\right)_{h\Sigma_k} \simeq 
\bigvee_{[I] \in \mc{I}^k_i} \left( R^{X_{h\Sigma_I}^{\norm{I}}} \right)_{h\Sigma_{(I)}}
$$
where $\mc{I}^k_i$ is the set of $\Sigma_k$-orbits:
$$ \mc{I}^k_i := \{ I = (i_1, \ldots, i_k) \: : \: \norm{I} = i \}/\Sigma_k. $$

\begin{lem}\label{lem:xiadd}
The map $\xi^{\mr{add}}_k$ is given by the composite 
\begin{align*}
\xi_k^{\mr{add}} : 
(R^{\PP(X)})_{h\Sigma_k}^{\wedge_R k} & 
\rightarrow (R^{(\PP(X)^{\wedge k})})_{h\Sigma_k} \\
& \simeq \left( \prod_i \bigvee_{\substack{I  = (i_1, \ldots, i_k) \\ \norm{I} = i}} R^{X^{\norm{I}}_{h\Sigma_I}} \right)_{h\Sigma_k} \\
& \rightarrow \prod_i \left( \bigvee_{\substack{I  = (i_1, \ldots, i_k) \\ \norm{I} = i}} R^{X^{\norm{I}}_{h\Sigma_I}} \right)_{h\Sigma_k} \\
& \simeq \prod_i \bigvee_{[I]  \in \mc{I}^k_i} \left(  R^{X^{\norm{I}}_{h\Sigma_I}} \right)_{h\Sigma_{(I)}}  \\
& \xrightarrow{(\xi^{\mr{add}}_{I,i})_{I,i}} \prod_i R^{X^{i}_{h\Sigma_i}} \\
& \simeq R^{\PP (X)}.
\end{align*}
where the only non-zero matrix coefficients 
$$ \xi^{\mr{add}}_{I,i}: (R^{X^{\norm{I}}_{h\Sigma_I}})_{h\Sigma_{(I)}} 
\rightarrow R^{X^{i}_{h\Sigma_i}} $$
occur when $i = \norm{I}$, for which they are adjoint to the composites
\begin{multline*}
\td{\xi}^{\mr{add}}_{I,i} : (R^{X^{\norm{I}}_{h\Sigma_I}})_{h\Sigma_{(I)}} \wedge X^{i}_{h\Sigma_{\norm{I}}} \simeq
(R^{X^{\norm{I}}_{h\Sigma_I}} \wedge X^{\norm{I}}_{h\Sigma_{\norm{I}}} )_{h\Sigma_{(I)}} 
\\
\xrightarrow{1 \wedge \mr{Tr}^{\Sigma_{\norm{I}}}_{\Sigma_I}}  
(R^{X^{\norm{I}}_{h\Sigma_I}} \wedge X^{\norm{I}}_{h\Sigma_{I}} )_{h\Sigma_{(I)}}
\xrightarrow{\mr{ev}} R_{h\Sigma_{(I)}} \xrightarrow{\mr{Res}^{1}_{\Sigma_{(I)}}} R.
\end{multline*}
\end{lem}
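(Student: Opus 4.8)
The plan is to unwind the formula for $\td{\xi}^{\mr{add}}_k$ coming from the proof of Lemma~\ref{lem:Hinfty} and substitute the explicit description of the additive coproduct together with the $\Sigma_k$-equivariant splitting (\ref{eq:Pwedge}). Recall that $\td{\xi}^{\mr{add}}_k$ is the composite
\[ (R^{\PP(X)})^{\wedge_R k}_{h\Sigma_k} \wedge \PP(X) \to \bigl(R^{\PP(X)^{\wedge k}} \wedge \PP(X)\bigr)_{h\Sigma_k} \xrightarrow{1 \wedge \psi^{\mr{add}}_k} \bigl(R^{\PP(X)^{\wedge k}} \wedge \PP(X)^{\wedge k}\bigr)_{h\Sigma_k} \xrightarrow{\mr{ev}} R_{h\Sigma_k} \xrightarrow{\Res^1_{\Sigma_k}} R. \]
I would split the final copy of $\PP(X)$ by weight, $\bigvee_i X^i_{h\Sigma_i}$, and split $R^{\PP(X)^{\wedge k}}$ via the $R$-dual of (\ref{eq:Pwedge}) as $\prod_{i'}\bigvee_{\norm{I'}=i'}R^{X^{\norm{I'}}_{h\Sigma_{I'}}}$. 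The whole computation then reduces to understanding, weight by weight, the map $X^i_{h\Sigma_i}\xrightarrow{\psi^{\mr{add}}_k}\PP(X)^{\wedge k}$ and how it pairs against the wedge summands of $R^{\PP(X)^{\wedge k}}$.

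The heart of the argument is the following explicit description of $\psi^{\mr{add}}_k$ on weight $i$. Since $\psi^{\mr{add}}_k$ is the map of $E_\infty$-ring spectra determined by its restriction $w_k\colon\Sigma^\infty X\to\bigvee_{j=1}^k\Sigma^\infty X_j\hookrightarrow\PP(X)^{\wedge k}$, its weight-$i$ component is the composite
\[ X^i_{h\Sigma_i}\xrightarrow{(w_k^{\wedge i})_{h\Sigma_i}}\Bigl(\bigvee_{j=1}^k X_j\Bigr)^{\wedge i}_{h\Sigma_i}\xrightarrow{\mu}\PP(X)^{\wedge k}, \]
where $\mu$ is the ($i$-fold) multiplication. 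Grouping the $\Sigma_i$-orbits of $\{1,\dots,k\}^{i}$ by the composition $I=(i_1,\dots,i_k)$ they determine, one gets a $\Sigma_i$-equivariant identification $(\bigvee_j X_j)^{\wedge i}\simeq\bigvee_{\norm I=i}\Ind_{\Sigma_I}^{\Sigma_i}X^{\wedge i}$; hence $(\bigvee_j X_j)^{\wedge i}_{h\Sigma_i}\simeq\bigvee_{\norm I=i}X^i_{h\Sigma_I}$, and by the very definition of the transfer in Appendix~\ref{apx:borel} (the map $\eta$ of (\ref{eq:eta}) for the orbit inclusion $\Sigma_i/\Sigma_I$) the $I$-component of $(w_k^{\wedge i})_{h\Sigma_i}$ is the transfer $\Tr^{\Sigma_i}_{\Sigma_I}\colon X^i_{h\Sigma_i}\to X^i_{h\Sigma_I}$. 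Composing with $\mu$ places the $I$-summand into $X^{i_1}_{h\Sigma_{i_1}}\wedge\cdots\wedge X^{i_k}_{h\Sigma_{i_k}}=X^{\norm I}_{h\Sigma_I}$, which is precisely the $I$-summand of (\ref{eq:Pwedge}). In particular $\psi^{\mr{add}}_k$ is ``weight-diagonal'': it carries the weight-$i$ summand into the total-weight-$i$ part $\bigvee_{\norm I=i}X^{\norm I}_{h\Sigma_I}$ of $\PP(X)^{\wedge k}$.

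Given this, the remainder is bookkeeping with homotopy orbits. Weight-diagonality forces the matrix coefficient out of the summand indexed by $[I]$ with $\norm I=i$ into the $i'$-factor of $R^{\PP(X)}$ to vanish for $i'\neq i$, which is the vanishing claimed, and it forces $\xi^{\mr{add}}_k$ to factor through the natural map $(\prod_i(-))_{h\Sigma_k}\to\prod_i((-)_{h\Sigma_k})$. Fixing $i$ and taking $\Sigma_k$-orbits, the wedge $\bigvee_{\norm I=i}$ breaks over the orbit set $\mc{I}^k_i$; an orbit representative $[I]$ has $\Sigma_k$-stabilizer $\Sigma_{(I)}$, so by the $\Ind$--$h\Sigma_k$ shearing of Appendix~\ref{apx:borel} the corresponding summand of $\bigl(\bigvee_{\norm I=i}R^{X^{\norm I}_{h\Sigma_I}}\bigr)_{h\Sigma_k}$ is $(R^{X^{\norm I}_{h\Sigma_I}})_{h\Sigma_{(I)}}$, and the residual $\Res^1_{\Sigma_{(I)}}$ of the original $\Res^1_{\Sigma_k}$ reappears. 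Tracing $\mr{ev}\circ(1\wedge\psi^{\mr{add}}_k)$ on this summand and using naturality of homotopy orbits to move the transfer $\Tr^{\Sigma_{\norm I}}_{\Sigma_I}$ past the evaluation (invoking Lemma~\ref{lem:twotransfer} if a coherence between the $\Sigma_{(I)}$-orbit construction and the transfer is needed) produces exactly the composite defining $\td{\xi}^{\mr{add}}_{I,i}$. The main obstacle is organizational rather than conceptual: keeping the $\Sigma_k$-action straight through the iterated wedge/product decompositions and checking that the splitting (\ref{eq:Pwedge}), the shearing isomorphisms, and the identification of $w_k$-components with transfers are all compatible in $\mr{Ho}(\Sp_{\Sigma_k})$, so that they may be composed as asserted.
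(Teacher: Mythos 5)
Your proposal is correct and follows essentially the same route as the paper: the paper's proof simply asserts that $\td{\xi}^{\mr{add}}_k$ is the composite in which $1\wedge\psi^{\mr{add}}_k$ becomes $1\wedge\bigvee_i\sum_I\Tr^{\Sigma_i}_{\Sigma_I}$ under the splitting (\ref{eq:Pwedge}) and concludes that "the result follows," which is exactly your weight-by-weight identification of the additive coproduct with the sum of transfers followed by the orbit/stabilizer bookkeeping. Your extra justification of why the $I$-component of $(w_k^{\wedge i})_{h\Sigma_i}$ is the transfer (via the unit map $\eta^{\Sigma_i}_{\Sigma_I}$ of (\ref{eq:eta})) is a correct elaboration of a step the paper leaves implicit.
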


\begin{proof}
Using (\ref{eq:Pwedge}), the map $\td{\xi}^{\mr{add}}_k$ is given by the composite
\begin{multline*}
(R^{\PP(X)})_{h\Sigma_k}^{\wedge_R k} \wedge \PP(X) \rightarrow 
\left( R^{(\PP(X)^{\wedge k})} \wedge \PP(X) \right)_{h\Sigma_k}
\simeq \left(R^{\bigvee_{I}  X^{\norm{I}}_{h\Sigma_I}} \wedge \bigvee_i X^i_{h\Sigma_i} \right)_{h\Sigma_k} \\
\xrightarrow{1 \wedge \bigvee_i \sum_I  \mr{Tr}_{\Sigma_I}^{\Sigma_i}} 
\left(R^{\bigvee_{I}  X^{\norm{I}}_{h\Sigma_I}} \wedge \bigvee_i \bigvee_{\norm{I} = i} X^i_{h\Sigma_I} \right)_{h\Sigma_k}
 \xrightarrow{\mr{ev}} R_{h\Sigma_k} \xrightarrow{\Res_{\Sigma_k}^1} R.
\end{multline*}
The result follows.
\end{proof}

\subsection*{The $H_\infty$ structure of $R^{\PP(X)^{\mr{mult}}}$}

We now turn to a similar analysis of the $H_\infty$ structure of $R^{\PP(X)^{\mr{mult}}}$.  This is essentially done in \cite{Kuhndiagonal}, except that there the computation is of the product, rather than the full $H_\infty$ structure.  In order to facilitate the understandability of the combinatorics, we again return to the motivating analogy (see Remark~\ref{rmk:addmult}) of the bialgebras representing the multiplicative groups.
We compute the multiplicative coproduct $\psi^{\mr{mult}}_k$ on $\ZZ[[x]]$ to be
\begin{align*}
\psi^{\mr{mult}}_k (x^i) & = \left( \sum_{\emptyset \ne S \subseteq \ul{k}} \: \prod_{s \in S} x_s \right)^i \\
& = \sum_{\substack{J = (j_S) \\ \norm{J} = i}} \frac{i!}{\prod_S j_S!} \prod_{S} \prod_{s \in S} x_s^{i_S}
\end{align*}
where the last sum now ranges over sequences 
$$ J = (j_S \: : \: \emptyset \ne S \subseteq \ul{k}) $$
of non-negative integers indexed by subsets of $\ul{k}$, and 
$$ \norm{J} := \sum_{S} j_S. $$
Note that the exponent $i_s$ of $x_s$ in the $J$th summand of $\psi_k^{\mr{mult}}(x^i)$ is given by
$$ i_s = \sum_{\substack{S \\ s \in S}} i_S. $$
We therefore define $I(J)$ to be the sequence
$$ I(J) = (i_1, \ldots, i_k) $$
with $i_s$ defined as above.  Note that 
$$ \norm{I(J)} = \sum_S \abs{S} j_S. $$

The homotopical refinement of the formula for $\psi^{\mr{mult}}_k(x^i)$ proceeds as follows.
For such a sequence $J = (j_S)$ of non-negative integers, define 
$$ \Sigma_J := \prod_S \Sigma_{j_S}. $$
Let $\Sigma_{(J)}$ denote the subgroup of $\Sigma_k$ which preserves the sequence $J$:
$$ \Sigma_{(J)} = \{ \sigma \in \Sigma_k \: : \: j_S = j_{\sigma(S)} \} $$
and define $\Sigma_{[J]}$ to be the subgroup of $\Sigma_{\norm{J}}$ given by
$$ \Sigma_{[J]} := \Sigma_{(J)} \ltimes \Sigma_{J}. $$

\begin{lem}\label{lem:ximult}
The $H_\infty$ structure maps $\xi^{\mr{mult}}_k$ for $R^{\PP(X)^{\mr{mult}}}$ are given by the composites 
\begin{align*}
\xi_k^{\mr{mult}} : 
(R^{\PP(X)})_{h\Sigma_k}^{\wedge_R k} & 
\rightarrow (R^{(\PP(X)^{\wedge k})})_{h\Sigma_k} \\
& \simeq \left( \prod_i \bigvee_{\substack{I  = (i_1, \ldots, i_k)\\ \norm{I} = i}}  R^{X^{\norm{I}}_{h\Sigma_I}} \right)_{h\Sigma_k} \\
& \rightarrow  \prod_i \left( \bigvee_{\substack{I  = (i_1, \ldots, i_k)\\ \norm{I} = i}}  R^{X^{\norm{I}}_{h\Sigma_I}} \right)_{h\Sigma_k} \\
& \simeq \prod_i \bigvee_{[I] \in \mc{I}^k_i } (R^{X^{\norm{I}}_{h\Sigma_I}})_{h\Sigma_{(I)}} \\
& \xrightarrow{(\xi^{\mr{mult}}_{I,i})_{I,i}} \prod_i R^{X^{i}_{h\Sigma_i}} \\
& \simeq R^{\PP (X)}.
\end{align*}
Here the matrix coefficients 
$$ \xi^{\mr{mult}}_{I,i}: (R^{X^{\norm{I}}_{h\Sigma_I}})_{h\Sigma_{(I)}} 
\rightarrow R^{X^{i}_{h\Sigma_i}} $$
are now given by sums
$$ \xi^{\mr{mult}}_{I,i} = \sum_{[J] \in \mc{J}(I,i)} \xi^{\mr{mult}}_{J} $$
where 
$$ \mc{J}(I,i) = \{ J = (j_S) \: : \: \norm{J} = i, I(J) = I \}/\Sigma_{(I)}, $$ 
$\xi^{\mr{mult}}_{J}$ is adjoint to the composite
\begin{align*}
\td{\xi}^{\mr{mult}}_{J} : (R^{X^{\norm{I}}_{h\Sigma_I}})_{h\Sigma_{(I)}} \wedge X^{i}_{h\Sigma_{i}} 
& \simeq (R^{X^{\norm{I}}_{h\Sigma_I}} \wedge X^{i}_{h\Sigma_{i}} )_{h\Sigma_{(I)}} \\
& \xrightarrow{\Tr^{\Sigma_{(I)}}_{\Sigma_{(J)}}}   
(R^{X^{\norm{I}}_{h\Sigma_I}} \wedge X^{i}_{h\Sigma_{i}} )_{h\Sigma_{(J)}} \\
& \xrightarrow{1 \wedge \mr{Tr}^{\Sigma_{i}}_{\Sigma_J}}
(R^{X^{\norm{I}}_{h\Sigma_I}} \wedge X^{i}_{h\Sigma_{J}} )_{h\Sigma_{(J)}} \\
& \xrightarrow{1 \wedge \Delta_J}
(R^{X^{\norm{I}}_{h\Sigma_J}} \wedge X^{\norm{I}}_{h\Sigma_{J}} )_{h\Sigma_{(J)}} \\
& \xrightarrow{1 \wedge \mr{Res}^{\Sigma_{I}}_{\Sigma_{J}}}
(R^{X^{\norm{I}}_{h\Sigma_I}} \wedge X^{\norm{I}}_{h\Sigma_{I}} )_{h\Sigma_{(J)}} \\
& \xrightarrow{\mr{Res}^{\Sigma_{(I)}}_{\Sigma_{(J)}}}
(R^{X^{\norm{I}}_{h\Sigma_I}} \wedge X^{\norm{I}}_{h\Sigma_{I}} )_{h\Sigma_{(I)}} \\
& \xrightarrow{\mr{ev}} R_{h\Sigma_{(I)}} \\
& \xrightarrow{\mr{Res}^{1}_{\Sigma_{(I)}}} R,
\end{align*}
and $\Delta_J$ is induced by the diagonal map
$$ X^i = \bigwedge_S X^{j_S} \xrightarrow{\bigwedge \Delta_{\abs{S}}} \bigwedge_S X^{\abs{S}j_S} = X^{\norm{I}}. $$ 
\end{lem}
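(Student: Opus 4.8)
The plan is to argue exactly as in the proof of Lemma~\ref{lem:xiadd}, the only new ingredient being the more elaborate combinatorics of the multiplicative coproduct recorded in Remark~\ref{rmk:addmult}. First I would invoke Lemma~\ref{lem:Hinfty}, which expresses the adjoint structure map $\td{\xi}^{\mr{mult}}_k$ as the composite of $1 \wedge \psi^{\mr{mult}}_k$ with the evaluation map and $\Res^1_{\Sigma_k}$; thus the whole computation is reduced to understanding the $E_\infty$-ring map $\psi^{\mr{mult}}_k : \PP(X) \to \PP(X)^{\wedge k}$ on each summand $X^i_{h\Sigma_i}$ of $\PP(X)$, decomposed according to (\ref{eq:Pwedge}).

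Next I would carry out that analysis. Since $\PP(X)$ is the free $E_\infty$-ring on $\Sigma^\infty X$, the map $\psi^{\mr{mult}}_k$ is determined by its restriction $f := \psi^{\mr{mult}}_k|X = \bigl(\bigvee_{\emptyset \neq S \subseteq \ul{k}} \Delta^S\bigr) \circ w_{2^k-1}$, and its value on $X^i_{h\Sigma_i}$ is obtained by smashing together $i$ copies of $f$, multiplying in $\PP(X)^{\wedge k}$, and passing to $\Sigma_i$-homotopy orbits. Writing $f$ as the stable sum $\bigvee_{\emptyset \neq S \subseteq \ul{k}} f_S$ with $f_S \colon \Sigma^\infty X \xrightarrow{\Delta^S} \bigwedge_{s \in S} X_s \hookrightarrow \PP(X)^{\wedge k}$, this produces a sum over functions $\phi \colon \ul{i} \to \{\,\emptyset \neq S \subseteq \ul{k}\,\}$; the summand indexed by $\phi$ lands in $X^{\wedge i_1} \wedge \cdots \wedge X^{\wedge i_k}$, where $i_s = |\{a : s \in \phi(a)\}|$, hence in the $X^{\norm{I}}_{h\Sigma_I}$-summand with $I = I(J)$ for $J = (j_S)$, $j_S = |\phi^{-1}(S)|$. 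The functions of a fixed type $J$ form a single $\Sigma_i$-orbit with stabilizer $\Sigma_J$, so collapsing the orbit and passing to $\Sigma_i$-orbits contributes the transfer $1 \wedge \Tr^{\Sigma_i}_{\Sigma_J}$, while the permutations of $\ul{k}$ preserving $J$ form $\Sigma_{(J)}$ and reassembling the various $\phi$ of type $J$ into a single $\mc{I}^k_i$-summand contributes the transfer $\Tr^{\Sigma_{(I)}}_{\Sigma_{(J)}}$. Each $a$ with $\phi(a) = S$ contributes the $|S|$-fold diagonal $\Delta^S$; collecting these over all $a$ gives the map $\Delta_J$, after which one reindexes the $\norm{I}$ copies of $X$ from the $\Sigma_J$-grouping to the $\Sigma_I$-grouping, which accounts for the maps $1 \wedge \Res^{\Sigma_I}_{\Sigma_J}$ and $\Res^{\Sigma_{(I)}}_{\Sigma_{(J)}}$, before evaluating and restricting along $\Sigma_{(I)} \to 1$. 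This is exactly the composite $\td{\xi}^{\mr{mult}}_J$ of the statement, and grouping the types $J$ into $\Sigma_{(I)}$-orbits $[J] \in \mc{J}(I,i)$ yields $\xi^{\mr{mult}}_{I,i} = \sum_{[J]} \xi^{\mr{mult}}_J$.

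Finally, the homotopy-level justification of each of these identifications uses only the formal manipulations of transfers, restrictions, induction and coinduction, projection formulas, and diagonals developed in Appendix~\ref{apx:borel} (together with Lemma~\ref{lem:twotransfer}), exactly as in the additive case; the portion of this computation dealing with the product alone (the case $k = 2$) is already in \cite{Kuhndiagonal}. I expect the main obstacle to be purely organizational: one must simultaneously keep track of the $\Sigma_k$-action permuting the smash factors of $\PP(X)^{\wedge k}$, the $\Sigma_i$-action permuting the $i$ inputs, and the various subgroups $\Sigma_J, \Sigma_{(J)}, \Sigma_{[J]}, \Sigma_I, \Sigma_{(I)}, \Sigma_{[I]}$ that arise from grouping the functions $\phi$ by type, and then verify that the double-coset bookkeeping produces precisely the displayed composite — with the transfers appearing for the indicated source and target subgroups and in the indicated order — rather than a cosmetically different variant.
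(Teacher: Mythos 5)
Your proposal is correct and follows essentially the same route as the paper: the paper's proof simply records the adjoint $\td{\xi}^{\mr{mult}}_k$ as the composite in which $\psi^{\mr{mult}}_k$ restricted to $X^i_{h\Sigma_i}$ is the sum over $J$ of $\mr{Res}^{\Sigma_{I(J)}}_{\Sigma_J}\circ\Delta_J\circ\Tr^{\Sigma_i}_{\Sigma_J}$, and then reads off the matrix coefficients exactly as you do. Your expansion of why that decomposition holds (freeness of $\PP(X)$, the sum over functions $\phi$ grouped by type $J$ with stabilizer $\Sigma_J$) is the implicit content the paper leaves to the reader.
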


\begin{proof}
This result follows from the fact that the map $\td{\xi}^{\mr{mult}}_k$ is given by the composite
\begin{align*}
(R^{\PP(X)})_{h\Sigma_k}^{\wedge_R k} \wedge \PP(X) & 
\rightarrow \left( R^{(\PP(X)^{\wedge k})} \wedge \PP(X) \right)_{h\Sigma_k} \\
& \simeq \left(R^{\bigvee_{I}  X^{\norm{I}}_{h\Sigma_I}} \wedge \bigvee_i X^i_{h\Sigma_i} \right)_{h\Sigma_k} \\
& \xrightarrow{1 \wedge \bigvee_i \sum_{J} \Delta_J  \mr{Tr}_{\Sigma_J}^{\Sigma_i}} 
\left(R^{\bigvee_{I}  X^{\norm{I}}_{h\Sigma_I}} \wedge \bigvee_i \bigvee_{\norm{J} = i} X^{\norm{I(J)}}_{h\Sigma_J} \right)_{h\Sigma_k} \\
& \xrightarrow{1 \wedge \bigvee_i \bigvee_{J} \mr{Res}_{\Sigma_J}^{\Sigma_{I(J)}}} 
\left(R^{\bigvee_{I}  X^{\norm{I}}_{h\Sigma_I}} \wedge \bigvee_I  X^{\norm{I}}_{h\Sigma_I} \right)_{h\Sigma_k} \\
& \xrightarrow{\mr{ev}} R_{h\Sigma_k} \\
& \xrightarrow{\Res_{\Sigma_k}^1} R.
\end{align*}
\end{proof}

\subsection*{The mod $\mf{m}$ $E$-cohomology of $\PP(X)^{\mr{add}}$ and $\PP(X)^{\mr{mult}}$}

We will now argue the crucial point, namely, as long as $X$ is a sufficiently large suspension, the mod $\mf{m}$ $\TT$-algebra structures on $E^* \PP(X)^{\mr{add}}$ and $E^* \PP(X)^{\mr{mult}}$ are indistinguishable.

\begin{lem}\label{lem:exponential}
Suppose that $X = \Sigma^N X'$ is a pointed space whose suspension spectrum is $K$-locally strongly dualizable.
Then for $N \gg 0$ the identity map gives an isomorphism of $\TT$-algebras mod $\mf{m}$:
$$ E^* \PP(X)^{\mr{mult}} \cong_{\mf{m}} E^* \PP(X)^{\mr{add}}. $$
\end{lem}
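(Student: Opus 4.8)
The plan is to compare the two $\TT$-algebra structure maps on $E^*\PP(X)$ head-on, using the explicit formulas for the $H_\infty$ structure maps provided by Lemmas~\ref{lem:xiadd} and~\ref{lem:ximult}. Since both $H_\infty$-coalgebra structures produce $\TT$-algebra structures on the \emph{same} underlying $E_*$-module $E^*\PP(X)$, and since $E^*\PP(X)$ is flat over $E_*$ (Corollary~\ref{cor:flat}, using \eqref{eq:SKQX}), the identification \eqref{eq:extendedpower} shows that the $\TT$-algebra structure map $\TT(E^*\PP(X))\rightarrow E^*\PP(X)$ is assembled out of the maps $\pi_*(\xi_k)\colon \TT\langle k\rangle(E^*\PP(X))\rightarrow E^*\PP(X)$ induced by the $H_\infty$ structure maps $\xi_k\colon (E^{\PP(X)})^{\wedge_E k}_{h\Sigma_k}\rightarrow E^{\PP(X)}$. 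By the definition of $\cong_{\mf{m}}$, it therefore suffices to show that $\pi_*(\xi^{\mr{mult}}_k)\equiv\pi_*(\xi^{\mr{add}}_k)\pmod{\mf{m}}$ for all $k$, with $f=\mr{id}$ supplying the module isomorphism.

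Next I would isolate the difference. Comparing Lemma~\ref{lem:xiadd} with Lemma~\ref{lem:ximult}: in the multiplicative structure map the matrix coefficient $\xi^{\mr{mult}}_{I,i}=\sum_{[J]\in\mc{J}(I,i)}\xi^{\mr{mult}}_J$ contains, for $i=\norm{I}$, exactly one term (all diagonals $\Delta_J$ being identities, $\Sigma_J=\Sigma_I$, all restrictions trivial), and unwinding Lemma~\ref{lem:ximult} this term coincides with $\xi^{\mr{add}}_{I,i}$. Hence $\xi^{\mr{mult}}_k=\xi^{\mr{add}}_k$ plus the \emph{correction terms} $\xi^{\mr{mult}}_J$ indexed by $J=(j_S)$ with $\sum_S|S|j_S>\sum_S j_S$, i.e.\ with $j_{S_0}\neq0$ for some $S_0$ of size $|S_0|\geq2$. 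Each such $\xi^{\mr{mult}}_J$ factors, inside the relevant homotopy-orbit/homotopy-fixed-point constructions, through a map built from the reduced diagonal $\Delta_{|S_0|}\colon X\rightarrow X^{\wedge|S_0|}$. Since $X=\Sigma^N X'$ is a suspension and $|S_0|\geq2$, this reduced diagonal is null-homotopic; the whole task is to promote that nullity to the vanishing of $\pi_*(\xi^{\mr{mult}}_J)$ modulo $\mf{m}$.

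The main obstacle — and the reason one needs both $N\gg0$ and the reduction mod $\mf{m}$ — is that $\Delta_{|S_0|}$ enters $\xi^{\mr{mult}}_J$ only after passage to Borel constructions, which do not preserve null maps (homotopy fixed points of a highly connected spectrum need not be connected, cf.\ the Segal conjecture). To circumvent this I would reduce mod $\mf{m}$ and invoke $K$-local Tate vanishing \cite{KuhnTate} to replace the relevant homotopy fixed point spectra by homotopy orbit spectra; the latter inherit the connectivity of the underlying smash powers, so $K$-locally the obstruction to the nullity of the reduced diagonal lands in a spectrum that is roughly $|S_0|N$-connected — hence at least about $2N$-connected, uniformly in the symmetric-group degree $k$, since higher smash powers are only more connected. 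On the other hand $X$ has $\td{K}^*$ concentrated in a bounded range of degrees about $N$, determined by $X'$. Thus for $N$ larger than the top nonzero degree of $\td{K}^*X'$, the correction maps $\xi^{\mr{mult}}_J$ vanish on $E$-cohomology modulo $\mf{m}$.

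With all correction terms killed mod $\mf{m}$, the two structure maps $\TT(E^*\PP(X))/\mf{m}\rightarrow E^*\PP(X)/\mf{m}$ coincide, which is precisely the statement that $\mr{id}\colon E^*\PP(X)^{\mr{mult}}\rightarrow E^*\PP(X)^{\mr{add}}$ is an isomorphism of $\TT$-algebras mod $\mf{m}$. I expect the delicate point, beyond the bookkeeping of the many transfers and restrictions in Lemma~\ref{lem:ximult}, to be the precise connectivity estimate in the presence of the completed $E$-cohomology and the dualization $R^{(-)}$ — i.e.\ making rigorous the informal claim that ``null as a bare map plus $K$-local Tate vanishing plus sufficient suspension forces vanishing on $K$-cohomology.''
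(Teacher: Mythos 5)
Your reduction is the same as the paper's: the additive structure map is the sub-sum of the multiplicative one over the $J=(j_S)$ with $j_S=0$ for $\abs{S}>1$, so everything comes down to killing, mod $\mf{m}$, each correction term $\xi^{\mr{mult}}_J$ with some $j_{S_0}\neq 0$, $\abs{S_0}\ge 2$; and you correctly identify the real obstacle, namely that the diagonal $\Delta_J$ only appears inside Borel constructions, where a non-equivariant null homotopy buys you nothing. But the fix you propose does not work. You want to argue that the obstruction lands in a roughly $\abs{S_0}N$-connected spectrum while ``$X$ has $\td{K}^*$ concentrated in a bounded range of degrees about $N$.'' That last assertion is false: $K$ (and $E/\mf{m}$) is even periodic, so $\td{K}^*X$ is periodic in $*$ and is not concentrated in any bounded range; more generally, periodic cohomology theories are blind to connectivity, so ``target is $2N$-connected, source has cohomology only near degree $N$'' cannot force a map to vanish on $K$-cohomology. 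Indeed, if connectivity sufficed the correction terms would already die for $N=1$ (the reduced diagonal of any suspension is null), and the lemma would not need $N\gg 0$ at all.

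The paper's actual mechanism is different and is the essential content of the proof. One writes $X=\Sigma^{2M}X'$ and observes that $\Delta_J$ is modeled by the $\Sigma_{(J)}$-equivariant inclusion of representation spheres $S^{2M\rho}\hookrightarrow S^{2M\rho'}$, where $\rho^\perp=\rho'-\rho$ has positive dimension precisely because some $\abs{S_0}\ge 2$. By Lemma~\ref{lem:euler}, under the Thom isomorphism the induced map on $\td{E}^*\left((-)_{h\Sigma_{(J)}}\right)/\mf{m}$ is multiplication by the Euler class power $e_{2\rho^\perp}^{M}$. Since $\varinjlim_M S^{2M\rho^\perp}$ is non-equivariantly contractible, the $e_{2\rho^\perp}$-localization of $E^*(B\Sigma_{(J)})/\mf{m}$ vanishes, and since that ring is a finite-dimensional $K_*$-vector space, $e_{2\rho^\perp}$ is nilpotent; hence $e_{2\rho^\perp}^M=0$ for $M$ large. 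This is where $N\gg 0$ genuinely enters, and the uniformity in $N$ comes from reducing to $k\in\{2,p\}$ (which generate the $\TT$-algebra structure) and the finiteness of the set of subgroups $\Sigma_{(J)}$ and representations $\rho^\perp$ that occur --- not from connectivity of higher smash powers. To repair your argument you would need to replace the connectivity step by this Euler-class nilpotence step (or an equivalent).
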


\begin{proof}
Comparing Lemmas~\ref{lem:xiadd} and Lemmas~\ref{lem:ximult} (with $R = S_K$), the $H_\infty$ structure map $\xi_k^{\mr{add}}$ is the map obtained from $\xi_k^{\mr{mult}}$ by only including the terms $\xi^{\mr{mult}}_J$ for $J = (j_S)$ with $j_S = 0$ for $\abs{S} > 1$.  
Since the $\TT$-algebra structure is completely determined by the maps $\xi^{\mr{mult}}_2$ and $\xi^{\mr{mult}}_p$, we may assume that $k$ is either $2$ or $p$.  It then suffices to show that if $j_S$ is not zero for some $S \subseteq \ul{k}$ with $\abs{S}> 1$, then for $N$ sufficiently large the induced map
$$ \xi^{\mr{mult}}_{J}: E_* \left( \left(S_K^{X^{\norm{I(J)}}_{h\Sigma_{I(J)}}}\right)_{h\Sigma_{(I(J))}}\right)/\mf{m} \rightarrow E_*\left(S_K^{X^{\norm{J}}_{h\Sigma_{\norm{J}}}}\right)/\mf{m}
$$
is zero.  It suffices to show that the map
$$ \Delta^*_J: E_* \left( \left(S_K^{X^{\norm{I(J)}}_{h\Sigma_{J}}}\right)_{h\Sigma_{(J)}}\right)/\mf{m} \rightarrow E_*\left(\left( S_K^{X^{\norm{J}}_{h\Sigma_J}}\right)_{h\Sigma_{(J)}}\right)/\mf{m} $$
is zero,
as the above map factors through this.
Since $X$ is $K$-locally dualizable, and $K$-locally $\Sigma_{(J)}$ homotopy orbits are equivalent to homotopy fixed points, this map is isomorphic to the map
\begin{equation}\label{eq:deltaJ}
 \Delta^*_J: \td{E}^* \left((X^{\norm{I(J)}}_{h\Sigma_J})_{h\Sigma_{(J)}}\right)/\mf{m} \rightarrow \td{E}^* \left((X^{\norm{J}}_{h\Sigma_J})_{h\Sigma_{(J)}}\right)/\mf{m}. 
\end{equation}
The group $\Sigma_{(J)}$ naturally acts by permutation on the representations
\begin{align*}
\rho & = \bigoplus_{\substack{0 \ne S \subseteq \ul{k} \\ j_S \ne 0}} \RR , \\
\rho' & = \bigoplus_{\substack{0 \ne S \subseteq \ul{k} \\ j_S \ne 0}} \RR^{\abs{S}}. 
\end{align*}
The diagonal map
$$ \Delta_J: \rho \rightarrow \rho' $$
is $\Sigma_{(J)}$-equivariant.  Let $\rho^\perp$ be the orthogonal complement of $\rho$ in $\rho'$.  Our assumption on $J$ ensures that $\rho^\perp$ has positive dimension.  Writing $X = \Sigma^N X'$, the map (\ref{eq:deltaJ}) now can be interpreted as a composite
\begin{align*}
& \td{E}^* \left(\left( S^{N\rho + N\rho^\perp} \wedge (S^{\norm{I(J)}-N\abs{\rho'}} \wedge X'^{\norm{I(J)}})_{h\Sigma_J} \right)_{h\Sigma_{(J)}}\right)/\mf{m} \\
& \xrightarrow{1 \wedge \Delta_J} \td{E}^* \left(\left(  S^{N\rho + N\rho^\perp} \wedge (S^{\norm{J}-N\abs{\rho}} \wedge X'^{\norm{J}})_{h\Sigma_J} \right)_{h\Sigma_{(J)}}\right)/\mf{m} 
\\
& \xrightarrow{\Delta_J \wedge 1} \td{E}^* \left(\left(  S^{N\rho} \wedge (S^{\norm{J}-N\abs{\rho}} \wedge X'^{\norm{J}})_{h\Sigma_J} \right)_{h\Sigma_{(J)}}\right)/\mf{m}.
\end{align*}
It therefore suffices to show that the map $\Delta_J \wedge 1$ above is trivial.  We may assume that $N = 2M$ is even.  
To simplify notation let $Y$ denote the $\Sigma_{(J)}$-space
$$ Y := (S^{\norm{J}} \wedge X'^{\norm{J}})_{h\Sigma_J}. $$
We wish to show that the map 
$$ Y_{h\Sigma_{(J)}} \hookrightarrow 
\left(  S^{2M\rho^\perp} \wedge Y \right)_{h\Sigma_{(J)}} $$
is zero on mod $\mf{m}$ $E$-cohomology.

Using the fact that $E$ is complex orientable (and taking a fixed complex orientation), Lemma~\ref{lem:euler} implies that the effect on mod $\mf{m}$ $E$-cohomology of the above map, under the Thom isomorphism (\ref{eq:thom}), is multiplication by a power of the Euler class
$$ e_{2\rho^\perp}^M: \td{E}^*\left( Y_{h\Sigma_{(J)}} \right)/\mf{m} \rightarrow \td{E}^*\left( Y_{h\Sigma_{(J)}} \right)/\mf{m}. $$
Since $\rho^\perp$ is not zero dimensional, the colimit
$$ \varinjlim_M S^{2M\rho^\perp} $$
is non-equivariantly contractible, which implies that 
$$ \varinjlim_M {E\Sigma_{(J)}}_+ \wedge S^{2M\rho^\perp} $$
is equivariantly contractible, and therefore the localization is trivial:
$$ e_{2\rho^\perp}^{-1} E^*\left( B\Sigma_{(J)} \right)/\mf{m} = 0. $$
Since $G$ is finite, the $K_*$-vector space $E^*\left( B\Sigma_{(J)} \right)/\mf{m}$ is finite dimensional.  It follows (e.g. using Jordon canonical form after base change to $\bar{\FF}_p$) that the element $e_{2\rho^\perp}$ is nilpotent in $E^*(B\Sigma_{(J)})$, and therefore 
$$ e_{2\rho^\perp}^M : \td{E}^*\left( Y_{h\Sigma_{(J)}} \right)/\mf{m} \rightarrow \td{E}^*\left( Y_{h\Sigma_{(J)}} \right)/\mf{m} $$
is zero for $M$ sufficiently large.

The value of $M$ above depends on only on $k$ (which is either $2$ or $p$), the subgroup $\Sigma_{(J)} \le \Sigma_k$, and the representation $\rho^\perp$.  As there are only finitely many $k$ and subgroups, and associated representations in play, there is a global bound on the values of $M$ for the various subgroups showing up for different sequences $J$.  As such, $M$ can be taken sufficiently large to handle all of these different terms simultaneously.
\end{proof}

\subsection*{The norm equivalence of $H_\infty$-ring spectra}

\begin{lem}\label{lem:norm}
Suppose that $X$ is a space whose suspension spectrum is $K$-locally strongly dualizable.
Then the norm induces an equivalence
$$ \widehat{\PP}_{S_K}(S_K^X) = \prod_i [(S_K^{X^i})_{h\Sigma_i}]_K \xrightarrow[\simeq]{\prod N_{\Sigma_i}} \prod_i [S_K^{X^i}]^{h\Sigma_i} \simeq S_K^{\PP(X)^{\mr{add}}} $$
of $H_\infty$-$S_K$-algebras.
\end{lem}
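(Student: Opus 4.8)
The plan is to first check that the displayed arrow is an equivalence of underlying $S_K$-modules, and then to verify that it respects the $H_\infty$-structures by comparing the structure maps on the two sides, using Lemma~\ref{lem:xiadd} on the right.

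First I would establish the underlying equivalence. Since $X$, and hence each smash power $X^{\wedge i}$, is $K$-locally strongly dualizable, so is $S_K^{X^i} \simeq (S_K^X)^{\wedge_{S_K} i}$, so its $K$-local Tate spectrum vanishes \cite{KuhnTate} and each norm map $N_{\Sigma_i}\colon (S_K^{X^i})_{h\Sigma_i} \to (S_K^{X^i})^{h\Sigma_i}$ becomes an equivalence after $K$-localizing the source, the target being already $K$-local. Taking the product over $i$ and using that $K$-localization commutes with the relevant products \cite[Cor.~6.1.3]{BehrensDavis} (cf.\ (\ref{eq:SKQX})) gives the underlying equivalence, while the identification $\prod_i (S_K^{X^i})^{h\Sigma_i} \simeq S_K^{\PP(X)}$ comes from $\PP(X) \simeq \bigvee_i X^i_{h\Sigma_i}$ together with $S_K^{Z_{h\Sigma_i}} \simeq (S_K^Z)^{h\Sigma_i}$.

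To see that the map respects the $H_\infty$-structures I would compare structure maps under these splittings. The left-hand side is the completed free commutative $S_K$-algebra on $S_K^X$; using the exponential isomorphism $\widehat{\PP}_{S_K}(A)\wedge_{S_K}\widehat{\PP}_{S_K}(B) \simeq \widehat{\PP}_{S_K}(A\oplus B)$ and the standard description of the multiplication of a free commutative algebra, its structure maps have as matrix coefficients the canonical orbit maps $(S_K^{X^{\norm I}})_{h\Sigma_{[I]}} \to (S_K^{X^{\norm I}})_{h\Sigma_{\norm I}}$ induced by $\Sigma_{[I]}=\Sigma_{(I)}\ltimes\Sigma_I \hookrightarrow \Sigma_{\norm I}$. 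On the right-hand side, Lemma~\ref{lem:xiadd} (with $R=S_K$) shows that, after the identifications $\PP(X)^{\wedge k}\simeq\bigvee_I X^{\norm I}_{h\Sigma_I}$ and $S_K^{X^{\norm I}_{h\Sigma_I}}\simeq (S_K^{X^{\norm I}})^{h\Sigma_I}$, the only nonzero matrix coefficients of $\xi^{\mr{add}}_k$ are the $S_K$-linear duals of the transfers $\Tr^{\Sigma_{\norm I}}_{\Sigma_{[I]}}$ on $X^{\norm I}$, that is, the co-transfer maps $(S_K^{X^{\norm I}})^{h\Sigma_{[I]}} \to (S_K^{X^{\norm I}})^{h\Sigma_{\norm I}}$; there are no diagonal contributions precisely because we use the additive, rather than the multiplicative, coalgebra structure on $\PP(X)$. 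So the lemma comes down to the commutativity, in the $K$-local category, of the square
$$
\xymatrix{
(W_{hH})_K \ar[r]^{N_H}_{\simeq} \ar[d]_{q} & W^{hH} \ar[d]^{c} \\
(W_{hG})_K \ar[r]^{N_G}_{\simeq} & W^{hG}
}
$$
for a finite group $G$ acting on a $K$-locally dualizable $W$ and a subgroup $H\le G$, where $q$ is the canonical map on homotopy orbits and $c$ is the $S_K$-linear dual of the transfer $\Tr^G_H$; one applies this with $H=\Sigma_{[I]}$, $G=\Sigma_{\norm I}$, $W=S_K^{X^{\norm I}}$. Writing $N_G$ as the adjoint of the $G$-equivariant transfer $\Tr^G_e$ of (\ref{eq:equivtr}), and using the iterated-transfer identity $\Tr^G_e = \Tr^H_e\circ\Tr^G_H$, the projection formula, and the compatibility of $(-)_!$ and $(-)_*$ with the pullback squares recorded in Appendix~\ref{apx:borel}, both composites in the square unwind to the same map; Lemma~\ref{lem:twotransfer} and the identity $N_{G_1\times G_2} = N_{G_1}\wedge N_{G_2}$ are used to control the interaction of these transfers with the external $\wedge_{S_K}$-products and to reduce the product-group norms appearing in $\xi^{\mr{add}}_k$ to external smashes of norms. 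Assembling the square over all arities $k$ and all $[I]$ then shows that $\prod_i N_{\Sigma_i}$ commutes with all of the $H_\infty$-structure maps.

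The hard part will be the bookkeeping in this last step: one must check that the underlying equivalence is natural enough --- with respect to all the subgroup inclusions and the full $\Sigma_k$-diagonal structure, not merely the binary multiplication --- that the individual commuting squares promote to a compatibility of the entire $H_\infty$-structure, and one must keep careful track of the $K$-localizations and completions built into $\widehat{\PP}_{S_K}$. All the essential (co)homological inputs --- vanishing of $K$-local Tate spectra, the formal properties of norms and transfers in Appendix~\ref{apx:borel}, and Lemmas~\ref{lem:xiadd} and \ref{lem:twotransfer} --- are already in place; the substance of the proof is organizing this comparison.
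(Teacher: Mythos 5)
Your proposal is correct and follows essentially the same route as the paper: the underlying equivalence via vanishing of $K$-local Tate spectra and commutation of $K$-localization with products, and the $H_\infty$-compatibility by matching matrix coefficients of the structure maps (the orbit maps $\mathrm{Res}^{\Sigma_i}_{\Sigma_{[I]}}$ on the free-algebra side against the $\xi^{\mr{add}}_{I,i}$ of Lemma~\ref{lem:xiadd}), reduced to a norm-versus-transfer square proved with the equivariant transfer formalism and Lemma~\ref{lem:twotransfer}. The paper packages the second step as a general-$R$ statement (Lemma~\ref{lem:Rnorm}) and carries out the bookkeeping you defer in its diagram~(\ref{eq:bigdiag}), but the content is the same.
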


Observe that since $K$-localization commutes with products when the factors involved are $E$-local (see \cite[Cor.~6.1.3]{BehrensDavis}) there is an equivalence
$$ 
\prod_i [(S_K^{X^i})_{h\Sigma_i}]_K \simeq \left[ \prod_i (S_K^{X^i})_{h\Sigma_i} \right]_K.
$$
Therefore, Lemma~\ref{lem:norm} follows from the following slightly more general observation.

\begin{lem}\label{lem:Rnorm}
The composite
$$ \widehat{\PP}_{R}(R^X) := \prod_i (R^{X^i})_{h\Sigma_i} \xrightarrow{\prod N_{\Sigma_i}} \prod_i (R^{X^i})^{h\Sigma_i} \simeq R^{\PP(X)^{\mr{add}}}  $$
is a map of $H_\infty$-$R$-algebras.
\end{lem}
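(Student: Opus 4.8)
The plan is to check directly that the norm map $\Theta := \prod_i N_{\Sigma_i}$ commutes, in the homotopy category, with all the $H_\infty$-structure maps, and to reduce this to a single square relating norms and transfers that is an instance of Lemma~\ref{lem:twotransfer}.

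First I would record both $H_\infty$-structures in compatible form. On the source, $\widehat{\PP}_R(R^X)=\prod_i (R^X)^{\wedge_R i}_{h\Sigma_i}$ is the completed free commutative $R$-algebra on $R^X$; unwinding the canonical map $\big(\prod_i (R^X)^{\wedge_R i}_{h\Sigma_i}\big)^{\wedge_R k}\to \prod_i \bigvee_{[I]\in\mc{I}^k_i}(R^X)^{\wedge_R\|I\|}_{h\Sigma_{[I]}}$ (a finite wedge inside each product factor), its $k$-th structure map $\xi_k$ is, on the $[I]$-summand, the canonical map $(R^X)^{\wedge_R\|I\|}_{h\Sigma_{[I]}}\to (R^X)^{\wedge_R\|I\|}_{h\Sigma_{\|I\|}}$ induced by the group inclusion $\Sigma_{[I]}\hookrightarrow\Sigma_{\|I\|}$, landing in the $i=\|I\|$ factor (and zero in the other factors). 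On the target, Lemma~\ref{lem:Hinfty} together with Lemma~\ref{lem:xiadd} already express the structure maps $\xi^{\mr{add}}_k$ summand-by-summand over $\mc{I}^k_i$, the $[I]$-component being $\xi^{\mr{add}}_{I,\|I\|}$, adjoint to a composite built from the transfer $\mr{Tr}^{\Sigma_{\|I\|}}_{\Sigma_I}$ and the evaluation pairing. Since $\Theta$ is a product over $i$ of the single norms $N_{\Sigma_i}$, and the norm for a product of symmetric groups is the iterated norm (an identity which follows from the induction/coinduction formalism of Appendix~\ref{apx:borel}), the map $\Theta^{\wedge_R k}_{h\Sigma_k}$ is also diagonal over the $\mc{I}^k_i$. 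Hence it suffices to check, for each $k$ and each orbit $[I]\in\mc{I}^k_i$ with $i=\|I\|$, the commutativity of one square: writing $Y:=R^{(X^{\wedge\|I\|})}$ with its $\Sigma_{\|I\|}$-action, $H:=\Sigma_I$, $K:=\Sigma_{(I)}$, $P:=\Sigma_{[I]}=K\ltimes H\le\Sigma_{\|I\|}$, one must show
$$ \xi^{\mr{add}}_{I,\|I\|}\circ (N_H)_{hK} \;=\; N_{\Sigma_{\|I\|}}\circ \big(Y_{hP}\to Y_{h\Sigma_{\|I\|}}\big) $$
as maps $Y_{hP}\to Y^{h\Sigma_{\|I\|}}$, where $N_H\colon Y_{hH}\to Y^{hH}$ is the $K$-equivariant norm (legitimate since $H\triangleleft P$) and the second factor on the right is the canonical map on homotopy orbits.

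The heart of the proof is this square. I would expand both composites after passing to the adjoint description of maps into $Y^{h\Sigma_{\|I\|}}=R^{(X^{\wedge\|I\|})_{h\Sigma_{\|I\|}}}$, i.e. as maps $(-)\wedge (X^{\wedge\|I\|})_{h\Sigma_{\|I\|}}\to R$. By Lemma~\ref{lem:xiadd}, $\xi^{\mr{add}}_{I,\|I\|}$ unwinds to: apply the equivariant transfer $\mr{Tr}^{\Sigma_{\|I\|}}_{\Sigma_I}$ to the $(X^{\wedge\|I\|})_{h\Sigma_{\|I\|}}$-factor, evaluate, then collapse $R_{h(-)}\to R$. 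By the very definition of the norm in Appendix~\ref{apx:borel} as the adjoint of the equivariant transfer $\mr{Tr}^{(-)}_e$, the map $N_{\Sigma_{\|I\|}}$ precomposed with the canonical orbit map unwinds to: apply the equivariant transfer to the $Y=R^{(X^{\wedge\|I\|})}$-factor, evaluate, then collapse. That these two agree is precisely the content of Lemma~\ref{lem:twotransfer} (``it does not matter on which tensor factor the transfer is performed''), combined with transitivity of the equivariant transfers $\mr{Tr}^H_K$ for nested normal subgroups and compatibility of transfers with restriction along $P\hookrightarrow\Sigma_{\|I\|}$. Concretely I would assemble the diagram from the functors $f_!,f_*,f^*$, using the projection formula $Y\wedge f_!Z\cong f_!(f^*Y\wedge Z)$ and the base-change isomorphism $g_!f^*\cong (f')^*(g')_!$ to move the transfers past the (iterated) homotopy-orbit constructions, then invoke Lemma~\ref{lem:twotransfer} at the last step; the case $k=2$, $I=(1,1)$, where $H$ is trivial and $P=K=\Sigma_{\|I\|}$, is already exactly Lemma~\ref{lem:twotransfer} and is the model for the general bookkeeping.

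I expect the main obstacle to be purely organizational: keeping the equivariance straight through the nested (co)limits — justifying the untwisting equivalences $A_{hK}\wedge B\simeq (A\wedge B)_{hK}$ for $B$ with trivial action, the identification of $\Theta^{\wedge_R k}$ on the $I$-summand with a single $K$-equivariant norm $N_{\Sigma_I}$, and the factorization of $\xi^{\mr{add}}_{I,\|I\|}$ through the equivariant transfer rather than a non-equivariant one. Conceptually nothing deep is happening: $K$-locally $(-)_{hG}\simeq(-)^{hG}$ is an ambidextrous (co)limit over finite groups, the norm is the ambidexterity equivalence, and it is automatically symmetric monoidal and compatible with the wreath-product structure underlying the $H_\infty$-operations; but I would give the hands-on diagram-chase so as to stay inside the framework of Appendix~\ref{apx:borel}, which also accommodates the (not necessarily $K$-local) generality of $R$ in the statement. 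Compatibility with units is immediate from the $i=0$ summand, and although for a map of $H_\infty$-ring spectra only $\xi_2$ and $\xi_p$ need be checked, the argument is uniform in $k$.
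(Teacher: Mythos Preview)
Your proposal is correct and follows essentially the same approach as the paper: reduce to a square indexed by each orbit $[I]\in\mc{I}^k_i$, pass to adjoints, and invoke Lemma~\ref{lem:twotransfer} to swap which tensor factor carries the transfer. The paper's execution differs only in organization: before unwinding the adjoints it inserts an explicit factorization of the top--right composite through $R^{X^i_{h\Sigma_{[I]}}}$ via $N_{\Sigma_{[I]}}$ followed by $(\mr{Tr}^{\Sigma_i}_{\Sigma_{[I]}})^*$, which reduces the square to a triangle involving $N_{\Sigma_{(I)}}$ and $\xi^{\mr{add}}_{I,i}$; this makes the subsequent big diagram (where region~(1) is exactly Lemma~\ref{lem:twotransfer} and region~(2) is a trivial adjunction identity) easier to parse than the direct unwinding you sketch, but the content is the same.
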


The $H_\infty$-$R$-algebra structure of $\widehat{\PP}_R(R^X)$ has structure maps 
$$ \zeta_k: \widehat{\PP}_R(R^X)^{\wedge_R k}_{h\Sigma_k} \rightarrow \widehat{\PP}_R(R^X) $$
which are given by the composites
\begin{align*}
\zeta_k : 
\widehat{\PP}_R(R^X)^{\wedge_R k}_{h\Sigma_k} & = \left( \prod_i (R^{X^i})_{h\Sigma_i} \right)^{\wedge_R k}_{h\Sigma_k} \\
& \rightarrow \left( \prod_i \bigvee_{\substack{I = (i_1, \ldots, i_k) \\ \norm{I} = i}} (R^{X^{\norm{I}}})_{h\Sigma_I} \right)_{h\Sigma_k} \\
& \rightarrow \prod_i \left( \bigvee_{\substack{I = (i_1, \ldots, i_k) \\ \norm{I} = i}}  (R^{X^{\norm{I}}})_{h\Sigma_I} \right)_{h\Sigma_k}  \\
& \simeq \prod_i \bigvee_{[I] \in \mc{I}^k_i}  \left( (R^{X^{\norm{I}}})_{h\Sigma_I} \right)_{h\Sigma_{(I)}}  \\
& \simeq \prod_i \bigvee_{[I] \in \mc{I}^k_i}  (R^{X^{\norm{I}}})_{h\Sigma_{[I]}} \\
& \xrightarrow{\prod_i \sum_I \mr{Res}^{\Sigma_i}_{\Sigma_{[I]}}} \prod_i (R^{X^{i}})_{h\Sigma_i} \\
& = \widehat{\PP}_R(R^X). 
\end{align*}

Lemma~\ref{lem:Rnorm} therefore follows from the following lemma.

\begin{lem}
Fix a sequence $I=(i_1, \ldots, i_k)$ with $\norm{I} = i$.  Then the following diagram commutes.
$$
\xymatrix{
(R^{X^i})_{h\Sigma_{[I]}} \ar[r]^-{\mr{Res}^{\Sigma_i}_{\Sigma_{[I]}}} \ar[d]_{N_{\Sigma_{I}}} & (R^{X^i})_{h\Sigma_i} \ar[d]^{N_{\Sigma_i}} 
\\
(R^{X^i_{h\Sigma_I}})_{h\Sigma_{(I)}} \ar[r]_-{\xi^{\mr{add}}_{I,i}}  & R^{X^i_{h\Sigma_i}}
}
$$
\end{lem}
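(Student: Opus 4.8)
The plan is to unwind both composites in the square using the transfer-and-evaluation formalism of Appendix~\ref{apx:borel}, and then to recognize the resulting identity as an instance of transitivity of transfers together with the interchange Lemma~\ref{lem:twotransfer}. Throughout, since $\norm{I}=i$, write $G=\Sigma_i$ acting on $X^i$ by permuting smash factors, $K=\Sigma_I$, $H=\Sigma_{[I]}=\Sigma_{(I)}\ltimes\Sigma_I$, so that $K\trianglelefteq H$ with $H/K=\Sigma_{(I)}$; set $Y=R^{X^i}\in\Sp_G$. For any subgroup $G'\le G$ one has $R^{X^i_{hG'}}\simeq(R^{X^i})^{hG'}$ (just the mapping-spectrum adjunction, no dualizability needed for this particular identification), so the target $R^{X^i_{h\Sigma_i}}$ is $Y^{hG}$, and by the evaluation adjunction a map into it is the same as a map out of $(-)\wedge X^i_{h\Sigma_i}$. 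Recalling from \eqref{eq:equivtr} that the additive norm $N_G\colon Y_{hG}\to Y^{hG}$ is the adjoint of the $G$-equivariant transfer built from the diagonal $\Delta\colon G\to G\times G$, the composite $N_{\Sigma_i}\circ\mr{Res}^{\Sigma_i}_{\Sigma_{[I]}}$, de-adjointed against $X^i_{h\Sigma_i}$, becomes
\begin{multline*}
(R^{X^i})_{h\Sigma_{[I]}}\wedge X^i_{h\Sigma_i}\xrightarrow{\mr{Res}\wedge 1}(R^{X^i}\wedge X^i)_{h(\Sigma_i\times\Sigma_i)}\\
\xrightarrow{\Tr^{\Sigma_i\times\Sigma_i}_{\Delta\Sigma_i}}(R^{X^i}\wedge X^i)_{h\Sigma_i}\xrightarrow{\mr{ev}}R_{h\Sigma_i}\xrightarrow{\mr{Res}^1_{\Sigma_i}}R,
\end{multline*}
while, using the explicit formula for $\td{\xi}^{\mr{add}}_{I,i}$ from Lemma~\ref{lem:xiadd} and the same description (via \eqref{eq:equivtr}) of the $\Sigma_{(I)}$-equivariant norm $N_{\Sigma_I}$, the composite $\xi^{\mr{add}}_{I,i}\circ N_{\Sigma_I}$ de-adjoints to a composite assembled from the diagonal transfer $\Tr^{\Sigma_I\times\Sigma_I}_{\Delta\Sigma_I}$ underlying $N_{\Sigma_I}$, the partial transfer $\Tr^{\Sigma_i}_{\Sigma_I}$ applied to the $X^i$-factor, the two evaluation maps, and the structural restrictions.

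First I would reduce the commutativity of the square to the equality of these two de-adjointed maps $(R^{X^i})_{h\Sigma_{[I]}}\wedge X^i_{h\Sigma_i}\to R$, which is now purely a matter of manipulating homotopy orbits, transfers, diagonals and evaluations — all operations controlled by Appendix~\ref{apx:borel}. Next I would insert the factorization $\Tr^{\Sigma_i}_{\Sigma_I}=\Tr^{\Sigma_{[I]}}_{\Sigma_I}\circ\Tr^{\Sigma_i}_{\Sigma_{[I]}}$ and identify $\Tr^{\Sigma_{[I]}}_{\Sigma_I}$ with the underlying map of the $\Sigma_{(I)}$-equivariant transfer appearing in $N_{\Sigma_I}$. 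The crux of the argument is to move the map $\mr{Res}^{\Sigma_i}_{\Sigma_{[I]}}$ on the $R^{X^i}$-factor past the diagonal transfer in the first composite: the projection formula $Y\wedge f_!Z\cong f_!(f^*Y\wedge Z)$ together with Lemma~\ref{lem:twotransfer} converts ``restriction on the first factor followed by the $\Delta\Sigma_i$-transfer'' into ``the $\Delta\Sigma_{[I]}$-transfer followed by $\Tr^{\Sigma_i}_{\Sigma_{[I]}}$ on the second factor'', after which transitivity of transfers peels off the $\Sigma_I\le\Sigma_{[I]}$ step and realizes the $\Delta\Sigma_I$-transfer of $N_{\Sigma_I}$. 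A final application of the Frobenius compatibility — the identity $\mr{ev}\circ(1\wedge\Tr^{\Sigma_i}_{\Sigma_I})=\mr{ev}\circ(\mr{Res}\wedge 1)$, expressing that the evaluation $\mr{ev}\colon R^W\wedge W\to R$ carries a transfer on $W$ to the corresponding restriction on $R^W$ — matches the outcome with the second composite.

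The main obstacle will be the bookkeeping: one is comparing two paths from $\Sigma_i\times\Sigma_i$ (or from $\Sigma_{[I]}\times\Sigma_i$) down to a diagonally embedded copy of $\Sigma_I$, one routed through $\Delta\Sigma_i$ and one through $\Sigma_I\times\Sigma_I$, and the assertion is that they carry the same transfer. Rather than a full Mackey double-coset analysis, I expect to exploit that $\Sigma_I$ is normal in $\Sigma_{[I]}=\Sigma_{(I)}\ltimes\Sigma_I$, so that the relevant double cosets are trivial and the iterated transfers compose on the nose; this normality is exactly what makes $N_{\Sigma_I}$, and hence $\xi^{\mr{add}}_{I,i}$, well-defined in the first place, so the semidirect-product structure is used in an essential way. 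A secondary nuisance is tracking the residual $\Sigma_{(I)}$-actions after each homotopy-orbit construction and the coherence of the various equivalences $R^{X^i_{hG'}}\simeq(R^{X^i})^{hG'}$; these are all routine within the setup of Appendix~\ref{apx:borel}, and one crucially never needs the norm maps to be equivalences — only the displayed compatibility of the underlying maps, which the appendix supplies.
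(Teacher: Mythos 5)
Your proposal is correct and follows essentially the same route as the paper's proof: reduce by adjointness to an identity of maps into $R$, unwind $\td{\xi}^{\mr{add}}_{I,i}$ via Lemma~\ref{lem:xiadd}, and verify the resulting diagram using transitivity of transfers/norms through $\Sigma_I \trianglelefteq \Sigma_{[I]}$, the interchange Lemma~\ref{lem:twotransfer}, and naturality of evaluation against restriction and transfer maps. The paper merely organizes the bookkeeping slightly differently, first peeling off the factorization $N_{\Sigma_i}\circ\mr{Res}^{\Sigma_i}_{\Sigma_{[I]}} = \mr{Tr}^{\Sigma_i}_{\Sigma_{[I]}}\circ N_{\Sigma_{(I)}}\circ N_{\Sigma_I}$ before de-adjointing the remaining triangle, whereas you de-adjoint the whole square at once.
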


\begin{proof}
The construction of the norm as the adjoint to the equivariant transfer implies that the following diagram commutes.
$$
\xymatrix{
(R^{X^i})_{h\Sigma_{[I]}} \ar[rr]^-{\mr{Res}^{\Sigma_i}_{\Sigma_{[I]}}} \ar[d]_{N_{\Sigma_{I}}} 
\ar[dr]^{N_{\Sigma_{[I]}}}
&& (R^{X^i})_{h\Sigma_i} \ar[d]^{N_{\Sigma_i}} 
\\
(R^{X^i_{h\Sigma_I}})_{h\Sigma_{(I)}} \ar[r]_-{N_{\Sigma_{(I)}}}  & R^{X^i_{h\Sigma_{[I]}}} \ar[r]_{\mr{Tr}^{\Sigma_i}_{\Sigma_{[I]}}} &
R^{X^i_{h\Sigma_i}}
}
$$
Therefore it suffices to show the commutativity of the diagram below.
$$
\xymatrix{
(R^{X^i_{h\Sigma_I}})_{h\Sigma_{(I)}} \ar[r]^-{\xi^{\mr{add}}_{I,i}}  \ar[d]_{N_{\Sigma_{(I)}}}
& R^{X^i_{h\Sigma_i}}
\\
R^{X^i_{h\Sigma_{[I]}}} \ar[ur]_{\mr{Tr}^{\Sigma_i}_{\Sigma_{[I]}}}
}
$$
By adjointness this is equivalent to showing the commutativity of the diagram:
$$
\xymatrix{
(R^{X^i_{h\Sigma_I}})_{h\Sigma_{(I)}} \wedge X^i_{h\Sigma_i} \ar[r]^-{\td{\xi}^{\mr{add}}_{I,i}}  \ar[d]_{N_{\Sigma_{(I)}} \wedge 1}
& R 
\\
R^{X^i_{h\Sigma_{[I]}}} \wedge X^i_{h\Sigma_i} \ar[r]_{\mr{Tr}^{\Sigma_i}_{\Sigma_{[I]}}\wedge 1} &
R^{X^i_{h\Sigma_i}} \wedge X^i_{h\Sigma_i} \ar[u]_{\mr{ev}}
}
$$
By Lemma~\ref{lem:xiadd}, the commutativity of the above diagram is seen in the following commutative diagram.
\begin{equation}\label{eq:bigdiag}
{\scriptsize
\xymatrix@C-1em{
(R^{X^i_{h\Sigma_I}})_{h\Sigma_{(I)}} \wedge X^i_{h\Sigma_i} 
\ar@{=}[r] \ar[ddd]_{N_{\Sigma_{(I)}}\wedge 1} \ar[ddr]_{1 \wedge \mr{Tr}^{\Sigma_i}_{\Sigma_{[I]}}} & 
(R^{X^i_{h\Sigma_I}} \wedge X^i_{h\Sigma_i})_{h\Sigma_{(I)}} \ar[r]^{1 \wedge \mr{Tr}^{\Sigma_i}_{\Sigma_I}} 
 \ar[d]_{1 \wedge \mr{Tr}^{\Sigma_i}_{\Sigma_{[I]}}} &
(R^{X^i_{h\Sigma_I}} \wedge X^i_{h\Sigma_I})_{h\Sigma_{(I)}} \ar[r]^-{\mr{ev}} &
R_{h\Sigma_{(I)}} \ar[r]^-{\mr{Res}^1_{\Sigma_{(I)}}} & 
R
\\
&
(R^{X^i_{h\Sigma_I}} \wedge X^i_{h\Sigma_{[I]}})_{h\Sigma_{(I)}} 
 \ar[ur]|{1 \wedge \mr{Tr}^{\Sigma_{[I]}}_{\Sigma_{I}}} &
(R^{X^i_{h\Sigma_{[I]}}} \wedge X^i_{h\Sigma_I})_{h\Sigma_{(I)}} 
\ar[u]_{\mr{Res}^{\Sigma_{[I]}}_{\Sigma_I} \wedge 1} \ar@{=}[d] \ar@{}[urr]|{(2)} &
&
\\
&
(R^{X^i_{h\Sigma_I}})_{h\Sigma_{(I)}} \wedge X^i_{h\Sigma_{[I]}} 
\ar@{=}[u] \ar[r]^{N_{\Sigma_{(I)}}\wedge 1} \ar@{}[ur]|{(1)} &
R^{X^i_{h\Sigma_{[I]}}} \wedge X^i_{h\Sigma_{[I]}} \ar[uurr]_{\mr{ev}} &
&
\\
R^{X^i_{h\Sigma_{[I]}}} \wedge X^i_{h\Sigma_i} 
\ar[rrrr]_{\mr{Tr}^{\Sigma_{i}}_{\Sigma_{[I]}}\wedge 1} 
\ar[urr]_{1 \wedge \mr{Tr}^{\Sigma_i}_{\Sigma_{[I]}}} &
&
&
&
R^{X^i_{h\Sigma_{i}}} \wedge X^i_{h\Sigma_i} \ar[uuu]_{\mr{ev}}
}}
\end{equation}
With the exception of regions (1) and (2) in the above diagram, all of the faces of the diagram clearly commute.

The commutativity of $(1)$ is seen below, making use of Lemma~\ref{lem:twotransfer}.
$$
\xymatrix{
(R^{X^i_{h\Sigma_I}} \wedge (X^i_{h\Sigma_{I}})_{h\Sigma_{(I)}})_{h\Sigma_{(I)}} 
 \ar[rr]^-{1 \wedge \mr{Tr}^{\Sigma_{(I)}}_{1}}  \ar@{=}[dd] &
&
(R^{X^i_{h\Sigma_I}} \wedge X^i_{h\Sigma_I})_{h\Sigma_{(I)}} 
\\
&
((R^{X^i_{h\Sigma_{I}}})_{h\Sigma_{(I)}} \wedge X^i_{h\Sigma_{I}})_{h\Sigma_{(I)}}
\ar[ru]^-{\mr{Tr}^{\Sigma_{(I)}}_{1} \wedge 1} \ar[r]^{N_{\Sigma_{(I)}}\wedge 1} &
(R^{X^i_{h\Sigma_{[I]}}} \wedge X^i_{h\Sigma_I})_{h\Sigma_{(I)}} 
\ar[u]_{\mr{Res}^{\Sigma_{[I]}}_{\Sigma_I} \wedge 1} \ar@{=}[d]
\\
(R^{X^i_{h\Sigma_I}})_{h\Sigma_{(I)}} \wedge X^i_{h\Sigma_{[I]}} 
\ar[rr]_{N_{\Sigma_{(I)}}\wedge 1} \ar@{=}[ur] &
&
R^{X^i_{h\Sigma_{[I]}}} \wedge X^i_{h\Sigma_{[I]}}
}
$$

By adjointness, the commutativity of region (2) in Diagram~(\ref{eq:bigdiag}) is equivalent to the commutativity of the following diagram in $\mr{Ho}(\Sp_{\Sigma_{(I)}})$, which clearly commutes.
$$
\xymatrix@C+2em{
R^{X^i_{h\Sigma_I}} \wedge X^i_{h\Sigma_I} \ar[r]^-{\mr{ev}} & R 
\\
R^{X^i_{h\Sigma_{[I]}}} \wedge X^i_{h\Sigma_I} 
\ar[u]^{\mr{Res}^{\Sigma_{[I]}}_{\Sigma_I} \wedge 1} \ar[r]_-{1 \wedge \mr{Res}^{\Sigma_{[I]}}_{\Sigma_I} } &
R^{X^i_{h\Sigma_{[I]}}} \wedge X^i_{h\Sigma_{[I]}} \ar[u]_{\mr{ev}}
}
$$
\end{proof}

\bibliographystyle{amsalpha}
\nocite{*}
\bibliography{BKTAQ8}

\end{document}